\documentclass[12pt]{amsart}

\usepackage{amssymb,latexsym}

% SHOWKEYS
%\usepackage[notcite,notref]{showkeys}

\usepackage{graphicx,epsfig}
\usepackage[dvips]{color}

\def\cal{\mathcal}
\def\frak{\mathfrak}
\def\Bbb{\mathbb}

\def\Re{\text{\rm Re\,}}

\def\sgn{\text{\rm sgn\,}}

\def\pad{\phi^a}
\def\tpad{\tilde\phi^a}

\def\tka{\tilde\kappa}

\def\ra{\rangle}
\def\laa{\langle}

\def \supp {\text{\rm supp\,}}

\def \sgn {\text{\rm sgn\,}}

\def\r{{\frak r}}
\def\hl{{h_{\rm \,lin}}}
\def\A{{\cal A}}
\def\D{{\cal D}}

\def\N{{\cal N}}
\def\S{{\cal S}}

\def\bC{{\Bbb C}}

\def\NN{{\Bbb N}}

\def\bR{{\Bbb R}}
\def\RR{{\Bbb R}}
\def\bZ{{\Bbb Z}}

\def\La{{\Lambda}}
\def\vp{{\varphi}}
\def\al{{\alpha}}
\def\be{{\beta}}
\def\ga{{\gamma}}
\def\la{{\lambda}}
\def\om{{\omega}}
\def\eps{{\epsilon}}
\def\x{(x_1,x_2)}
\def\y{(y_1,y_2)}
\def\pa{{\partial}}

\def\ve{{\varepsilon}}
\def\si{{\sigma}}

\def\de{{\delta}}
\def\ze{{\zeta}}

\def\ka{{\kappa}}
\def\tka{{\tilde\kappa}}
\def\th{{\theta}}

\def\pr{\text{\rm pr\,}}

\def\bpm{\begin{pmatrix}}
\def\epm{\end{pmatrix}}

\def\noi{\noindent}
\def\bee{\begin{enumerate}}
\def\ee{\end{enumerate}}

\textwidth15.5cm \textheight21cm \evensidemargin.2cm
\oddsidemargin.2cm

\addtolength{\headheight}{3.2pt}    %% leave room for symbol in header

\newtheorem{thm}{Theorem}[section]
\newtheorem{prop}[thm]{Proposition}
\newtheorem{proposition}[thm]{Proposition}
\newtheorem{cor}[thm]{Corollary}
\newtheorem{lemma}[thm]{Lemma}
\newtheorem{remark}[thm]{Remark}

\newtheorem{assumption}[thm]{Assumption}

\begin{document}

\title[A sharp restriction theorem]{
$L^p$-$L^2$  Fourier restriction  for  hypersurfaces in  $\bR^3:$
Part II\\
VERSION 13.10.2014}

\author[I. A. Ikromov]{Isroil A. Ikromov}
\address{Department of Mathematics, Samarkand State University,
University Boulevard 15, 140104, Samarkand, Uzbekistan}
\email{{\tt ikromov1@rambler.ru}}

\author[D. M\"uller]{Detlef M\"uller}
\address{Mathematisches Seminar, C.A.-Universit\"at Kiel,
Ludewig-Meyn-Stra\ss{}e 4, D-24098 Kiel, Germany}
\email{{\tt mueller@math.uni-kiel.de}}
\urladdr{{http://analysis.math.uni-kiel.de/mueller/}}

\thanks{2000 {\em Mathematical Subject Classification.}
35D05, 35D10, 35G05}
\thanks{{\em Key words and phrases.}
Oscillatory integral, Newton diagram, Fourier restriction}
\thanks{We gratefully acknowledge the support for this work by the Deutsche Forschungsgemeinschaft.}

\begin{abstract}
This is the second  in a serious of two  articles,  in which we prove a sharp  $L^p$-$L^2$ Fourier
restriction theorem for  a large class of smooth, finite type hypersurfaces in
$\RR^3,$ which includes in particular all  real-analytic hypersurfaces.
\end{abstract}

\maketitle

\maketitle

%\vfill\newpage

\tableofcontents

\thispagestyle{empty}

\setcounter{equation}{0}
\section{Introduction}\label{intro}
This is the second part of a pair of articles whose main goal is to prove our main result, Theorem 1.7, in \cite{IM-rest1} on $L^p$-$L^2$ Fourier restriction estimates for smooth hypersurfaces of finite type in $\RR^3.$  For the relevant statements, definitions and bibliographical references we  therefore refer the reader to the introduction to that article. Under the assumption that our hypersurface $S$ is given as the graph of a smooth function $\phi$ defined near the origin and satisfying the conditions $\phi(0,0)=0$ and $\nabla\phi(0,0)=0,$  we had covered in \cite{IM-rest1} all situations with the exception of the cases where  the so-called linear height $\hl(\phi)$  satisfies $2\le \hl(\phi)<5.$ For this case, substantially more refined  methods  than the ones used  in \cite{IM-rest1} are needed, since the use of Drury's restriction estimate for non-degenerate curves turns out to be insufficient. In fact, the  method that we shall develop in this second  part will work whenever $ \hl(\phi)\ge 2.$
\medskip

Throughout this article, we shall make the following general 
\begin{assumption}\label{assumption}
There is no linear coordinate system which is adapted to $\phi.$
\end{assumption}
Moreover, we may and shall assume that we are in linearly adapted coordinates, so that $d=\hl(\phi)\ge 2.$ Recall also from  \cite{IM-rest1} that this assumption implies that  the principal face $\pi(\phi)$ of the Newton polyhedron of $\phi$ is a compact edge  which is intersected by the bi-sectrix 
$$
\Delta:=\{(t_1,t_2)\in\bR^2:t_1=t_2\},
$$
in an interior point, given by $(d,d),$ and that $\pi(\phi)$ is contained in the {\it principal line}
$$
L:=\{(t_1,t_2)\in\bR^2:\ka_1t_1+\ka_2t_2=1\},
$$
and thus determines a weight $\ka:=(\ka_1,\ka_2),$ so that also  $m=\ka_2/\ka_1\ge 2.$

\medskip

{\bf Conventions:}  As in \cite{IM-rest1}, we shall use the ``variable constant'' notation in this article, i.e.,  many constants appearing in the paper, often  denoted by  $C,$  will typically have different values  at different lines.  Moreover, we shall use  symbols such as  $\sim, \lesssim$ or $\ll$ in order to avoid  writing down constants. By $A\sim B$ we mean that there are constants $0<C_1\le C_2$ such that $C_1 A\le B\le C_2 A,$ and  these constants will not depend on the relevant parameters arising in the context in which the quantities $A$ and $B$  appear. Similarly, by $A\lesssim B$ we mean that there  is a  (possibly large)  constant  $C_1>0$ such that  $A\le C_1B,$  and by $A\ll B$ we mean that there  is a sufficiently  small  constant  $c_1>0$ such that  $A\le c_1B,$  and  again these constants do not depend on the relevant parameters.

By $\chi_0 $ and $\chi_1$ we shall  always denote  smooth cut-off functions with compact support on $\RR^n,$ where $\chi_0 $ will   be supported in a neighborhood of the origin, whereas $\chi_1=\chi_1(x)$  will be support away from the origin in each of its coordinates $x_j,$ i.e., $|x_j|\sim 1$ for every $j=1,\dots,n.$ These cut-off functions may also vary from line to line, and may  in some instances, where several of  such functions of different variables appear within the same formula, even   designate different functions.

Also, if we speak of the {\it slope}  of a line such as a supporting line to a Newton polyhedron, then we shall actually  mean the modulus of the slope.

\setcounter{equation}{0}
\section{The case when $\hl(\phi)\ge 2$: Reminder of the open cases}\label{remind}

Recall from \cite{{IM-rest1}}, Section 9,  the following two {\bf Cases:}
\begin{enumerate}
 \item[(a)] The principal face $\pi(\pad)$ of the Newton polyhedron $\N(\pad)$  of $\pad$ is a compact edge, which lies on a line $L^a,$ which we call the {\it principal line} of $\N(\pad)$ % we choose $l_\pr$ so  that the edge $\ga_{l_\pr}=[(A_{l_\pr-1},B_{l_\pr-1}) ,(A_{l_\pr},B_{l_\pr})]$ is the principal face $\pi(\pad)$ of the Newton polyhedron of $\pad.$

\item[(b)]  $\pi(\pad)$ is the vertex $(h,h).$  
 \end{enumerate}

What had remained open in \cite{{IM-rest1}}   was the study of  the piece of the surface $S$ corresponding to the domain $D_\pr$  containing the principal root jet $\psi,$ in the cases (a) and (b),  i.e.,
\begin{equation}\label{restdomain2}
D_\pr:=\begin{cases}  
\{ \x: 0<x_1<\ve, \,|x_2-\psi(x_1)|\le N x_1^a\} &   \mbox{ in Case (a)},   \\
      \{ \x: 0<x_1<\ve, \, |x_2-\psi(x_1)|\le \ve x_1^{a}\}, &   \mbox{ in Case (b)},
\end{cases}
\end{equation}
when $2\le \hl(\phi)<5.$  Indeed, we shall here develop an approach which will work whenever $\hl(\phi)\ge 2.$ Our goal will thus be to prove the following extension of Proposition 12.1 in \cite{{IM-rest1}} to the case where $\hl(\phi)\ge 2:$

\begin{proposition}\label{rdomain}
Assume that $\hl(\phi)\ge 2,$ and that we are in Case (a) or (b). When $\ve>0$ is sufficiently small, and $N$ is sufficiently large in Case (a),   then
$$
\Big(\int_{D_{\pr}} |\widehat f|^2\,d\mu\Big)^{1/2}\le C_{p} \|f\|_{L^p(\RR^3)},\qquad  f\in\S(\RR^3),
$$
whenever $p'\ge p'_c .$
\end{proposition}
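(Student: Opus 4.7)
\medskip\noindent
\textbf{Proof proposal.}
The plan is to reduce the restriction estimate on $D_{\pr}$ to a sum of scale-invariant estimates on dyadic pieces, via the $\ka$-dilations adapted to the principal line, and to control each piece by an interpolation between a trivial $L^1\to L^\infty$ bound and a sharp $L^2\to L^2$ bound coming from a careful stationary phase / oscillatory integral analysis of the rescaled phase. First I would pass to the adapted coordinates $y_1=x_1,$ $y_2=x_2-\psi(x_1),$ which flatten the principal root jet and replace $\phi$ by $\pad(y_1,y_2):=\phi(y_1,y_2+\psi(y_1)).$ In these coordinates $D_{\pr}$ becomes $\{0<y_1<\ve,\ |y_2|\le Ny_1^a\}$ (resp.\ $\{|y_2|\le \ve y_1^a\}$ in Case (b)), and one inserts a dyadic partition of unity $\sum_k\chi_1(2^{k\ka_1}y_1,2^{k\ka_2}y_2)$ subordinate to the weight $\ka$ determined by the principal face, possibly complemented by a secondary decomposition in $y_2/y_1^{\ka_2/\ka_1}$ to separate roots of $\pad$ on the principal face.

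Next I would rescale each dyadic piece by $y_j\mapsto 2^{-k\ka_j}y_j,$ which by the $\ka$-homogeneity of $\pi(\pad)$ produces a normalized phase of the form $\pad_k(z)=\phi_{\pr}(z)+\hbox{(lower $\ka$-order terms in $2^{-k}$)},$ where $\phi_{\pr}$ is the $\ka$-principal part of $\pad$; the support of the rescaled amplitude is contained in a fixed annular region $|z|\sim 1$ (Case (a) with suitable restriction $|z_2|\le N$) or in a small neighbourhood of the $z_1$-axis (Case (b)). The scaling produces a Jacobian factor and Fourier-side scaling in the measure $d\mu,$ from which one reads off exactly how an $L^p\to L^2$ restriction estimate of norm $A_k$ for the rescaled piece translates into the original estimate, and under what conditions the geometric series over $k$ converges at the critical exponent $p'_c.$ This bookkeeping is routine once one uses the relation $p'_c=2(\ka_1+\ka_2)^{-1}$ implicit in the construction of $\ka.$

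The heart of the matter is to establish, uniformly in $k,$ the $L^p\to L^2$ restriction estimate for the normalized pieces $d\mu_k.$ For this I would use a $TT^*$ / Stein--Tomas reduction and estimate the Fourier transform of $d\mu_k$ by the corresponding oscillatory integral
$$
J_k(\xi)=\int e^{i[\xi_3\pad_k(z)+\xi_1z_1+\xi_2z_2]}\,a_k(z)\,dz,
$$
decomposed according to the location of the critical points of the rescaled phase. On the bulk of the support, away from the roots of $\pa_{z_2}^2\pad_k,$ a two-dimensional stationary phase gives a decay $|J_k(\xi)|\lesssim (1+|\xi_3|)^{-1},$ which is enough, by the standard interpolation with $\|d\mu_k\|\lesssim 1,$ to recover the Stein--Tomas exponent. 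Near the remaining degeneracies one has to invoke a one-dimensional van der Corput / Airy-type estimate in $z_2$ together with a further $T\widetilde T^*$ argument in the $z_1$ direction; this is precisely where Drury's curve estimate would traditionally be used, and the unified approach announced in the introduction must replace it by a more refined, $\ka$-dependent oscillatory integral bound that degrades at most as a power of $2^{-k}$ compatible with the summation.

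The hard step will be the uniform control of $J_k$ in Case (b) and, in Case (a), when the restricted phase on the principal face has roots of high multiplicity, since then the Hessian of $\pad_k$ degenerates on a curve and the straightforward Stein--Tomas decay is lost. Once this uniform estimate is available, summing the dyadic contributions with weights given by the $\ka$-dilation Jacobians yields the target inequality for $p'\ge p'_c,$ and the smallness of $\ve$ (and largeness of $N$ in Case (a)) are used only to guarantee that the perturbation of $\phi_{\pr}$ by the lower-order terms in $\pad_k$ does not spoil the uniform estimates on each rescaled piece.
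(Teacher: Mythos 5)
Your high-level outline — pass to (modified) adapted coordinates, dyadically decompose with respect to $\ka$-dilations, rescale to normalized measures, prove uniform $L^p\to L^2$ estimates on the pieces, then sum the resulting geometric series — does track the structure of the paper's argument. But there are several concrete gaps, one of which is an actual error, and all of which concern exactly the hard part of the proof.

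First, your summability relation $p'_c=2(\ka_1+\ka_2)^{-1}$ is wrong in general. The bisectrix meets the principal line at $(d,d)$ with $d=1/|\ka|$, so your formula gives $p'_c=2d$. But the critical exponent here is $p'_c=2(h^r+1)$, where $h^r$ is the $r$-height, and in general $h^r\ge d$ with equality failing in many of the cases under consideration. The actual rescaling gain is (see \eqref{rescale}) a factor $2^{-k\frac{|\tilde\ka|}2(1-2(h_{l_\pr}+1)/p'_c)}$ where $h_{l_\pr}+1=\frac{1+(m+1)\tilde\ka_1}{|\tilde\ka|}$ is the second coordinate of the intersection of the shifted bisectrix $\Delta^{(m)}$ with the principal line. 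Getting this bookkeeping right is not routine, and is precisely what forces the whole $r$-height machinery.

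Second, and more seriously, the statement that one can "recover the Stein--Tomas exponent" by "the standard interpolation with $\|d\mu_k\|\lesssim 1$" does not close the argument at the endpoint $p=p_c$. In the critical regimes ($m=2$ with $B\in\{2,3,4\}$ and $\theta_c=1/3$, or $\theta_c=\tilde\theta_c$) the naive interpolation only yields a bound $\|T_\de^\la\|_{p_c\to p'_c}\lesssim 1$ uniformly in the dyadic scale, with no summable gain. Reaching the endpoint requires, on top of the dyadic decomposition in $x$, a second full decomposition of frequency space into boxes $|\xi_i|\sim\la_i$ (your parenthetical $T\widetilde T^*$ remark vaguely points in this direction, but the separate treatment of the regimes $\la_1\sim\la_3\gg\la_2$, $\la_2\sim\la_3\gg\la_1$, $\la_1\sim\la_2\gg\la_3$, and $\la_1\sim\la_2\sim\la_3$ is an essential structural step), followed by genuinely non-trivial interpolation arguments: Stein's complex interpolation with carefully chosen analytic families of measures, controlled via oscillatory-sum lemmas, and in other cases a real-interpolation argument \`a la Bak--Seeger adapted to complex (not necessarily positive) pieces of the measure. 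None of this is in your proposal.

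Third, the oscillatory integral estimates in the degenerate regime do not give the decay $(1+|\xi_3|)^{-1}$ you posit. After blowing up at the Airy-type critical point $x_1^c$ in the root-jet direction, the uniform decay one can prove for $\widehat{\nu_\de^\la}$ with $\la_1\sim\la_2\sim\la_3=:\la$ is only $\la^{-1/3-1/B}$, and even this has to be upgraded, when $\la\rho(\tilde\de)\gg 1$, via a Duistermaat-type scaling in the parameters $\de$ that normalizes $\rho(\tilde\de)=1$ and an accompanying sublevel-set/van der Corput lemma for perturbed $A_2\times A_{B-1}$ or $D_4$-type phases (Lemma \ref{duistuniest} and its $B=3$ analogue). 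The cases $B=3,4$ require a further Airy-cone decomposition in frequency and a second round of complex interpolation (Section 12--13). Your sentence about replacing Drury's estimate by "a more refined, $\ka$-dependent oscillatory integral bound" correctly identifies where the difficulty lives, but that bound and the interpolation machinery needed to sum it at the endpoint constitute essentially the entire content of the paper, so I cannot accept the proposal as a proof.
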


In order to prove this proposition, we follow the domain decomposition algorithm for the domain $D_\pr$ developed in  Section 12 of  \cite{IM-rest1}. In Case (a), that algorithm led  to a finite family of subdomains   $E_{(l)}$ (so-called transition domains) and domains $D'_{(l)}, l\ge 1,$ of the form
$$
D'_{(l)}:=\{ \x: 0<x_1<\de, \,  |x_2-\psi^{(l+1)}(x_1)|\le \ve  x_1^{a_{(l)}}\},
$$
where the functions $\psi^{(l)}$  are of the form
$$
\psi^{(l)}(x_1)=\psi(x_1)+\sum_{j=1}^{l-1}c_{j-1} x_1^{a_{(j)}},
$$
with real coefficients $c_j$, and where the exponents $a_{(j)}$ form a strictly increasing sequence
$$
a=a_{(1)}<a_{(2)}<\cdots a_{(l)}<a_{(l+1)}<\cdots
$$
of rational numbers.    Moreover, in the modified adapted coordinates given by 
$$
y_1:= x_1, \quad y_2:=x_2-\psi^{(l+1)}(x_1),
$$
the function $\phi$ is given by
$$
\phi^{(l+1)}\y:=\phi(y_1, y_2+\psi^{(l+1)}(y_1)).
$$
Notice that  we can define these notions  also for $l=0,$  and then have $\psi^{(0)}=\psi$ and $\phi^{(0)}=\pad.$

\medskip
Moreover, the domain  $D'_{(l)}$  is  associated to an ``edge''  $\ga'_{(l)}=[(A'_{(l-1)},B'_{(l-1)}), (A'_{(l)},B'_{(l)})]$ (which is indeed an edge, or can degenerate to a single point) of the Newton polyhedron of $\phi^{(l+1)}$ in the following way:

\medskip
The edge with index $l$ will lie on a line 
$$
L_{(l)}:=\{(t_1,t_2)\in\bR^2:\ka^{(l)}_1t_1+\ka^{(l)}_2t_2=1\},
$$
of slope $1/ a_{(l)}$ (here, we shall always mean the modulus of the slope), 
where $ a_{(l)}=\ka^{(l)}_2/\ka^{(l)}_1.$  Introduce corresponding ``$\ka^{(l)}$-dilations'' $\de_r=\de_r^{\ka^{(l)}}$ by putting 
$\de_r\y=(r^{\ka_1^{(l)}}y_1,r^{\ka_2^{(l)}}y_2), \ r>0.$  Then the domain
$$
D'^a_{(l)}:=\{ \y: 0<y_1<\ve, \,  |y_2|\le \ve y_1^{a_{(l)}}\},
$$
which represents the domain $D'_{(l)}$  in the coordinates $\y,$ is invariant under these dilations, and the Newton diagram of the $\ka^{(l)}$-principal part  $\phi^{(l+1)}_{\ka^{(l)}}$ of $\phi^{(l+1)}$ agrees with the edge $\ga'_{(l)}.$

Recall also that the first   edge $\ga'_{(1)}$ agrees with the principal face  $\pi(\phi^{(2)})$ of \ $\phi^{(2)}$ and lies on the principal line $L^a$ of the Newton polyhedron of $\pad,$ and it  intersects the bi-sectrix $\Delta,$
whereas for $l\ge 2$ the edge $\ga'_{(l)}$  will lie in the closed half-space below the bi-sectrix.

Moreover, the Newton polyhedra of $\pad$ and of $\phi^{(l)}$ do agree in the closed  half-space above the bi-sectrix.

\medskip
Now, in  Section 8 of \cite{{IM-rest1}}, setting  $v=(1,0),$ we had distinguished  between the cases where $\pa_2\phi^{(l+1)}_{\ka^{(l)}}(v) \neq 0$ (Case 1),  $\pa_2\phi^{(l+1)}_{\ka^{(l)}}(v) =0$ and $\pa_1\phi^{(l+1)}_{\ka^{(l)}}(v) \ne 0$ (Case 2),  and the case where $\nabla \phi^{(l+1)}_{\ka^{(l)}}(v) =0$ (Case 3), and studied restriction estimates for the pieces of the surface $S$ corresponding to the domain $D'_{(l)}.$ 

Only in Case 2  we had made use of the assumption $\hl(\phi)\ge 5,$ so we can concentrate in the sequel on Case 2.

Notice also that our decomposition algorithm worked as well in Case (b), only that we had to skip the first step of the algorithm. We shall therefore first study the domain $D'_{(1)}$ in  Case (a), and in the last section describe the minor modifications needed to treat also the domains $D'_{(l)}$ for $l\ge 2,$ which will then also cover Case (b) at the same time.

 We can localize to the domain $D'_{(l)}$ by means of a cut-off function 
$$
\rho_{(l)}\x:=\chi_0\Big(\frac {x_2-\psi^{(l+1)}(x_1)}{\ve x_1^{a_{(l)}}}\Big),
$$
where $\chi_0\in\D(\RR).$ 
Let us again fix a suitable smooth cut-off function $\chi\ge 0$ on $\RR^2$ supported in an annulus $\A:=\{x\in \RR^2:1/2\le |y|\le R\}$ such that the functions $\chi^a_k:=\chi\circ \de_{2^k}$ form a partition of unity. Here, $\de_r=\de_r^{\ka^{(l)}}$ denote the dilations associated to the weight $\ka^{(l)}.$  In the original coordinates $x,$ these correspond to the functions $\chi_k(x):=\chi^a_k(x_1,x_2-\psi^{(l+1)}(x_1)).$ We then decompose the measure $\mu^{\rho_{(c_0)}}$ dyadically as 
\begin{eqnarray}\label{2.2nII}
\mu^{\rho_{(l)}}=\sum_{k\ge k_0}\mu_k,
\end{eqnarray}
where 
$$\mu_k:=\mu^{(l)}_k :=\mu^{\chi_k\rho_{(l)}}.
$$
  Notice that by choosing the support of $\eta$ sufficiently small, we can choose $k_0\in \NN$ as large as we need. It is also important to observe that  this decomposition can essentially we achieved by means of a dyadic decomposition with respect to the variable $x_1,$  which again allows to apply Littlewood-Paley theory (see \cite{IM-rest1}).  
  
  Moreover, changing to modified adapted coordinates in the integral defining $\mu_k$ and scaling by $\de_{2^{-k}}$ we find that
\begin{eqnarray}\nonumber
\laa\mu_k,\, f\ra &=&2^{-k|\ka^{(l)}|}\int f(2^{-\ka_1^{(l)} k}x_1,\, 2^{-\ka_2^{(l)} k}x_2+2^{-m\ka_1^{(l)} k}x_1^m\om(2^{-\ka_1^{(l)}k} x_1),\, 2^{-k}\phi_k(x))\\
&&\hskip7cm  \eta(x) \, dx,\label{2.3nII}
\end{eqnarray}
where $\om=\om^{(l)}$ is given by 
$$
\psi^{(l+1)}(x_1)=x_1^m\om(x_1),
$$
so that $\om(0)\ne 0,$ 
$\eta=\eta_{k}^{((l)}$ is a smooth function supported  where $x_1\sim1,\,|x_2|<\ve $ (for some  small $\ve>0$), whose derivatives are uniformly bounded  $k,$  and where
\begin{equation}\label{7.2}
\phi_k(x)=\phi_k^{(l+1)}(x):=2^k\phi^{(l+1)}(\de_{2^{-k}}x)=\phi^{(l+1)}_{\ka^{(l)}} (x)+ \mbox{error terms of order } O(2^{-\delta k})
\end{equation}
with respect to the $C^\infty$ topology (and  $\delta>0).$

\medskip
In order to prove Proposition \ref{rdomain}, we then still need to prove 

\begin{proposition}\label{Dl}
Assume that $\hl\ge 2,$   that we are in Case 2, i.e., $\pa_2\phi^{(l+1)}_{\ka^{(l)}}(1,0) =0$ and $\pa_1\phi^{(l+1)}_{\ka^{(l)}}(1,0) \ne 0,$ and  recall that $p'_c=2h^r+2 .$ When $\ve>0$ is sufficiently small and $k_0\in\NN$ is sufficiently large, then for every $l\ge 1,$ 
\begin{equation}\label{estk}
\Big(\int |\widehat f|^2\,d\mu_k\Big)^{1/2}\le C_{p_c} \|f\|_{L^p(\RR^3)},\qquad  f\in\S(\RR^3), \ k\ge k_0,
\end{equation}

 where the constant $C_p$ is independent of $k.$
\end{proposition}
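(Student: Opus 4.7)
The plan is to exploit the rescaling formula (\ref{2.3nII}): after the change of variables $x\mapsto \de_{2^{-k}}x$, the measure $\mu_k$ becomes, up to a prefactor $2^{-k|\ka^{(l)}|}$, a smooth measure $\nu_k$ of unit size whose phase $\phi_k$ satisfies (\ref{7.2}) and converges in $C^\infty$ to the $\ka^{(l)}$-principal part $\phi^{(l+1)}_{\ka^{(l)}}$. Testing $|\widehat f|^2$ against $\mu_k$ corresponds, after the dual rescaling of $f$, to testing it against $\nu_k$; a direct bookkeeping of the dilation prefactors shows that the $L^p/L^2$ ratio matches exactly when $p'=p'_c=2h^r+2$. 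Thus (\ref{estk}) with $k$-uniform constant is equivalent to a single critical restriction estimate for the model $\ka^{(l)}$-homogeneous phase $\phi^{(l+1)}_{\ka^{(l)}}$.

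\medskip

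In Case 2 we have $\pa_2\phi^{(l+1)}_{\ka^{(l)}}(1,0)=0$ and $\pa_1\phi^{(l+1)}_{\ka^{(l)}}(1,0)\ne 0$, so the model surface retains non-degenerate principal curvature away from the root curve $\{x_2=0\}$ and degenerates only along it, while its gradient remains non-vanishing there. I would therefore introduce a further dyadic decomposition $\nu_k=\sum_{j\ge 0}\nu_{k,j}$ according to $|x_2|\sim 2^{-j}$ and rescale each piece in a manner adapted to the vertices $(A'_{(l-1)},B'_{(l-1)}),(A'_{(l)},B'_{(l)})$ of the edge $\ga'_{(l)}$. On each renormalized piece the phase is again mixed homogeneous with non-degenerate Hessian at its base point, up to explicit powers of $2^{-j}$, so one can apply either a Stein--Tomas-type $L^p\to L^2$ restriction estimate or, on the innermost piece closest to the root curve, Drury's restriction estimate for non-degenerate curves in $\RR^3$; both are executed through the $R^*R$ identity and oscillatory integral bounds for $\widehat{\nu_{k,j}}$ supplied by stationary phase.

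\medskip

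Littlewood--Paley theory in the transversal direction, as already used in Section~\ref{remind} for the outer decomposition, then recombines the pieces $\nu_{k,j}$ into a bound for $\nu_k$, and a direct computation of the scaling factors shows that the geometric sum in $j$ converges exactly when $p'\ge 2h^r+2$, accounting for the critical exponent $p'_c$ and its dependence on the reduced height $h^r$.

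\medskip

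The principal difficulty, and the reason the approach of \cite{IM-rest1} required $\hl(\phi)\ge 5$, is that for small heights the naive use of Drury's theorem on the piece nearest the root curve loses a factor that the $j$-sum cannot absorb at the endpoint. To close this gap I would introduce, following the interpolation strategy typical of this program, an analytic family of damped measures $\nu_k^z$ obtained by inserting a fractional power of a suitable Jacobian (such as $\pa_2\phi_k$) into the density, and invoke Stein's complex interpolation between a trivial kernel bound for $\Re z$ large and a sharp $L^2\to L^2$ convolution bound for $\Re z$ small. The damping weight is calibrated to recover precisely the factor lost in Drury's inequality, and the interpolation strip is chosen so that $z=0$ delivers the desired critical $L^{p_c}\to L^2$ restriction estimate of Proposition~\ref{Dl}, with bounds uniform in $k$.
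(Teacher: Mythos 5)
Your opening step—rescaling $\mu_k$ to a normalized measure $\nu_\de$ and tracking the dilation factor—matches the paper's Section 3 and indeed reduces Proposition~\ref{Dl} to the uniform estimate \eqref{2.7II}. After that, however, your proposal diverges sharply from what the paper actually does, and the divergence opens a gap that your sketch does not close.

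First, you propose a \emph{physical-space} dyadic decomposition $\nu_k = \sum_j \nu_{k,j}$ according to $|x_2|\sim 2^{-j}$, followed by Stein--Tomas or Drury on each piece. The paper instead performs a \emph{frequency-space} dyadic decomposition $\nu_\de = \sum_\la \nu_\de^\la$ over triples $\la=(\la_1,\la_2,\la_3)$ with $|\xi_i|\sim \la_i$ (Section~\ref{specloc}), and then carefully distinguishes the regimes $\la_1\sim\la_3\gg\la_2$, $\la_2\sim\la_3\gg\la_1$, $\la_1\sim\la_2\gg\la_3$, and $\la_1\sim\la_2\sim\la_3$. The reason this matters: the dominant obstruction in this paper is not the degeneracy of the surface along the root curve $\{x_2=0\}$, which your $|x_2|$-decomposition would localize, but rather an Airy-type degeneracy in the $x_1$-direction of the complete phase $\Phi_1(x_1,\de_1,s)$ that appears only when all three frequency components are comparable (Sections~\ref{equalla}, \ref{m2b34}). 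A physical-space slicing in $x_2$ does not separate the frequency regimes and would not isolate the Airy cone $B_1(s,\de_1)=0$, which is where the sharp estimates live.

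Second, you explicitly invoke Drury's restriction theorem on the innermost piece near the root curve and then propose to recover the lost endpoint factor by inserting a fractional damping weight $|\pa_2\phi_k|^z$ into the density and running complex interpolation. But the paper states in its introduction, and this is the whole raison d'\^etre of Part~II, that ``the use of Drury's restriction estimate for non-degenerate curves turns out to be insufficient'' precisely in the range $2\le \hl(\phi)<5$; the loss is not a single removable factor that a Phong--Stein-type damping weight can absorb, because the obstruction is tied to the interaction of the $x_1$-Airy degeneracy with the $x_2^B$-degeneracy, and this interaction depends on the Newton-polyhedral data ($B$, $H$, $m$, $A$, the exponent $n$) in a way that is invisible to a pointwise Jacobian weight. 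The paper's complex interpolation is of a genuinely different nature: it embeds the dyadic sum itself into an analytic family $\mu_\ze = \ga(\ze)\sum_\la \la^{a(1-3\ze)}\nu_\de^\la$ and controls $\|\mu_{1+it}\|_\infty$ via oscillatory sum lemmas (Lemma~\ref{simplesum} and Lemma~8.1 of \cite{IM-rest1}), after a case-by-case reduction driven by the sizes of quantities $A,B,D,E$ built from the phase. In addition it deploys a Bak--Seeger-style real interpolation (Proposition~\ref{bsint}), a Duistermaat-type uniform oscillatory integral estimate (Lemma~\ref{duistuniest}), and a detailed structural classification of the Newton polyhedron in the remaining hard cases $m=2$, $B\in\{3,4\}$ (Proposition~\ref{Z}, Corollary~\ref{B45}). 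None of this machinery is present or substituted for in your sketch, and in particular there is no argument in your proposal that identifies, let alone handles, the cases $m=2$, $B=H=3$ where $\th_c=1/3$ is exactly the Airy exponent and every interpolation loses logarithmically unless the phase structure is exploited term by term.

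In short: your reduction to a uniform estimate for $\nu_\de$ is correct, but the subsequent plan—physical-space decomposition in $x_2$, Drury on the innermost piece, and a damped-density analytic family—is both different from the paper's approach and, more importantly, incomplete: it does not confront the $x_1$-Airy degeneracy at comparable frequencies, which is the actual source of the endpoint difficulty, and it does not provide the case analysis on $(m,B,H,A,n)$ that the sharp result requires.
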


\setcounter{equation}{0}
\section{Restriction estimates for the domain $D'_{(1)}$ }\label{dom1}

Let us assume that we are in Case (a), where  the principal face $\pi(\pad)$ is a compact edge. In the enumeration of edges $\ga_l$ of the Newton polyhedron associated to $\pad$ in Section 7 of \cite{IM-rest1}, this edge corresponds to the index $l=l_\pr,$ i.e.,
\begin{equation}\label{lpr}
\pi(\pad)=\ga_{l_\pr}.
\end{equation}
 The weight $\ka^{(1)}$ is  here the principal weight $\ka^{l_\pr}$  from \cite{IM-rest1}, and the line $L_{(1)}$ is the principal line $L^a=L_{l_\pr}$ of the Newton polyhedron of $\pad.$ 
We then put 
$$
\tilde\ka:=\ka^{(1)}, \quad\mbox{so that } \quad a=\frac {\tilde \kappa_2}{\tilde \kappa_1}, \ \pad_{\tilde\kappa}=\pad_\pr.
$$
In particular, $h_{l_\pr}+1$ is the second coordinate of the point of intersection of the line 
$$
\Delta^{(m)}:=\{(t,t+m+1):t\in\RR\}
$$
with the line $L^a,$ and according to \cite{IM-rest1}, display (1.11), is given by 
\begin{equation}\label{hlpr}
 h_{l_\pr}+1=\frac {1+(m+1)\tilde\ka_1}{|\tilde\ka|} .
\end{equation}

The domain $D'_{(1)}$ that we have to study is then of the form
$$
D'_{(1)}=\{\x:0<x_1<\ve, \,  |x_2-\psi(x_1)-c_0 x_1^{a}|\le \ve x_1^{a}\},
$$
where $\psi(x_1)+c_0 x_1^{a}=\psi^{(2)}(x_1).$  Moreover,
\begin{equation}\label{phi2}
\phi^{(2)}\x=\pad(x_1,x_2-c_0 x_1^{a})=:\tilde\pad\x,
\end{equation}
so that $\tpad$ represents $\phi$ in the modified adapted coordinates 
\begin{equation}\label{modada}
y_1:=x_1,\quad y_2:= x_1-\psi(x_1)-c_0 x_1^{a},
\end{equation}
compared to the adapted coordinates $y_1:=x_1,\quad y_2:= x_1-\psi(x_1),$ in which $\phi$ is represented by $\pad.$ 

Notice that  the exponent $a$ may be non-integer (but rational), so that $\psi^{(2)}$ is in general  only fractionally smooth, i.e., a  smooth function of $x_2$ and some fractional power of $x_1$ only.  The same applies to every $\psi^{(l)}$ with $l\ge 2,$ whereas $\pad$ is still smooth, i.e., when we express $\phi$ in our adapted coordinates, we still get a smooth function,  whereas when we pass to modified adapted coordinates, we may only get fractionally  smooth functions.

\medskip
%With a slight abuse of notation, we shall nevertheless from now  on use the symbol $\pad$ for $\psi^{(2)},$ in order to defray the notation  (so, $\pad$ will in general  only be  fractionally smooth). 

We shall write $D^a$  for the domain $D'^a_{(1)},$ i.e., 
$$
D^a:=\{\y: 0<y_1<\ve, |y_2|<\varepsilon y_1^a \},
$$
so that $D^a$ represents our domain $D'_{(1)}$ in our modified adapted coordinates, in which $\phi$ is represented by $\tilde\pad.$ 

\medskip
We assume that we are in Case 2, so that 
 $\pa_2\tilde\pad_{\tilde\ka}(1,0) =0$ and $\pa_1\tilde\pad_{\tilde\ka}(1,0) \ne 0.$
\medskip

We choose $B\ge 2$ minimal so that $\pa_2^B\tpad_{\tilde \kappa}(1,0) \ne 0.$ Since $\tpad_{\tilde \kappa}$ is $\tilde\ka$-homogeneous,  the principal part of $\tpad$ is then of the form (cf. (9.6) in \cite{{IM-rest1}})
\begin{equation}\label{2.2II}
\tpad_{\tilde \kappa}\y=y_2^BQ\y+c_1y_1^n,\quad c_1\neq 0, \ Q(1,0)\ne 0,
\end{equation}
where $Q$ is a $\tilde\kappa$-homogeneous smooth function.  Note that $n$ is rational, but not necessarily integer, since we are in modified adapted coordinates.

Observe also that this implies that we may write
\begin{equation}\label{2.3II}
\tpad\y=y_2^B \,b_B\y+y_1^n\al(y_1)+\sum_{j=1}^{B-1}y_2^j \, b_j(y_1),
\end{equation}
with smooth functions  $b_B,\al$ such that $\al(0)\ne 0$  and 
$$
b_B\y=Q\y+ \mbox{ terms of $\tilde\ka$-degree  strictly bigger than that of $Q,$}
$$
and smooth functions  $b_1,\dots, b_{B-1}$ of $y_1,$ which are  either flat, or of finite type $b_j(y_1)= y_1^{n_j} \al_j(y_1),$ with smooth functions $\al_j$ such that  $\al_j(0)\ne 0.$ 

 For convenience, we shall also write $b_j(y_1)= y_1^{n_j} \al_j(y_1)$ when $b_j$ is flat, keeping in mind that in this case we may choose $n_j\in\NN$ as large as we please (but  $\al_j(0)=0$). 

Notice that then, for $j=1,\dots,B-1,$ 
$y_2^jb_j(y_1)$ consists of terms of $\tilde\ka$-degree strictly bigger than $1.$

 \medskip
 
 Recall that the Newton diagram of  the $\tilde\ka$-principal part $\tpad_{\tilde\ka}$ is the line segment $\ga'_{(1)}=[(A'_{(0)},B'_{(0)}), (A'_{(1)},B'_{(1)})],$ which must then contain a point with second coordinate given by $B.$ It then follows easily   that the following relations hold true:
$$
\tilde\ka_2<1,\quad 2\le m<\frac1{\tilde\ka_1}=n,\quad B\le B'_{(0)}.
$$
Actually, since $\tilde\ka_2 B'_{(0)}\le1,$ we even have 
\begin{equation}\label{2.5II}
\tilde\ka_2 B\le1,\quad m<a=\frac{\tilde\ka_2}{\tilde\ka_1}\le
\frac{n}{B}.
\end{equation}
%Following  \cite{IKM-max} we write the function $\tpad$ in the form:
%$$
%\tpad\y=\tpad(y_1,\,0)+\Theta\y.
%$$
%Thus we have
%$$
%\pa_2\tpad\y=\pa_2\Theta\y.
%$$

As in  \cite{IM-rest1}, we define  normalized 
measures $\nu_k$  corresponding to the $\mu_k$  by
$$
\laa\nu_k,f\ra:=\int f\Big(x_1,\,
2^{(m\tilde\ka_1-\tilde\ka_2)k}x_2+x_1^m\omega(2^{-k\tilde\ka_1}x_1),\,\phi_k(x)\Big)\, \eta(x)\,dx,
$$
where again $\eta$ is a smooth function with $\supp \eta\subset\{x_1\sim1,\,|x_2|<\ve  \}$ (for some  small $\ve>0$)  and $\phi_k\x:=2^k\tpad(2^{-\tilde\ka_1 k} x_1,2^{-\tilde\ka_2 k} x_2)$  is given by
\begin{eqnarray}\nonumber
\phi_k\x&:=&x_2^B(Q(x)+O(2^{-\ve' k}))+x_1^n\alpha(2^{-\tilde\ka_1 k}x_1)
 +\sum_{j=1}^{B-1} x_2^j \,2^{(1-j\tilde\ka_2)k} \,b_j(2^{-\tilde\ka_1 k} x_1)
\label{2.6II}
\end{eqnarray}
for some $\ve'>0.$ Observe that 
$$
2^{(1-j\tilde\ka_2)k} \,b_j(2^{-\tilde\ka_1 k} x_1)= 
x_1^{n_j}2^{-(j\tilde\ka_2+n_j\tilde\ka_1-1)k}\al_j(2^{-\tilde\ka_1 k}x_1),
$$
where $(j\tilde\ka_2+n_j\tilde\ka_1-1)>0.$

We write $\nu_k$ as $\nu_\de,$ by putting 
\begin{equation}\label{nud}
\laa\nu_\de,f\ra:=\int f\Big(x_1,\,\de_0x_2+x_1^m\omega(\de_1x_1),\,\phi_\de(x)\Big)\,\eta(x)\,dx,
\end{equation}

where $\phi_\de$ is of the form
\begin{equation}\label{phid}
\phi_\de(x):=x_2^B\, b(x_1,x_2,\de)+x_1^n\alpha(\de_1x_1)+ r(x_1,x_2,\de),
\end{equation}
with
\begin{equation}\label{defr}
r(x_1,x_2,\de):=\sum_{j=1}^{B-1}\de_{j+2}\,x_2^jx_1^{n_j}\al_j(\de_1x_1),
\end{equation}
 
and $\de=(\de_0,\,\de_1,\,\de_2,\,\de_3,\dots,\de_{B+1})$ is given by 

\begin{equation}\label{defde}
\de:=(2^{-k(\tilde\ka_2-m\tilde\ka_1)},\,2^{-k\tilde\ka_1},\,2^{-k\tilde\ka_2},\,2^{-(n_1\tilde\ka_1+\tilde\ka_2-1)k},\dots,2^{-(n_{B-1}\tilde\ka_1+(B-1)\tilde\ka_2-1)k)}).
\end{equation}

Recall   that  $\alpha(0)\neq 0,$ and that either $\al_j(0)\ne 0,$ and then $n_j$  is fixed (the type of the finite type function $b_j$), or $\al_j(0)=0,$ and then we may assume that $n_j$ is as large as we please.

Observe that $\de\to 0$ as $k\to \infty,$ that every $\de_j$ is a   power of $\de_0,$ 
$$
\de_j=\de_0^{q_j}, \quad j=1,\dots, B+1,
$$
with positive exponents $q_j>0$ which are  fixed rational numbers, except for those $j\ge 2$ for which $\al_{j-2}(0)=0,$ for which we may choose the exponents $q_j$ as large as we please.

Moreover, $b(x_1,x_2,\de)$ is a smooth function of all three arguments, and 
\begin{equation}\label{bQ}
b(x_1,x_2,0)=Q(x_1,x_2).
\end{equation}
For $\de$ sufficiently small, this implies in particular that $b(x_1,0,\de)\ne 0$ when $x_1\sim 1$ and $|x_2|<\ve.$

Assume we can prove that 
\begin{equation}\label{2.7II}
 \left(\int|\hat f|^2d\nu_\de \right)^\frac12
\le C\|f\|_{L^{p_c}},
\end{equation}
with $C$ independent of $\de.$ Then straight-forward re-scaling by means of the  $\tilde\ka-$ dilations  leads to the estimate
\begin{equation}\label{rescale}
\left(\int|\hat f|^2d\mu_k \right)^\frac12 \le C
2^{-k\frac{|\tilde\ka|}2\left(1-\frac{2(h_{l_\pr}+1)}{p_c'}\right)}
\|f\|_{L^{p_c}},
\end{equation}
where $p_c'\ge 2(h_{l_{pr}}+1)$ (cf. 11.5) in \cite{IM-rest1}).   So, our goal is to verify \eqref{2.7II}.

Observe also that the $\tilde\ka$-principal parts of $\tpad$ and $\phi_\de$ do agree.
\bigskip

Recall that $B\ge 2,$ and $d\ge 2.$ We shall often use the   interpolation parameter $\th_c:= 2/p'_c= 1/(h^r+1).$ Since, by definition, $h^r\ge d,$ the second assumption  implies
\begin{equation}\label{ontc}
\th_c\le \frac 13.
\end{equation}

\medskip

We first derive some  useful estimates from below for $p'_c=2(h^r+1).$ We put $H:=1/\tilde\ka_2,$ so that 
\begin{equation}\label{Hdef}
n=1/\tilde\ka_1,\qquad H=1/\tilde\ka_2.
\end{equation}
Note that $H$ is rational, but not necessarily entire. We next define
$$
\tilde h^r:=\frac {mH}{m+1}, \quad \tilde p'_c:=2(\tilde h^r+1), \quad \tilde \th_c:=\frac 2{\tilde p'_c}=\frac {m+1}{mH+m+1}\le \tilde\th_B:=\frac {m+1}{mB+m+1}.
$$

Let us also put $\tilde p'_B:=2/\tilde\th_B\le \tilde p'_c,$ and 

$$
p'_{H}:=\frac {12H}{3+H}, \quad \th_H:=\frac 2 {p'_H}=\frac 1{2H}+\frac 1 6,
$$
and define $p'_B, \th_B$ accordingly, with $H$ replaced by $B.$

\begin{lemma}\label{pc-est1} \begin{itemize}
\item[(a)] We have  $p'_c>\tilde p'_c,$ unless $h^r=\tilde h^r=d$  and  $h^r+1\ge  H.$  In the latter case,  $p'_c= \tilde p'_c=2(d+1).$  %Moreover, if $\tilde h^r=d$  and  $h^r+1< H,$ then 
%$$
%p'_c=2\frac {1+(m+1)\tilde\ka_1}{|\tilde\ka|}.
%$$

\item[(b)]  If $m\ge 3$ and $H\ge 2,$ or $m=2$ and $H\ge 3,$ then 
$$
 \tilde p'_c\ge p'_H\ge p'_B,
$$
where the inequality $\tilde p'_c\ge p'_H$ is even strict unless $m=2$ and $H=3.$ 
 \end{itemize}
\end{lemma}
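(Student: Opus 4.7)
First I would re-express everything in terms of $m$, $n$, $H$. Since the principal face $\pi(\pad)=\ga'_{(1)}$ lies on $L^a$ and the bi-sectrix intersects this edge at $(d,d)$, we have $\tilde\ka_1 d+\tilde\ka_2 d=1$, hence
$$
d=\frac{1}{|\tilde\ka|}=\frac{nH}{n+H},\qquad a=\frac{\tilde\ka_2}{\tilde\ka_1}=\frac{n}{H}.
$$
The definition of the restriction height gives $h^r\ge d$, and \eqref{2.5II} gives $a\ge m$, equivalently $n\ge mH$.

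For part (a), the key computation is
$$
d-\tilde h^r=\frac{nH}{n+H}-\frac{mH}{m+1}=\frac{H(n-mH)}{(n+H)(m+1)}\ge 0,
$$
with equality iff $n=mH$, i.e., iff $a=m$. Combined with $h^r\ge d$, this chain yields $h^r\ge d\ge\tilde h^r$, hence $p'_c\ge\tilde p'_c$, and equality throughout forces $h^r=d=\tilde h^r$, in which case $p'_c=\tilde p'_c=2(d+1)$ is immediate. It remains to identify the condition $h^r+1\ge H$ in the equality case: substituting $n=mH$ gives $d=mH/(m+1)$, so $h^r+1\ge H$ is equivalent to $H\le m+1$. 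I would deduce this either from a general lower bound of the form $h^r+1\ge H$ for the restriction height, already available from Part I, or from the position of the edge $\ga'_{(1)}$ relative to the principal line $L^a$.

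For part (b) I would treat the two inequalities separately. Since the function $X\mapsto 12X/(3+X)$ is strictly increasing on $(0,\infty)$ and \eqref{2.5II} yields $B\le 1/\tilde\ka_2=H$, the bound $p'_H\ge p'_B$ is immediate. For $\tilde p'_c\ge p'_H$, clearing positive denominators in
$$
\frac{2(mH+m+1)}{m+1}\ \ge\ \frac{12H}{3+H}
$$
reduces matters to the polynomial inequality
$$
P(m,H):=mH^2-2mH+3m-5H+3\ge 0.
$$
When $m=2$ one factors $P(2,H)=2H^2-9H+9=(2H-3)(H-3)$, which vanishes exactly at $H=3$ and is strictly positive for $H>3$, giving the single exceptional pair $(m,H)=(2,3)$. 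When $m\ge 3$, the coefficient $H^2-2H+3=(H-1)^2+2$ of $m$ in $P$ is strictly positive, so $P(\,\cdot\,,H)$ is strictly increasing in $m$; hence it suffices to check $P(3,H)=3H^2-11H+12$, whose discriminant $121-144<0$ is negative, so $P(3,H)>0$ for every $H\ge 2$.

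The only genuinely nontrivial point is the clause $h^r+1\ge H$ in the exceptional case of part (a): this is a statement about the restriction height $h^r$ rather than about Newton data, so it cannot be extracted from the short algebraic manipulation above and must be imported from the corresponding lower bound for $h^r$ established in Part I.
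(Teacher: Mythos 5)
Part (b) is correct and is essentially the paper's argument (clear denominators, factor $P(2,H)=(2H-3)(H-3)$, use monotonicity in $m$).

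Part (a), however, has a genuine gap: you have misidentified the quantity $d$. The paper's $d$ is the linear height $\hl(\phi)$, namely $1/|\ka|$, where $\ka$ is the weight of the line $L$ carrying the principal face $\pi(\phi)$ of $\N(\phi)$ (in the original, linearly adapted, coordinates); the bi-sectrix meets $\pi(\phi)$, not $\pi(\pad)$, at $(d,d)$. The quantity you compute, $1/|\tilde\ka|=nH/(n+H)$, is the Newton distance of $\pad$ in adapted coordinates, i.e.\ the \emph{height} $h$, and by Assumption \ref{assumption} one has $d<h$ strictly. This matters in two ways. First, the known inequality is $h^r\ge d$ with $d=\hl(\phi)$; one does \emph{not} have $h^r\ge h$ — in fact the geometry of the augmented polyhedron $\N^r(\tpad)$, whose boundary above $(h,h)$ has slope magnitudes between $1/a$ and $1/m$, forces $h^r+1\le h+1$, and in the equality case of the lemma one has exactly $h^r=d<h$. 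Hence your chain $h^r\ge(nH/(n+H))\ge\tilde h^r$ uses a false first inequality. Second, your equality criterion "$n=mH$, i.e.\ $a=m$" never occurs (by \eqref{2.5II} one has $m<a$ strictly), so your argument cannot reproduce the exceptional case $p'_c=\tilde p'_c$ that the lemma asserts. The paper's proof instead compares the line $L$ (slope $-1/m$, through $(d,d)$) with the parallel line $\tilde L$ through $(0,H)$, using that $L$ is a supporting line to $\N(\tpad)$ (a fact coming from Varchenko's algorithm); this gives $d\ge\tilde h^r$ with equality precisely when $L=\tilde L$, and that geometric input is what your purely algebraic reduction omits. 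The remark at the end of your proposal — that only the clause $h^r+1\ge H$ needs to be imported — therefore misses the main difficulty.
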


%%%%%%%%%%%%%%%%%%%%%%%%%%%%%%%%%%%%%%%%%%%%%%%%%%%%%%
%%%%%%%%%%%%%%%%%%%%%%%%%%%%%%%%%%%%%%%%%%%%%%%%%%%%%%
\vspace{2cm}
\begin{figure}[!h]
\centering
\bigskip
%\vspace{-1.3cm}
\includegraphics[width=0.7\textwidth]{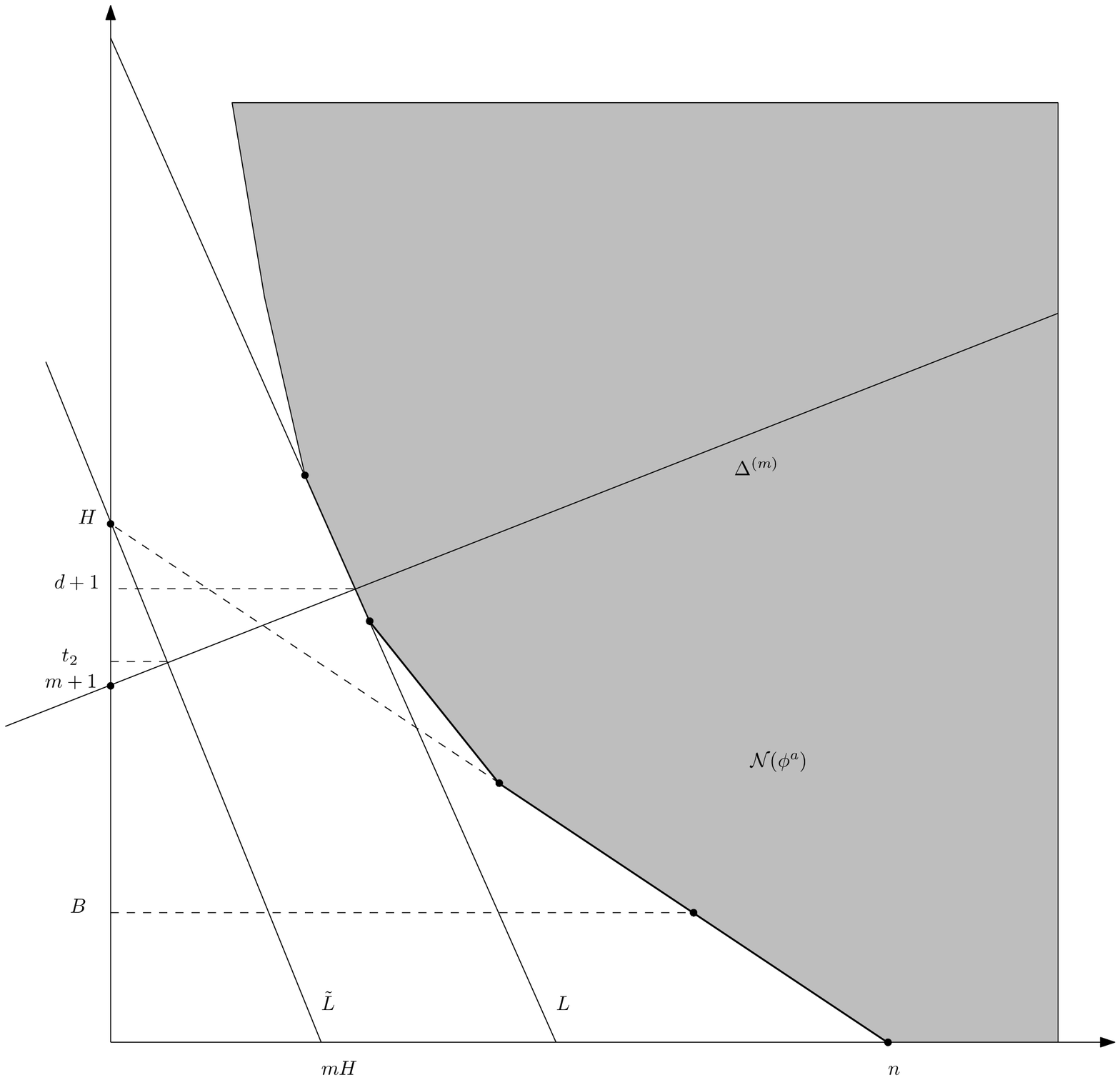}\label{fig1}
%\vspace{-1.3cm}
\caption{}
\end{figure}
%%%%%%%%%%%%%%%%%%%%%%%%%%%%%%%%%%%%%%%%%%%%%%%%%%%%%%
%                      FIGURE 1
%%%%%%%%%%%%%%%%%%%%%%%%%%%%%%%%%%%%%%%%%%%%%%%%%%%%%%

\begin{proof} 
(a)  The Newton polyhedron $\N(\tpad)$ of $\tpad$ is contained in the closed half-space bounded from below by the 
principal line of $\tpad,$ which passes through the points $(0,H)$  and $(n,0).$ Moreover, it  known that the principal line $L$ of $\phi$ is a supporting line  to $\N(\tpad)$ (this follows from Varchenko's algorithm), and it has slope $1/m.$ It is therefore parallel to the line $\tilde L$ passing through the points $(0,H)$ and $(mH,0)$ and lies ``above'' $\tilde L$  (see Figure 1). Thus the second coordinate $d+1$ of the point of intersection of $L$ with $\Delta^{(m)}$ is greater or equal to the second coordinate $t_2$ of the point of intersection $(t_1,t_2)$ of $\tilde L$ with 
$\Delta^{(m)},$ so that $h^r\ge d\ge t_2-1.$ 

But, the point  $(t_1,t_2)$ is determined by the equations $t_2=m+1+ t_1$ and $t_2=H-t_1/m,$ so that $t_2=(mH+m+1)/(m+1).$ This shows that $h^r\ge \tilde h^r, $ hence $p'_c\ge \tilde p'_c.$ 

Notice also that $d+1>t_2,$ hence $p'_c> \tilde p'_c,$ unless $L=\tilde L.$ 

So, assume that  $L=\tilde L.$  
%Let $L^a$ denote the principal line of $\tpad,$ which passes through the points $(0,H)$ and $(n,0).$ Then $L\cap L^a=\{(0,H)\}.$ Notice also that if $Q$ is the left vertex of the principal face  $\pi(\phi)$ of $\N(\phi)$ (which  is a compact edge),  then  Varchenko's algorithm shows that also $Q\in\N(\tpad),$ and thus $Q$ must lie on or above the line $L^a.$ The only point on $\pi(\phi)$ which can have this property is $Q=(0,H)$ (see Figure 1), so that $(0,H)\in \N(\phi).$  
%Observe that according to Varchenko's algorithm, $L\cap \N(\tpad)\ne\emptyset.$ Let $Q\in L\cap \N(\tpad).$ Since   the line  containing $\pi(\tpad),$ which passes through the points $(0,H)$ and $(n,0),$ is a supporting line to $\N(\tpad),$  we see that $Q$ must lie on or above that line, and thus necessarily $Q=(0,H),$ so that $(0,H)\in L.$ 
Then  $d=1/(1/H+1/mH)=\tilde h^r, $ and  the principal face $\pi(\tpad)$ of $\tpad$ must be the edge $[(0,H),(n,0)]$ (see Figure 1). Thus,  if  $h^r+1\ge H,$ then  clearly $h^r=d=\tilde h^r, $  and $p'_c= \tilde p'_c.$ And, if  $h^r+1< H,$ then we see that $h^r+1$ is the second coordinate of the point of intersection of $\Delta^{(m)}$ with $\pi(\tpad),$  and thus 
$$
\tilde h^r+1<h^r+1=h_{l_\pr}+1=\frac {1+(m+1)\tilde\ka_1}{|\tilde\ka|}
$$
(cf.  \eqref{hlpr}).

\medskip

(b) The inequality $\tilde p'_c\ge p'_H$ is equivalent to 
$$
mH^2-(2m+5)H +3m+3\ge 0,
$$
so that the remaining statements are elementary to check.
\end{proof}
The following corollary is a straight-forward  consequence of  the definition of $\th_H$ and Lemma \ref{pc-est1}.
\begin{cor}\label{compth}
 \begin{itemize}
\item[(a)] If $m\ge 3$ and $H\ge 2,$ or $m=2$ and $H\ge 3,$ then   $\th_c<\th_B,$ unless $m=2$ and $H=B=3$ (where $\th_c=\th_B=1/3$).
\item[(b)] If $h^r+1\le B,$ then $\th_c<\tilde\th_c,$ unless $B=H=h^r+1=d+1,$ where $\th_c=\tilde\th_c.$ 
\item[(c)] If $H\ge 3,$ then $\th_c< 1/3,$ unless $H=3$ and $m=2.$
 \end{itemize}
\end{cor}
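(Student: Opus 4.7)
My plan is to deduce all three parts of the corollary from Lemma \ref{pc-est1} by inverting the various $p'$'s to $\theta$'s, together with the elementary observation $B \le H$ (immediate from $\tilde\ka_2 B \le 1$ in \eqref{2.5II}) and the strict monotonicity of $\theta_B = 1/(2B) + 1/6$ in $B$.

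For part (a), I would string together the chain
\[
\theta_c \le \tilde\theta_c \le \theta_H \le \theta_B,
\]
where the first inequality is Lemma \ref{pc-est1}(a), the second is Lemma \ref{pc-est1}(b), and the third follows from $B \le H$. The strict inequality $\theta_c < \theta_B$ therefore fails only when all three comparisons are equalities. By Lemma \ref{pc-est1}(b), the middle equality forces $m = 2$ and $H = 3$; the right equality then forces $B = 3$; and by Lemma \ref{pc-est1}(a), the left equality forces $h^r = \tilde h^r = d$ together with $h^r + 1 \ge H$, i.e.\ (given $H = 3$) $h^r = d = 2$. Plugging back in gives $\theta_c = \theta_B = 1/3$, the stated exceptional configuration.

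For part (b), Lemma \ref{pc-est1}(a) gives $\theta_c \le \tilde\theta_c$ unconditionally, with equality iff $h^r = \tilde h^r = d$ and $h^r + 1 \ge H$. Combining this last condition with the standing hypothesis $h^r + 1 \le B$ and with $B \le H$ sandwiches these three quantities into the equalities $B = H = h^r + 1 = d + 1$. Conversely, in this configuration $H = 1/\tilde\ka_2 = d+1$ combined with \eqref{hlpr} forces $\tilde\ka_2 = 1/(m+1)$, whence $\tilde h^r = mH/(m+1) = m = d = h^r$, so $\theta_c = \tilde\theta_c$. Outside the exceptional configuration, strict inequality therefore holds.

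For part (c), I would rewrite $\theta_c < 1/3$ as $h^r > 2$ and use $h^r \ge \tilde h^r = mH/(m+1)$. A one-line computation shows $\tilde h^r > 2 \Leftrightarrow m(H - 2) > 2$, and under $H \ge 3$ the only failure is $(m, H) = (2, 3)$, where $\tilde h^r = 2$ exactly. Thus generically $h^r \ge \tilde h^r > 2$ gives the strict inequality, while in the excluded case $\theta_c = 1/3$ can be attained (when $h^r = 2$). The only real obstacle is the bookkeeping in parts (a)--(b), where the strict-versus-weak distinction has to be traced through several comparisons simultaneously; beyond that, everything reduces to elementary rational arithmetic on top of Lemma \ref{pc-est1}.
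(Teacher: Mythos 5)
Your proof is correct and uses precisely the route the paper intends (the paper itself only says the corollary is a ``straight-forward consequence'' of the definition of $\th_H$ and Lemma~\ref{pc-est1}, without details). Reorganizing Lemma~\ref{pc-est1} into the chain $\th_c \le \tilde\th_c \le \th_H \le \th_B$ and tracking where each inequality is strict is exactly the natural way to read it; the extra input is only $B \le H$, which you correctly pull from \eqref{2.5II}, and the monotonicity of $\th_B = 1/(2B) + 1/6$. Part~(c) is a direct computation with $\tilde h^r = mH/(m+1)$ and matches the paper's stated exception.

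One small caveat on part~(b): the converse implication — that the configuration $B = H = h^r+1 = d+1$ really does give $\th_c = \tilde\th_c$ — is more than the corollary is actually used for (the paper only needs the forward direction, plus the trivial observation $\th_c = 1/(h^r+1) = 1/B$ in the exceptional case). Moreover, your argument for it invokes \eqref{hlpr}, which is a formula for $h_{l_\pr}+1$, not for $h^r+1$; to substitute $h^r + 1$ for $h_{l_\pr} + 1$ you are implicitly using the identity $h^r + 1 = h_{l_\pr} + 1$, which is valid under $h^r + 1 \le B$ but is only recorded in the paper as \eqref{hrform} (with a geometric justification via the augmented Newton polyhedron) much later, in Section~\ref{thgB}. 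Flagging that dependency would close the gap; as written, the step ``$H = 1/\tilde\ka_2 = d+1$ combined with \eqref{hlpr}'' is a slight non-sequitur unless one already knows $h^r = h_{l_\pr}$ here.
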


%Notice that 
%$$
% p'_H> 6,\quad \mbox{if} \quad H>3, \quad\mbox{and} \quad p'_3=6.
%$$
%In particular, we see that $p'_c\ge 6,$ if $B\ge3, m\ge 2,$ with strict inequality  unless $B=3$ and $m=2.$

\medskip

%We introduce $\tilde h_B$ given by
%$$
%\tilde h_B=\begin{cases} 2+\frac3{n+3}\quad \mbox{if}\quad B=3,\, m\ge3, \\B-1\quad \mbox{if}\quad 3<B\le m+1, \\
%\frac{mB}{m+1}\quad \mbox{if}\quad B>m+1.
%\end{cases}
%$$

%\color{red} Following Your notes we introduce  $p_B':=2(\tildeh_B+1)$.
% \color{black}

%We set $p_B':=2(\tilde h_B+1)$. Then  we have
%$$
%p_c'\ge \frac{n+m+1}{n+B}B \ge p_B'\ge \frac{6B}{3+B},
%$$
%whenever $B\ge3$.

Recall next that the complete phase corresponding to $\phi_\de$  has the form
$$
\Phi(x,\de,\xi):=\xi_1x_1+\xi_2(\de_0x_2+x_1^m\omega(\de_1x_1))+\xi_3\phi_\de\x,
$$
where
$$
\phi_\de(x)=x_2^B\, b(x_1,x_2,\de)+x_1^n\alpha(\de_1x_1)+r(x_1,x_2,\de)$$
so that 
\begin{eqnarray*}
\Phi(x,\de,\xi)&=&\xi_1x_1+\xi_2 x_1^m\omega(\de_1x_1)+\xi_3 x_1^n\alpha(\de_1x_1)\\
&+& \xi_2\de_0x_2+\xi_3\Big( x_2^B\, b(x_1,x_2,\de)+r(x_1,x_2,\de)\Big).
\end{eqnarray*}

\setcounter{equation}{0}
\section{Spectral localization to frequency boxes where $|\xi_i|\sim \la_i:$ \\The case where not all $\la_i$'s are comparable}\label{specloc}

Denote by $T_\de$ the operator of convolution with $\widehat{\nu_\de},$  where we recall that 
$$
\widehat{\nu_\de}(\xi)=\int e^{-i\Phi(x,\de,\xi)}\, \eta(x) \, dx.
$$
In a next step, as in Section 5 of \cite{IM-rest1} (where $2^{-j}$ plays the same role is $\de_0$ here),  we decompose 
$$\nu_\de=\nu_k=\sum_{\la}\nu_\de^\la,
$$  
 where the sum is taken over all triples $\la=(\la_1,\la_2,\la_3)$ of dyadic  numbers $\la_i\ge 1,$  and where $\nu_k^\la$ is  localized to frequencies $\xi$ such that $|\xi_i|\sim \la_i,$  if $\la_i> 1,$ and $|\xi_i|\lesssim 1,$  if $\la_i=1.$ The cases where $\la_i=1$ for at least one $\la_i$ can be dealt with in the same way as the corresponding cases where $\la_i=2,$  and therefore we shall always assume in the sequel that
$$\la_i>1. \qquad i=1,2.3.
$$

The spectrally localized measure $\nu_\de^\la(x)$ is then given by 
$$
\widehat{\nu^\la_\de}(\xi):=\chi_1\Big(\frac{\xi_1}{\la_1}\Big)\chi_1\Big(\frac{\xi_2}{\la_2}\Big)\chi_1\Big(\frac{\xi_3}{\la_3}\Big)\; \widehat{\nu_\de}(\xi),
$$
i.e., 
\begin{eqnarray}\nonumber
\nu^{\la}_\de(x)=\la_1\la_2\la_3&\int \check\chi_1\Big({\la_1}(x_1-y_1)\Big) \, \check\chi_1\Big({\la_2}(x_2-\de_0 y_2-y_1^m\omega(\de_1y_1)\Big)\\
&\check\chi_1\Big({\la_3}(x_3-\phi_\de(y)\Big) \,\eta(y)\,dy, \label{2.8II}
\end{eqnarray}
where $\check\chi$ is the inverse Fourier transform. Recall also that 
\begin{equation}\label{2.9II}
\supp \eta\subset\{y_1\sim1,\,|y_2|<\ve  \}, \quad (\ve \ll 1).
\end{equation}

Arguing as in \cite{IM-rest1},  by making use of the localizations given by the first and the third factor of the integrand, the integration in $y_1$ and $y_2$ (here we can apply  van der Corput type estimates) yield 
$$
\|\nu_\de^\la\|_\infty \lesssim \la_1\la_2\la_3
\la_1^{-1}\la_3^{-\frac1{B}}.
$$
Similarly, the localizations given by the first and second factor imply 
$$
\|\nu_\de^\la\|_\infty \lesssim \la_1\la_2\la_3
\la_1^{-1}(\la_2\de_0)^{-1},
$$
 and consequently
\begin{equation}\label{2.10II}
 \|\nu_\de^\la\|_\infty \lesssim \min\{\la_2\la_3^\frac{B-1}{B},\,
 \la_3\de_0^{-1}\}.
\end{equation}

%\color{red}
%Next, we ess. follow the arguments from [restriction-II-old, type E, first case in (9.5)], pp.65 ...
%\color{black}

 We have to  distinguish various cases. Notice first that it is  easy to see that the phase function $\Phi$ has no critical point with respect to $x_1$  if one of the components $\la_i$ of $\la$ is much bigger than the two others, so that integrations by parts yield that 
$$
\|\widehat{\nu_\de^\la}\|_\infty\lesssim |\la|^{-N}\quad \mbox{for every }\ N\in \NN,
$$
which easily implies estimates for  the operator  $ T_\de^\la:\vp\mapsto\vp *\widehat{\nu^\la_\de}$ which are better than needed.
We may therefore concentrate on the following, remaining cases.
\medskip

Recall  the  interpolation parameter $\th_c=2/ {p'_c}\le \frac 13.$ 
%Notice that here $\th_c< 1/3,$ unless $B=3$ and $m=2$ (then  we only know that $\th_c\le 1/3$). ????? According to Lemma \ref{pc-est1} 
%\begin{equation}\label{pcpb}
%\th_c\le \th_B, \quad \mbox{with strict inequality unless}\quad B=3\  \mbox{and}\  m=2.
%\end{equation}

\medskip
{\bf 1. Case: $\la_1\sim \la_3,\, \la_2\ll\la_1$.} 
Applying first the method of  stationary phase  in  
 $x_1,$ and then van der Corput's  lemma in $x_2,$ we find that 
 \begin{equation}\label{2.11II}
 \|\widehat{\nu_\de^\la}\|_\infty \lesssim \la_1^{-\frac12-\frac1{B}}.
\end{equation}
%Also the last erstimate follows from more general J.J. Duistermaat
%results \cite{duistermaat}.
By interpolation, using this estimate and the first one in  \eqref{2.10II}, we obtain
$$
\|T_\de^\la\|_{p\to p'}\lesssim
\la_1^{-\frac1{B}-\frac12+\frac32\th}\la_2^\th,
$$
where $\th=2/p'.$ 
Summation over all dyadic $\la_2$ with $\la_2\ll\la_1$ yields
$$
\sum_{\la_2\ll\la_1}\|T_\de^\la\|_{p\to p'}\lesssim
\la_1^{-\frac1{B}-\frac12+\frac52\th}.
$$
Notice  that for $\th:=\th_B$ we have 
$$
-\frac1{B}-\frac12+\frac52\th= \frac 14 (\frac 1B-\frac 13)\le 0, \qquad \mbox{if $B\ge 3,$}
$$
and that strict inequality holds when $\th<\th_B.$ But,  Corollary \ref{compth} (a)  shows that if $H\ge 3,$ then indeed $\th_c<\th_B,$ unless $m=2$ and $H=B=3.$ 
Consequently,  for $\th:=\th_c$  we can sum over all dyadic $\la_1$ (unless  $H=B=3$ and $m=2$) and obtain
\begin{equation}\label{TI}
\|T_\de^I\|_{p_c\to p'_c}\lesssim \sum_{\la_1\sim \la_3,\, \la_2\ll\la_1}\|T_\de^\la\|_{p_c\to p'_c}\lesssim 1,
\end{equation}
where  
$
T_\de^{I}:=\sum_{\la_1\sim \la_3,\, \la_2\ll\la_1}T_\de^{\la}
$
denotes the contribution by the operators $T_\de^\la$ which arise in this case. The constant in this estimate does not depend on $\de.$ 
\medskip

If  $H=B=3$ and $m=2,$ then  we only get a uniform estimate 
$$\sum_{\la_2\ll\la_1}\|T_\de^\la\|_{p_c\to p'_c}\lesssim 1.
$$ 

Finally, assume that $B=2.$  Since we assume that $\th_c\le 1/3,$ we than again find that 
$$
-\frac1{B}-\frac12+\frac52\th_c\le  -1+\frac52\,\frac 13<0,$$
so that \eqref{TI} remains valid.

\medskip
Let us return to the case where  $H=B=3$ and $m=2,$  hence $\th_c=1/3,$ which will require more refined methods. 

 In a first step, we shall take the sum of the $\nu_\de^\la$ over all dyadic $\la_2\ll \la_1.$  Moreover, since $\la_1\sim \la_3,$ we may reduce to the case where $\la_3=2^M\la_1,$ where $M\in\NN$ is fixed and not too large. For the sake of simplicity of notation, we then assume that $M=0.$ All this then amounts to considering the functions $\si_\de^{\la_1}$ given by 
$$
\widehat{\si_\de^{\la_1}}(\xi)=\chi_1\Big(\frac {\xi_1}{\la_1}\Big)\chi_0\Big(\frac {\xi_2}{\la_1}\Big)\chi_1\Big(\frac {\xi_3}{\la_1}\Big)\, \widehat{\nu_\de}(\xi),
$$
where now $\chi_0$ is smooth, compactly supported in an interval $[-\ve,\ve],$ where $\ve >0$ is sufficiently small, and $\chi_0\equiv 1$ in the interval $[-\ve/2,\ve/2].$  In particular, $\si_\de^{\la_1}(x)$ is given again by the expression \eqref{2.8II}, only with the second  factor $\check\chi_1\Big({\la_2}(x_2-\de_0 y_2-y_1^m\omega(\de_1y_1)\Big)$ in the integrand replaced by $\check\chi_0\Big({\la_1}(x_2-\de_0 y_2-y_1^m\omega(\de_1y_1)\Big)$ and $\la_2$ replaced by $\la_1.$  Thus we obtain the same type of estimates as in  \eqref{2.10II}, i.e., 
\begin{equation}\label{case1est}
\|\widehat{\si_\de^{\la_1}}\|_\infty \lesssim \la_1^{-\frac56}, \qquad  \|\si_\de^{\la_1}\|_\infty \lesssim\la_1^{\frac 23} \min\{\la_1, \de_0^{-1} \la_1^{\frac 13}\}=\la_1\min\{\la_1^{\frac 23}, \de_0^{-1} \}.
\end{equation}
By $T_\de^{\la_1}$  we shall denote the operator of convolution with $\widehat{\si_\de^{\la_1}}.$

\medskip 
In view of \eqref{case1est}, we shall  distinguish between two subcases:

\medskip

{\bf 1.1. The subcase where $\la_1\le \de_0^{-3/2}.$} In this case, by \eqref{case1est}   we have 
\begin{equation}\label{case1.1}
 \|\widehat{\si_\de^{\la_1}}\|_\infty \lesssim \la_1^{-\frac56}, \qquad \|\si_\de^{\la_1}\|_\infty \lesssim  \la_1^{\frac 53},
\end{equation}
so that  
$$
\|T_\de^{\la_1}\|_{p_c\to p'_c}\lesssim
\la_1^{-\frac13}\la_1^{\frac 13}=1,
$$ 
and summing these estimates does not lead to  the desired uniform estimate.  Let us denote by  
$$
T^{I_1}_\de:=\sum_{ \la_1\le \de_0^{-3/2}}T_\de^{\la_1}
$$
the contribution by the operators $T_\de^\la$ which arise in this subcase. In order to  prove the desired estimate 
\begin{equation}\label{TI_1}
\|T_\de^{I_1}\|_{p_c\to p'_c}\lesssim  1,
\end{equation}
we shall therefore have to apply an interpolation argument (see Subsection \ref{realint1}).

\medskip

{\bf 1.2. The subcase where $\la_1> \de_0^{-3/2}.$}  In this case we have 
$
 \|\si_\de^{\la_1}\|_\infty \lesssim \de_0^{-1} \la_1,
$ 
and interpolation yields
$$
\|T_\de^{\la_1}\|_{p_c\to p'_c}\lesssim
\de_0^{-\frac 13} \la_1^{-\frac29}.
$$
If  we denote by 
$
T_\de^{I_2}:=\sum_{ \la_1> \de_0^{-3/2}}T_\de^{\la_1}
$
the contribution by the operators $T_\de^\la$ which arise in this subcase, we thus obtain
\begin{equation}\label{TI_2}
\|T_\de^{I_2}\|_{p_c\to p'_c}\lesssim \sum_{ \la_1> \de_0^{-3/2}} \de_0^{-\frac 13} \la_1^{-\frac29}\lesssim 1.
\end{equation}

\medskip
{\bf 2. Case: $\la_2\sim\la_3$ and $\la_1\ll\la_2$.} Here, we can estimate $\widehat{\nu^{\la}_{\de}}$ in the same way as in the previous
case and obtain $\|\widehat{\nu^{\la}_{\de}}\|_\infty\lesssim \la_2^{-1/2} \la_3^{-1/B}\sim \la_2^{-1/2-1/B}.$ Moreover, by \eqref{2.10II}, 
we have $\|\nu^{\la}_{\de}\|_\infty\lesssim \la_2\min\{\la_2^{(B-1)/B}, \de_0^{-1}\}.$ Both these estimates are independent of $\la_1.$  Assuming  here  without loss of generality that $\la_2=\la_3,$ we therefore  consider the sum over all $\nu^\la_\de$ such that $\la_1\ll 1,$  by putting
 $
\si_\de^{\la_2}:=\sum_{\la_1\ll \la_2}\nu^{(\la_1,\la_2,\la_2)}_\de.
$
This means that 
$$
\widehat{\si_\de^{\la_2}}(\xi)=\chi_0\Big(\frac {\xi_1}{\la_2}\Big)\chi_1\Big(\frac {\xi_2}{\la_2}\Big)\chi_1\Big(\frac {\xi_3}{\la_2}\Big)\; \widehat{\nu_\de}(\xi),
$$
where now $\chi_0$ is smooth and compactly supported in an interval $[-\ve,\ve],$ where $\ve>0$ is sufficiently small. In particular, $\si_\de^{\la_2}(x)$ is given again by the expression \eqref{2.8II}, only with the first factor $\check\chi_1\Big({\la_1}(x_1-y_1)\Big)$ in the integrand replaced by $\check\chi_0\Big({\la_2}(x_1-y_1)\Big)$ and $\la_1$ replaced by $\la_2.$  Thus we obtain the same type of estimates 
\begin{equation}\label{est2II}
\|\widehat{\si_\de^{\la_2}}\|_\infty\lesssim \la_2^{-\frac 12-\frac 1B}, \quad \|\si_\de^{\la_2}\|_\infty\lesssim \la_2\min\{\la_2^{\frac {B-1} B}, \de_0^{-1}\}.
\end{equation}
Denote by $T_\de^{\la_2}$ the operator of convolution with $\widehat{\si_\de^{\la_2}}.$

\medskip
Interpolating between the  first estimate in \eqref{est2II} and the the estimate  $\|\si_\de^{\la_2}\|_\infty\lesssim \la_2 \la_2^{(B-1)/B},$  we get
$$
\|T_\de^{\la_2}\|_{p\to p'}\lesssim
\la_2^{-\frac1{B}-\frac12+\frac 52\th}.
$$

Arguing as in the previous case, we see that this still suffices to sum over all dyadic $\la_2$ for $\th=\th_c=2/p'_c$, to obtain  the desired estimate
\begin{equation}\label{TII}
\|T_\de^{II}\|_{p_c\to p'_c}\lesssim \sum_{\la_2}\|T_\de^{\la_2}\|_{p_c\to p'_c}\lesssim 1,
\end{equation}
unless $H=B=3$ and $m=2.$ Here  $T_\de^{II}$ denotes the contribution by the operators $T_\de^\la$ which arise in this case.

So, assume  that $H=B=3$ and $m=2,$  so that $\th_c=1/3.$  Then we distinguish two subcases:
\smallskip

{\bf 2.1. The subcase where $\la_2\le \de_0^{-B/(B-1)}=\de_0^{-3/2}.$ }  In this case, \eqref{est2II} reads
\begin{equation}\label{case2.1}
\|\widehat{\si_\de^{\la_2}}\|_\infty\lesssim \la_2^{-\frac 56}, \quad \|\si_\de^{\la_2}\|_\infty
\lesssim \la_2^{\frac 53}, 
\end{equation}  which implies  our previous estimate
$$
\|T_\de^{\la_2}\|_{p_c\to p'_c}\lesssim 1.
$$
  Let us denote by  
$$
T_\de^{II_1}:=\sum_{ \la_2\le \de_0^{-3/2}}T_\de^{\la_2}
$$
the contribution by the operators $T_\de^\la$ which arise in this subcase. In order to  prove the desired estimate 
\begin{equation}\label{TII_1}
\|T_\de^{II_1}\|_{p_c\to p'_c}\lesssim  1,
\end{equation}
we shall thus  have to apply an interpolation argument once more (see Subsection \ref{realint1}).

 \smallskip
 {\bf 2.2. The subcase where $\la_2> \de_0^{-B/(B-1)}=\de_0^{-3/2}.$  } Then \eqref{est2II} implies  that  $\|\si_\de^{\la_2}\|_\infty\lesssim \la_2 \de_0^{-1},$  hence 
$$
\|T_\de^{\la_2}\|_{p_c\to p'_c}\lesssim \de_0^{-\th_c}\, \la_2^{(\frac 32+\frac 1B)\th_c-\frac 12-\frac 1B}=
\de_0^{-\frac 13}\la_2^{-\frac 29}.
$$
As in Subcase 1.2, this implies the desired estimate 
\begin{equation}\label{TII_2}
\| T_{\de}^{II_2}\|_{p_c\to p_c'}\lesssim 1
\end{equation}
for the  contributions  $T_{\de}^{II_2}$ of the operators $T^{\la}_\de$ with $\la$ satisfying the assumptions of this subcase  to $T_{\de}.$

\medskip
{\bf 3. Case: $\la_1\sim\la_2$ and $\la_3\ll\la_1$}. If $\la_3\ll  \la_2\de_0,$ then the phase function has no critical point in $x_2,$ and so an 
integrations by part in $x_2$ and  the stationary phase method
in $x_1$ yield
$$
\|\widehat{\nu_\de^\la}\|_\infty \lesssim \la_1^{-\frac 12}\, (\la_2\de_0)^{-N}\lesssim \la_2^{-\frac12}\, \la_3^{-N}
$$
for every $N\in\NN,$ and the second estimate in  \eqref{2.10II} implies that 
$$
\|\nu_\de^\la\|_\infty \lesssim  \la_3\de_0^{-1}.
$$
Interpolating these  estimates we  obtain
$$
\|T_\de^\la\|_{p\to p'}\lesssim \la_3^{-N'}\la_2^{-\frac12(1-\th)}\de_0^{-\th},
$$
where $N'$ can be chosen arbitrarily large if $\th<1.$  But, if $\th=\th_c,$ then $\th\le 1/3,$ and since $\la_2\de_0\ge 1$ if $\la_3\ll  \la_2\de_0,$ we see that 
$$
\sum_{\la_1\sim \la_2,\ \la_3\ll  \la_2\de_0}\|T_\de^\la\|_{p_c\to p_c'}\lesssim
\de_0^{\frac{1-3\th}2}\lesssim 1.
$$

\medskip

Let us therefore assume from now on that in addition   $\la_3\gtrsim \la_2\de_0.$   Then we can first apply the method of stationary phase to the integration in $x_1$ and subsequently van der Corput's estimate to the $x_2$-integration and obtain  the estimate
\begin{equation}\label{2.14II}
\|\widehat{\nu_\de^\la}\|_\infty \lesssim \la_2^{-\frac12}\la_3^{-\frac1B}.
\end{equation}

In view of \eqref{2.10II},  we distinguish  two subcases.
\smallskip

{\bf 3.1 The subcase where $\la_3^{1/B}> \la_2\de_0$.}
Then interpolation  of the first estimate in \eqref{2.10II} with  \eqref{2.14II} yields 
$$
\|T_\de^\la\|_{p\to p'}\lesssim \la_2^\frac{3\th-1}2\la_3^{\th-\frac 1B}.
$$
Since $\th_c\le 1/3,$ we have $3\th_c-1\le 0$ (even with strict inequality, unless $B=2,$ or $H=B=3$ and  $m=2,$  because of Corollary \ref{compth} (c)). If  $3\th_c-1<0,$  we can sum over all dyadic $\la_2\gg \la_3$ and obtain 
$$
\sum_{\{\la_2:\la_3\ll \la_2 <\de_0^{-1} \la_3^{\frac 1 B}\}}\|T_\de^\la\|_{p_c\to p_c'}\lesssim
\la_3^{-\frac1{B}-\frac12+\frac52\th}.
$$
If 
$$T_\de^{III_1}:= \sum_{\la_1\sim \la_2,\ \la_3 \ll \de_0^{-\frac B{B-1}}, \,\la_3\ll \la_2 <\de_0^{-1} \la_3^{\frac 1 B}}T_\de^\la
$$
 denotes the contribution by the operators $T_\de^\la$ which arise in this subcase,  this implies  in a similar way  as before that  
\begin{equation}\label{TIII_1}
\|T_\de^{III_1}\|_{p_c\to p_c'}\lesssim 1.
\end{equation}

If $H=B=3$ and  $m=2,$ then we only get a uniform estimate
 $$
\|T_\de^\la\|_{p_c\to p'_c}\lesssim 1,
$$
and in order to establish \eqref{TIII_1}, we shall apply an interpolation argument in Subsection \ref{complexint1}.
\medskip
 
  If $B=2$  and $\th_c=1/3$ (hence $m=2$), then we can first sum over the dyadic $\la_3$ with $\la_3>(\la_2\de_0)^2,$  provided $\la_2\de_0\gtrsim 1,$ because $\th_c-1/B=-1/6,$ and  then  we sum over the $\la_2$ for which $\la_2\de_0\gtrsim 1.$ So, in  order  to \eqref{TIII_1} in this case, we are left with the estimation of  the operator
  $$T_\de^{III_0}:= \sum_{\la_1\sim \la_2,\ \la_3 \ll \de_0^{-1}, \,\la_3\ll \la_2 \ll \de_0^{-1} }T_\de^\la.
$$
This will also be done by means  of complex interpolation in Subsection \ref{complexint1}.

 \medskip
 
 {\bf 3.2 The subcase where $\la_3^{1/B}\le  \la_2\de_0.$}  Assuming without loss of generality (in a similar way as before)  that $\la_1=\la_2,$ then   the second estimate in  \eqref{2.10II}  implies that 
$
\|\nu_\de^\la\|_\infty \lesssim \la_3\de_0^{-1},
$
and in combination with \eqref{2.14II} we obtain
\begin{equation}\label{2.16II}
\|T_\de^\la\|_{p_c\to p'_c}\lesssim \la_2^{-\frac{1-\th}2}\la_3^\frac{(B+1)\th-1}B\de_0^{-\th},
\end{equation}
where   $\th:=\th_c.$
Notice that in this subcase
$$
\la_3\le \min\{(\la_2\de_0)^B, \la_2\},\quad \mbox{and} \quad  \la_2\de_0\lesssim \la_3.
$$
In view of this , we shall   distinguish two cases:
\medskip

 {\bf (a) $\la_2\ge \de_0^{-\frac B{B-1}}.$}  Assume first that $B\ge 3.$ Then
 $$
 (B+1)\th_B-1=\frac{B^2-2B+3}{6B}>0
 $$
 for $B\ge 2,$ and since $\th=\th_c\le\th_B=1/(2B)+1/6$  by Corollary \ref{compth}, we see that we can sum the estimates in \eqref{2.16II} over all dyadic $\la_3\ll \la_2$ and get 
 $$
 \sum_{\la_3\ll \la_2} \|T_\de^\la\|_{p_c\to p'_c}\lesssim \la_2^{\frac{(3B+2)\th-B-2}{2B}}\,\de_0^{-\th},
 $$
 Using again that $\th\le\th_B,$ we find that  for $B\ge2,$ 
 $$
(3B+2)\th-B-2\le (3B+2)(\frac 16+\frac 1{2B})-B-2=\frac{-3B^2-B+6}{6B}<0,
$$
so we can sum also in $\la_2\ge \de_0^{-\frac B{B-1}}$ and find that
$
\|T_\de^{III_{2a}}\|_{p_c\to p_c'} \lesssim \de_0^{-\frac{5B\th-B-2}{2(B-1)}},
$
where $T_\de^{III_{2a}}:=\sum_{\la_1\sim\la_2\ge \de_0^{-\frac B{B-1}}, \,\la_3\ll \la_2}T_\de^\la.$
But,
\begin{equation}\label{3.10II}
5B\th-B-2\le 5B\th_B-B-2 =5B(\frac1{2B}+\frac 16)-B-2 =\frac{3-B}{6}\le 0,
\end{equation}
if $B\ge 3,$ and thus for $B\ge 3$ we get 
 \begin{equation}\label{TIII_2a}
\|T_\de^{III_{2a}}\|_{p_c\to p_c'}
\lesssim 1.
\end{equation}
The remains the case $B=2.$ Here, for $\th=\th_c,$ we have $(B+1)\th-1=3\th-1\le 0,$  and 
$$
\|T_\de^\la\|_{p_c\to p'_c}\lesssim \la_2^{-\frac{1-\th}2}\la_3^\frac{3\th-1}2\de_0^{-\th}
 $$
 Assume first that $\th_c<1/3.$ Then we can first sum in $\la_3\ge \la_2\de_0$ (notice that $\la_2\de_0> 1$) and obtain 
 $$
\sum_{\la_3\ge \la_2\de_0}\|T_\de^\la\|_{p_c\to p'_c}\lesssim \la_2^{2\th -1}\de_0^{\frac{\th-1}{2}}.
 $$
 Then we sum over $\la_2\ge \de_0^{-1}$ and get an estimate by $C\de_0^{\frac{1-3\th}{2}}\lesssim1,$ so that \eqref{TIII_2a} remains true also in this case.
 
 Assume finally that $\th_c=1/3.$ Then
 $
\|T_\de^\la\|_{p_c\to p'_c}\lesssim \la_2^{-\frac 13}\de_0^{-\frac 13}.
 $
 Summing first in $\la_3\ll\la_2,$ we get an estimate by $C(\log\la_2) \,\la_2^{-\frac 13}\de_0^{-\frac 13},$  and summation over all $\la_2\ge \de_0^{- B/(B-1)}=\de_0^{-2}$ leads to an estimate  of order $\log (1/\de_0) \de_0^{1/3}\lesssim 1.$ Thus, again \eqref{TIII_2a} holds true.

\medskip

 {\bf (b) $\la_2< \de_0^{-\frac B{B-1}}.$} Then we use $\la_3\le (\la_2\de_0)^B,$ i.e., $\la_2\ge\la_3^{1/B}\de_0^{-1},$ and summation of the estimate \eqref{2.16II} over these $\la_2$ yields
 $$
\sum_{\{\la_2: \la_2\ge\la_3^{1/B}\de_0^{-1}\}} \|T_\de^\la\|_{p_c\to p'_c}\lesssim 
\la_3^{\frac{(3+2B)\th-3}{2B}}\de_0^{\frac{1-3\th}{2}}.
 $$
 If the exponent of $\la_3$ on the right-hand side of this estimate is strictly negative, then we see that 
  \begin{equation}\label{TIII_2b}
\|T_\de^{III_{2b}}\|_{p_c\to p_c'}
\lesssim 1,
\end{equation}
where 
$$
T_\de^{III_{2b}}:=\sum_{\la_1\sim\la_2< \de_0^{-\frac B{B-1}}, \,\la_3\ll (\la_2\de_0)^B} T_\de^\la.
$$
So, assume that the exponent is non-negative,   and notice that our assumptions in this case imply that $\la_3\le \de_0^{-B/(B-1)}.$ Summation over all these dyadic $\la_3$ then leads to 
\begin{eqnarray*}
\sum_{\la_2\ge\la_3^{1/B}\de_0^{-1}, \la_3\le \de_0^{-B/(B-1)}} \|T_\de^\la\|_{p_c\to p'_c}&\lesssim& 
(\log \frac 1{\de_0})( \de_0^{-B/(B-1)})^{\frac{(3+2B)\th-3}{2B}}\de_0^{\frac{1-3\th}{2}}\\
&=&(\log \frac 1{\de_0})\de_0^{-\frac{5B\th-B-2}{2(B-1)}}.
\end{eqnarray*}
However, if we assume without loss of generality that  $\th=\th_B,$ then we have seen in \eqref{3.10II} that 
$5B\th-B-2\le 0,$ if $B\ge 3.$ Moreover, by Corollary \ref{compth} (a) we know that $\th_c<\th_B,$ unless $B=H=3$ and $m=2.$ Thus, using $\th=\th_c$ in place of $\th_B,$ when  $B\ge 3$ we have $5B\th_c-B-2\le 0,$ and we again obtain \eqref{TIII_2b}, unless $B=H=3$ and $m=2.$ Note that in the latter case $\th_c=1/3,$ so that 
$$
\sum_{\{\la_2:\la_2\ge\la_3^{1/3}\de_0^{-1}\}} \|T_\de^\la\|_{p_c\to p'_c}\lesssim 1.
$$
The proof of \eqref{TIII_2b} in this particular case, where 
$$
T_\de^{III_{2b}}=\sum_{\de_0^{-1}\la_3^{1/3}\ll \la_1\sim \la_2<\de_0^{-3/2}} T_\de^\la,
$$
will therefore again require the use of some interpolation argument, in order to control the summation in $\la_3.$ 
We remark that in this case, estimate \eqref{2.16II} reads as 
$
\|T_\de^\la\|_{p_c\to p'_c}\lesssim \la_1^{-1/3 }\la_3^{1/9}\de_0^{-1/3},
$
and 
$$
\sum_{1\le \la_3\lesssim \la_1\de_0}\sum_{ \la_1<\de_0^{-2/3}}\la_1^{-1/3 }\la_3^{1/9}\de_0^{-1/3}\lesssim 1,
$$
since $\la_1\de_0\ge 1$ in this double sum.

This shows that if $B=H=3$ and $m=2$  (hence $\th_c=1/3,$ ), what remains to be estimated is the operator
$$
T_\de^{III_2}:=\sum_{\de_0^{-1}\ll \la_1 <\de_0^{-3/2}} \sum_{ \la_1\de_0\ll \la_3\ll (\la_1\de_0)^3} T_\de^\la,
$$
This will be done in Subsection \ref{complexint2}.

\medskip

Assume  finally that $B=2.$ The case where $\th_c<1/3$ can be treated as in the previous case (a), so assume that $\th_c=1/3.$ Then
 $
\|T_\de^\la\|_{p_c\to p'_c}\lesssim (\la_2\de_0)^{-\frac 13}.
 $
 Summing first over all $\la_2$ such that $\la_2\de_0\ge \la_3^{1/2}$ leads to an estimate  of order $\la_3^{-1/6},$ which then allows to sum also in $\la_3.$ Thus, again \eqref{TIII_2b} holds true.

\setcounter{equation}{0}
\section{Interpolation arguments for the open cases  where $m=2$ and $B=3$ or $B=2$}\label{interpolation1}

Let us assume in  this section that $m=2$ and $B=3$  of $B=2.$ Our goal will be to establish the estimates \eqref{TI_1}, \eqref{TII_1},
\eqref{TIII_1} and \eqref{TIII_2b}  for the operators $T_\de^{I_1}, T_\de^{II_1}, T_\de^{III_1}$ and $T_\de^{III_0},$  as well as  $T_\de^{III_{2b}},$ also at the endpoint $p_c$ corresponding to $\th_c=1/3$ (hence $p'_c=2/\th_c=6$). These cases had been left open in the previous section.

The estimates for the operators $T_\de^{III_1},T_\de^{III_0}$ and $T_\de^{III_{2b}}$ will be established by means of complex interpolation, roughly in analogy with the proof of Proposition 5.1 (a) and (b) in \cite{IM-rest1}. 

Also the operators $T_\de^{I_1}$ and $T_\de^{II_1}$ could be handled by means of complex interpolation. However, a shorter proof is possible by means of the  real interpolation approach developed  by Bak and Seeger  in \cite{bs} (which, however, requires the validity of the expected estimates of the operators $T_\de^{I_1},T_\de^{II_1},$ as well as of  several further operators). 

\subsection{Estimation of  $T_\de^{I_1}$ and $T_\de^{II_1}$: Real interpolation}\label{realint1}
The estimation of the operators  $T_\de^{I_1}$ and $T_\de^{II_1}$ will follow the same scheme, so let us consider  $T_\de^{I_1}$  only,  which is the operator of convolution with $\widehat{\si_\de^{I_1}},$ where 
$\si_\de^{I_1}$ denotes the measure 
$$
\si_\de^{I_1}:=\sum_{ 0\le j\le \de_0^{-3/2}}\si_\de^{2^j}.
$$
In the following discussion, if $\mu$ is any bounded, complex Borel measure on $\RR^d,$ we  shall often denote 
by $T_\mu$ the convolution operator
$$
T_\mu:\vp\mapsto \vp*\hat \mu.
$$

Regretfully,  we cannot apply Theorem 1.1 in \cite{bs} directly to the measure $\mu=\si_\de^{I_1},$ because an essential assumption in \cite{bs} is that $\mu$ is a   bounded positive   measure, and $\si_\de^{I_1}$  will not be positive. However, the method of proof in \cite{bs} easily yields the following variant of Theorem 1.1, which can be applied in our situation:

\begin{proposition}\label{bsint}
Let $\mu$ be a  positive Borel  measure on $\RR^d$ of total mass $\|\mu\|_1\le 1,$ and  let 
$p_0\in [1,2[.$ Assume that $\mu$ can be decomposed into a finite sum 
$$
\mu=\mu^b +\sum_{i\in I}\mu^i
$$
of  bounded complex Borel measures $\mu^b$ and $\mu^i, i\in I, $ such that the following hold true: There is    a constant $A\ge 0$ such that:
 \begin{itemize}
\item[(a)] The operator $T_{\mu^b}$ is bounded from $L^{p_0}(\RR^d)$ to $L^{p'_0}(\RR^d),$ with 
 \begin{equation}\label{bs1}
\|T_{\mu^b}\|_{p_0\to p'_0}\le A.
\end{equation}
\item[(b)] Each of the measures $\mu^i$ decomposes as 
$$
\mu^i=\sum_{j=1}^{K_i} \mu^i_j=\sum_{j=1}^{K_i} \mu*\phi^i_j,
$$
where $K_i\in\NN\cup\{\infty\},$ and where the $\phi^i_j$ are integrable functions such that
\begin{equation}\label{bs2}
\|\phi^i_j\|_1\le 1 .
\end{equation}
  Moreover, there are constants $a_i>0,b_i>0$ % $\le (d-a_i)/2,\; i\in I,$  
   such that for all $i$ and $j,$ 
\begin{eqnarray} \label{bs3}
\|\mu^i_j\|_\infty&\le& A 2^{ja_i};\\  \label{bs4}
\|\widehat{\mu^i_j}\|_\infty&\le& A 2^{-j b_i};\\  \label{bs5}
p_0&=&2\frac{a_i+b_i}{2a_i+b_i} \ (\mbox{ i.e., if } \th a_i-(1-\th)b_i=0,\mbox{ then } \frac 1{p_0}=\frac {\th}{2} +(1-\th)).
\end{eqnarray}
 \end{itemize}
 Then there is a constant $C$ which depends only on $d$ and any compact interval in $]0,\infty[$ containing the $a_i$ and $b_i$  such that for every $i,$ 
 \begin{equation}\label{bs6}
\|T_{\mu^i} f\|_{L^{p'_0}}\le C A \|f\|_{L^{p_0}},
\end{equation}
and consequently 
 \begin{equation}\label{bs7}
\int|\hat f|^2\, d\mu\le C A\|f\|^2_{L^{p_0}(\RR^d)},
\end{equation}
\end{proposition}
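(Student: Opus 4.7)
The plan is to reduce the Stein--Tomas-type estimate \eqref{bs7} to the operator-norm bound \eqref{bs6} for each piece and then establish the latter by the real-interpolation argument of \cite{bs}. For any complex measure $\nu$ on $\RR^d$, Parseval's identity yields
$$
\int |\hat f|^2\,d\nu \;=\; \langle T_\nu f, f\rangle, \qquad T_\nu\varphi := \varphi * \widehat{\nu},
$$
so the decomposition $\mu = \mu^b + \sum_{i\in I}\mu^i$ combined with the triangle inequality, hypothesis (a), and H\"older's inequality yields
$$
\int |\hat f|^2\,d\mu \;\le\; \Bigl(A + \sum_{i\in I} \|T_{\mu^i}\|_{p_0\to p_0'}\Bigr)\, \|f\|_{p_0}^2.
$$
Since $I$ is finite, it suffices to prove the uniform bound \eqref{bs6} for each fixed $i$.

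For fixed $i$, I would write $a=a_i$, $b=b_i$, $\mu_j=\mu^i_j$, and record the two basic operator estimates on each dyadic piece:
$$
\|T_{\mu_j}\|_{L^1\to L^\infty} \;\le\; \|\widehat{\mu_j}\|_\infty \;\le\; A\, 2^{-jb}, \qquad
\|T_{\mu_j}\|_{L^2\to L^2} \;\le\; C\,\|\mu_j\|_\infty \;\le\; CA\, 2^{ja};
$$
the second follows from Plancherel, because $\mu_j$ is an $L^\infty$ density so that $T_{\mu_j}$ acts as a bounded Fourier multiplier with symbol (up to a constant) the reflection of that density. The main difficulty surfaces at this point: by \eqref{bs5}, $p_0$ is precisely the endpoint at which Riesz--Thorin interpolation between these two bounds produces a piecewise estimate $\|T_{\mu_j}\|_{p_0\to p_0'}\le CA$ with \emph{no} $j$-decay, and the naive dyadic sum diverges logarithmically. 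This is the endpoint failure that the Bak--Seeger theorem is designed to overcome, and is the key obstacle in the proof.

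The repair, following \cite{bs}, proceeds through restricted weak-type interpolation: one establishes the bound $\|T_\mu\chi_E\|_{L^{p_0',\infty}}\le CA\,|E|^{1/p_0}$ for characteristic functions by splitting the $j$-sum at a threshold determined by $|E|$, using the $L^\infty$ bound on $T_{\mu_j}\chi_E$ below the threshold and the $L^2$ bound above it, so that the geometric summation converges and any logarithmic loss is absorbed into the Lorentz-space norm. Marcinkiewicz-type real interpolation between two such restricted weak-type estimates, obtained for slightly different values of the parameters in a compact range, then produces the strong-type bound \eqref{bs6}, with a constant that depends on $a$ and $b$ only through their stated range. The only modification from \cite{bs} is that $\mu^i$ is allowed to be complex rather than positive; inspection shows that positivity is used in \cite{bs} only to interpret $\int |\hat f|^2\,d\mu$ as an $L^2$-norm, whereas all the pointwise and operator estimates in the Bak--Seeger proof are purely linear and depend only on the pointwise bounds \eqref{bs3}--\eqref{bs4} on $\mu_j$ and $\widehat{\mu_j}$ together with the uniform total-variation bound $\|\mu_j\|_M \le \|\mu\|_1\,\|\phi^i_j\|_1 \le 1$. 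All of these are available here, so the Bak--Seeger argument carries over verbatim; combining with hypothesis (a) then yields \eqref{bs7}.
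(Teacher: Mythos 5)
Your reduction of \eqref{bs7} to \eqref{bs6}, and your identification of the basic obstruction — that the interpolated bound $\|T_{\mu^i_j}\|_{p_0\to p_0'}\lesssim A$ carries no $j$-decay at the endpoint — are both correct, and the invocation of restricted weak-type (Bourgain) as the starting point of the repair is the right instinct. But the account of how one then passes to the strong-type endpoint estimate \eqref{bs6}, and the accompanying dismissal of the role of positivity, contain a genuine gap.

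The proposed final step, ``Marcinkiewicz-type real interpolation between two such restricted weak-type estimates, obtained for slightly different values of the parameters in a compact range,'' cannot work as stated: the exponents $a_i,b_i$ are fixed by the data and there is no family of estimates to interpolate along, and a restricted weak-type bound at $(p_0,p_0')$ does not self-improve to strong type at the same exponent by real interpolation. The actual mechanism, in \cite{bs} and in the paper's proof, is a bootstrap. Bourgain's trick yields restricted weak-type $(p_0,p_0')$ for $T_{\mu^i}$, hence (with hypothesis (a), since $I$ is finite) for the full operator $T_\mu$. Tomas' $R^*R$ argument then upgrades this to
\begin{equation*}
\int |\hat f|^2\,d\mu \le CA\,\|f\|^2_{L^{p_0,1}},
\end{equation*}
and this restriction estimate \emph{for the positive measure} $\mu$ is fed back in to produce the sharpened $L^2$ bound
\begin{equation*}
\|T_{\mu^i_j}f\|_2 \le A\,2^{j a_i/2}\,\|f\|_{L^{p_0,1}},
\end{equation*}
with exponent $a_i/2$ in place of the $a_i$ that Plancherel and \eqref{bs3} give you directly. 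It is this gain of a factor $2^{-ja_i/2}$ that makes the remaining Lorentz-space summation in \cite{bs} close at the endpoint. And it is exactly here that positivity enters essentially: one needs $|\mu^i_j| = |\mu*\phi^i_j| \le \mu*|\phi^i_j|$ pointwise, which requires $\mu\ge 0$, so that $\int|\hat f|^2\,|\mu^i_j|$ can be controlled by the restriction estimate for $\mu$ (the paper flags this step explicitly). Your claim that positivity is used in \cite{bs} ``only to interpret $\int|\hat f|^2\,d\mu$ as an $L^2$-norm,'' and that the rest of the argument depends only on \eqref{bs3}--\eqref{bs4} and the total-variation bound, therefore omits the key ingredient; and your plan of working with the complex pieces $\mu^i$ in isolation, without first establishing the restriction estimate for the global positive $\mu$ and exploiting the convolution structure $\mu^i_j=\mu*\phi^i_j$, does not reach \eqref{bs6}.
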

  
  \begin{proof}

By essentially following the proof of Proposition 2.1 in \cite{bs}, we define the interpolation parameter $\th:=2/p'_0.$ Observe that by \eqref{bs5} we have $\th=b_i/(a_i+b_i),$ hence  $(1-\th)(-b_i)+\th a_i=0$ for every $i.$  Thus, the two inequalities \eqref{bs3} and \eqref{bs4} allow to apply an interpolation trick due to Bourgain \cite{bourgain} and to conclude that each of the  operators $T_{\mu^i_j}$ is of restricted weak-type $(p_0,p'_0),$ with operator norm $\le C A,$  and if $J$ is any compact subinterval of $]0,\infty[,$ then for $a_i,b_i\in J$ we may chose the constant $C$ so that it depends only on $J.$  In combination with \eqref{bs1}, this implies that also $T_{\mu}$ is of restricted weak-type $(p_0,p'_0),$ with operator norm $\le C A,$ where $C$ may be different from the  previous constant, but with similar properties. By applying Tomas'  $R^*R$- argument for the restriction operator $R,$   we get
$$
\int|\hat f|^2\, d\mu\le C A\|f\|^2_{L^{p_0,1}}.
$$
In combination with Plancherel's theorem and  \eqref{bs3} and   \eqref{bs2} we can next use this estimate as in \cite{bs} to control 
$$
\|T_{\mu^i_j} f\|_2=\|f*\widehat{\mu^i_j} \|_2\le A^{\frac 12} 2^{j\frac {a_i}2}\|\phi^i_j\|_1^{\frac 12}  A^{\frac 12}\|f\|^2_{L^{p_0,1}}\le A 2^{j\frac {a_i}2}\|f\|^2_{L^{p_0,1}}.
$$
It is here where the positivity of the measure $\mu$ is used in an essential way.  The remaining part of the argument in \cite{bs} does not require positivity of the underlying measure, so that it applies to each of the complex measures $\mu^i$  as well, and we may conclude that for any $s\in ]0, \infty],$
$$
\|T_{\mu^i} f\|_{L^{p'_0,s}}\le C A \|f\|_{L^{p_0,s}}
$$
(compare Proposition 2.1, (2.2),  in \cite{bs}), where $L^{p,s}$ denotes the Lorentz space of type $p,s.$
Choosing $s=2,$ so that $p_0\le 2\le p'_0,$ by the nesting properties of the scale of Lorentz spaces this implies in particular that
$$
\|T_{\mu^i} f\|_{L^{p'_0,p'_0}}\le C A \|f\|_{L^{p_0,p_0}},
$$
hence \eqref{bs6}. The same type of estimate then holds for the operator $T_\mu,$ and   Tomas'  argument then leads to \eqref{bs7}.

\end{proof}

Let us return to  our measure $\nu_\de.$ This is a positive measure, so we can choose $\mu:=\nu_\de$ in Proposition \ref{bsint}. The  spectral decompositions of the measure $\nu_\de$ in Section \ref{specloc} as well as Section \ref{equalla}  amounts to a decomposition of the measure $\nu_\de$ into a finite sum of complex measures

\begin{equation}\label{nudecomp}
\nu_\de=\sum_{j\in J} \nu^j +\sum_{i\in I} \mu^i,
\end{equation}
in such a way,  that the  convolution operators $T_{\nu^j}$ corresponding to the measure $\nu^j  \, (j\in J)$ from the first  class will be bounded from $L^{p_c}$ to $L^{p'_c},$  whereas the  measure $\mu^i, \, (i\in I)$ from the second class will satisfy the   conditions required  on the measures $\mu^i$  of Proposition \ref{bsint}.  For instance, the operators $T_\de^{I_2},  T^{II_2}_\de$ and $T_\de^{III_{2a}}$   from Section \ref{specloc} belong to the first class (compare the estimates \eqref{TI_2}, \eqref{TII_2}, \eqref{TIII_2a}),   but also $T_\de^{III_{1}}$ and  $T_\de^{III_{2}}$  (the corresponding estimates \eqref{TIII_1}  and \eqref{TIII_2b} will be established by means of complex interpolation in the next subsection), whereas the operators $T_\de^{I_1}$ and $T_\de^{II_1}$ will belong to the second class. 

We may then put $\mu^b:=\sum_{j\in J} \nu^j$ in Proposition \ref{bsint}. Let us show for instance that the  measure
$\mu^i:=\si_\de^{I_1}$ corresponding to  the operator $T_\de^{I_1}$ satisfies the assumptions of this proposition: 

Recall to this end that $\mu^i_j:=\si^{2^j}_\de=\nu_\de*\phi_j,$ where the Fourier transform of $ \phi_j$ is given by 
$\widehat{\phi_j}(\xi)=\chi_1\Big(\frac {\xi_1}{\la_1}\Big)\chi_0\Big(\frac {\xi_2}{\la_1}\Big)\chi_1\Big(\frac {\xi_3}{\la_1}\Big).$
  This implies a uniform estimate of the $L^1$-norms of the  $\phi_j$  of the form \eqref{bs2}  (possibly not with constant $1,$ but a fixed constant, which does not matter).  Moreover, the estimates \eqref{bs3} and  \eqref{bs4} are satisfied  because of \eqref{case1.1}, with exponents $a_i:=5/3$ and $b_i:=5/6,$  so that $p_0=6/5=p_c,$ 
  %{\color{red} (here, we do not have $b_i\le (d-a_i)/2$)}. 
  
  Similar arguments apply also to the measure $\mu^i:=\si_\de^{II_1}$ corresponding to  the operator $T_\de^{II_1},$  where the exponents $a_i$ and $b_i$ will be the same (compare \eqref{case2.1}), as well as to the other measures of the first class which will appear later.

  \subsection{Estimation of  $T_\de^{III_{1}}$: Complex interpolation}\label{complexint1}
  
  The discussion of this operator will  somewhat resemble the one of the operator $T_{\de,j}^{V}$ in Subsection 8.1 of \cite{IM-rest1}, which arose from the same Subcase 3.1 of Subsection 5.3 in  \cite{IM-rest1}, with $2^{-j}$ playing the role of $\de_0$ here, and where we have had $B=2,$ in place of $B=3$ here. We shall make use of a more refined structural result for the phase 
  function $\phi_\de$  which will be derived in a more general context in Section \ref{m2b34} (compare \eqref{7.2II}). In view of this result,  in combination with Corollary \ref{rem7.1II}, we may and shall assume that 
   
\begin{equation}\label{phideb3}
\phi_\de(x):= x_2^3  \,b(x_1,x_2,\de)+x_1^n\al(\de_1 x_1)+ \de_3x_2  x_1^{n_1}\al_1(\de_1x_1),\qquad (x_1\sim 1, |x_2|<\ve),
\end{equation}
where  we may now  assume (compare \eqref{bde}) that 
\begin{equation}\label{bfine}
b(x_1,x_2,\de)=b_3(\de_1 x_1, \de_2x_2), \quad b_3(0,0)=1.
\end{equation}

Moreover,  $\al(0)\ne 0,$ and   either $\al_1(0)\ne 0,$ and then $n_1$  is fixed, or $\al_1(0)=0,$ and then we may assume that $n_1$ is as large as we please (notice also that by  \eqref{defde2}, $\de_3=2^{-k(n_1\tilde\ka_1+\tilde\ka_2-1)}$ is coupled with $n_1$), so that in particular in this case  $\de_3\ll \de_0.$ Observe also that $\de_2\ll \de_0.$ 
\medskip

Useful tools will also be the Lemmas  7.2 and 8.1 from \cite{IM-rest1} on oscillatory sums and double sums. For the convenience of the reader, let us at least recall the first lemma (in the sharper version of Remark 7.3 of \cite{IM-rest1}):

\begin{lemma}\label{simplesum}
Let $Q=\prod_{j=1}^nÊ[-R_k,R_k]\subset \RR^n$ be a compact cuboid, with $R_k>0, k=1,\dots, n,$ and let $H$ be a $C^1$-function on an open neighborhood of $Q.$ Moreover, let $\al, \be^1, \dots, \be^n\in\RR^\times$  be given. For any given real numbers $a_1,\dots,a_n\in \RR^\times$ and $M\in \NN$ we then put 
\begin{equation}\label{Ft1}
F(t):= \sum_{l=0}^{M} 2^{i \al lt}
(H\chi_Q)\Big(2^{\be^1l}a_1,\dots, 2^{\be^n l}a_n\Big).
\end{equation}
Assume that there are constants $\epsilon\in]0,1]$ and $C_k, \, k=1,\dots, n,$ such that
\begin{equation}\label{simsumb}
\int_0^1\Big|\frac{\pa H}{\pa u_k}(su)\Big|\, ds \le C_k |u_k|^{\epsilon-1}, \qquad \mbox{ for all } u\in Q.
\end{equation}
Then there is a constant $C$ depending on $Q,$ the numbers $\al$ and $\be^k$ and $\epsilon,$ but not on  $H,$ the $a_k,$ $M$ and $t,$   such that 
\begin{equation}\label{simsum}
|F(t)|\le C\frac {|H(0)|+\sum_k C_k} {|2^{i\al t}-1|}, \qquad \mbox{ for all } t\in\RR, a_1,\dots a_2\in \RR^\times \mboxÊ{ and  } \ M\in \NN.
\end{equation}
In particular, we have 
$$
|F(t)|\le C\frac {\|H\|_{C^1(Q)}} {|2^{i\al t}-1|}, \qquad \mbox{ for all } t\in\RR, a_1,\dots a_2\in \RR^\times \mboxÊ{ and  } \ M\in \NN.
$$
\end{lemma}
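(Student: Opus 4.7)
The plan is to combine summation by parts in $l$ with a radial integration bound on $H$. Set $x^l:=(2^{\beta^1 l}a_1,\ldots,2^{\beta^n l}a_n)$ and $c_l:=(H\chi_Q)(x^l)$, and let $E_L:=\sum_{l=0}^L 2^{i\alpha l t}=(2^{i\alpha(L+1)t}-1)/(2^{i\alpha t}-1)$, so that $|E_L|\le 2/|2^{i\alpha t}-1|$ uniformly in $L$. Abel's summation formula gives
$$F(t)=c_M E_M+\sum_{l=0}^{M-1}(c_l-c_{l+1})E_l,$$
so that
$$|F(t)|\le \frac{2}{|2^{i\alpha t}-1|}\Big(|c_M|+\sum_{l=0}^{M-1}|c_{l+1}-c_l|\Big),$$
and the problem reduces to bounding the total variation of $(c_l)$ by a constant multiple of $|H(0)|+\sum_k C_k$, with the constant depending only on $Q,\beta^1,\ldots,\beta^n,\epsilon$.

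Since each $|x^l_k|=|a_k|2^{\beta^k l}$ is strictly monotonic in $l$ (as $\beta^k\neq 0$), the set of $l\in[0,M]$ for which $x^l\in Q$ is an interval $[A,B]\subset[0,M]$, and $c_l=0$ elsewhere. For $l\in[A,B]$ the radial identity
$$H(x^l)-H(0)=\sum_k x^l_k\int_0^1 \partial_k H(sx^l)\,ds,$$
combined with hypothesis \eqref{simsumb} applied at $u=x^l\in Q$, gives
$$|H(x^l)-H(0)|\le \sum_k |x^l_k|\cdot C_k |x^l_k|^{\epsilon-1}=\sum_k C_k|x^l_k|^{\epsilon}.$$
For consecutive interior indices $l,l+1\in[A,B]$ the triangle inequality through $H(0)$ then yields
$$|c_{l+1}-c_l|\le \sum_k C_k\bigl(|x^{l+1}_k|^{\epsilon}+|x^l_k|^{\epsilon}\bigr),$$
while at the (at most two) boundary transitions between $[A,B]$ and its complement, $|c_{l+1}-c_l|\le|H(0)|+\sum_k C_k R_k^\epsilon$. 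For each $k$ the sum $|a_k|^\epsilon\sum_{l\in[A,B]}2^{\epsilon\beta^k l}$ is geometric with ratio $2^{\epsilon\beta^k}\neq 1$, so it is dominated by a constant (depending only on $\beta^k$ and $\epsilon$) times its largest term, which is bounded by $R_k^\epsilon$. Collecting contributions and absorbing the factor $R_k^\epsilon$ into a constant that may depend on $Q$ yields the announced inequality. The particular case follows by taking $\epsilon=1$ and $C_k:=\|\partial_k H\|_{C^0(Q)}\le \|H\|_{C^1(Q)}$.

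The delicate point is that hypothesis \eqref{simsumb} only controls integrals of $\partial_k H$ along rays from the origin, not along the dilation orbit $r\mapsto(2^{\beta^k r}a_k)_k$ that physically connects $x^l$ with $x^{l+1}$. Passing through $H(0)$ in the triangle inequality is precisely what allows the hypothesis to be invoked; the resulting geometric over-count is harmless because for $l\in[A,B]$ all coordinates $|x^l_k|$ remain bounded by $R_k$, so the geometric series telescopes into a single $R_k^\epsilon$. The handful of $l$'s where the cut-off $\chi_Q$ produces a genuine jump contributes only $O(|H(0)|+\sum_k C_k R_k^\epsilon)$ and presents no essential difficulty.
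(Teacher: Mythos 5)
Your proof is correct. The paper itself does not supply an argument for this lemma — it simply recalls Lemma 7.2 (sharpened as in Remark 7.3) of the companion paper \cite{IM-rest1} — so there is no in-text proof to compare against. Your argument (Abel summation to reduce to the total variation of the coefficients, then the radial fundamental-theorem-of-calculus identity combined with the hypothesis \eqref{simsumb} to bound $|H(x^l)-H(0)|\le\sum_k C_k|x_k^l|^\epsilon$, followed by the geometric-series summation over the dilation orbit restricted to $Q$ and control of the at-most-two boundary jumps of $H\chi_Q$) is the natural and standard route, and all the details check out: monotonicity of each $|x^l_k|$ in $l$ correctly makes the support of $c_l$ an integer interval, the segment $[0,x^l]$ lies in $Q$ by convexity so the radial identity is licit, and the constant inherits the required dependence only on $Q$, the $\beta^k$ and $\epsilon$.
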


\medskip
 
Coming back to  the operator $T_\de^{III_{1}}$, observe that  $\la_1\sim \la_2$ in the definition of $T_\de^{III_{1}}$,  so that we may and shall assume without  loss of generality that $\la_1=\la_2.$ In order to verify estimate \eqref{TIII_1}, we then have to prove 
 
\begin{prop}\label{analyint3}
Let $m=2$ and $B=3,$ and consider  the measure 
$$
\nu_\de^{III_{1}}:=\sum_{2^{M}\le\la_3\le 2^{-M}\de_0^{- 3/2}} \sum_{2^M\la_3\le\la_1\le \de_0^{-1}\la_3^{1/3}} \nu^{(\la_1,\la_1,\la_3)}_\de,
$$
where summation is  taken over all sufficiently large dyadic $\la_i\ge 2^M$ in the given range. If we denote by $T_\de^{III_{1}}$  the operator of convolution with $\widehat{\nu_\de^{III_{1}}},$ then, if $M\in \NN$ is sufficiently large (and $\ve$ sufficiently    small), 
\begin{equation}\label{TIII1a}
\|T_\de^{III_{1}}\|_{6/5\to 6}\le C,
\end{equation}
with a constant $C$ not depending on $\de,$ for $\de$ sufficiently small.
\end{prop}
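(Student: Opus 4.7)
The plan is to apply Stein's complex interpolation theorem to a suitable analytic family of operators, in analogy with the treatment of Proposition 5.1 in \cite{IM-rest1}. For $\zeta$ in the strip $0\le \Re \zeta\le 1$ I would define
$$
T_\de^{III_1}(\zeta) := \sum_{\la_1,\la_3} \la_1^{a(\zeta-1/3)}\,\la_3^{b(\zeta-1/3)}\; T_\de^{(\la_1,\la_1,\la_3)},
$$
where the sum ranges over the dyadic pairs prescribed in the proposition and $a,b>0$ are to be chosen. At the critical value $\zeta=1/3$ the weights are trivial, so $T_\de^{III_1}(1/3)=T_\de^{III_1}$, and the interpolation parameter is $\th_c=1/3$, hitting the target endpoint $(p,p')=(6/5,6)$.

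On the line $\Re\zeta=0$ I would establish the $L^2\to L^2$ bound by bounding the symbol in $L^\infty$. Since $\widehat{\nu_\de^\la}$ is frequency-localized to $|\xi_i|\sim \la_i$, only $O(1)$ pairs $(\la_1,\la_3)$ contribute at each fixed $\xi$, so the individual estimate \eqref{2.14II} with $B=3$ reduces matters to
$$
\la_1^{-a/3-1/2}\,\la_3^{-b/3-1/3}\lesssim 1,
$$
which holds uniformly in the admissible range as soon as $a\ge -3/2$ and $b\ge -1$, yielding $\|T_\de^{III_1}(it)\|_{2\to 2}\lesssim e^{\ve|t|}$.

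On the line $\Re\zeta=1$ one needs a uniform pointwise bound for the kernel
$$
K(x,t)=\sum_{k_1,k_3} 2^{(2a/3+iat)k_1}\,2^{(2b/3+ibt)k_3}\;\nu_\de^{(2^{k_1},2^{k_1},2^{k_3})}(x).
$$
The triangle inequality combined with the estimate $\|\nu_\de^\la\|_\infty\lesssim \la_1\la_3^{2/3}$ from \eqref{2.10II} would force $2a/3+1<0$ and $2b/3+2/3<0$, incompatible with the $\Re\zeta=0$ constraints, so cancellation in the double dyadic sum must be exploited. Substituting the explicit representation \eqref{2.8II} for $\nu_\de^\la$ and interchanging the $(k_1,k_3)$-summations with the $y$-integration, $K(x,t)$ is rewritten as a double oscillatory sum of the form \eqref{Ft1} in $(k_1,k_3)$. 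Thanks to the refined structural form \eqref{phideb3}--\eqref{bfine} of $\phi_\de$, the amplitude of that double sum (once stationary-phase and van der Corput reductions in $y_1,y_2$ are performed) depends in a $C^1$-way on the rescaled variables and satisfies the $\epsilon$-gain hypothesis \eqref{simsumb} required by Lemma \ref{simplesum} and its two-parameter analogue (Lemma 8.1 of \cite{IM-rest1}). The logarithmic losses $\log(1/\de_0)$ that plagued the naive summation in Subsection 5 are absorbed by the oscillatory denominators $|2^{iat}-1|^{-1}|2^{ibt}-1|^{-1}$, producing the required bound $\|T_\de^{III_1}(1+it)\|_{1\to\infty}\lesssim e^{\ve|t|}$.

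The hard part is precisely this pointwise bound at $\Re\zeta=1$: tracking the smooth amplitude through the oscillatory reductions carefully enough to meet the hypotheses of the summation lemmas, and choosing the parameters $a,b$ strictly between the $L^2$ borderline values ($a=-3/2$, $b=-1$) and the values where physical-side summation would close — a natural choice being $a$ slightly larger than $-3/2$ and $b$ slightly larger than $-1$, so that the two geometric series produced by the double oscillatory sum both converge. Once both strip estimates are in place with at most $e^{\ve|t|}$ growth, Stein's interpolation theorem at $\zeta=1/3$ yields \eqref{TIII1a} with a constant independent of $\de$.
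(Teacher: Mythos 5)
Your skeleton matches the paper's own proof: a Stein complex interpolation over an analytic family with weights $\la_1^{(1-3\zeta)/2}\la_3^{(1-3\zeta)/3}$, trivial $L^2\to L^2$ control at $\Re\zeta=0$ from the disjoint frequency supports and the oscillatory-integral bound \eqref{2.14II}, and an $L^1\to L^\infty$ bound at $\Re\zeta=1$ won by exploiting cancellation via Lemma \ref{simplesum} and the double-sum lemma from \cite{IM-rest1}. You also correctly identify \eqref{phideb3}--\eqref{bfine} as the structural input and recognize that absolute summation on the kernel side cannot close. So the approach is the right one.

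Two things, though. First, the parameter discussion at the end is confused. As you wrote the family, $T_\de^{III_1}(1/3)=T_\de^{III_1}$ for any $a,b$; but the constraints you derive are $a\ge -3/2$, $b\ge -1$ from $\Re\zeta=0$ and $a<-3/2$, $b<-1$ for physical-side absolute summability — an empty intersection, so there is nothing to put ``strictly between'' them, and taking $a$ \emph{larger} than $-3/2$ makes the $\Re\zeta=1$ divergence worse, not better (it produces a positive power of $\la_1\le\de_0^{-1}\la_3^{1/3}$, hence a $\de$-dependent blow-up). The only sensible choice is the tight one $a=-3/2$, $b=-1$ (exactly what the paper uses), which makes each term in $\mu_{1+it}(x)$ of size $O(1)$, and then one must harvest cancellation.

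Second, and more importantly, the genuine content of the proof is the pointwise bound $|\mu_{1+it}(x)|\le C$ on $\Re\zeta=1$, and you leave it as a declared gap. The paper's argument here is not a single application of an oscillatory-sum lemma: it scales the kernel so that the quantities $A\sim\la_3\big(x_3-x_1^n\al(\de_1 x_1)\big)$, $B\sim\la_3^{2/3}\de_3 x_1^{n_1}\al_1(\de_1x_1)$, $D\sim\la_1\big(x_2-x_1^2\om(\de_1x_1)\big)$, and $E=\de_0\la_1\la_3^{-1/3}\le 1$ appear explicitly in the amplitude, and then it decomposes the double sum into four regimes according to the sizes of $\max\{|A|,|B|\}$ and $|D|$ relative to fixed thresholds. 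In the regime where both are large one gets absolute summability from a pointwise estimate of the form $|D|^{-1/4}\max\{|A|^{1/3},|B|^{1/2}\}^{-1/4}$ (derived from a van der Corput--type bound, Lemma \ref{as1}); in the two mixed regimes one fixes the ``large'' variable, applies Lemma \ref{simplesum} in the other summation variable after verifying the $\epsilon$-gain hypothesis \eqref{simsumb} for the $u_3$-derivative of the amplitude, and then sums the remaining variable absolutely; and in the regime where all are small one needs the genuine two-parameter Lemma 8.1 of \cite{IM-rest1}. This case split, and the verification that the $C^1$- and $\epsilon$-gain hypotheses hold after the $y_1\mapsto x_1-y_1/\la_1$, $y_2\mapsto y_2/\la_3^{1/3}$ rescaling, is the hard, non-automatic step, and without it the proposal does not constitute a proof.
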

 
 \begin{proof} 
 Recall that, by \eqref{2.14II},  $\|\widehat{\nu^{\la}_{\de}}\|_\infty\lesssim \la_1^{-1/2} \la_3^{-1/3}.$ We  therefore define here for $\ze$ in the strip $\Sigma=\{\zeta\in \bC: 0\le \Re \zeta\le 1\}$  an analytic family of measures by 
$$
\mu_\ze(x):=\gamma(\ze) \sum_{2^{M}\le 2^{k_3}\le 2^{-M}\de_0^{-\frac 32}} \sum_{2^{M+k_3}\le 2^{k_1}\le \de_0^{-1}2^{\frac {k_3}3}} 2^\frac{(1-3\zeta)k_1}2 2^\frac{(1-3\zeta)k_3}3\nu^{(2^{k_1},2^{k_1},2^{k_3})}_\de,
$$
where $\ga(\ze)$ is an entire function which  will serve a similar role as the function $\ga(z)$ in the proof of Proposition 5.2(a) in  \cite{IM-rest1}. We shall choose  $\ga(\ze)=\ga_1(\ze)\ga_2(\ze)\ga_3(\ze)$ as the product of three factors $\ga_j(\ze),$ whose  precise definition will be given in the course of the proof. It will  be uniformly bounded on $\Sigma,$  and such that  $\ga(\th_c)=\ga(1/3)=1.$ 
\medskip

By $T_\ze$  we denote the operator of convolution with $\widehat{\mu_\ze}.$ 
Observe that  for $\ze=\theta_c=1/3,$ we have $\mu_{\th_c}=\nu_{\de}^{III_1},$ hence 
$
T_{\theta_c}=T_{\de}^{III_1},
$
 so that, again  by Stein's interpolation theorem for analytic families of operators,  \eqref{TIII1a}  will follow if we can prove the following estimates on the boundaries of the  strip $\Sigma:$ 
\begin{eqnarray*}
\|\widehat{\mu_{it}}\|_\infty &\le& C \qquad \forall t\in\RR,\\ 
\|{\mu_{1+it}}\|_\infty &\le& C \qquad \forall t\in\RR. 
\end{eqnarray*}

The  first estimate  is an immediate consequence of  our estimate for $\widehat{\nu^{\la}_{\de}},$  since these functions have essentially disjoint supports,  so let us concentrate on the second estimate, i.e., assume that $\zeta=1+it,$ with $t\in \RR.$ We then have to  prove that there is constant $C$ such that 
\begin{equation}\label{5.10}
|\mu_{1+it}(x)|\le C,
\end{equation}
where $C$ is independent of $t,x$ and $\de.$  

Let us  introduce the  measures $ \mu_{\la_1,\la_3}$ given by
$$
\mu_{\la_1,\la_3}(x):=\la_1^{-1}\la_3^{-\frac23} \nu_\de^{(\la_1,\la_1,\la_3)}(x), 
$$
which allow to re-write 
\begin{equation}\label{5.11}
\mu_{1+it}(x)=\gamma(1+it)  \sum_{2^{M}\le\la_3\le 2^{-M}\de_0^{-\frac 32}} \sum_{2^M\la_3\le\la_1\le \de_0^{-1}\la_3^{\frac 13}}
\la_1^{-\frac 32 it}\la_3^{-it} \mu_{\la_1,\la_3}(x).
\end{equation}

Recall also from   \eqref{2.8II} (in combination with \eqref{phideb3}) that 
\begin{eqnarray}\nonumber
\mu_{\la_1,\la_3}(x)&=&\la_1\la_3^\frac 13 \int \check\chi_1\Big({\la_1}(x_1-y_1)\Big) \, \check\chi_1\Big({\la_1}(x_2-\de_0y_2-y_1^2\om(\de_1y_1))\Big)\\
&&\check\chi_1\Big({\la_3}\Big(x_3-y_2^3  \,b(y_1,y_2,\de)-y_1^n\al(\de_1 y_1)- \de_3y_2  y_1^{n_1}\al_1(\de_1y_1)\Big)\Big)\label{mula1la3}
\,\eta(y) \,dy,
\end{eqnarray}
where $\eta$ is supported where $y_1\sim 1$ and $|y_2|<\ve.$
Assume first that $|x_1|\gg 1,$  or $|x_1|\ll 1.$ Since $\check\chi_1$ is rapidly decreasing, and $\la_3\ll \la_1,$ we  easily see that 
$|\mu_{\la_1,\la_3}(x)|\le C_N\la_1^{-N} \la_3^{-N}$ for every $N\in\NN,$ which immediately implies \eqref{5.10}. A similar argument applies if $|x_2|\gg 1.$ However, if  $|x_1|+|x_2|\lesssim 1$ and  $|x_3|\gg 1,$ we can only conclude (after scaling by $1/\la_1$ in $y_1$) that $|\mu_{\la_1,\la_3}(x)|\le C_N \la_3^{-N},$ which allows to sum in $\la_3,$ but the summation in $\la_1$ remains a problem. 

\medskip 
Let us thus assume from now on that $|x_1|\sim 1$ and $|x_2|\lesssim 1.$ 
  
 By means of the change of variables $y_1\mapsto x_1-y_1/\la_1, \  y_2\mapsto y_2/\la_3^{1/3}$ and Taylor expansion around $x_1$ we  may re-write 
\begin{equation}\label{mulalaa}
\mu_{\la_1,\la_3}(x)=\iint  \check\chi_1(y_1)F_\de(\la_1,\la_3,x,y_1,y_2)\, dy_1dy_2,
\end{equation}

where
\begin{eqnarray*}
&&F_\de(\la_1,\la_3,x,y_1,y_2):=\eta(x_1-\la_1^{-1}y_1,\,  \la_3^{-\frac13} y_2) \,\check\chi_1(D-Ey_2+r_1(y_1))\\
&\times& \check\chi_1\Big(A- B y_2-y_2^3 b(x_1-\la_3^{-1}(\la_3\la_1^{-1}y_1),\la_3^{-\frac13}y_2,\de)+\la_3\la_1^{-1}\Big(r_2(y_1)+ (\la_3^{-\frac 13}y_2)\de_3r_3(y_1)\Big)\Big).
\end{eqnarray*}
Here,  the quantities $A$ to $ E$ are given by 
\begin{eqnarray}\nonumber
&& A=A(x,\la_3,\de):=\la_3Q_A(x) , \qquad B:=B(x,\la_3,\de):=\la_3^{\frac 23}Q_B(x),\\
&& D=D(x,\la_1,\de):=\la_1Q_D(x),  \qquad E=E(\la_1,\la_3,\de):=\la_1\la_3^{-\frac 13}\de_0\le 1,\label{fd2}
\end{eqnarray}
with
$$
Q_A(x):=x_3-x_1^n\al(\de_1 x_1),\quad Q_B(x):=\de_3 x_1^{n_1}\al_1(\de_1x_1), \quad Q_D(x):=x_2-x_1^2\om(\de_1 x_1),
$$
and do not depend on $y_1,y_2.$ 
Moreover, the functions $r_i(y_1)=r_i(y_1; \la_1^{-1}, x_1,\de),\,  i=1,2,3, $ are smooth functions of $y_1$  (and $\la_1^{-1}$ and $x_1$)  satisfying estimates of the form
\begin{equation}\label{5.14}
|r_i(y_1)|\le C|y_1|, \ 
\quad \left|\left(\frac{\pa}{\pa (\la_1^{-1})}\right)^lr_i(y_1; \la_1^{-1}, x_1,\de)\right|\le C_l |y_1|^{l+1}\quad \mbox{for every}\quad l\ge1.
 \end{equation}

Notice that we may here assume that $|y_1|\lesssim \la_1,$ because of our assumption  $|x_1|\lesssim1$ and the support properties of $\eta.$ It will also be important to observe that 
$E=\de_0\la_1\la_3^{-\frac13}\le 1$ for the index set of $\la_1,\la_3$ over which we sum in \eqref{5.11}. Notice also that 
$|\la_3^{-1/3}y_2|\le \ve.$

\medskip
Let us choose $c>0$ so that $|r_i(y_1)|\le c(1+|y_1|), i=1,2,3.$

\medskip
In order to verify \eqref{5.10}, given $x,$ we shall split the sum in \eqref{5.11} into four parts, which will be treated subsequently in different ways (compare the analogous discussion in Subsection 8.1 of \cite{IM-rest1}).

\medskip
\noi {\bf 1. The part where $\max\{|A|,|B|\}\ge 1$ and $|D|\ge 4c.$ } 
 Denote  by $\mu_{1+it}^1(x)$ the contribution to $\mu_{1+it}(x)$ by the terms for which 
 $\max\{|A(x,\la_3,\de)|,|B(x,\la_3,\de)|\}\ge 1$ and $|D(x,\la_1,\de)|\ge 4c.$
 
\smallskip
We claim  that here
\begin{equation}\label{5.15}
|\mu_{\la_1,\la_3}(x)|\lesssim  |D|^{-\frac14}\max\{|A|^{\frac 13}, |B|^{\frac 12}\}^{-\frac 14}.
\end{equation}
In view of \eqref{fd2}, this estimate will allow us to sum   over all dyadic $\la_1,\la_3$ for which the corresponding  quantities $A,B$  and $D$ satisfy the conditions of this subcase, and we obtain the right kind of  estimate $|\mu_{1+it}^1(x)|\le C,$ 
in agreement with \eqref{5.10}.

\medskip

In order to prove \eqref{5.15}, let us first consider the contribution $\mu^1_{\la_1,\la_3}(x)$  to the integral defining $ \mu_{\la_1,\la_3}(x)$ by the   region where $|y_1|> |D|/4c.$ Here we may estimate $|\check{\chi_1}(y_1)|\lesssim |D|^{-N}$ for every $N\in\NN.$ Moreover, if $|x_3|\gg 1,$ then $|A|\gg \la_3\gg |B|^{3/2},$ and $A$ becomes the dominant term in the argument of the last factor of $F_\de.$  Therefore we may estimate 
$$
|\mu_{\la_1,\la_2}(x)|\le C_N |D|^{-N} \la_3^{\frac 13} |A|^{-N}
$$
for every $N\in\NN,$ which is stronger than \eqref{5.15}.

And, if $|x_3|\lesssim 1,$ then we may apply Lemma \ref{as1} (with $T:=\la_3^{1/3}, \epsilon:=0$) and obtain the  estimate
$$
|\mu^1_{\la_1,\la_3}(x)|\lesssim |D|^{-N}|\max\{|A|^{\frac 13}, |B|^{\frac 12}\}^{-\frac 12},
$$
which is still  stronger than required in \eqref{5.15}.

\smallskip
Denote next by $ \mu^2_{\la_1,\la_3}(x)$ the contribution by the region where $|y_1|\le |D|/4c.$ Then $|r_i(y_1)|\le |D|/2,$ and thus if in addition $|E y_2|\le |D|/4,$ or $|E y_2|> 2|D|,$ then 
$\Big|D-E y_2+r_1(y_1)\Big|\ge |D|/4.$ We may then estimate 
 the  second factor of $F_\de$ by $C_N|D|^{-N},$ which allows to argue as before.
 So, let us assume that $|r_1(y_1)|\le |D|/2$ and $|D|/4\le |E y_2|\le 2|D|.$ Then $|y_2|\sim |D|/|E|\ge |D|\gtrsim 1.$
In case that $|D|\lesssim\max\{|A|^{\frac 13}, |B|^{\frac 12}\},$ we may apply Lemma \ref{as1} (assuming again that $|x_3|\lesssim 1;$ the other case is again easier) and find that 
\begin{eqnarray*}
|\mu^2_{\la_1,\la_3}(x))|\lesssim \max\{|A|^{\frac 13}, |B|^{\frac 12}\}^{-\frac 12}\le |D|^{-\frac14}\max\{|A|^{\frac 13}, |B|^{\frac 12}\}^{-\frac 14},
\end{eqnarray*}
as desired.
So assume that $|D|\gg\max\{|A|^{\frac 13}, |B|^{\frac 12}\}.$ Then one easily sees that 
$$
\Big|A- B y_2-y_2^3 b(x_1-\la_1^{-1}y_1,\la_3^{-\frac13}y_2,\de)+\la_3\la_1^{-1}\Big(r_2(y_1)
+ (\la_3^{-\frac 13}y_2)\de_3r_3(y_1)\Big)\Big|\gtrsim |y_2|D^2,
  $$ 
so that 
$$
|\mu^2_{\la_1,\la_3}(x))|\lesssim \iint (1+|y_1|)^{-N}(1+|y_2|D^2)^{-N}\, dy_1dy_2\lesssim D^{-2}\lesssim |D|^{-1}
\max\{|A|^{\frac 13}, |B|^{\frac 12}\}^{-1} ,
$$
which is again stronger then required in \eqref{5.15}.

\medskip
\noi {\bf 2. The part where $\max\{|A|,|B|\}\ge 1$ and $|D|< 4c.$ } 
 Denote  by $\mu_{1+it}^2(x)$ the contribution to $\mu_{1+it}(x)$ by the terms for which 
 $\max\{|A(x,\la_3,\de)|,|B(x,\la_3,\de)|\}\ge 1$ and $|D(x,\la_1,\de)|< 4c.$ 
 
Let us fix $\la_3$ satisfying $2^{M}\le\la_3\le 2^{-M}\de_0^{-\frac 32}$  and $\max\{|A(x,\la_3,\de)|,|B(x,\la_3,\de)|\}\ge 1$ in the first sum in \eqref{5.11}.  In order to compute 
$\mu_{1+it}^2(x),$ we then have to study the following sum in $\la_1=2^{k_1}:$ 
$$
\si^2(\la_3, t,x):=\sum_{\{\la_1: 2^M\la_3\le\la_1\le \de_0^{-1}\la_3^{\frac 13}, \la_1|Q_D(x)|< 4c\}}
\la_1^{-\frac 32 it} \mu_{\la_1,\la_3}(x).
$$
Indeed, we have  $\mu_{1+it}^2(x)=\sum_{\la_3}\la_3^{-it} \si^2(\la_3, t,x),$ where summation is over all dyadic $\la_3$ in the range described before.

The oscillatory sum defining $\si^2(\la_3, t,x)$  can  essentially be written in the form \eqref{Ft1}, with $\al:=-3/2, l=k_1$  and
$$
u_1=2^{\be^1l}a_1:= \la_1^{-1}\la_3,\  u_2= 2^{\be^2l}a_2:=\la_1 Q_D(x),\  u_3=2^{\be^3l}a_3:=\la_1(\la_3^{-\frac 13} \de_0)
$$
and where the function $H=H_{\la_3,x,\de}$  of $u:=(u_1,u_2, u_3)$ is given by
\begin{eqnarray*}
&&H(u):=\iint  \check\chi_1(y_1)\eta(x_1-u_1\la_3^{-1}y_1,\,  \la_3^{-\frac13} y_2) \,\check\chi_1(u_2-u_3y_2
+r_1(y_1;\la_3^{-1}u_1,x_1,\de))\\
&&\hskip1cm\times \check\chi_1\Big(A- B y_2-y_2^3 b(x_1-\la_3^{-1} u_1y_1,\la_3^{-\frac13}y_2,\de)
+u_1r_2(y_1;\la_3^{-1}u_1,x_1,\de)\\
&&\hskip3cm+ (\la_3^{-\frac 13}y_2)\de_3r_3(y_1;\la_3^{-1}u_1,x_1,\de)\Big)\, dy_1dy_2.
\end{eqnarray*}
Moreover, the cuboid $Q$ in Lemma \ref{simplesum} is defined by the conditions
$$
|u_1|\le 2^{-M}, \quad |u_2|<4c,\quad |u_3|\le 1.
$$

Let us estimate the $C^1$- norm of $H$ on $Q.$  If $|x_3|\gg 1,$ then $|Q_A(x)|\gg 1,$ whereas $|Q_B(x)|\lesssim 1,$ so that $|A|\gg |B|,$ hence $\max\{|A|,|B|\}=|A|.$  We even have that $|By_2|\ll \la_3|Q_B(x)|\ll |A|,$ as well as 
$|y_2^3 b(x_1-\la_3^{-1} u_1y_1,\la_3^{-\frac13}y_2,\de)|\ll |A|.$  Making use also of the rapid  decay of 
$\check \chi_1(y_1),$ this easily implies that 
$$
|H(u)|\lesssim |A|^{-N} 
$$
for every $N\in \NN,$ uniformly on $Q.$ 

 On the other hand, if $|x_3|\lesssim 1,$ then $|A|\lesssim  \la_3,$ so that the assumptions of Lemma \ref{as1} are satisfied (if we essentially put $T:=\la_3^{1/3}),$ and we may conclude that
\begin{equation}\label{5.18}
|H(u)|\lesssim \max\{|A|^{\frac 13}, |B|^{\frac 12}\}^{-\frac 12}, \qquad \mbox{for all} \ u\in Q.
\end{equation}
We have seen that this estimate holds true no matter which size $|x_3|$ may have.

\medskip
We next consider partial derivatives of $H.$  From our integral formula for $H(u),$ it is obvious that the partial derivative of $H$ with respect to $u_1$ will essentially only produce additional factors of the form $\la_3^{-1} y_1, 
\la_3^{-1} y_1 y_2^3, \la_3^{-1/3} y_2 \la_3^{-1} y_1$ under the double integral. However, powers of $y_1$ can be absorbed by the rapidly decaying factor $\check\chi_1(y_1), $ and $|\la_3^{-1} y_2^3|\le \ve \ll 1,$  so that $|\pa_{u_1}H(u)|$ will satisfy an estimate of the form \eqref{5.18} as well. It is also easy to see that $|\pa_{u_2}H(u)|$  satisfies such an estimate too.

\smallskip
More of a problem is the partial derivative of $H$ with respect to $u_3.$ This will  essentially produce an additional factor $y_2$  under the  double integral. More precisely, let us put
$$
g(y_1,y_2; u):= -\check\chi_1(y_1) \, y_2\,\check\chi'_1(u_2-u_3y_2+r_1(y_1;\la_3^{-1}u_1,x_1,\de)),
$$
so that 
\begin{eqnarray*}
&&\pa_{u_3}H(u):=\iint  \eta(x_1-u_1\la_3^{-1}y_1,\,  \la_3^{-\frac13} y_2) \,g(y_1,y_2; u)\\
&&\hskip1cm\times \check\chi_1\Big(A- B y_2-y_2^3 b(x_1-\la_3^{-1} u_1y_1,\la_3^{-\frac13}y_2,\de)
+u_1r_2(y_1;\la_3^{-1}u_1,x_1,\de)\\
&&\hskip3cm+ (\la_3^{-\frac 13}y_2)\de_3r_3(y_1;\la_3^{-1}u_1,x_1,\de)\Big)\, dy_1dy_2.
\end{eqnarray*}
We claim that for every $\eps\in ]0,1]$  and $s\in [0,1]$ we have 
\begin{equation}\label{5.20}
|g(y_1,y_2; su)|\le C_N |u_3|^{\eps-1}|y_2|^{\eps} \,s^{\eps-1} (1+|y_1|)^{-N}\qquad  \mbox{for every } N\in \NN
\end{equation}
in the integrand.  Indeed, if $|su_3y_2|\gg (1+|y_1|),$ then the third factor in $g(y_1,y_2; u)$ can be estimated  by 
$C|su_3y_2|^{\eps-1},$  because of the rapid decay of $\check\chi'_1,$  and \eqref{5.20} follows, and  if $|su_3y_2|\lesssim (1+|y_1|),$ then $|y_2|\lesssim |y_2|^\eps(|su_3|^{-1}(1+|y_1|))^{1-\eps},$ and \eqref{5.20} follows again.

\smallskip
By means of \eqref{5.20}, we may now estimate
\begin{eqnarray*}
&&|\pa_{u_3}H(su)|\lesssim |u_3|^{\eps-1}\,s^{\eps-1}\iint  (1+|y_1|)^{-N}
\Big(1+|A- B y_2-y_2^3 b(x_1-\la_3^{-1} su_1y_1,\la_3^{-\frac13}y_2,\de)\\
&&\hskip1cm +su_1r_2(y_1;\la_3^{-1}su_1,x_1,\de)
 + (\la_3^{-\frac 13}y_2)\de_3r_3(y_1;\la_3^{-1}su_1,x_1,\de)|\Big)^{-N}\, |y_2|^{\eps} \, dy_1dy_2,
\end{eqnarray*}
and arguing from here on as before (distinguishing between the cases where $|x_3|\gg 1$ and where $|x_3|\lesssim 1),$ we obtain by means of Lemma \ref{as1} that 
$$
|\pa_{u_3}H(su)|\lesssim |u_3|^{\eps-1}\,s^{\eps-1}\max\{|A|^{\frac 13}, |B|^{\frac 12}\}^{\eps-\frac 12}, \qquad \mbox{for all} \ u\in Q.
$$
This implies for every sufficiently small $\eps>0$  that for all $u\in Q,$ 
$$%\label{simsumb}
\int_0^1\Big|\frac{\pa H}{\pa u_k}(su)\Big|\, ds \le C  |u_k|^{\epsilon-1}\max\{|A|^{\frac 13}, |B|^{\frac 12}\}^{\eps-\frac 12}. 
$$
By means of Lemma \ref{simplesum}, we may thus conclude that 
$$
|\si^2(\la_3,t,x)|\lesssim \frac 1{|2^{-\frac 32i t}-1|}\max\Big\{(\la_3|Q_A(x)|)^{\frac 13}, (\la_3^{\frac 23}|Q_B(x)|)^{\frac 12}\Big\}^{\eps-\frac 12}.
$$
Finally, this estimate allows to sum also  in $\la_3,$ and we conclude that  $|\mu_{1+it}^2(x)|\le C,$ provided we choose 
the second factor in the definition of $\ga(\ze)$ as
 $\ga_2(\ze):=2^{\frac 32 (1-\ze)}-1.$

\medskip
\noi {\bf 3. The part where $\max\{|A|,|B|\}< 1$ and $|D|\ge 4c.$ } 
 Denote  by $\mu_{1+it}^3(x)$ the contribution to $\mu_{1+it}(x)$ by the terms for which 
 $\max\{|A(x,\la_3,\de)|,|B(x,\la_3,\de)|\}< 1$ and $|D(x,\la_1,\de)|\ge 4c.$ 

This case can be treated  again by means of Lemma \ref{simplesum}, only with the roles of $\la_1$ and $\la_3$ interchanged. So, let us  here fix $\la_1$ satisfying $2^{2M}\le\la_1\le 2^{-M/3}\de_0^{-\frac 32}$ and $\la_1|Q_D(x)|\ge 4c,$  and consider the remaining sum in $\la_3$ in  \eqref{5.11},  i.e., 
$$
\si^3(\la_1, t,x):=\sum_{\{\la_3: \de_0^3\la_1^3\le \la_3\le 2^{-M} \la_1,\, \la_3|Q_A(x)|< 1, \la_3^{\frac 23}|Q_B(x)|< 1\}}
\la_3^{-it} \mu_{\la_1,\la_3}(x).
$$
Notice that then  $\mu_{1+it}^3(x)=\sum_{\la_1}\la_1^{-3it/2}\si^3(\la_1, t,x),$ where summation is over all dyadic $\la_1$ in the range described before.

Also $\si^3(\la_1, t,x)$  can  essentially be written in the form \eqref{Ft1}, with $\al:=-1$ and $l=k_3$ (if $\la_3=2^{k_3})$,  and
\begin{eqnarray*}
&& u_1=2^{\be^1l}a_1:= \la_3\, \la_1^{-1},\  u_2= 2^{\be^2l}a_2:= \la_3^{-\frac 13},\ u_3=2^{\be^3l}a_3:=\la_3^{-\frac 13}\,(\la_1 \de_0)\\
 && u_4= 2^{\be^4l}a_4:=\la_3 Q_A(x),\ u_5= 2^{\be^5l}a_5:=\la_3^{\frac 23} Q_B(x),\  
\end{eqnarray*}
and where the function $H=H_{\la_1,x,\de}$  of $u:=(u_1,\dots, u_5)$ is given by
\begin{eqnarray*}
&&H(u):=\iint  \check\chi_1(y_1)\eta(x_1-\la_1^{-1}y_1,\,  u_2 y_2) \,\check\chi_1(D-u_3y_2
+r_1(y_1;\la_1^{-1},x_1,\de))\\
&&\hskip1cm\times \check\chi_1\Big(u_4- u_5 y_2-y_2^3 b(x_1-\la_1^{-1}y_1,u_2y_2,\de)
+u_1r_2(y_1;\la_1^{-1},x_1,\de)\\
&&\hskip3cm+ (u_2 y_2)\de_3r_3(y_1;\la_1^{-1},x_1,\de)\Big)\, dy_1dy_2.
\end{eqnarray*}
Moreover, the cuboid $Q$ in Lemma \ref{simplesum} is defined by the conditions
$$
|u_1|\le 2^{-M}, \quad |u_2|\le 2^{-\frac M3},\quad |u_3|\le 1, \quad |u_4|\le 1, \quad |u_5|\le 1.
$$
In order to estimate the $C^1$-norm of $H$ on $Q,$ observe first that here we may estimate 
\begin{eqnarray}\nonumber
&&\Big|\check\chi_1(y_1)\,\check\chi_1\Big(u_4- u_5 y_2-y_2^3 b(x_1-\la_1^{-1}y_1,u_2y_2,\de)
+u_1r_2(y_1;\la_1^{-1},x_1,\de)\\
&&\hskip3cm+ (u_2 y_2)\de_3r_3(y_1;\la_1^{-1},x_1,\de)\Big)\Big|\\ \label{e0}
  &&\quad \le C_N(1+|y_1|)^{-N}(1+|y_2|)^{-N},\nonumber
\end{eqnarray}
for every $N\in\NN$ (just distinguish the cases where $|y_1|\ll |y_2|^3,$ and $|y_1|\gtrsim |y_2|^3$). Notice that this estimate allows in particular to absorb any powers of $y_1$ of $y_2$ in the upcoming estimations. Moreover, we find that 
\begin{equation}\label{e1}
|H(u)|\le C_N \iint (1+|y_1|)^{-N}(1+|y_2|)^{-N}\,|\check\chi_1(D-u_3y_2
+r_1(y_1;\la_1^{-1},x_1,\de))|\, dy_1 dy_2.
\end{equation}
It is easy to see that this allows to estimate 
$$
|H(u)|\le C_N |D|^{-N/2}, \qquad u\in Q.
$$
Indeed, when $1+|y_1|\gtrsim |D|,$ then we can gain a factor $|D|^{-N/2}$ from the first factor in the integral in \eqref{e1}, and when $1+|y_1|\ll |D|$  and $|u_3y_2|\le |D|/2,$ then the last factor in the integral can be estimated by $C'_N|D|^{-N}.$ Finally, when $1+|y_1|\ll |D|$  and $|u_3y_2|\ge |D|/2,$ then $|y_2|\ge |D|/2,$ because $|u_3|\le 1,$ and we can gain a factor $|D|^{-N/2}$ from the second factor in the integral in \eqref{e1}.

Similar estimates hold true also for partial derivatives of $H,$ since these essentially produce only further factors of the order  $|y_2|, |r_2(y_1)|\lesssim (1+|y_1|)$ and $|y_2 r_3(y_1)|\lesssim |y_2|(1+|y_1|)$  under the integral defining $H(u),$ and as we have observed before, such factors can easily be absorbed.

We thus find that $\|H\|_{C^1(Q)}\lesssim |D|^{-1},$ so that, by  Lemma \ref{simplesum},
$$
|\si^3(\la_1,t,x)|\lesssim \frac 1{|2^{-i t}-1|} \Big(\la_1|Q_D(x)|\Big)^{-1}.
$$
This estimate allows to sum   in $\la_1,$  since we are assuming that $\la_1|Q_D(x)|\ge 4c$ in the definition of 
$\mu_{1+it}^3(x),$ and we conclude that also $|\mu_{1+it}^2(x)|\le C,$ provided we choose 
the second factor in the definition of $\ga(\ze)$ as $\ga_2(\ze):=(2^{1-\zeta}-1)/(2^{2/3}-1).$

 \medskip
\noi {\bf 4. The part where $\max\{|A|,|B|\}< 1$ and $|D|< 4c.$ } 
 Denote  by $\mu_{1+it}^4(x)$ the contribution to $\mu_{1+it}(x)$ by the terms for which 
 $\max\{|A(x,\la_3,\de)|,|B(x,\la_3,\de)|\}< 1$ and $|D(x,\la_1,\de)|< 4c.$ 
 
 Under the assumptions of this case,  it is easily seen from  formula \eqref{mulalaa}, in combination with an estimate analogous to \eqref{e0}, that 
  $$
\mu_{\la_1,\la_3}(x)= J(A,B, D, E,\,\la_1^{-1}, \la_3^{-\frac13}, \la_3\la_1^{-1}),
$$
where $J$ is a smooth function of all its (bounded) variables. We may thus invoke Lemma 8.1 from \cite{IM-rest1} on oscillatory double sums in order to conclude that also $|\mu_{1+it}^4(x)|\le C,$ provided we choose the third factor 
$\ga_3(\ze)$ of $\ga(\ze)$ according to Remark  8.2  in \cite{IM-rest1}. 

Since the details are very similar to the discussion the corresponding case in  the  last part of the proof of Proposition 5.2 
in  \cite{IM-rest1}, we shall skip the details.

\medskip
Estimate \eqref{5.10} is a consequence of our estimates on the $\mu_{1+it}^j(x), j=1,\dots,4,$  which completes the proof of Proposition \ref{analyint3}. 

 \end{proof} 
 \bigskip

  \subsection{Estimation of   $T_\de^{III_{0}}$: Complex interpolation}\label{complexint0}
  The discussion of this operator is easier  than the one in the preceding subsection.  Observe that in place of \eqref{phideb3}, we here have 
  
  \begin{equation}\label{phideb0}
\phi_\de(x):= x_2^2  \,b(x_1,x_2,\de)+x_1^n\al(\de_1 x_1)+ \de_3x_2  x_1^{n_1}\al_1(\de_1x_1),\qquad (x_1\sim 1, |x_2|<\ve),
\end{equation}
since $B=2.$ 
  
  Assuming again  without  loss of generality that $\la_1=\la_2,$ we see that we have to prove 
 
\begin{prop}\label{analyint0}
Let $m=2$ and $B=2,$  and consider  the measure 
$$
\nu_\de^{III_{0}}:=\sum_{2^M \le \la_3\le 2^{-M}\de_0^{-1}}
\sum_{2^{M}\la_3\le \la_1\le 2^{-M}\de_0^{-1}}  \nu^{(\la_1,\la_1,\la_3)}_\de,
$$
where summation is  taken over all sufficiently large dyadic $\la_i$ in the given range. If we denote by $T_\de^{III_{0}}$  the operator of convolution with $\widehat{\nu_\de^{III_{0}}},$ then, if $M\in \NN$ is sufficiently large (and $\ve$ sufficiently    small), 
\begin{equation}\label{TIII0}
\|T_\de^{III_{0}}\|_{6/5\to 6}\le C,
\end{equation}
with a constant $C$ not depending on $\de,$ for $\de$ sufficiently small.
\end{prop}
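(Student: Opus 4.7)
The approach is to establish Proposition~\ref{analyint0} by complex interpolation, in close parallel with the proof of Proposition~\ref{analyint3} in the preceding subsection. Since $\th_c = 1/3$ interpolates $L^1 \to L^\infty$ at $\Re \ze = 0$ with $L^2 \to L^2$ at $\Re \ze = 1$ to yield $L^{6/5} \to L^6$, I would apply Stein's theorem to the analytic family
$$
\mu_\ze(x) := \ga(\ze) \sum_{2^{M} \le 2^{k_3} \le 2^{-M}\de_0^{-1}}\ \sum_{2^{M+k_3} \le 2^{k_1} \le 2^{-M}\de_0^{-1}} 2^{(1-3\ze)k_1/2}\, 2^{(1-3\ze)k_3/2}\, \nu_\de^{(2^{k_1},2^{k_1},2^{k_3})},
$$
where $\ga(\ze) = \ga_1(\ze)\ga_2(\ze)\ga_3(\ze)$ is an entire, uniformly bounded function on $\Sigma := \{0 \le \Re\ze \le 1\}$ normalized by $\ga(1/3)=1$, so that $\mu_{1/3} = \nu_\de^{III_0}$ and $T_{1/3} = T_\de^{III_0}$. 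It then suffices to prove $\|\widehat{\mu_{it}}\|_\infty \le C$ and $\|\mu_{1+it}\|_\infty \le C$ uniformly in $t \in \RR$ and $\de$. The left boundary estimate is immediate from the $B=2$ version $\|\widehat{\nu_\de^\la}\|_\infty \lesssim \la_1^{-1/2}\la_3^{-1/2}$ of \eqref{2.14II}: the factor $|2^{(1-3it)(k_1+k_3)/2}| = \la_1^{1/2}\la_3^{1/2}$ exactly cancels the decay, and the spectral supports of the $\widehat{\nu_\de^\la}$ in $\xi_1$ and $\xi_3$ are essentially disjoint.

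For the right boundary, introduce the renormalized measures $\mu_{\la_1,\la_3}(x) := \la_1^{-1}\la_3^{-1} \nu_\de^{(\la_1,\la_1,\la_3)}(x)$, so that
$$
\mu_{1+it}(x) = \ga(1+it) \sum_{\la_1,\la_3} \la_1^{-3it/2}\, \la_3^{-3it/2}\, \mu_{\la_1,\la_3}(x).
$$
Via the change of variables $y_1 \mapsto x_1 - y_1/\la_1$, $y_2 \mapsto y_2/\la_3^{1/2}$ (note the exponent $1/B = 1/2$) and Taylor expansion around $x_1$ in the integral representation \eqref{2.8II}, with $\phi_\de$ given by \eqref{phideb0}, I would express $\mu_{\la_1,\la_3}(x)$ as $\iint \check\chi_1(y_1)\, F_\de(\la_1,\la_3,x,y_1,y_2)\,dy_1dy_2$, where the dependence of $F_\de$ on $(\la_1,\la_3)$ enters only through the quantities
$$
A := \la_3 Q_A(x),\quad B := \la_3^{1/2} Q_B(x),\quad D := \la_1 Q_D(x),\quad E := \la_1\la_3^{-1/2}\de_0,
$$
with $Q_A,Q_B,Q_D$ defined as in the proof of Proposition~\ref{analyint3}; a direct check gives $E \le 2^{-3M/2}$ throughout the $III_0$ range, and Subcase~3.1 is automatic there.

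I would then partition the double sum according to whether $\max\{|A|,|B|\} \ge 1$ or $<1$ and whether $|D| \ge 4c$ or $<4c$, yielding four contributions $\mu_{1+it}^j(x)$, $j=1,\dots,4$, handled exactly on the scheme of Proposition~\ref{analyint3}. In region~1 (both large), Lemma~\ref{as1} applied to the $y_2$-integration with the now \emph{quadratic} phase, combined with the rapid decay of $\check\chi_1(y_1)$, yields a pointwise bound on $\mu_{\la_1,\la_3}(x)$ that is summable over both $\la_1$ and $\la_3$. In region~2 ($\max\{|A|,|B|\} \ge 1$, $|D|<4c$), I would fix $\la_3$ and apply Lemma~\ref{simplesum} with $\al = -3/2$ to the inner sum in $\la_1$; the resulting gain $1/|2^{-3it/2}-1|$ is absorbed by choosing $\ga_2(\ze) := 2^{3(1-\ze)/2} - 1$, and the remaining sum in $\la_3$ is controlled by the same pointwise bound. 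Region~3 is symmetric: fix $\la_1$ and apply Lemma~\ref{simplesum} to the $\la_3$-sum with $\al = -3/2$, absorbing via $\ga_3(\ze) := 2^{3(1-\ze)/2} - 1$. Region~4 (all of $A,B,D$ bounded) is handled by Lemma~8.1 of \cite{IM-rest1} on oscillatory double sums, with $\ga_1(\ze)$ chosen as in Remark~8.2 there.

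The main obstacle is the pointwise van der Corput analysis in region~1: the quadratic dependence on $y_2$ in the third factor of $F_\de$ produces estimates of the form $\max\{|A|^{1/2},|B|\}^{-\al}$ for some $\al>0$, rather than the $\max\{|A|^{1/3},|B|^{1/2}\}^{-1/2}$ of the $B=3$ case, and one must verify that after the usual subdivision into the ranges $|y_1| > |D|/(4c)$, then $|y_1| \le |D|/(4c)$ with $|E y_2|\lesssim |D|$, and $|y_1| \le |D|/(4c)$ with $|E y_2|\gtrsim |D|$, the resulting bounds on $\mu_{\la_1,\la_3}(x)$ are strong enough to sum in both $\la_1$ and $\la_3$ over the entire $III_0$ range $\la_1,\la_3 \le 2^{-M}\de_0^{-1}$. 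Once this pointwise control is secured, the applications of Lemma~\ref{simplesum} in regions~2--3 and of Lemma~8.1 of \cite{IM-rest1} in region~4 proceed by routine adaptation of the $B=3$ argument, and $\|\mu_{1+it}\|_\infty$ is bounded uniformly in $t$ and $\de$, whence Stein interpolation yields \eqref{TIII0}.
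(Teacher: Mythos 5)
Your proposal takes a genuinely different route from the paper's. The paper does \emph{not} form a double-sum analytic family: it fixes $\la_3$, interpolates only the $\la_1$-sum inside the family $\si^{\la_3}_\ze$, establishes the boundary bounds $\|\widehat{\si^{\la_3}_{it}}\|_\infty\lesssim\la_3^{-1/2}$ and $\|\si^{\la_3}_{1+it}\|_\infty\lesssim\la_3^{(1+\eps)/2}$, deduces $\|T^{\la_3}\|_{6/5\to 6}\lesssim\la_3^{(\eps-1)/6}$ from Stein's theorem, and then sums the operator norms over $\la_3$ absolutely. Your plan instead oscillates in both indices, normalizing each $\nu_\de^{(\la_1,\la_1,\la_3)}$ by $\la_1^{(1-3\ze)/2}\la_3^{(1-3\ze)/2}$, and aims for a uniform endpoint bound $\|\mu_{1+it}\|_\infty\le C$. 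That strategy can in fact be closed: dividing the paper's bounds by the extra power of $\la_3$ in your normalization, the $|D|\ge 4c$ part is $\lesssim\la_3^{-1/2}|D|^{-N}$ (summable in both variables), and the $|D|<4c$ part, after Lemma~\ref{simplesum} in $\la_1$, contributes $\lesssim\la_3^{(\eps-1)/2}$, also summable. So the left boundary and the Stein normalization in your proposal are fine, and the overall plan is sound.

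However, the mechanism you propose for your Region~1 is wrong, and this is precisely where $B=2$ is genuinely simpler than $B=3$. Lemma~\ref{as1} in the Appendix is formulated for a \emph{cubic} phase $y_2^3\,b(\cdot)$ and has no quadratic analogue; for $A-By_2-y_2^2b(\cdot)$ there is no estimate of the form $\max\{|A|^{1/2},|B|\}^{-\al}$ with $\al>0$. Since a degree-two polynomial attains each value at most twice, $\int|\check\chi_1(A-By_2-y_2^2b+\dots)|\,dy_2$ is merely \emph{uniformly bounded} in $A$ and $B$ — and this is exactly what \eqref{5.30n} records. Moreover, in the $III_0$ range $|A|=\la_3|Q_A|\lesssim(\la_3^{1/2})^2$ and $|B|=\la_3^{1/2}|Q_B|\lesssim\la_3^{1/2}$, so neither $|A|\gg T^2$ nor $|B|\gg T$ (with $T\sim\la_3^{1/2}$) can kick in and no decay is available. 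The decay that actually makes the $\la_1$-sum converge in the $|D|\ge 4c$ regime comes entirely from the first and second factors of $F_\de$: from the rapid decay of $\check\chi_1(y_1)$ when $|y_1|>|D|/4c$, and otherwise from the observation that $|Ey_2|\le|E|\la_3^{1/2}\le\la_1\de_0\ll 1$ throughout the $III_0$ range, so that $|D-Ey_2+r_1|\gtrsim|D|$. Because no $(A,B)$-decay is available nor needed, the four-region decomposition you borrow from Proposition~\ref{analyint3} collapses to the two-case split $|D|\gtrless 4c$, which is exactly the paper's argument; and the $|D|<4c$ part is handled by Lemma~\ref{simplesum} in $\la_1$ alone, the $\la_3$-sum remaining absolutely convergent without invoking the double-sum Lemma~8.1 of \cite{IM-rest1}.
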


 \begin{proof} 
 For fixed $\la_3$ satisfying $2^M \le \la_3\le 2^{-M}\de_0^{-1}$ we put
 $$
 \si^{\la_3}:=\sum_{\{\la_1:2^{M}\la_3\le \la_1\le 2^{-M}\de_0^{-1}\}}  \nu^{(\la_1,\la_1,\la_3)}_\de,
 $$
 so that
 $$
 \nu_\de^{III_{0}}=\sum_{2^M \le \la_3\le 2^{-M}\de_0^{-1}}\si^{\la_3}.
 $$
 We embed $\si^{\la_3}$ into  an analytic family of measures
 $$
\si^{\la_3}_\ze(x):=\gamma(\ze) 
\sum_{\{\la_1:2^{M}\la_3\le \la_1\le 2^{-M}\de_0^{-1}\}} \la_1^{\frac{1-3\ze}2} \nu^{(\la_1,\la_1,\la_3)}_\de,\quad \ze\in\Sigma,
$$
where  $\ga(\ze):=2^{3(1-\ze)/2}-1,$ so that $\si^{\la_3}_{1/3}=\si^{\la_3}.$ From \eqref{2.14II} we obtain that
$$
\|\widehat{\si^{\la_3}_{it}}\|_\infty \le C\la_3^{-\frac 12} \qquad \forall t\in\RR.
$$
We shall also prove that for every sufficiently small  $\eps>0$ there is a constant $C_\eps$ such that 
$$
\|\si^{\la_3}_{1+it}\|_\infty \le C_\eps\la_3^{\frac {1+\eps}2} \qquad \forall t\in\RR.
$$
By Stein's interpolation theorem for analytic families of operators,  these estimates easily imply that 
$$
\|T^{\la_3}\|_{p_c\to p'_c}\lesssim (\la_3^{-\frac 12})^{\frac 23}(\la_3^{\frac {1+\eps}2})^{\frac 13}=\la_3^{\frac {\eps-1}6},
$$
where $T^{\la_3}$ denotes the operator of convolution with $\widehat{\si^{\la_3}}.$ 
Thus, if we choose $\eps$ sufficiently small, we can also sum in $\la_3$ and obtain \eqref{TIII0}. 
\medskip

Our goal will thus be to show that for $\eps$ sufficiently small, we have
 \begin{equation}\label{III01}
|\si^{\la_3}_{1+it}(x)|  \le C_\eps\la_3^{\frac {1+\eps}2},
\end{equation}
where $C_\eps$ is independent of $t,x,\de$ and $\la_3.$  

To this end, observe that 	
$$
\si^{\la_3}_{1+it}(x):=\gamma(1+it) 
\sum_{\{\la_1:2^{M}\la_3\le \la_1\le 2^{-M}\de_0^{-1}\}} \la_1^{-\frac 32 it} \, \la_1^{-1}\nu_\de^{(\la_1,\la_1,\la_3)}(x),
$$
and, by \eqref{2.8II}, \eqref{2.9II},
\begin{eqnarray}\nonumber
\la_1^{-1}\nu_\de^{(\la_1,\la_1,\la_3)}(x)&=&\la_1\la_3 \int \check\chi_1\Big({\la_1}(x_1-y_1)\Big) \, \check\chi_1\Big({\la_1}(x_2-\de_0y_2-y_1^2\om(\de_1y_1))\Big)\\
&&\check\chi_1\Big({\la_3}\Big(x_3-y_2^2  \,b(y_1,y_2,\de)-y_1^n\al(\de_1 y_1)- \de_3y_2  y_1^{n_1}\al_1(\de_1y_1)\Big)\Big)\label{mula1la0}
\,\eta(y) \,dy,
\end{eqnarray}
where $\eta$ is supported where $y_1\sim 1$ and $|y_2|<\ve.$
Now, if  $|x_1|\gg 1,$  or $|x_2|\gg 1,$  then similar arguments as in the preceding subsection show that 
$|\la_1^{-1}\nu_\de^{(\la_1,\la_1,\la_3)}(x)|\lesssim \la_1^{-N}\la_3\le \la_3^{-1}\la_1^{2-N}$ for every $N\ge 2,$ which implies \eqref{III01}.

\medskip 
We shall therefore  assume from now on that  $|x_1|+|x_2|\lesssim 1.$ 
The  change of variables $y_1\mapsto x_1-y_1/\la_1, \  y_2\mapsto y_2/\la_3^{1/2}$ then  leads  in a similar way as in the previous subsection to $\la_1^{-1}\nu_\de^{(\la_1,\la_1,\la_3)}(x)=\la_3^{1/2} \mu_{\la_1,\la_3}(x),$ where 
$$
\mu_{\la_1,\la_3}(x):=\iint  \check\chi_1(y_1)F_\de(\la_1,\la_3,x,y_1,y_2)\, dy_1dy_2,
$$
with
\begin{eqnarray*}
&&F_\de(\la_1,\la_3,x,y_1,y_2):=\eta(x_1-\la_1^{-1}y_1,\,  \la_3^{-\frac12} y_2) \,\check\chi_1(D-Ey_2+r_1(y_1))\\
&\times& \check\chi_1\Big(A- B y_2-y_2^2 b(x_1-\la_3^{-1}(\la_3\la_1^{-1}y_1),\la_3^{-\frac12}y_2,\de)+\la_3\la_1^{-1}\Big(r_2(y_1)+ (\la_3^{-\frac 12}y_2)\de_3r_3(y_1)\Big)\Big).
\end{eqnarray*}
Here,  the quantities $A$ to $ E$ are given by 
\begin{eqnarray*}
&& A:=\la_3Q_A(x) , \qquad B:=\la_3^{\frac 12}Q_B(x),\\
&& D:=\la_1Q_D(x),  \qquad E:=(\de_0\la_1)\la_3^{-\frac 12},
\end{eqnarray*}
with $ Q_A(x), Q_B(x) $ and  $Q_D(x)$ as in \eqref{fd2}.  The functions $r_i(y_1)$ have properties as before, and we  choose again $c>0$ so that $|r_i(y_1)|\le c(1+|y_1|), i=1,2,3.$ Observe also that in this integral, $|y_1|\lesssim \la_1$ and $|y_2|\ll \la_3^{1/2},$ and that only $D$ and $E$ depend on the summation variable $\la_1.$ 

\medskip 
It will be useful to observe that a simple van der Corput estimate allows to  show that 
\begin{equation}\label{5.30n}
\int \Big|\chi_1\Big(A- B y_2-y_2^2 b(x_1\dots,\de)+\la_3\la_1^{-1}\Big(r_2(y_1)+ (\la_3^{-\frac 12}y_2)\de_3r_3(y_1)\Big)\Big)\Big|\, dy_2\le C,
\end{equation}
with a constant $C$ which does not depend on $A,B,x,y_1,$  the $\la_j$ and $\de.$ 

\medskip
\noi {\bf 1. The part where $|D|\ge 4c.$ } 
 Denote  by $\si_{1+it,1}^{\la_3}(x)$ the contribution to $\si_{1+it}^{\la_3}(x)$ by the terms for which 
  $|D(x,\la_1,\de)|\ge 4c.$
We claim  that, for every $N\in\NN,$ 
\begin{equation}\label{5.31n}
|\mu_{\la_1,\la_3}(x)|\lesssim  |D|^{-N}.
\end{equation}
Clearly, this estimate will allow us to sum   in  $\la_1$ and obtain the right kind of  estimate $|\si_{1+it,1}^{\la_3}(x)|\le C\la_3^{1/2},$ 
in agreement with \eqref{III01}.

\smallskip

In order to prove \eqref{5.31n}, let us first consider the contribution $\mu^1_{\la_1,\la_3}(x)$  to the integral defining $ \mu_{\la_1,\la_3}(x)$ by the   region where $|y_1|> |D|/4c.$ Here we may estimate $|\check{\chi_1}(y_1)|\lesssim |D|^{-N}(1+|y_1|)^{-N}$ for every $N\in\NN,$ and combining this with \eqref{5.30n} clearly yields \eqref{5.31n}.

\smallskip
Denote next by $ \mu^2_{\la_1,\la_3}(x)$ the contribution by the region where $|y_1|\le |D|/4c.$ Then $|r_i(y_1)|\le |D|/2.$  But notice  also  that
$$
|Ey_2|\ll |E|\la_3^{\frac 12}\le \la_1\de_0\ll 1,
$$
which shows that $|D-Ey_2+r_1(y_1)|\ge |D|/4.$   Thus, the second factor in $F_\de$ can be estimated by $C_N|D|^{-N},$ and again we arrive at \eqref{5.31n}.

\medskip
\noi {\bf 2. The part where $|D|< 4c.$ } 
 Denote  by $\si_{1+it,2}^{\la_3}(x)$ the contribution to $\si_{1+it}^{\la_3}(x)$ by the terms for which 
  $|D(x,\la_1,\de)|< 4c.$ 
  
  As in the discussion in the previous subsection (part 2) we see that the oscillatory sum defining $\la_3^{-1/2}\si_{1+it,2}^{\la_3}(x)$ can  essentially be written in the form \eqref{Ft1}, with $\al:=-3/2, l=k_1$  and
$$
u_1=2^{\be^1l}a_1:= \la_1^{-1}\la_3,\  u_2= 2^{\be^2l}a_2:=\la_1 Q_D(x),\  u_3=2^{\be^3l}a_3:=\la_1(\la_3^{-\frac 12} \de_0)
$$
and where the function $H=H_{\la_3,x,\de}$  of $u:=(u_1,u_2, u_3)$ is now given by
\begin{eqnarray*}
&&H(u):=\iint  \check\chi_1(y_1)\eta(x_1-u_1\la_3^{-1}y_1,\,  \la_3^{-\frac12} y_2) \,\check\chi_1(u_2-u_3y_2
+r_1(y_1;\la_3^{-1}u_1,x_1,\de))\\
&&\hskip1cm\times \check\chi_1\Big(A- B y_2-y_2^2 b(x_1-\la_3^{-1} u_1y_1,\la_3^{-\frac12}y_2,\de)
+u_1r_2(y_1;\la_3^{-1}u_1,x_1,\de)\\
&&\hskip3cm+ (\la_3^{-\frac 12}y_2)\de_3r_3(y_1;\la_3^{-1}u_1,x_1,\de)\Big)\, dy_1dy_2.
\end{eqnarray*}
Moreover, the cuboid $Q$ in Lemma \ref{simplesum} is defined by the conditions
$$
|u_1|\le 2^{-M}, \quad |u_2|<4c,\quad |u_3|\le 2^{-\frac 32 M}.
$$

Let us estimate the $C^1$- norm of $H$ on $Q.$ Because of \eqref{5.30n}, we clearly have $\|H\|_{C(Q)}\lesssim 1.$ 
We next consider partial derivatives of $H.$  From our integral formula for $H(u),$ it is obvious that the partial derivative of $H$ with respect to $u_1$ will essentially only produce additional factors of the form $\la_3^{-1} y_1, 
\la_3^{-1} y_1 y_2^2, \la_3^{-1/2} y_2 \la_3^{-1} y_1$ under the double integral. However, powers of $y_1$ can be absorbed by the rapidly decaying factor $\check\chi_1(y_1), $ and $|\la_3^{-1} y_2^2|\le \ve \ll 1,$  so that $|\pa_{u_1}H(u)|\lesssim 1$ too, and the same applies to $|\pa_{u_2}H(u)|.$ The main problem is again caused by the partial derivative with respect to $u_3,$ which produces an additional  factor $y_2.$ 

\medskip
However, arguing as in the preceding subsection (compare \eqref{5.10}), we find that for $\eps\in ]0,1]$ and $s\in [0,1]$
\begin{eqnarray*}
&&|\pa_{u_3}H(su)|\lesssim |u_3|^{\eps-1}\,s^{\eps-1}\iint\limits_{|y_2|\le \la_3^{\frac 12}}  (1+|y_1|)^{-N}
\Big|\check\chi_1(A- B y_2-y_2^2 b(x_1-\la_3^{-1} su_1y_1,\la_3^{-\frac12}y_2,\de)\\
&&\hskip1cm +su_1r_2(y_1;\la_3^{-1}su_1,x_1,\de)
 + (\la_3^{-\frac 12}y_2)\de_3r_3(y_1;\la_3^{-1}su_1,x_1,\de)|\Big|\, |y_2|^{\eps} \, dy_1dy_2.
\end{eqnarray*}
 Estimating  $|y_2|^\eps$ in a trivial way by $|y_2|^\eps\le \la_3^{\eps/2},$  we see by means of \eqref{5.30n} that 
\begin{equation}\label{5.32n}
|\pa_{u_3}H(su)|\lesssim |u_3|^{\eps-1}\,s^{\eps-1}\la_3^{\frac \eps 2}, \qquad \mbox{for all} \ u\in Q.
\end{equation}
By means of Lemma \ref{simplesum} (and our choice of $\ga(\ze)$), this implies  that 
$$
|\la_3^{-1/2}\si_{1+it,2}^{\la_3}(x)|\lesssim\la_3^{\frac \eps 2},
$$
which  completes  the proof of \eqref{III01}, and hence also of Proposition \ref{analyint0}.

 \end{proof}

\bigskip

 \subsection{Estimation of   $T_\de^{III_{2}}$: Complex interpolation}\label{complexint2}
  The discussion of this operator will  somewhat resemble the one of the operator $T_{\de,j}^{VI}$ in Subsection 8.2 of \cite{IM-rest1}, which arose from the same Subcase 3.2 (b)  of Subsection 5.3 in  \cite{IM-rest1}, with $2^{-j}$ playing again the role of $\de_0$ here, and where we have had $B=2,$ in place of $B=3$ here.
  
  Assuming again  without  loss of generality that $\la_1=\la_2,$ we see that here  we have to prove 
 
\begin{prop}\label{analyint4}
Let $m=2$ and $B=3,$ and consider  the measure 
$$
\nu_\de^{III_{2}}:=\sum_{2^{M}\de_0^{-1}\le\la_1<\de_0^{- 3/2}} \sum_{2^M \la_1\de_0\le \la_3\le 2^{-M}
(\la_1\de_0)^3} \nu^{(\la_1,\la_1,\la_3)}_\de,
$$
where summation is  taken over all sufficiently large dyadic $\la_i$ in the given range. If we denote by $T_\de^{III_{2}}$  the operator of convolution with $\widehat{\nu_\de^{III_{2}}},$ then, if $M\in \NN$ is sufficiently large (and $\ve$ sufficiently    small), 
\begin{equation}\label{TIII2}
\|T_\de^{III_{2}}\|_{6/5\to 6}\le C,
\end{equation}
with a constant $C$ not depending on $\de,$ for $\de$ sufficiently small.
\end{prop}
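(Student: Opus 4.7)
The plan is to mirror the complex-interpolation scheme used in Proposition \ref{analyint3}. Introduce for $\zeta$ in the strip $\Sigma=\{\zeta\in\bC:0\le\Re\zeta\le 1\}$ the analytic family
\begin{equation*}
\mu_\zeta(x):=\gamma(\zeta)\sum_{2^M\de_0^{-1}\le 2^{k_1}<\de_0^{-3/2}}\ \sum_{2^M\cdot 2^{k_1}\de_0\le 2^{k_3}\le 2^{-M}(2^{k_1}\de_0)^3} 2^{\frac{(1-3\zeta)k_1}{2}}\,2^{\frac{(1-3\zeta)k_3}{3}}\,\nu_\de^{(2^{k_1},2^{k_1},2^{k_3})}(x),
\end{equation*}
where $\gamma=\gamma_1\gamma_2\gamma_3$ is an entire function bounded on $\Sigma$ with $\gamma(1/3)=1$; each $\gamma_j$ will be chosen later so as to cancel the $(2^{i\al t}-1)^{-1}$-type factors produced by Lemma \ref{simplesum} and Lemma~8.1 of \cite{IM-rest1}. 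Then $\mu_{1/3}=\nu_\de^{III_{2}}$, and Stein's interpolation theorem for analytic families of operators reduces \eqref{TIII2} to the two boundary estimates $\|\widehat{\mu_{it}}\|_\infty\le C$ and $\|\mu_{1+it}\|_\infty\le C$, uniformly in $t$ and $\de$.

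The first boundary estimate is immediate from \eqref{2.14II} with $B=3$, since the $\widehat{\nu_\de^\la}$ have essentially disjoint supports. The real work is the pointwise bound on $\mu_{1+it}(x)$. Setting $\mu_{\la_1,\la_3}(x):=\la_1^{-1}\la_3^{-2/3}\nu_\de^{(\la_1,\la_1,\la_3)}(x)$, rescaling $y_1\mapsto x_1-y_1/\la_1$ and $y_2\mapsto y_2/\la_3^{1/3}$ in \eqref{mula1la3} and Taylor-expanding around $x_1$ yields the same integral representation \eqref{mulalaa} as in Proposition \ref{analyint3}, with $F_\de$ built from the quantities $A,B,D,E$ of \eqref{fd2}. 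After the standard reduction to $|x_1|\sim 1$, $|x_2|\lesssim 1$ by rapid-decay arguments, the essential new feature of the present regime is that $E=\la_1\de_0\la_3^{-1/3}\ge 1$ throughout (since $\la_3\le(\la_1\de_0)^3$), and in fact $1\le E\le(\la_1\de_0)^{2/3}\lesssim\de_0^{-1/3}$, while $1<\la_1\de_0<\de_0^{-1/2}$.

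Next, split the double sum $\sum_{\la_1,\la_3}\la_1^{-3it/2}\la_3^{-it}\mu_{\la_1,\la_3}(x)$ defining $\mu_{1+it}(x)$ into four parts according to whether $\max\{|A|,|B|\}\gtrless 1$ and $|D|\gtrless 4c$, exactly as in the template of Proposition \ref{analyint3}. In the part where $\max\{|A|,|B|\}\ge 1$ and $|D|\ge 4c$, direct pointwise estimates on $\mu_{\la_1,\la_3}(x)$ analogous to \eqref{5.15} are summable in both variables. In the two mixed parts a single oscillatory sum in $\la_1$ or $\la_3$ is evaluated via Lemma \ref{simplesum}: cast the partial sum in the form \eqref{Ft1}, control the $C^1$-norm of the auxiliary function $H$ by van der Corput in $y_2$ against the cubic phase (as in \eqref{5.18}--\eqref{5.20}), and absorb the resulting $(2^{i\al t}-1)^{-1}$-factor into $\gamma_1$ or $\gamma_2$, whereupon the remaining sum in the other variable is convergent. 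In the fourth part, where $\max\{|A|,|B|\}<1$ and $|D|<4c$, $\mu_{\la_1,\la_3}(x)$ is smooth in the bounded variables $A,B,D,E,\la_1^{-1},\la_3^{-1/3},\la_3\la_1^{-1}$, so Lemma~8.1 of \cite{IM-rest1} handles the oscillatory double sum, with $\gamma_3$ chosen as in Remark~8.2 there.

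The principal obstacle is the careful handling of the $y_2$-integration in the new $E\ge 1$ regime. In Proposition \ref{analyint3}, the condition $E\le 1$ guaranteed that the factor $\check\chi_1(D-Ey_2+r_1(y_1))$ did not materially further restrict $y_2$ on the support of $\eta$; here, by contrast, it localises $y_2$ to the thin interval $|y_2-(D+r_1(y_1))/E|\lesssim 1/E$, which can be much smaller than unity. When verifying the hypothesis of Lemma \ref{simplesum}, one must track this localisation through the cubic-phase van der Corput estimate for the $y_2$-integral; in particular, taking the $u_3$-derivative of $H$ introduces an extra factor $y_2$ that on the relevant support can be as large as $E/|u_3|$, and one must show via an $\eps$-loss argument modeled on \eqref{5.20} that the required $|u_3|^{\eps-1}$ bound for $\int_0^1|\pa_{u_3}H(su)|\,ds$ still holds with constants uniform in $\de$. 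Once this is in place, the $\la_1,\la_3$-summations are carried out exactly as in Proposition \ref{analyint3}, and Stein's theorem delivers \eqref{TIII2}.
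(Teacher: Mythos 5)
Your proposal keeps the rescaling $y_2\mapsto y_2/\la_3^{1/3}$ from the $T_\de^{III_1}$ argument, and you correctly identify that in the present regime $E=\la_1\de_0\la_3^{-1/3}\ge 1$. But you underestimate how serious that is. With your normalization $E$ ranges over $[1,(\la_1\de_0)^{2/3}]\subset[1,\de_0^{-1/3}]$, so it is not a bounded quantity uniformly in $\de$. That breaks your treatment of the fourth regime (where $\max\{|A|,|B|\}<1$ and $|D|<4c$): you assert that $\mu_{\la_1,\la_3}(x)$ is a smooth function of the ``bounded variables $A,B,D,E,\la_1^{-1},\la_3^{-1/3},\la_3\la_1^{-1}$'', but $E$ is unbounded, so Lemma~8.1 of \cite{IM-rest1} does not apply as stated. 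The same defect kills the mixed cases: when you try to invoke Lemma \ref{simplesum} with $u_3$ proportional to $E$, the cuboid $Q$ must contain values of $u_3$ up to $\de_0^{-1/3}$, hence $Q$ depends on $\de$, and the lemma's conclusion then only gives a constant that depends on $\de$. The ``$\eps$-loss'' trick from \eqref{5.20} handles the derivative factor $y_2$ but does nothing about the unboundedness of the cuboid itself.

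The paper circumvents this by a \emph{different rescaling}: $y_2\mapsto y_2/(\la_1\de_0)$ rather than $y_2\mapsto y_2/\la_3^{1/3}$, which is the natural scale here because $\la_3^{1/3}\le\la_1\de_0$ throughout. With that choice the factor $\check\chi_1(D-Ey_2+r_1(y_1))$ becomes $\check\chi_1(D-y_2+r_1(y_1))$ (coefficient $1$ on $y_2$), and instead the cubic term in $\phi_\de$ acquires the small prefactor $E:=\la_3/(\de_0\la_1)^3\le 2^{-M}$, which is now a legitimate bounded parameter. This changes the structure of the van der Corput estimates (in Part~1 one exploits that the second derivative of the $y_2$-phase is of size $|ED^3|$ precisely because of the small $E$; in Part~2 one rescales $y_2\mapsto (|B|/|E|)^{1/2}y_2$). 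There is a second structural ingredient you are missing: the rescaling produces a prefactor $\la_3^{1/3}/(\la_1\de_0)$ that depends on both summation variables, and to control it the paper passes from $(\la_1,\la_3)$ to new variables $(\la_0,\la_4)$ with $\la_1=\la_0\la_4/\de_0$, $\la_3=\la_4^3$, reducing matters to a one-parameter oscillatory sum in $\la_4$ whose output is shown to be $O(\la_0^\eps)$ with $\eps<1$ (estimate \eqref{5.30}). Without this reparametrization, the prefactor cannot be cleanly absorbed. So while the analytic-family/Stein-interpolation outer framework in your proposal is the right one, the core of the argument — the choice of rescaling and the change of summation variables — is genuinely different, and the proposal as written has a gap that would prevent the oscillatory-sum lemmas from closing.
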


\begin{proof}
 
Recall that, by \eqref{2.14II},  $\|\widehat{\nu^{\la}_{\de}}\|_\infty\lesssim \la_1^{-1/2} \la_3^{-1/3}.$ In analogy to the proof of Proposition \ref{analyint3}, we  therefore define here for $\ze$ in the strip $\Sigma=\{\zeta\in \bC: 0\le \Re \zeta\le 1\}$  an analytic family of measures by 
$$
\mu_\ze(x):=\gamma(\ze) \sum_{2^{M}\de_0^{-1}\le 2^{k_1}<\de_0^{-3/2}} \sum_{2^{M+k_1}\de_0\le 2^{k_3}\le 2^{-M} 2^{3k_1}\de_0^3} 2^\frac{(1-3\zeta)k_1}2 2^\frac{(1-3\zeta)k_3}3\nu^{(2^{k_1},2^{k_1},2^{k_3})}_\de,
$$
where we may here put 
$$\ga(\ze):=\frac {1-2^{\frac 92(1-z)}}{1-2^3}.
$$
By $T_\ze$  we denote again  the operator of convolution with $\widehat{\mu_\ze}.$ 
Observe that  for $\ze=\theta_c=1/3,$ we have $\mu_{\th_c}=\nu_{\de}^{III_2},$ hence 
$
T_{\theta_c}=T_{\de}^{III_2},
$
 so that,  arguing exactly as in the preceding subsection, by means of Stein's interpolation theorem  \eqref{TIII2}  will follow if we can prove that there is a constant $C$ such that  
 \begin{equation}\label{5.25}
|\mu_{1+it}(x)|\le C,
\end{equation}
where $C$ is independent of $t,x$ and $\de.$  

Setting 
$$
\mu_{\la_1,\la_3}(x):=\la_1^{-1}\la_3^{-\frac23} \nu_\de^{(\la_1,\la_1,\la_3)}(x), 
$$
we may  re-write 
\begin{equation}\label{5.26}
\mu_{1+it}(x)=\gamma(1+it) \sum_{2^{M}\de_0^{-1}\le\la_1<\de_0^{- 3/2}} \sum_{2^M \la_1\de_0\le \la_3\le 2^{-M}
(\la_1\de_0)^3}\la_1^{-\frac 32 it}\la_3^{-it} \mu_{\la_1,\la_3}(x).
\end{equation}

Arguing as in the preceding subsection, by means  the identity \eqref{mula1la3} we see again  that we may assume in the sequel that $|x_1|\sim 1$ and $|x_2|\lesssim 1$  (notice that also here we have $\la_3\ll \la_1$).

By means of the change of variables $y_1\mapsto x_1-y_1/\la_1, \  y_2\mapsto y_2/(\la_1\de_0)$ and Taylor expansion around $x_1$ we  may re-write $\mu_{\la_1,\la_3}(x)=\la_3^{1/3}/(\la_1\de_0)\, \tilde\mu_{\la_1,\la_3}(x),$ where
\begin{equation}\label{mulala}
\tilde\mu_{\la_1,\la_3}(x)=\iint  \check\chi_1(y_1)\tilde F_\de(\la_1,\la_3,x,y_1,y_2)\, dy_1dy_2,
\end{equation}
with
\begin{eqnarray*}
&&\tilde F_\de(\la_1,\la_3,x,y_1,y_2):=\eta(x_1-\la_1^{-1}y_1,\,  \de_0^{-1}\la_1^{-1} y_2) \,\check\chi_1(D-y_2+r_1(y_1))\\
&\times& \check\chi_1\Big(A- B y_2-E\,y_2^3 b(x_1-\la_1^{-1}y_1),\de_0^{-1}\la_1^{-1} y_2,\de)+\la_3\la_1^{-1}\Big(r_2(y_1)+ (\de_0^{-1}\la_1^{-1}y_2)\de_3r_3(y_1)\Big)\Big).
\end{eqnarray*}
Here,  the quantities $A$ to $ E$ are given by 
\begin{eqnarray}\nonumber
&& A=A(x,\la_3,\de):=\la_3Q_A(x) , \qquad B:=B(x,\la_1,\la_3,\de):=\frac{\la_3}{\la_1}Q_B(x),\\
&& D=D(x,\la_1,\de):=\la_1Q_D(x),  \qquad E:=\frac{\la_3}{(\de_0\la_1)^3}\le 2^{-M},\label{fd3}
\end{eqnarray}
with
$$
Q_A(x):=x_3-x_1^n\al(\de_1 x_1),\quad Q_B(x):=\frac{\de_3}{\de_0} x_1^{n_1}\al_1(\de_1x_1), \quad Q_D(x):=x_2-x_1^2\om(\de_1 x_1).
$$
Again, the functions $r_i(y_1)=r_i(y_1; \la_1^{-1}, x_1,\de),\,  i=1,2,3, $ are smooth functions of $y_1$  (and $\la_1^{-1}$ and $x_1$)  satisfying estimates of the form \eqref{5.14}.
Moreover, we may assume that $|y_1|\lesssim \la_1$ and $|\de_0^{-1}\la_1^{-1} y_2|\le \ve,$ because of our assumption  $|x_1|\sim1$ and the support properties of $\eta.$ 

The  factor $\la_3^{1/3}/(\la_1\de_0)$ by which $\mu_{\la_1,\la_3}(x)$ and  $\tilde\mu_{\la_1,\la_3}(x)$ differ suggests to decompose  the summation over $k_3$ into three arithmetic progressions $k_3=i+3k_4,\, i=0,1,2$  (cf. a similar discussion in \cite{IM-rest1}).  Restricting  ourselves to anyone of them, let us  assume for simplicity that $i=0,$ so that  $k_3=3k_4,$ with $k_4\in \NN.$ Let us also assume that $\de_0$ is a dyadic number (otherwise, replace $\de_0$ by the biggest dyadic number smaller or equal to $\de_0$).  It is then  convenient to introduce new summation variables $(k_0,k_4)$ in place of $(k_1,k_3)$ by requiring that  $k_1=k_0+k_3/3-\log_2(\de_0)=k_0+k_4-\log_2(\de_0).$ In terms of their exponentials  
$\la_0:=2^{k_0}$ and $\la_4:=2^{k_4},$ this means that 
$$
\la_1=\frac{\la_0\la_4}{\de_0},\quad \la_3=\la_4^3,
$$
and we can re-write the  conditions  on the index sets for  $\la_1$ and $\la_3$ over which we sum in \eqref{5.26} as 
\begin{equation}\label{5.29}
\la_0\ge 2^{\frac M 3},\quad 2^{\frac M 2}\la_0^{\frac 12}\le \la_4\le \de_0^{-\frac 12}\la_0^{-1},
\end{equation}
and correspondingly we shall re-write \eqref{5.26} as 
$$
\mu_{1+it}(x)=\gamma(1+it) \,\de_0^{\frac 32 it}\sum_{\la_0\ge 2^{\frac M 3}}\sum_{2^{\frac M 2}\la_0^{\frac 12}\le \la_4\le  \de_0^{-\frac 12}\la_0^{-1}}\la_0^{-(1+\frac 32 it)}\la_4^{-\frac 92 it}  \tilde\mu_{\frac{\la_0\la_4}{\de_0},\la_4^3}(x).
$$
For $\la_0$ and $x$ fixed, let us put 
\begin{eqnarray*}
f_{\la_0,x}(\la_4)&:=& \tilde\mu_{\frac{\la_0\la_4}{\de_0},\la_4^3}(x),\\
\rho_{t,\la_0}(x)&:=&\ga(1+it)
\sum_{\{\la_4:\, 2^{\frac M 2}\la_0^{\frac 12}\le \la_4\le  \de_0^{-\frac 12}\la_0^{-1}\}}\la_4^{-\frac 92 it}f_{\la_0,x}(\la_4).
\end{eqnarray*}

The previous formula for $\mu_{1+it}(x)$  shows that in order to verify \eqref{5.25},  it will suffice to prove the following uniform estimate: there exist constants $C>0$ and $\epsilon\ge 0$ with $\epsilon <1,$ so that for all $x$ such that $|x_1|+|x_2|\lesssim 1$ and $\de$ sufficiently small we have 
\begin{equation}\label{5.30}
 |\rho_{t,\la_0}(x)|\le C\la_0^\epsilon \qquad\mbox{for} \quad  \la_0\ge2^{\frac M3}.
\end{equation}

In order to prove this, observe that by \eqref{mulala}
\begin{equation}\label{fla0}
f_{\la_0,x}(\la_4)=\iint  \check\chi_1(y_1)F_\de(\la_0,\la_4,x,y_1,y_2)\, dy_1dy_2,
\end{equation}
where
\begin{eqnarray*}
&&F_\de(\la_0,\la_4,x,y_1,y_2):=\eta(x_1-\de_0(\la_0\la_4)^{-1}y_1,\,  (\la_0\la_4)^{-1}y_2,\de) \,\check\chi_1(D-y_2+r_1(y_1))\\
&&\hskip1cm \times\check\chi_1\Big(A-By_2-E\,y_2^3  b(x_1-\de_0(\la_0\la_4)^{-1}y_1,(\la_0\la_4)^{-1}y_2,\de)+\de_3\de_0\la_4\la_0^{-2}\, y_2\, r_3(y_1)\\
&&\hskip4 cm +\de_0\la_4^2\la_0^{-1}r_2(y_1)\Big)
\end{eqnarray*}
and
  \begin{eqnarray}\nonumber
&& A=A(x,\la_4,\de):=\la_4^3Q_A(x) , \qquad B:=B(x,\la_0,\la_4,\de):=\frac{\la_4^2}{\la_0}Q_B(x),\\
&& D=D(x,\la_0,\la_4,\de):=\frac{\la_0\la_4}{\de_0}Q_D(x),  \qquad E:=\la_0^{-3}\le 2^{-M},\label{fd4}
\end{eqnarray}
where 
$$
Q_A(x):=x_3-x_1^n\al(\de_1 x_1),\quad Q_B(x):=\de_3 x_1^{n_1}\al_1(\de_1x_1), \quad Q_D(x):=x_2-x_1^2\om(\de_1 x_1).
$$
  
The functions $r_i(y_1)=r_i(y_1; \la_0^{-1}, \la_4^{-1}, x_1,\de),\,  i=1,2,3, $ are smooth functions of $y_1$  (and $\la_1^{-1}, \la_4^{-1}$ and $x_1$),  satisfying estimates of the form
\begin{equation}\label{5.33}
|r_i(y_1)|\le C|y_1|, \ 
\quad \left|\left(\frac{\pa}{\pa (\la_4^{-1})}\right)r_i(y_1; \la_0^{-1}, \la_4^{-1}, x_1,\de)\right|\le C_l |y_1|^{2}
\end{equation}
(compare \eqref{5.14}).

  \medskip
    Given $x$ and $\la_0,$ we shall split the summation in $\la_4$ into sub-intervals, according to the (relative ) sizes of the quantities $A,B$ and $D,$ which are considered as functions of $\la_4.$ 

\medskip
\noi {\bf 1. The part where $|D|\gg 1$. } Denote by $\rho_{t,\la_0}^1(x)$ the contribution to $\rho_{t,\la_0}(x)$ by the terms for which  $|D|\gg 1.$ 

We first consider the contribution to $f_{\la_0,x}(\la_4)$ given by integrating in \eqref{fla0} over the region where $|y_1|\gtrsim |D|^\ve$ (where $\ve>0$ is assumed to be sufficiently small).  Here, the rapidly decaying  first factor $\check\chi_1(y_1)$  leads to an improved estimate of this contribution of the order $|D|^{-N}$ for every $N\in \NN,$ which allows to sum over the dyadic $\la_4$ for which $|D|\gg 1,$ and the contribution to  $\rho_{t,\la_0}^1(x)$ is of order $O(1),$ which is stronger than what is needed in \eqref{5.30}.

We may therefore restrict ourselves in the sequel to the region where $|y_1|\ll |D|^\ve.$ Observe that, because of \eqref{5.33},   this implies in particular  that $|r_i(y_1)|\ll |D|^\ve, \, i=1,2,3.$  By looking at the second factor in $F_\de,$ we  again see that the contribution by the regions where in addition $|y_2|<|D|/2,$ or  $|y_2|>3|D|/2,$ is again of the  order $|D|^{-N}$ for every $N\in \NN,$  and their contributions to 
$\rho^1_{t,\la_0}(x)$ are again admissible.

  \medskip
What remains is the region where  $|y_1|\ll |D|^\ve$ and $|D|/2\le |y_2|\le 3|D|/2.$ In addition, we may assume that $y_2$ and $D$ have the same sign, since otherwise we can estimate as before. Let us therefore assume, e.g.,  that  $D>0,$ and that $D/2\le y_2\le 3D/2.$ 

\medskip
The change of variables $y_2\mapsto Dy_2$ then allows to re-write the corresponding contribution to 
$f_{\la_0,x}(\la_4)$ as 
\begin{equation}\label{mulala3}
\tilde f_{\la_0,x}(\la_4):=D\int_{|y_1|\ll |D|^\ve} \int_{1/2\le y_2\le 3/2} \check\chi_1(y_1)\tilde F_\de(\la_0,\la_4,x,y_1,y_2)\, dy_2dy_1,
\end{equation}
where here 
\begin{eqnarray*}
&&\tilde F_\de(\la_0,\la_4,x,y_1,y_2):=\eta(x_1-\de_0(\la_0\la_4)^{-1}y_1,\,  (\la_0\la_4)^{-1}D y_2,\de) \,\check\chi_1(D-Dy_2+r_1(y_1))\,\chi_1(y_2)\\
&& \check\chi_1\Big(A-BDy_2-ED^3\,y_2^3  b(x_1-\de_0(\la_0\la_4)^{-1}y_1,(\la_0\la_4)^{-1}D y_2,\de)+\de_3\de_0\la_4\la_0^{-2} D\, r_3(y_1)\, y_2\\
&&\hskip4 cm +\de_0\la_4^2\la_0^{-1}r_2(y_1)\Big),
\end{eqnarray*}
and where $\chi_1$ is supported where $y_2\sim 1.$ In the subsequent discussion, we may and shall assume that  $f_{\la_0,x}(\la_4)$ is replaced by $\tilde f_{\la_0,x}(\la_4).$

Recall also from \eqref{bfine} that $b(x_1,x_2,\de)=b_3(\de_1x_1, \de_2x_2),$  and that $\de_2\ll \de_0.$  The last estimate implies  that 
$$
|\de_2(\la_0\la_4)^{-1}D|=\frac {\de_2}{\de_0}|Q_D(x)|\ll 1,
$$
which shows that the second derivative of the argument of the last factor of  our function $\tilde F_\de(\la_0,\la_4,x,y_1,y_2)$ with respect to $y_2$ is comparable to  $|ED^3|.$ We may therefore apply a  classical van der Corput estimate for the integration in $y_2$  (see \cite{vdC};  also   case (i) in Lemma 2.2 (b) in \cite{IM-rest1}) and obtain that 
$$
|\tilde f_{\la_0,x}(\la_4)|\lesssim |D||ED^3|^{-\frac 12}=\la_0^{\frac 32} |D|^{-\frac 12}.
$$
 Interpolation with  the trivial estimate $|\tilde f_{\la_0,x}(\la_4)|\lesssim 1$ then leads to 
$
|\tilde f_{\la_0,x}(\la_4)|\lesssim  \la_0^{\frac 12}|D|^{-\frac 16}.
$
The second factor allows to sum in $\la_4, $ since we are assuming that $|D|\gg 1 ,$ and we obtain $|\rho_{t,\la_0}^{1}(x)|\le C\la_0^{1/2},$  in agreement with   \eqref{5.30}.

\bigskip
{\bf We may thus in the sequel assume that $|D|\lesssim 1$.}  
Here  we go back to \eqref{fla0} and observe that $\check\chi_1(y_1) \check \chi_1(D-y_2+r_1(y_1))$ can be estimated by $C_N\,(1+|y_1|)^{-N}(1+|y_2|)^{-N}.$ This shows in  particular that any power of $y_1$ or $y_2$ can be ``absorbed'' by these two factors.

\medskip
We shall still have to distinguish between the cases where $|B|\ge 1,$ and where $|B|< 1.$

\medskip
\noi {\bf 2. The part where $|D|\lesssim 1$ and $|B|\ge 1.$ }  Denote by $\rho_{t,\la_0}^{2}(x)$ the contribution to $\rho_{t,\la_0}(x)$ by the terms for which $|D|\lesssim 1$ and $|B|\ge 1.$ 

\medskip
If $|y_2|\gtrsim (|B|/|E|)^{1/2},$ then we see that we can estimate the contribution to  $ f_{\la_0,x}(\la_4)$ by a constant times $(|B|/|E|)^{-1/2}=\la_0^{-3/2}|B|^{-1/2}.$ Summing over all $\la_4$ such that $|B|\ge 1$  then leads to a  uniform estimate  for the contributions of these regions to  $\rho_{t,\la_0}^{2}(x).$

\medskip
So, assume that $|y_2|\ll (|B|/|E|)^{1/2}=:H.$ Applying the change of variables $y_2\mapsto H y_2,$ we then see that we may replace $f_{\la_0,x}(\la_4)$ by 
\begin{equation}\label{flaa0}
\tilde f_{\la_0,x}(\la_4)= H\, \iint \check\chi_1(y_1)\tilde F_\de(\la_0,\la_4,x,y_1,y_2)\, dy_1dy_2,
\end{equation}
where
\begin{eqnarray*}
&&\tilde F_\de(\la_0,\la_4,x,y_1,y_2):=\eta(x_1-\de_0(\la_0\la_4)^{-1}y_1,\,  (\la_0\la_4)^{-1}Hy_2,\de) \,\check\chi_1(D-Hy_2+r_1(y_1))\\
&&\hskip1cm \times\chi_0(y_2)\,  \check\chi_1\Big(A-HB \Big( y_2+  y_2^3 \,\sgn(B)\,  b(x_1-\de_0(\la_0\la_4)^{-1}y_1,(\la_0\la_4)^{-1}Hy_2,\de\Big)\\
&&\hskip3cm+\de_3\de_0\la_4\la_0^{-2} H\, y_2\, r_3(y_1) +\de_0\la_4^2\la_0^{-1}r_2(y_1)\Big)
\end{eqnarray*}
We claim that 
\begin{equation}\label{5.37}
|\tilde f_{\la_0,x}(\la_4)|\le C H|HB|^{-\frac 12}=\la_0^{\frac 34}|B|^{-\frac 14}.
\end{equation}
Since we are here assuming that $|B|\ge 1,$ this estimate would imply the estimate $|\rho_{t,\la_0}^{2}(x)|\le C\la_0^{3/4},$ again   in agreement with   \eqref{5.30}.

In order to prove \eqref{5.37}, observe first that the contribution to $\tilde f_{\la_0,x}(\la_4)$ by  the region where $|y_1|> |HB|$   clearly can be estimated by the right-hand side of \eqref{5.37}, because of the rapidly decaying factor $\check\chi_1(y_1)$ in the integrand.  And, on the remaining region where $|y_1|\le |HB|,$ we have 
\begin{eqnarray*}
|\de_3\de_0\la_4\la_0^{-2} H\, r_3(y_1)|&\le& \de_3\de_0\la_4\la_0^{-2} |H|\, |HB|=  \de_3\frac{\de_0\la_4^2}{\la_0}|Q_B(x)|^{\frac 12}|\, |HB| \\
&\lesssim& \la_0^{-3}\de_3^{\frac 32} \, |HB|\ll  |HB|.
\end{eqnarray*}
Observe also that  $(\la_0\la_4)^{-1}H=|Q_B(x)|^{\frac 12}\lesssim \de_3^{\frac 12}\ll 1.$ 
This shows that if $\ga$  denotes the  argument of the last factor of $\tilde F_\de(\la_0,\la_4,x,y_1,y_2),$ then 
there are constants $0<C_1<C_2,$ such that
$$
C_1|HB|\le \Big|\frac{\pa}{\pa{y_2}} \ga(y_2)\Big|+\Big|\Big(\frac{\pa}{\pa{y_2}}\Big)^2\ga (y_2)\Big|\le C_2|HB|,\quad |y_2|\lesssim 1,$$
uniformly in $x, y_1$ and $\de.$  We may thus apply a van der Corput type estimate (see case (ii) in Lemma 2.2 (b) of \cite{IM-rest1}) to the integration in $y_2$  and again arrive at  an estimate by the right-hand side of \eqref{5.37}, also for the contribution by the region where $|y_1|\le |HB|.$ 

\medskip
\noi {\bf 3. The part where $|D|\lesssim 1, |B|< 1$  and $|A|\gg 1.$}  Denote by $\rho_{t,\la_0}^{3}(x)$ the contribution to $\rho_{t,\la_0}(x)$ by the terms for which  these conditions are satisfied. We claim that  here we get 
\begin{equation}\label{5.38}
|f_{\la_0,x}(\la_4)|\le C|A|^{-N},
\end{equation}
for every $N\in\NN.$  This estimate will  imply the estimate $|\rho_{t,\la_0}^{3}(x)|\le C\la_0^{3/4},$ again   in agreement with   \eqref{5.30}.

In order to prove \eqref{5.38}, observe that  the contributions to $f_{\la_0,x}(\la_4)$  by the regions where  $|y_1|\gtrsim |A|^{1/3},$ or  $|y_2|\gtrsim |A|^{1/3},$ can be estimated by a constant times $(|A|^{1/3})^{-N},$ because  of the rapid decay in $y_1$ and $y_2$ of $F_\de.$  So, assume that $|y_1|+|y_2|\ll|A|^{1/3}.$ 
Then we see that 
$$
|By_2|\ll |B||A|^{1/3}\ll |A| \quad \mbox{and}\quad  |E y_2^3|\ll |EA|\ll |A|,
$$
as well as 
$$
|\de_3\de_0\la_4\la_0^{-2}\, y_2\, r_3(y_1)+\de_0\la_4^2\la_0^{-1}r_2(y_1)|\ll |A|^{\frac 23}\ll |A|,
$$
and thus the last factor  of $F_\de$  is of order $|A|^{-N}.$ Consequently, also the contribution to $f_{\la_0,x}(\la_4)$  by the region where   $|y_1|+|y_2|\ll|A|^{1/3}$ can be estimated as in \eqref{5.38}.

\medskip
\noi {\bf 4. The part where $\max\{|A|, |B|,|D|\}\lesssim 1.$ }  Denote by $\rho_{t,\la_0}^{4}(x)$ the contribution to $\rho_{t,\la_0}(x)$ by the terms for which $\max\{|A|, |B|,|D|\}\lesssim 1.$  Then we can easily  estimate $\rho_{t,\la_0}^{4}(x)$ by means of Lemma \ref{simplesum} in a very similar way as we did in the last part of Section 8 in \cite{IM-rest1}, and obtain that $|\rho_{t,\la_0}^{4}(x)|\le C. $ 

\medskip
This completes the proof of estimate \eqref{5.30} (with $\eps:=3/4$), and hence also of the proof of Proposition \ref{analyint4}.

  \end{proof}

\setcounter{equation}{0}
\section{The case where $\la_1\sim\la_2\sim \la_3$}\label{equalla}

We shall  assume for the sake of simplicity that 
$$
 \la_1=\la_2=\la_3\gg 1.
$$  
The more general case where  $\la_1\sim\la_2\sim \la_3\gg 1$ can be treated in a very similar way. By changing notation slightly, we shall denote in this section by $\la$ the common value of the $\la_j.$

We change coordinates from $\xi=(\xi_1,\xi_2,\xi_3)$ to $s_1, s_2$ and $s_3:=\xi_3/\la,$ i.e., 
$$
\xi_1=s_1\xi_3=\la s_1s_3,\quad \xi_2=\la s_2\xi_3=\la s_2s_3,\quad \xi_3=\la s_3,
$$
and write in the sequel 
$$
s:=(s_1,s_2,s_3) \quad s':=(s_1,s_2).
$$
Then we may re-write 
$$
\Phi(x,\de,\xi)=\la s_3\tilde \Phi(x,s'),
$$
where
\begin{eqnarray} \nonumber
\tilde \Phi(x,s')&:=&s_1x_1+s_2 x_1^m\omega(\de_1x_1)+ x_1^n\alpha(\de_1x_1)\\ 
&+& s_2\de_0x_2+\Big( x_2^B\, b(x_1,x_2,\de)+r(x_1,x_2,\de)\Big),\label{4.1II}
\end{eqnarray}
where $\om(0)\ne 0,\al(0)\ne 0, $ and $b(x_1,0,\de)\ne 0,$ if $x_1\sim 1,$ and where $\de$ and $r(x_1,x_2,\de)$ are given by \eqref{defde} and \eqref{defr}, respectively.

 Now, the first part of $\tilde\Phi$ has at worst an Airy type singularity with respect to $x_1,$ and the derivate of order $B$ with respect to $x_2$ does not vanish, so that we obtain

\begin{equation}\label{hatnuest}
\|\widehat{\nu_\de^\la}\|_\infty \lesssim  \la^{-\frac 13-\frac 1B}
\end{equation}
 (indeed, by localizing near a given point $x^0$ and looking at the corresponding Newton polyhedron of $\tilde \Phi$  at this point, this follows more precisely from the main result in \cite{IM-uniform}).
 On the other hand, standard van der Corput type arguments  (compare Lemma 2.2 in \cite{IM-rest1}) show that here 
\begin{equation}\label{nuest}
\|\nu_\de^\la\|_\infty \lesssim  \min\{  \la^3\la^{-1}\la^{-\frac 1B}, \, \la^3\la^{-1}(\la\de_0)^{-1}\}
=\la\,\min\{\la^{\frac{B-1}B}, \, \de_0^{-1}\}.
\end{equation}
 We therefore distinguish the following cases:
 \medskip
 
  {\bf Case A: $\la\le \de_0^{-B/(B-1)}.$} Then $\|\nu_\de^\la\|_\infty \lesssim \la^{2-1/B},$ and by interpolation we get
 \begin{equation}\label{4.2II}
\|T_\de^\la\|_{p_c\to p'_c}\lesssim \la^{-\frac 13-\frac 1B+\frac 73 \th_c},
\end{equation}
again with $\th_c=2/p'_c.$

\medskip
 
 {\bf  Case B: $\la> \de_0^{-B/(B-1)}.$} Then $\|\nu_\de^\la\|_\infty \lesssim \la\de_0^{-1},$ and by interpolation we get
 \begin{equation}\label{4.3II}
\|T_\de^\la\|_{p_c\to p'_c}\lesssim \la^{-\frac 13-\frac 1B+(\frac 43+\frac 1B) \th_c}\, \de_0^{-\th_c}.
\end{equation}

Observe that in both cases, the exponents of $\la$  in these estimates become strictly smaller if we replace $\th$ by a strictly smaller numbers, and the one of  $\de$ increases.

\medskip

\subsection{The case where $h^r+1>B$}  \label{thlB}
We observe that then $p'_c>2B,$ and thus 
$$\th_c<1/B.$$
This shows that 
$$
-\frac 13-\frac 1B+\frac 73 \th_c<-\frac 13-\frac 1B+\frac 73 \frac 1B=-\frac{B-4}{3B}.
$$
Thus, if $B\ge 4,$ then for $\th=\th_c,$ the exponent  of $\la$ in \eqref{4.2II} is strictly negative, so that in  Case A  we can sum the estimates for $p=p_c$ over  all $\la\le \de_0^{-B/(B-1)}$ and obtain, for $B\ge 4,$ 
\begin{equation}\label{4.4II}
\sum_{\la\le \de_0^{-B/(B-1)}}\|T_\de^\la\|_{p_c\to p'_c}\lesssim 1.
\end{equation}

Similarly, since 
$$
-\frac 13-\frac 1B+(\frac 43+\frac 1B) \th_c<-\frac 13-\frac 1B+(\frac 43+\frac 1B) \frac 1B=-\frac{B^2-B-3}{3B^2},
$$
where $B^2-B-3>0$ if $B\ge 3,$ we see that \eqref{4.3II} implies  in  Case B that
$$
\sum_{\la> \de_0^{-B/(B-1)}} \|T_\de^\la\|_{p_c\to  p'_c}\lesssim \de_0^{\frac{B+3-7B\th_c}{3(B-1)}}
<\de_0^{\frac{B-4}{3(B-1)}},
$$
hence, for $B\ge 4,$  
\begin{equation}\label{4.5II}
\sum_{\la> \de_0^{-B/(B-1)}} \|T_\de^\la\|_{p_c\to p'_c}\lesssim 1,\qquad (B\ge 4).
\end{equation}

\medskip

The case where $B=3$ requires more refined estimates, whereas the case  $B=2$ is rather easy to handle, 
given our assumption  \eqref{ontc}.

\medskip

\noindent {\bf Assume first that $B=2.$}Ê Then, by \eqref{4.2II}, if $\la\le \de_0^{-2},$
$$
\|T_\de^\la\|_{p_c\to p'_c}\lesssim \la^{-\frac 13-\frac 12+\frac 73 \th_c}= \la^{\frac {14\th_c-5}{6}},
$$
and since by our assumption \eqref{ontc} $\th_c\le1/3,$  the exponent of $\la$ in this estimate is strictly 
negative, so that we can sum over $\la$ and again obtain \eqref{4.4II}.

Similarly, by \eqref{4.3II},  if $\la>\de_0^{-2},$
$$
\|T_\de^\la\|_{p_c\to p'_c}\lesssim \la^{-\frac 13-\frac 12+(\frac 43+\frac 12) \th_c}\, \de_0^{-\th_c}
=\la^{\frac{11\th_c-5}{6}}\, \de_0^{-\th_c}.
$$
But, $11\th_c-5\le 11/3-5<0,$ and so we get 
$$
\sum_{\la> \de_0^{-2}} \|T_\de^\la\|_{p_c\to p'_c}\lesssim \de_0^{\frac{5-14\th_c}{3}}\le 1,
$$
so that \eqref{4.5II} holds true also in this case.

\bigskip
\noindent {\bf Assume next  that $B=3.$}Ê  Then in Case A, where  $\la\le \de_0^{-3/2},$  we have by \eqref{4.2II}
$$\|T_\de^\la\|_{p_c\to p'_c}\lesssim \la^{\frac{7\th_c-2}{3}},
$$
and thus, if $7\th_c-2<0,$ then  we can sum these estimates in $\la\le \de_0^{-3/2}$ and obtain \eqref{4.5II}.

Let us therefore assume henceforth that $\th_c\ge 2/7.$  Observe that 
by Lemma \ref{pc-est1}  we have $\th_c< \tilde\th_c,$ 
unless $\tilde h^r=d$ and $h^r+1\ge H,$  in which case we have $\th_c=\tilde\th_c$ and $\tilde p'_c=p'_c.$ Thus 
$$\|T_\de^\la\|_{p_c\to p'_c}\lesssim \la^{\frac{7\tilde \th_c-2}{3}},
$$
with $\tilde\th_c>2/7,$ unless $\th_c=\tilde\th_c=2/7, \tilde h^r=d$ and $h^r+1\ge H.$ 
Note that in the latter case, $H=B=3,$   and since $\tilde\th_c=2/7,$ we find that $m=5$ and $d=5.2.$ 
 
 \medskip
In this particular case, we  only get a uniform estimates for 
$\|T_\de^\la\|_{p_c\to p'_c}\lesssim 1.$ However, here we have  $\|\nu^\la_\de\|_\infty \lesssim \la^{5/3},$  since we are in Case A, and $\|\widehat{\nu^\la_\de}\|_\infty \lesssim \la^{2/3},$ whereas $\th_c=2/7,$ and thus $-(1-\th_c) 2/3+\th_c 5/3=0.$  
Moreover, $\nu^\la_\de=\nu_\de*\phi_\la,$ where the Fourier transform of $ \phi_\la$ is given by 
$\widehat{\phi_\la}(\xi)=\chi_1\Big(\frac {\xi_1}{\la}\Big)\chi_1\Big(\frac {\xi_2}{\la}\Big)\chi_1\Big(\frac {\xi_3}{\la}\Big).$
  This implies a uniform estimate of the $L^1$-norms of the  $\phi_\la$  for all dyadic $\la.$  We may thus  estimate the operator $T^{IV_1}$ of convolution with the   Fourier transform of the complex measure 
  $$
  \nu_\de^{IV_1}:=\sum_{\la\le \de_0^{-3/2}}\nu_\de^\la
  $$
  by means of  the real-interpolation Proposition \ref{bsint} in the same way as we estimated the operators $T^{I_1}$ and $T^{II_1}$ in Subsection \ref{realint1}, by adding the measure $\nu_\de^{IV_1}$ to the family of measures $\mu^i, i\in I,$ from the second class in \eqref{nudecomp}.
  
\medskip

So, assume that $\tilde\th_c>2/7.$ Then we find that 
$$
\sum_{\la\le \de_0^{-3/2}}\|T_\de^\la\|_{p_c\to p'_c}\lesssim \de_0^{\frac{2-7\tilde\th_c}{2}}.
$$

Let us next turn to Case B, where $\la>\de_0^{-3/2}.$ Then, by \eqref{4.3II},
$$
\|T_\de^\la\|_{p_c\to p'_c}\lesssim \la^{\frac{5\th_c-2}{3}}\, \de_0^{-\th_c}.
$$
Since $\th_c\le 1/3,$ we can sum in $\la$ and obtain
$$
\sum_{\la>\de_0^{-3/2}}\|T_\de^\la\|_{p_c\to p'_c}\lesssim  \de_0^{\frac{2-7\th_c}{2}}\le \de_0^{\frac{2-7\tilde\th_c}{2}}.
$$
Combining these estimates, we obtain
\begin{equation}\label{tdest}
\sum_{\la\gg 1}\|T_\de^\la\|_{p_c\to p'_c}\lesssim \de_0^{\frac{2-7\tilde\th_c}{2}}.
\end{equation}

Observe next that 
$$
\frac{2-7\tilde\th_c}2=1-\frac 7{\tilde p'_c}=\frac {2\tilde h^r-5}{\tilde p'_c},
$$
and recall that $\de_0=2^{-(\tilde\ka_2-m\tilde\ka_1)}.$
In combination with the re-scaling estimate \eqref{rescale} this leads to 
\begin{eqnarray}
\left(\int|\hat f|^2d\mu_{1,k} \right)^\frac12 \nonumber
&\lesssim &
2^{-k\Big(\frac{|\tilde\ka|}2-\frac{\tilde\ka_1(1+m)+1}{p_c'}+(\tilde\ka_2-m\tilde\ka_1)\frac {2\tilde h^r-5}{2\tilde p'_c}\Big)}\|f\|_{L^{p_c}}\\
&\le  &C
2^{-k\Big(\frac{|\tilde\ka|}2-\frac{\tilde\ka_1(1+m)+1}{\tilde p_c'}+(\tilde\ka_2-m\tilde\ka_1)\frac {2\tilde h^r-5}{2\tilde p'_c}\Big)}\|f\|_{L^{p_c}}.\label{rse}
\end{eqnarray}
where $\mu_{1,k}$ denotes the measure corresponding to the frequency  domains that we are here considering, i.e., $\mu_{1,k}$ corresponds to the re-scaled measure
$$
\nu_{1,\de}:=\sum_{\la\gg 1}\nu_\de^{(\la,\la,\la)}.
$$

But, 
\begin{eqnarray*}
E&:=&2\tilde p_c'\Big(\frac{|\tilde\ka|}2-\frac{\tilde\ka_1(1+m)+1}{\tilde p_c'}+(\tilde\ka_2-m\tilde\ka_1)\frac {2\tilde h^r-5}{2\tilde p'_c}\Big)\\
&=&|\tilde\ka|(2\tilde h^r+2)-2(\tilde\ka_1(1+m)+1)+(\tilde\ka_2-m\tilde\ka_1)(2\tilde h^r-5)\\
&=&\tilde\ka_2(4\tilde h^r-3)+\tilde\ka_1(3m-2\tilde h^r(m-1))-2,
\end{eqnarray*}
where
\begin{eqnarray*}
\tilde\ka_2(4\tilde h^r-3)&=&\frac {4m}{m+1}-3\tilde\ka_2,\\
\tilde\ka_1(3m-2\tilde h^r(m-1))&=&m\frac{\tilde\ka_1}{\tilde\ka_2}\Big(\frac 3H-2\frac{m-1}{m+1}\Big).
\end{eqnarray*}
Since $H\ge B=3,$ we see that $3/H-2(m-1)/(m+1)\le (3-m)/(m+1)\le 0$ if $m\ge 3.$ Thus, if $m\ge 3,$ then since $\tilde\ka_1/\tilde\ka_2=1/a<1/m$ we see that
$$
\tilde\ka_1(3m-2\tilde h^r(m-1))\ge  3\tilde\ka_2-2\frac{m-1}{m+1}
$$
 and altogether we find that $E\ge 0$ (even with strict inequality, if $H>3$). We thus have proved
\begin{equation}\label{4.6II}
\left(\int|\hat f|^2d\mu_{1,k} \right)^\frac12 \le C \|f\|_{L^{p_c}},
\end{equation}
with a constant $C$ not depending on $k,$  provided that $m\ge 3.$

\bigskip 
{\bf Assume finally that $B=3$ and  $m=2.$ } Recall also that we are still   assuming that $h^r+1>B=3$ and $\th_c\ge 2/7,$  so that $3<h^r+1\le 7/2.$

\medskip
We shall prove that the Newton-polyhedron of $\tpad$ respectively $\phi$ will have a particular structure.
 Indeed, if $\phi$ is analytic, then one can show that $\phi$ is  of type $Z,E,J,Q $ etc.,  in the sense
of Arnol'd's classification of singularities (compare  \cite{agv}). 

  We shall, however, content ourselves with a little less information, which will nevertheless  be sufficient for our purposes. 

Recall from \cite{IM-rest1} the notion of   {\it augmented Newton polyhedron} $\N^r(\tpad)$ of $\tpad.$ If $L$  denotes the principal line of $\N(\phi),$ then it is a supporting line to $\N(\tpad)$ too, and if $(A^+,B^+)$  denotes  the right endpoint of the line segment $L\cap\N(\tpad),$ then let $L^+$ be  the half-line $L^+\subset L$  contained in the principal line of $\N(\phi)$ with right endpoint $(A^+,B^+).$  Then $\N^r(\tpad)$  is the convex hull of the union of $\N(\tpad)$ with  the half-line $L^+.$  Recall also that  $\N^r(\tpad)$ and $\N^r(\pad)$  do agree in the closed half-space above the bi-sectrix $\Delta,$ so that  $h^r+1$ is the second coordinate of the  point at which the line $\Delta^{(m)}$ intersects the boundary of $\N^r(\tpad).$

\begin{proposition}\label{Z}
If $B=3,$  $m=2$ and $3<h^r+1\le 3.5,$ then $(A^+,B^+)=(1,3),$ and $\N^r(\tpad)$ has exactly two edges, $L^+$ and the line segment $[(1,3),(0,n)],$ which  is contained in the principal line $L^a$ of $\N(\tpad).$ 

In particular,   
\begin{equation}\label{typeZ}
\ka=\Big(\frac 17,\frac 27\Big),\quad h^r=d=\frac {7}{3},\quad\mbox{and } \ \tilde \ka=\Big(\frac 1n,\frac{n-1}{3n}\Big),
\end{equation}
where $n>7.$

\end{proposition}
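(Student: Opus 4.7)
The plan is to argue in three stages, using the strict inequality $h^r+1>3$ to force $d=7/3$ and then exploiting $B=3$ to pin down $\tka$ and the edge structure. First, since $d=\hl(\phi)\ge 2$ and the proof of Lemma \ref{pc-est1}(a) establishes $h^r\ge d$, we have $d\le h^r\le 5/2$. The compact edge $\pi(\phi)\subset L: t_1+2t_2=3d$ carries at least two integer vertices of $\N(\phi)$, forcing $3d\in\bN$, so $d\in\{2,7/3\}$. To rule out $d=2$: any vertex $(A^+,B^+)\in L\cap\N(\tpad)$ with $A^+\ge 0$ satisfies $A^++2B^+=6$ and hence $B^+\le 3$; then $\Delta^{(m)}\cap L=(d-2,d+1)=(0,3)$ satisfies $0\le A^+$ and $3\ge B^+$, so it lies on the half-line $L^+$, forcing $h^r+1=d+1=3$, contrary to hypothesis. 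Hence $d=7/3$ and $\ka=(1/7,2/7)$.

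Second, the integer points of $L: t_1+2t_2=7$ in the positive quadrant are $(1,3),(3,2),(5,1),(7,0)$. Since the bisectrix point $(d,d)=(7/3,7/3)$ lies strictly interior to $\pi(\phi)$, the top vertex of $\pi(\phi)$ must have $t_2>7/3$, and the only lattice point on $L$ meeting this is $(1,3)$. So $(1,3)$ is a vertex of $\pi(\phi)$, whence $(1,3)\in\supp\phi$. The coordinate change $y_2=x_2-\psi(x_1)$ (with $\psi(0)=0$) preserves the $y_1y_2^3$-coefficient at the $l=0$ level, so $(1,3)\in\supp\pad$; the subsequent change $y_2\mapsto y_2+c_0 y_1^a$ then yields $(1,3)\in\supp\tpad$, since $a>1$ (from \eqref{2.5II}) means no $l\ge 1$ contribution can land at $y_1$-exponent $1$ and $y_2$-exponent $3$.

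Third, from \eqref{2.2II} the principal part $\tpad_{\tka}$ contains a monomial $y_1^{p_0}y_2^3$ with $(p_0,3)\in L^a$; matching this against the fact that the $y_1y_2^3$-monomial of $\tpad$ is exactly the one inherited from $\phi$'s $(1,3)$-monomial via $l=0$ forces $p_0=1$, i.e., $\tka_1+3\tka_2=1$. Together with $n\tka_1=1$ (from $(n,0)\in L^a$, also recorded in \eqref{2.2II}), this yields $\tka=(1/n,(n-1)/(3n))$, and \eqref{2.5II} gives $a=(n-1)/3>m=2$, so $n>7$. Since the modified change of variables preserves $\ka$-degrees on $L$ (because $\ka_1 a-\ka_2>0$ forces $l=0$ on $L$), $\supp\tpad\cap L=\supp\pad\cap L$; for $n>7$ the other candidates $(3,2),(5,1),(7,0)$ have $\tka$-degree strictly less than $1$ (hence lie below $L^a$) and so cannot belong to $\supp\pad$. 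Thus $\supp\tpad\cap L=\{(1,3)\}$, giving $(A^+,B^+)=(1,3)$. The principal face $[(1,3),(n,0)]$ emerges as the $\tka$-collinear orbit $(1+la,3-l)$ for $l=0,1,2,3$. Hence $\N^r(\tpad)$ has exactly the two edges $L^+$ and $[(1,3),(n,0)]$, and $\Delta^{(m)}\cap L=(1/3,10/3)\in L^+$ gives $h^r=d=7/3$. The hardest step will be ruling out $d=2$ in Stage 1, which critically uses $h^r+1>3$; the subtle matching of $p_0=1$ in Stage 3 relies on the $\tka$-degree analysis of the orbit of $(1,3)$ under the coordinate change.
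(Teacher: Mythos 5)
Your proof is correct, and it takes a genuinely different route from the paper's. The paper works directly with the upper-left endpoint $(A',B')$ of the principal face of $\N(\tpad)$ and rules out $B'\ge 4$ and $A'\ne 1$ by a case analysis, showing in each excluded case that the relevant supporting line would meet $\Delta^{(m)}$ at second coordinate $>3.5$. You instead determine $d=7/3$ first via a lattice-point count on the principal line $L$ of $\N(\phi)$ (the key observation being $3d\in\bN$ together with $2\le d\le h^r\le 5/2$, and the elimination of $d=2$ using $h^r+1>3$), then track the vertex $(1,3)$ of $\N(\phi)$ through the two coordinate changes to conclude $(1,3)\in\supp\tpad$, and finally extract $\tka$ from this. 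That is a nice, more arithmetic-flavored argument, and in particular the elimination of $d=2$ is an elegant shortcut.

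Two steps are, however, asserted rather than argued, and they are genuine pieces of the proof. First, the identification $p_0=1$: it does \emph{not} follow merely from knowing that the $y_1y_2^3$-coefficient of $\tpad$ is nonzero. What makes it work is that $(1,3)\in\supp\tpad$ lies on the supporting line $L$ of $\N(\tpad)$ and $(p_0,3)\in\supp\tpad_{\tka}$ lies on the supporting line $L^a$, and both lines have finite nonzero slope, so both points lie on the compact Newton diagram of $\N(\tpad)$; that diagram is strictly monotone in $t_2$, forcing $(p_0,3)=(1,3)$. You should state this explicitly rather than say "matching \dots forces $p_0=1$." Second, the claim that $(1,3)$ is actually the \emph{upper-left endpoint} of $\pi(\tpad)$ (needed for the two-edge description of $\N^r(\tpad)$) is not established; it can be read off from the $\tka$ you have just computed, since a vertex $(A',B')\in L^a\colon 3t_1+(n-1)t_2=3n$ with $B'\ge 4$ would give $3A'=3n-(n-1)B'\le 4-n<0$ for $n>7$. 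With these two justifications filled in, the argument is complete.
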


\begin{proof} %It will be useful to distinguish here between the representation $\pad$ of $\phi$ in the adapted coordinates  that we had achieved before the final change to modified  adapted coordinates  \eqref{modadap}, and denote the representation of $\phi$ in these modified adapted coordinates by $\tilde\pad$ during this proof. 

Denote by  $(A',B'):=(A'_{(0)},B'_{(0)})\in L^a$ the left endpoint of the principal face $\pi(\tilde \pad)$ of the Newton polyhedron of $\tilde\pad.$ Then $B'\ge B=3.$  In a first step, we prove that $B'=3.$ 

Assume, to the contrary, that we had $B'\ge 4$ (observe that $B'$ is an integer). Since the line $L^a$ has slope strictly less $1/m=1/2,$ then it easily seen that the line  $L^a$ would intersect the line $\Delta^{(2)}$ at some point with second coordinate $z_2$  strictly bigger than $3.5,$  so that $h^r+1\ge z_2>3.5,$ which would contradict our assumption (Figure 2).
\medskip

Thus, $B'=B=3.$ In a second step, we show that $A'=1.$ To this end, let us here work with $\pad$ in place of $\tilde\pad.$   Note the point $(A',B')$ is  also the left endpoint of the principal face of $\N(\pad),$ and that the principal faces of the Newton polyhedra of $\pad$ and $\tilde \pad$ both lie on the same line $L^a,$ since the last step in the change to modified adapted coordinates \eqref{modada} preserves the homogeneity $\tilde\ka.$ This shows that also $A'\in\NN.$  Moreover,  $A'\ge 1,$ for otherwise we  had $A'=0$ and thus  $h^r+1=3.$ 

Assume that $A'\ge 2.$ We have to distinguish two cases.
\smallskip

a) If the line $L,$ which has slope $1/2,$  contains the point $(A',B'),$  then the assumption $A'\ge 2$ would imply that $h^r+1>3.5$ (see Figure 3).
\smallskip

b) If not, then $\pi(\pad)$  will have an edge   $\ga=[(A'',B''),(A',B')]$ with  right endpoint $(A',B'),$ and $L$ must  touch $\N(\pad)$  in a point contained in an edge strictly left to $\ga.$ But then  the line $L''$ containing $\ga$ must have slope strictly less than  the slope $1/2$ of $L,$ and  necessarily $B''>B'=3,$ hence $B''\ge 4.$ It is the again easily seen that the line  $L''$ would intersect the line $\Delta^{(2)}$ at some point with second coordinate $z_2$  strictly bigger than $3.5,$  so that  again $h^r+1\ge z_2>3.5,$ which would contradict our assumption (Figure 3).

\medskip
We have thus found that $(A',B')=(1,3).$ Assume finally that $\N(\phi)$ had a vertex $(A'',B'')$ to  the left of 
$(1,3).$ Then necessarily $A''=0$ and $B''\ge 4,$ so that the line passing through $(A'',B'')$ and $(1,3)$ had slope at least $1,$  a contradiction. We have seen that $\N(\phi)$ is contained in the half-plane where $t_1\ge 1,$ and thus the line $L$ must pass through the point $(1,3),$ and the  claim on the structure of $\N^r(\tilde\pad)$ is  now obvious.
 
But then clearly $\Delta^{(2)}$ will intersect the boundary of $\N^r(\tilde\pad)$ in a point of $L^+,$ so that $h^r=d.$ 
The remaining statements in \eqref{typeZ} are now easily verified.
\end{proof}

%%%%%%%%%%%%%%%%%%%%%%%%%%%%%%%%%%%%%%%%%%%%%%%%%%%%%%
%%%%%%%%%%%%%%%%%%%%%%%%%%%%%%%%%%%%%%%%%%%%%%%%%%%%%%
\vspace{1cm}
\begin{figure}[!ht]
\centering
\bigskip
%\vspace{-1.3cm}
\includegraphics[width=0.7\textwidth]{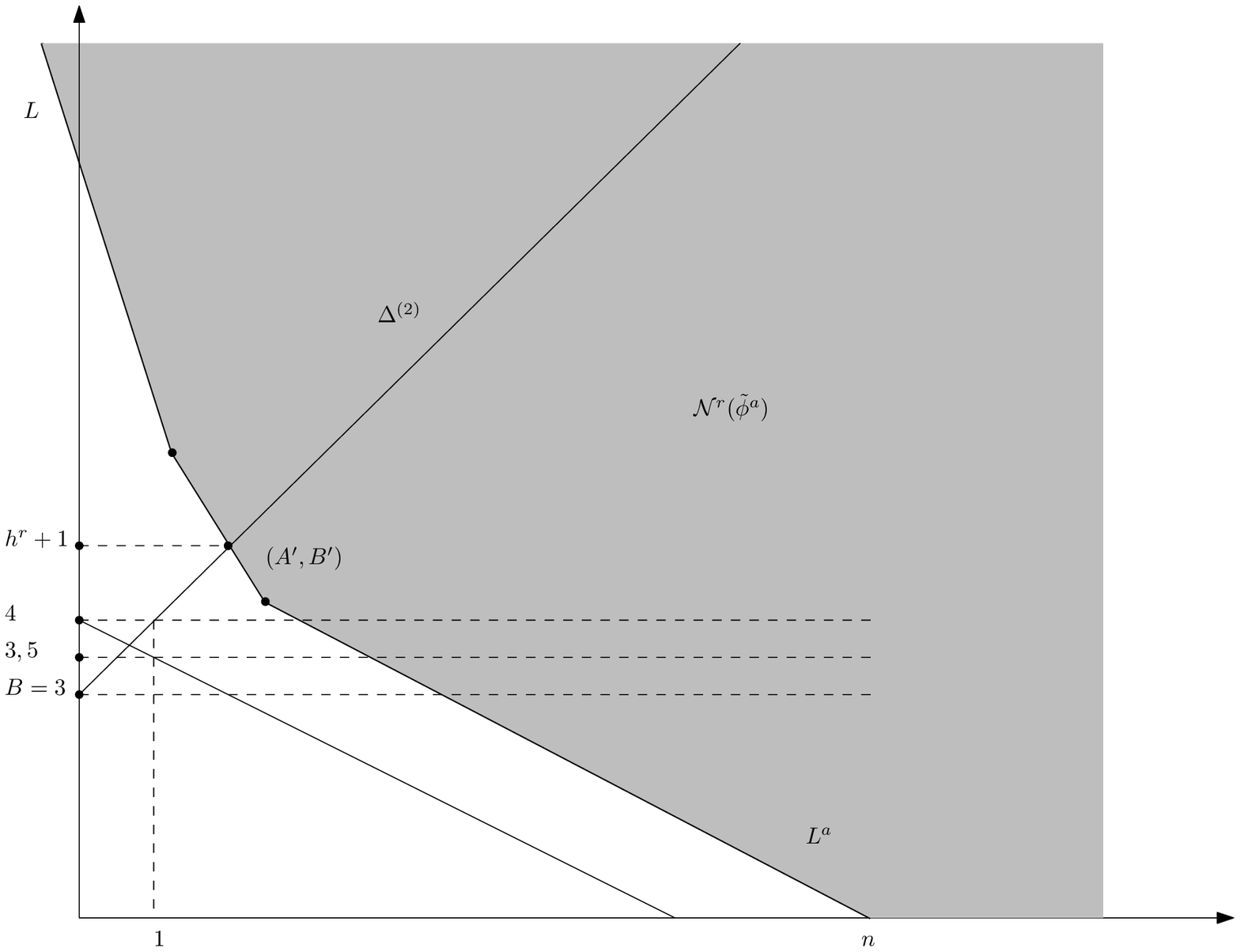}\label{fig2}
%\vspace{-1.3cm}
\caption{}
\end{figure}
%%%%%%%%%%%%%%%%%%%%%%%%%%%%%%%%%%%%%%%%%%%%%%%%%%%%%%
%                      FIGURE 2
%%%%%%%%%%%%%%%%%%%%%%%%%%%%%%%%%%%%%%%%%%%%%%%%%%%%%%

With this structural result, we can now conclude the discussion of this case. Indeed,  by Proposition \ref{Z} we have $\th_c=3/10>2/7,$ and, arguing as before, only with $\th_c$ in place of $\tilde\th_c,$ we obtain 

\begin{equation}\label{tdest2}
\sum_{\la\gg 1}\|T_\de^\la\|_{p_c\to p'_c}\lesssim \de_0^{\frac{2-7\th_c}{2}}.
\end{equation}
 Following the previous discussion, we   find that  the exponent $E$  in the estimate \eqref{rse} is here given  by 
\begin{eqnarray*}
E&:=&2 p_c'\Big(\frac{|\tilde\ka|}2-\frac{\tilde\ka_1(1+2)+1}{p_c'}+(\tilde\ka_2-2\tilde\ka_1)\frac {2h^r-5}{2 p'_c}
\Big)\\
&=&\tilde\ka_2(4 h^r-3)+\tilde\ka_1(6-2 h^r)-2.
\end{eqnarray*}

By means of  \eqref{typeZ} one then computes that 
$$
E=\frac {19}{9}\frac{n-1}n+\frac 43\frac 1n -2=\frac{n-7}{9n}>0,
$$
so that the uniform  estimate \eqref{4.6II} remains valid also in this case.

\subsection{The case where $h^r+1\le B$ }  \label{thgB}
In this case,  since $d<h\le h^r+1,$ we have $d<B,$ and since we are assuming that $d>5/2,$ we see that we may assume that $B\ge 3.$ 

Moreover,  it is obvious from the structure of the Newton polyhedron of $\pad$ that necessarily $m+1\le B,$
and 
\begin{equation}\label{hrform}
h^r+1=h_{l_\pr}+1=\frac{1+(m+1)\tilde\ka_1}{|\tilde\ka|}.
\end{equation}
Indeed, this follows from the geometric interpretation of the notion of $r$-height given directly after Remarks 1.3 in \cite{IM-rest1}, since  the line $\Delta^{(m)}$ intersects the principal face $\pi(\pad)$ (see Figure 3). 

%%%%%%%%%%%%%%%%%%%%%%%%%%%%%%%%%%%%%%%%%%%%%%%%%%%%%%
%%%%%%%%%%%%%%%%%%%%%%%%%%%%%%%%%%%%%%%%%%%%%%%%%%%%%%
\vspace{2cm}
\begin{figure}[!h]
\centering
\bigskip
\vspace{1.3cm}
\includegraphics[width=0.7\textwidth]{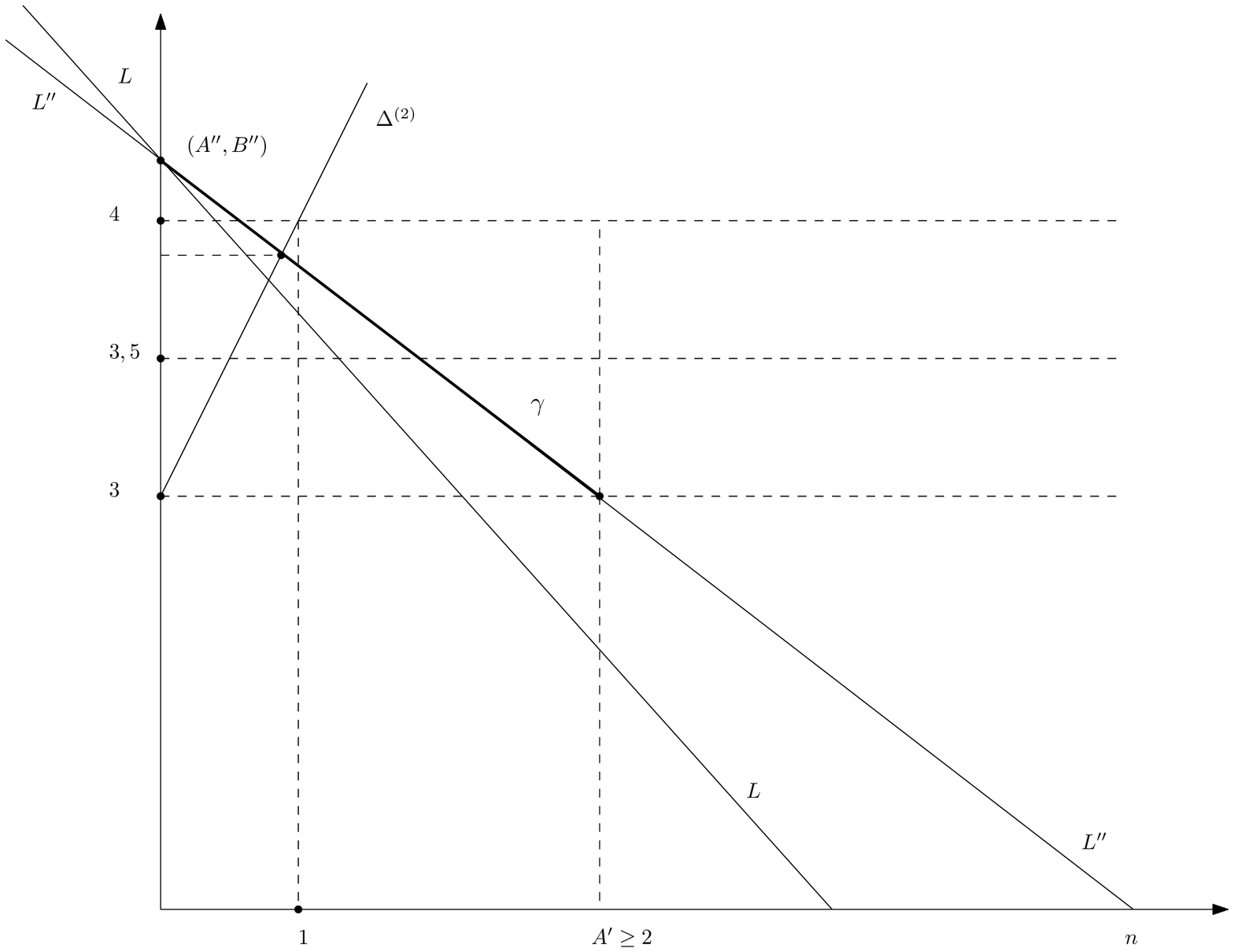}\label{fig3}
%\vspace{-1.3cm}
\caption{}
\end{figure}
%%%%%%%%%%%%%%%%%%%%%%%%%%%%%%%%%%%%%%%%%%%%%%%%%%%%%%
%                      FIGURE 3
%%%%%%%%%%%%%%%%%%%%%%%%%%%%%%%%%%%%%%%%%%%%%%%%%%%%%%

\bigskip

Therefore,  in passing from the measure $\nu_\de$ to $\mu_k,$ no further gain is possible in this situation  in \eqref{rescale}.

%In view of Corollary \ref{compth} (b)  we  may and shall here assume that
%\begin{equation}\label{4.7II}
%\th_c<\tilde\th_c,\quad \mbox{ unless } \quad B=H=h^r+1=d+1,\ \mbox{ where } \ \th_c=\tilde\th_c.
%\end{equation}

\medskip
{\bf Consider first Case A.} 
 Corollary \ref{compth} (b) implies that $\th_c\le \tilde \th_B.$ Thus, by \eqref{4.2II} we have 
$$
\|T_\de^\la\|_{p_c\to p'_c}\lesssim \la^{-\frac 13-\frac 1B+\frac 73\frac{m+1}{mB+m+1}}=\la^{-\frac{M(m,B)}{3B(mB+m+1)}},
$$
where 
$$
M(m,B):= mB^2-(3m+6)B+3(m+1)
$$
is increasing in $B$ if $B\ge3$ and $m\ge 2.$ Since $B\ge m+1,$ we thus have 
$$
M(m,B)\ge m^3-m^2-5m-3,
$$
and the right-hand side of this inequality is increasing in $m$ if $m\ge 2$ and assumes the value $0$ if $m=3.$ 
Therefore, $M(m,B)\ge 0$ if $m\ge 3,$  even with strict inequality if $B>m+1.$ 

We thus see that the estimates of  $\|T_\de^\la\|_{p_c\to p'_c}$ sum for $m\ge 3$ in  $\la\le \de_0^{-B/(B-1)}$ when  $B>m+1,$    and are at least uniform, if $B=m+1.$ 
\medskip

Finally, when  $m=2,$  then $M(2,B)=2B^2-12B+9=2[(B-3)^2-9/2]>0,$ iff $B\ge 6.$
We thus find that 
\begin{equation}\label{4.8II}
\sum_{\la\le \de_0^{-B/(B-1)}}\|T_\de^\la\|_{p_c\to p'_c}\lesssim 1,\qquad \mbox{except possibly when}\ m=2, B=3,4,5, 
\ \mbox{or}\   m=3,B= 4. 
\end{equation}

Moreover, we have 
$$
\|T_\de^\la\|_{p_c\to p'_c}\lesssim 1\qquad \mbox{if} \ m=3,B=4.
$$

But, recall  that $\th_c<\tilde\th_c\le \tilde\th_B,$ unless $4=B=H=h^r+1=d+1$ here, so that $\th_c=1/4,$  so that in the case where we can only obtain the previous uniform estimate for the $T_\de^\la,$  we will have $\th_c=1/4.$ Moreover, by 
\eqref{hatnuest} and \eqref{nuest} we have $\|\nu^\la_\de\|_\infty \lesssim \la^{7/4},$  since we are in Case A, and $\|\widehat{\nu^\la_\de}\|_\infty \lesssim \la^{-7/12},$  where $-(1-\th_c) 7/12+\th_c 7/4=0.$ We may thus  again estimate the operator  of convolution with the   Fourier transform of the complex measure 
  $
  \sum_{\la\le \de_0^{-4/3}}\nu_\de^\la
  $
  by means of  the real-interpolation Proposition \ref{bsint}, in the same way as we did in the corresponding case where $m=2$ and $B=3.$

\bigskip

We are thus left with the cases where $m=2, h^r+1\le B$  and $B=3,4,5.$  So assume in the sequel that that $m=2$ and 
$h^r+1\le B$
\medskip

If $m=2$ and $B=3,$  then  $B=m+1,$ and since we are assuming $h^r+1\le B,$ a look at the Newton polyhedron shows that necessarily $H=B=3=h^r+1.$ 

\begin{lemma}\label{HgB}
Assume that  $m=2$ and  $B=4,5.$ Then we have 
$$
-\frac 13-\frac 1B+\frac 73 \tilde\th_c<0,
$$
provided 
\begin{equation}\label{HBB}
H>H(B):=\begin{cases} 
   \frac{9}{2}, &  \  \mbox{ if } B=4,\\
    \frac{81}{16}, &  \  \mbox{ if } B=5.
\end{cases}
\end{equation}

\end{lemma}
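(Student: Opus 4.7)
The plan is a direct algebraic manipulation; there is no real obstacle, only bookkeeping. First I would substitute the definition $\tilde\theta_c = \frac{m+1}{mH+m+1}$ with $m=2$, obtaining $\tilde\theta_c = \frac{3}{2H+3}$. Thus the inequality to be proved,
$$
-\frac{1}{3} - \frac{1}{B} + \frac{7}{3}\tilde\theta_c < 0,
$$
is equivalent to $\frac{7}{2H+3} < \frac{1}{3} + \frac{1}{B} = \frac{B+3}{3B}$.

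Next, clearing denominators (both sides are positive since $H>0$ and $B>0$), this becomes $21B < (B+3)(2H+3)$, which rearranges to $2H(B+3) > 18B - 9$, i.e.
$$
H > \frac{9(2B-1)}{2(B+3)}.
$$

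Finally I would evaluate this threshold at $B=4$ and $B=5$. At $B=4$ we get $\frac{9\cdot 7}{2\cdot 7} = \frac{9}{2}$, and at $B=5$ we get $\frac{9\cdot 9}{2\cdot 8} = \frac{81}{16}$, which exactly match the definition of $H(B)$ in \eqref{HBB}. This proves the lemma. The only thing worth double-checking is that the rearrangement preserves the strict inequality, which it does since all quantities $H, B, 2H+3, B+3$ are strictly positive.
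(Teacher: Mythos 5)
Your proof is correct and follows essentially the same elementary algebraic route as the paper: substitute $\tilde\theta_c=3/(2H+3)$, clear denominators, and solve for $H$, arriving at the threshold $\frac{9(2B-1)}{2(B+3)}$, which the paper writes in the equivalent form $\frac{21}{2}\cdot\frac{B}{B+3}-\frac32$. The evaluations at $B=4,5$ match, and your remark about preserving strict inequality is sound.
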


\begin{proof}For $m=2$ we have 
$$
-\frac 13-\frac 1B+\frac 73 \tilde\th_c=-\frac 13-\frac 1B+\frac 7{2H+3}<0
$$
if and only if 
\begin{equation}\label{eHgB}
H>\frac {21}2\frac B{B+3}-\frac 32,
\end{equation}
and it is easily checked that this holds true if and only if  when $H\ge H(B) $  when $B=4,5.$ 
\end{proof}

Since by Corollary \ref{compth} (b) $\th_c\le \tilde\th_c,$ the previous  lemma shows that for $m=2,$ \eqref{4.8II} can be sharpened as follows:
\begin{equation}\label{4.10II} 
\sum_{\la\le \de_0^{-B/(B-1)}}\|T_\de^\la\|_{p_c\to p'_c}\lesssim 1,
\end{equation}
 with the possible exceptions  when $B=H=3=h^r+1,$ or $B=4,5, h^r+1\le B$ and $H\le H(B).$

\bigskip 
{\bf Consider finally  Case B.} Then, since $\th_c\le \tilde\th_B,$ 
$$
\|T_\de^\la\|_{p_c\to p'_c}\lesssim \la^{-\frac 13-\frac 1B+(\frac 43+\frac 1B) \frac{m+1}{mB+m+1}}\, \de_0^{-\tilde\th_B}
=\la^{-\frac{B(mB-3)}{3B(mB+m+1)}}\, \de_0^{-\tilde\th_B}.
$$
The exponent of $\la$ is negative, so we can sum these estimates in $\la$ and obtain
$$
\sum_{\la>\de_0^{-B/(B-1)}}\|T_\de^\la\|_{p_c\to p'_c}\lesssim \de_0^{[\frac 13+\frac 1B-(\frac 43+\frac 1B)\tilde\th_B]\frac B{B-1}-\tilde\th_B}=\de_0^{\frac{B+3-7B\tilde\th_B}{3(B-1)}}=\de_0^{\frac{M(m,B)}{3(B-1)(mB+m+1)}}.
$$
But, our previous discussion of $M(m,B)$ shows that $M(m,B)\ge 0,$ unless $m=2$ and $B=3,4,5$ (notice that the case $m=3,B=4$ still works here ).

In the latter cases, we can improve our estimates again by using $\tilde\th_c$ in place of $\tilde\th_B.$ Indeed, notice that the condition $B+3-7B\tilde\th_c> 0$ is equivalent to \eqref{eHgB}, which by Lemma \ref{HgB} does hold true if $H\le H(B).$

We thus find that 

\begin{equation}\label{4.12II}
\sum_{\la> \de_0^{-B/(B-1)}}\|T_\de^\la\|_{p_c\to p'_c}\lesssim 1,
\end{equation}
unless  $m=2$ and  $B=H=3=h^r+1,$ or $B=4,5$ $h^r+1\le B$ and $H\le H(B).$

\medskip
Finally, we observe that the following consequence of  Lemma \ref{HgB}:
\begin{cor}\label{B45}
Assume that  $m=2,$ and  that $B=H=3,$ or $B=4,5,$ $h^r+1\le B$ and $H\le H(B).$ Then the left endpoint of the  principal face of the Newton polyhedron of $\tpad$ is of the form $(A,B),$ where $A\in\{0,1,\dots,B-3\},$  and we have 
$$
\tilde\ka=\Big(\frac 1n, \frac{n-A}{Bn}\Big)\quad \mbox{and } h^r+1=\frac{n+3}{n+B-A}B,
$$
where $n$ must satisfy $n>2B$ and  $ (Bn)/(n-A)=H\le H(B).$ Moreover, $\tpad$ is smooth here, so that in particular $a$ and $n$ are integers. 
\end{cor}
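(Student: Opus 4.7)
My plan follows the pattern of Proposition~\ref{Z}, extended to the wider range $B\in\{3,4,5\}$. The proof breaks into three stages: first establish that the left endpoint of $\pi(\tpad)$ has second coordinate exactly $B$; then derive the explicit formulas for $\tilde\ka$, $H$, and $h^r+1$ together with the bound $n>2B$; and finally verify smoothness of $\tpad$.

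\medskip

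For the first stage I would argue by contradiction. Suppose the left endpoint of $\pi(\tpad)$ were $(A',B')$ with $B'\ge B+1$. Since $(A',B')$ lies on $L^a$ and the slope $1/a$ of $L^a$ is strictly less than $1/m=1/2$ by \eqref{2.5II}, a short computation shows that the intersection of $L^a$ with $\Delta^{(m)}=\{t_2=t_1+3\}$ has second coordinate
$$
h^r+1\;=\;\frac{aB'+A'+3}{a+1}\;\ge\;\frac{aB'+3}{a+1},
$$
which is increasing in $a$ for $B'\ge 4$; together with $a>2$ this yields $h^r+1>(2B'+3)/3$. For $B'=B+1\in\{4,5,6\}$ the lower bound is respectively $11/3$, $13/3$, and $5$, each exceeding the hypothesis $h^r+1\le B$ (or $h^r+1\le 3.5$ in the case $B=H=3$; the case $B'=6$ needs the strict inequality $a>2$). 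Hence $B'=B$, and the left endpoint is $(A,B)$ with $A$ determined by $\tilde\ka_1 A+\tilde\ka_2 B=1$.

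\medskip

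In the second stage I would substitute the endpoints $(A,B)$ and $(n,0)$ into the equation of $L^a$ to obtain $\tilde\ka_1=1/n$ and $\tilde\ka_2=(n-A)/(Bn)$, whence $H=1/\tilde\ka_2=Bn/(n-A)$. Plugging into \eqref{hrform} with $m=2$ gives $h^r+1=B(n+3)/(n+B-A)$, and the hypothesis $h^r+1\le B$ rearranges to $A\le B-3$, so $A\in\{0,1,\dots,B-3\}$. Finally, $m<a=\tilde\ka_2/\tilde\ka_1=(n-A)/B$ from \eqref{2.5II} yields $n>2B+A\ge 2B$. The condition $Bn/(n-A)=H\le H(B)$ is one of the standing hypotheses of the corollary.

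\medskip

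The main obstacle is the smoothness claim. Since $m=2\in\NN$, the principal root jet $\psi(x_1)=x_1^2\omega(x_1)$ is smooth and hence $\pad$ is smooth, so $\N(\pad)$ has integer vertices and the endpoints of $\pi(\pad)\subset L^a$ lie in $\bZ^2$, giving $a\in\bQ$. To upgrade to $a\in\NN$, my plan is to argue that a fractional $a$ would force $\tpad$ to carry monomials with fractional $y_1$-exponents lying strictly below $L^a$, contradicting both the already-established structure $\pi(\tpad)=[(A,B),(n,0)]$ with integer endpoints and the fact that we are in Case~2 (where $\partial_1\tpad_{\tilde\ka}(1,0)\ne 0$ pins the vertex $(n,0)$ down as a genuine integer lattice point on $L^a$). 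Equivalently, I would enumerate, for each admissible pair $(B,A)$ with $B\in\{3,4,5\}$ and $A\in\{0,\dots,B-3\}$, the possible integer configurations of the vertices of $\pi(\pad)$ on $L^a$ consistent with $H\le H(B)$ and $n>2B$, and verify by inspection that in every such configuration $B\mid n-A$, whence $a=(n-A)/B$ and $n=aB+A$ are integers, and $\tpad=\pad(y_1,y_2-c_0 y_1^a)$ is smooth. I expect this integrality verification to be the delicate part of the argument.
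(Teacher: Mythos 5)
Stages 1 and 2 are sound but longer than necessary (for Stage 1 the paper simply observes that $(A',B')\in L^a$ with $A'\ge 0$ gives $B'\le H$, so $B\le B'\le H\le H(B)<B+1$ and integrality of $B'$ yield $B'=B$ at once; and in the case $B=H=3$ the hypothesis gives $B'\le H=3$ directly---the bound $h^r+1\le 3.5$ you quote is the hypothesis of Proposition~\ref{Z}, not of this corollary). The genuine gap is Stage 3, and neither of your two proposed routes would close it.

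Your first route is wrong on the specifics: the shear $y_2\mapsto y_2-c_0 y_1^a$ moves each point $(j,k)$ of the Taylor support of $\pad$ along the line of slope $-1/a$ through it, so $\N(\tpad)$ remains in the closed half-plane \emph{above} $L^a$, and no term appears strictly below; moreover $\tpad$ is only fractionally smooth a priori, so fractional $y_1$-exponents in $\tpad$ are not by themselves contradictory, and invoking ``$\pi(\tpad)=[(A,B),(n,0)]$ with integer endpoints'' is circular because integrality of $n$ is precisely what is to be proved. Your second route cannot work either: the numerical constraints do not force $B\mid n-A$. For instance $B=4$, $A=0$, $n=9$ has $H=4\le H(4)=9/2$, $n>2B=8$, $h^r+1=48/13\le 4$, yet $a=9/4\notin\NN$; the exclusion of such configurations is structural, not arithmetical, so no finite inspection will reveal it. The correct mechanism runs through $\pad$, not through $\tpad$: since $(A,B)$ is the left endpoint of $\pi(\tpad)$, the Taylor support of $\tpad_{\tka}$ contains no point with $t_2>B$, so $Q$ in \eqref{2.2II} must be the monomial $c_0 y_1^A$ and hence $\tpad_{\tka}(x)=c_0 x_1^A x_2^B+c_1 x_1^n$. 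By \eqref{phi2}, $\pad_{\tka}(x)=c_0 x_1^A(x_2+c_0 x_1^a)^B+c_1 x_1^n$; but $\pad$ is smooth, so $\pad_{\tka}$ must be a polynomial with integer exponents. Expanding $(x_2+c_0 x_1^a)^B$, the coefficient of $x_2^{B-1}$ is a nonzero multiple of $x_1^{A+a}$, which forces $A+a\in\NN$, hence $a\in\NN$; smoothness of $\tpad(y)=\pad(y_1,y_2-c_0 y_1^a)$ and $n=A+aB\in\NN$ then follow immediately.
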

\begin{proof} 
Adopting the notation for the proof of Proposition \ref{Z}, let  $(A',B')$ denote the left endpoint of $\pi(\tpad),$ which, as we recall, is also the left endpoint of $\pi(\pad).$   If $B=4,5,$ then  $B\le B'\le H\le H(B)<B+1,$ so that $B'=B.$ For $B=3=H,$ the same conclusion applies. Next, since we assume that $h^r+1\le B,$ and since the line $\Delta^{(2)}$ intersects the line where $t_2=B$ in the point $(B-3,B),$ we must have $0\le A'\le B-3.$   This implies the first statement of the corollary, because $A'$ is integer.

 The  remaining statements, except for the last one, follow easily (for the identity for $h^r+1,$ recall \eqref{hrform}).
 
Finally, in order to see that in these cases we must have $a,n\in\NN,$ observe that $\tpad_{\tka}$ must be of the form
$$
\tpad_{\tilde\kappa}(x)=x_1^A x_2^B +c_1x_1^n,\qquad c_1\ne 0.
$$
But then $\pad_{\tka}$ will be given by 
$$
\pad_{\tilde\kappa}(x)=x_1^A( x_2 +c_0 x_1^a)^B+c_1x_1^n
$$
(compare \eqref{phi2}).  This must be a polynomial in $\x,$ since $\pad$ is smooth. Expanding $( x_2 +c_0 x_1^a)^B,$ it is clear that $a$ must be an integer. But then, necessarily also $\tpad$ is smooth, which implies that   $n$ must be an integer too.

\end{proof}

\subsection{The case where $B=5$}

The case where  $B=5$ can now be treated quite easily. Indeed,  going back to the estimations in Subsection \ref{thgB}, recall from \eqref{4.2II} that we have the estimate
$$
\|T_\de^\la\|_{p_c\to p_c'}\lesssim \la^{-\frac13-\frac15+\frac{7\th_c}3}.$$
This estimate is valid in  Case A as well as in Case B (in the latter case, an even stronger estimate is valid, but we won't need it). 
And, according to Corollary \ref{B45}, if $B=5$ then  we have the precise formula 
$$\th_c= \frac{n+5-A}{5(n+3)}
$$
for $\th_c,$ 
where $A\in\{0,1,2\}$ and $n>2B=10.$ Since $n$ is here an integer, we find that $n\ge 11,$ and thus 
$\th_c\le (11+5)/70=8/35,$ with strict inequality, unless $A=0$ and $n=11.$ This show that the exponent of $\la$ in the estimate for $\|T_\de^\la\|_{p_c\to p_c'}$ is strictly negative, so that we can sum these estimates over all dyadic $\la\ge 1,$ unless $A=0$ and $n=11,$ where we get a uniform estimate
\begin{equation}\label{B5A0}
\|T_\de^\la\|_{p_c\to p_c'}\lesssim 1.
\end{equation}
However, if $A=0, B=5$ and $n=11,$ then $\th_c=8/35,$ and moreover, by 
\eqref{hatnuest} and \eqref{nuest}, we have $\|\nu^\la_\de\|_\infty \lesssim \la^{9/5}$  and $\|\widehat{\nu^\la_\de}\|_\infty \lesssim \la^{-8/15},$  where $-(1-\th_c) 8/15+\th_c 9/5=0.$ We may thus  again estimate the operator  of convolution with the   Fourier transform of the complex measure 
  $
  \sum_{\la\gg 1}\nu_\de^\la
  $
  by means of  the real-interpolation Proposition \ref{bsint}, by adding this measure to the list of measures $\mu^i, i\in I,$ of the second class in \eqref{nudecomp}.
  
  \bigskip
  
  \subsection{The remaining cases}
  
  The case where $m=2$ and $B=3$  has essentially already been addressed in the previous subsection. Its treatment will  require  a refined Airy type analysis, following ideas from  Section 6 of \cite{IM-rest1}, but the discussion will be even more involved than in  \cite{IM-rest1}, where we have had   $m=2$ and $B=2.$ Also in the case $m=2$ and $B=4,$ we shall need some Airy type analysis, but of simpler form. 
  
   \medskip
Combining all the previous estimates and applying \eqref{rescale}, we see that we have proved  

\begin{cor}\label{l=1}
 Assume that $m$ and $H,B$ are not such that $m=2$ and  $B=H=3=h^r+1,$ or $m=2, B=4,$ 
 $h^r+1\le 4$ and $H\le H(4).$  Then the  estimates in Proposition \ref{Dl} hold true for $l=1.$ 
 \end{cor}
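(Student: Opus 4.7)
The strategy is to establish the uniform estimate \eqref{2.7II}, from which the rescaling identity \eqref{rescale} immediately yields \eqref{estk} for $l=1$; the exponent of $2^{-k}$ on the right of \eqref{rescale} is non-negative because $p_c'\ge 2(h^r+1)\ge 2(h_{l_\pr}+1)$ (cf.\ \eqref{hlpr}).

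I would collect the spectral decomposition $\nu_\de=\sum_\la \nu_\de^\la$ from Sections \ref{specloc} and \ref{equalla} into a decomposition of the form \eqref{nudecomp}. Into the ``good'' summand $\mu^b=\sum_{j\in J}\nu^j$ I place every family whose corresponding convolution operators sum to a uniformly bounded $L^{p_c}\to L^{p_c'}$ operator: namely $T_\de^I, T_\de^{I_2}, T_\de^{II}, T_\de^{II_2}, T_\de^{III_{2a}}, T_\de^{III_{2b}}$ from Section \ref{specloc} (estimates \eqref{TI}, \eqref{TI_2}, \eqref{TII}, \eqref{TII_2}, \eqref{TIII_2a}, \eqref{TIII_2b}), the complex-interpolation operators $T_\de^{III_1}, T_\de^{III_0}, T_\de^{III_2}$ (Propositions \ref{analyint3}, \ref{analyint0}, \ref{analyint4}), together with the sums from Section \ref{equalla} in \eqref{4.4II}, \eqref{4.5II}, \eqref{4.6II}, \eqref{4.10II}, \eqref{4.12II}, all of which close under the hypotheses of the corollary. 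Into the ``bad'' family $\{\mu^i\}_{i\in I}$ I place the remaining pieces for which only a uniform bound $\|T_\de^\la\|_{p_c\to p_c'}\lesssim 1$ was obtained: $T_\de^{I_1}, T_\de^{II_1}$ from Section \ref{specloc}, and from Section \ref{equalla} both the $m=3, B=4$ sub-case in Case A and the $m=2, B=5, A=0, n=11$ sub-case from \eqref{B5A0}.

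For each $\mu^i$ I would invoke Proposition \ref{bsint} with $p_0=p_c$, exactly as in Subsection \ref{realint1}. The spectral-cutoff construction gives $\mu^i_j=\nu_\de*\phi^i_j$ with $\|\phi^i_j\|_1\lesssim 1$, securing \eqref{bs2}. The sup- and Fourier-sup-norm estimates recorded in the relevant subsection, such as \eqref{case1.1}, \eqref{case2.1}, \eqref{hatnuest}, \eqref{nuest}, supply the exponents $a_i, b_i$, while the critical identity \eqref{bs5} was verified in each occurrence above (for instance $-(1-\th_c)\cdot 7/12+\th_c\cdot 7/4=0$ in the $m=3, B=4$ situation). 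Hypothesis \eqref{bs1} is then met by taking $\mu^b$ above. The conclusion \eqref{bs7} of Proposition \ref{bsint} is exactly \eqref{2.7II}.

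The main obstacle is not analytic but combinatorial: one must check that the two excluded case families in the statement, $m=2, B=H=3=h^r+1$ and $m=2, B=4, h^r+1\le 4, H\le H(4)$, are precisely the ones for which \emph{both} methods fail. In these cases the quantity $M(m,B)$ that governs dyadic summability in \eqref{4.2II} is strictly negative, and simultaneously the critical exponent $\th_c$ does not satisfy the balance condition \eqref{bs5} at $p_c$ with the natural choice of $a_i, b_i$ arising from \eqref{hatnuest} and \eqref{nuest}. Hence these residual cases must be deferred to the refined Airy-type analysis of the forthcoming subsections.
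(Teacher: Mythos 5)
Your overall architecture matches the paper's -- the corollary is proved by combining the spectral estimates from Sections~\ref{specloc}--\ref{equalla}, packaging the borderline pieces as $\mu^i$ and invoking Proposition~\ref{bsint}, then rescaling. But there is a genuine gap in the first step of your plan. You claim that the goal is always to establish the $\de$-\emph{uniform} estimate \eqref{2.7II} and then invoke \eqref{rescale}. This is not what happens in Subsection~\ref{thlB}: when $B=3$, $h^r+1>B$, and $\tilde\th_c>2/7$, the contribution from $\la_1\sim\la_2\sim\la_3$ satisfies only
\[
\sum_{\la\gg1}\|T_\de^\la\|_{p_c\to p'_c}\lesssim\de_0^{\frac{2-7\tilde\th_c}{2}},
\]
and the exponent is \emph{negative}, so the bound blows up as $\de_0\to0$ and \eqref{2.7II} fails. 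The paper does not establish a uniform estimate here; it feeds the $\de_0$-power into the rescaling, computes the combined exponent $E$ in \eqref{rse}, and verifies $E\ge 0$ using $\tilde p'_c\le p'_c$ (for $m\ge3$) and the structural result Proposition~\ref{Z} (for $m=2$). Note in particular that \eqref{4.6II}, which you list among your ``good'' operators for $\mu^b$, is already an estimate for the rescaled measure $\mu_{1,k}$, not an $L^{p_c}\to L^{p'_c}$ operator bound on a piece of $\nu_\de$; it cannot serve as an ingredient for hypothesis~(a) of Proposition~\ref{bsint}. Your dichotomy (``uniformly bounded'' $\to\mu^b$, ``balance case'' $\to\mu^i$) simply does not cover this regime, and the proof plan breaks there.

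A secondary, smaller omission: your list of measures requiring the $\mu^i$ treatment ($T_\de^{I_1}$, $T_\de^{II_1}$, the $m=3,B=4$ Case~A piece, and the $m=2,B=5,A=0,n=11$ piece) misses the $B=H=3$, $m=5$ case in Subsection~\ref{thlB} (where $\th_c=\tilde\th_c=2/7$ and $\tilde h^r=d$). There the paper again only obtains a uniform bound $\|T_\de^\la\|_{p_c\to p'_c}\lesssim1$ with $a_i=5/3$, $b_i=2/3$ satisfying the balance $\th_c a_i=(1-\th_c)b_i$, and the measure $\nu_\de^{IV_1}=\sum_{\la\le\de_0^{-3/2}}\nu_\de^\la$ has to be added to the family $\{\mu^i\}$ in \eqref{nudecomp}. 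Your closing diagnosis of the two excluded families is essentially correct but should be sharpened: it is not merely that \eqref{bs5} fails to hold, but that $\th_c$ lies strictly on the wrong side of the balance, so the trivial bounds \eqref{hatnuest}--\eqref{nuest} yield a \emph{positive} power of $\la$ (e.g.~$\la^{1/9}$ for $m=2,\,B=H=3$), which is why the finer $\rho$-dependent Airy analysis of Sections~\ref{m2b34} onward becomes indispensable.
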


\medskip
 More precisely, in view of Corollary \ref{B45}, open are the following situations:
\begin{equation}\label{5.17II}
m=2,\, B=3,4,\, h^r+1\le B, \quad \mbox{ and}\   \la_1\sim\la_2\simÊ\la_3,
 \end{equation}
 and the left endpoint of the  principal face of the Newton polyhedron of $\tpad$ is of the form $(A,B),$ where $A\in\{0,\dots,B-3\}.$  
 
 In these situations, we have 
\begin{equation}\label{5.18II}
\tilde\ka=\Big(\frac 1n, \frac{n-A}{Bn}\Big)\quad \mbox{and }\  h^r+1=\frac{n+3}{n+B-A}B,
\end{equation}
where $n$ must be an integer satisfying $n>2B,$ and $ (Bn)/(n-A)=H\le H(B),$  if $B=4.$

\bigskip
The discussion of these cases will occupy the major part of remainder of this article. Before we come to  this, let us first study also the contributions by the remaining domains $D'_{(l)}, \ l\ge 2,$ which will turn out to be simpler.

\setcounter{equation}{0}
\section{Restriction estimates for the domains $D'_{(l)}, \ l\ge 2$ }\label{dl}
For the domans $D'_{(l)}, \ l\ge 2,$ we can essentially argue as in the preceding section, by putting here 
\begin{equation}\label{6.1II}
\tpad:=\phi^{(l+1)},\quad \tilde\ka:=\ka^{(l)},Ê\quad D^a:= D^a_{(l)}, \quad L^a:= L_{(l)}, \quad \mbox {etc.}.
\end{equation}
$H$ and $n$ are defined correspondingly. 

We then have the following analogue of Lemma \ref{pc-est1}.
\begin{lemma}\label{pc-est2} \begin{itemize}  
\item[(a)] If $l\ge 2,$ then  $p'_c>\tilde p'_c.$ 

\item[(b)]  If $m\ge 3$ and $H\ge 2,$ or $m=2$ and $H\ge 3,$ then 
$$
 \tilde p'_c\ge p'_H\ge p'_B.
$$
 \end{itemize}
\end{lemma}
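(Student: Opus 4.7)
The plan is to follow the proof of Lemma~\ref{pc-est1}, adapting it to the deeper stages $l\ge 2$ of the algorithm. Part~(b) is verbatim the same as in Lemma~\ref{pc-est1}(b): the inequality $\tilde p'_c\ge p'_H$ reduces to the quadratic $mH^2-(2m+5)H+3m+3\ge 0$, and $p'_H\ge p'_B$ follows from $H\ge B$, both elementary under the stated hypotheses. The substance is part~(a), where the weak inequality $h^r\ge\tilde h^r$ of Lemma~\ref{pc-est1}(a) must be upgraded to a strict one, ruling out the equality cases that arose for $l=1$.

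For part~(a), I would again introduce the auxiliary line $\tilde L$ through $(0,H)$ parallel to $L$ (hence of slope $1/m$), whose intersection with $\Delta^{(m)}$ has second coordinate $\tilde h^r+1=mH/(m+1)+1$. The reasoning of Lemma~\ref{pc-est1}(a), applied with $\tilde\ka=\ka^{(l)}$ in place of $\ka^{(1)}$, still yields the weak comparison that $L$ lies above $\tilde L$, i.e., $1/\ka_2\ge H$, where $\ka$ denotes the principal weight of $\phi$; the task is to promote this to the strict inequality $1/\ka_2>H$, equivalently $\tilde h^r<h^r$. For this I would exploit the chain of supporting lines $L^a=L_{(1)},L_{(2)},\dots,L_{(l)}$ produced by the algorithm.

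The key geometric facts are these: consecutive lines $L_{(j)}$ and $L_{(j+1)}$ pass through the common vertex $(A'_{(j)},B'_{(j)})$ where the edges $\ga'_{(j)}$ and $\ga'_{(j+1)}$ meet; the slopes $1/a_{(j)}$ strictly decrease in $j$ because the $a_{(j)}$ strictly increase; and each such shared vertex satisfies $A'_{(j)}>0$, which holds for $j=1$ because the right endpoint of $\pi(\pad)$ lies strictly to the right of the interior point $(d,d)$, and for $j\ge 2$ because $\ga'_{(j)}$ lies strictly below the bi-sectrix. The elementary observation that two lines of distinct slopes meeting at a point with positive first coordinate place the line of larger slope above the other at $t_1=0$ then telescopes to the strict chain $1/\ka^{(1)}_2>1/\ka^{(2)}_2>\cdots>1/\ka^{(l)}_2=H$, and combined with $1/\ka_2\ge 1/\ka^{(1)}_2$, which is the $l=1$ case of the Lemma~\ref{pc-est1}(a) argument, this yields $1/\ka_2>H$ strictly, hence $h^r>\tilde h^r$. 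The only bookkeeping step is to confirm the shared-vertex structure of the chain from the algorithm of Section~\ref{remind}, and I anticipate no serious obstacle beyond that.
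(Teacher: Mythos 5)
Your proof is correct, but it takes a genuinely different and substantially longer route than the paper's. Part~(b) is indeed verbatim the same as in Lemma~\ref{pc-est1}(b), so there is nothing to add there. For part~(a), the paper simply reuses the equality characterization already obtained in the proof of Lemma~\ref{pc-est1}(a): equality $p'_c=\tilde p'_c$ would force $L=\tilde L$ and hence force the principal face $\pi(\tpad)$ to be the edge $[(0,H),(n,0)]$, whose left endpoint $(0,H)$ lies strictly above the bi-sectrix; but for $l\ge 2$ one already knows from Section~\ref{remind} that $\pi(\tpad)=\ga'_{(l)}$ lies in the closed half-space below $\Delta$, an immediate contradiction. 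You instead establish strictness directly by telescoping along the chain $L_{(1)},\dots,L_{(l)}$ of supporting lines, using the strictly decreasing slopes $1/a_{(j)}$, the shared-vertex structure $(A'_{(j)},B'_{(j)})$, and the positivity $A'_{(j)}>0$; this does work, but it requires extra verification that the paper avoids. Two minor points of care in your write-up: the vertex that must have positive abscissa for $j=1$ is the right endpoint of $\ga'_{(1)}=\pi(\phi^{(2)})$, not of $\pi(\pad)$ (these lie on the same line $L^a$ and share a left endpoint but may have different right endpoints); and for $j\ge 2$ the edges $\ga'_{(j)}$ are only known to lie in the \emph{closed} half-space below $\Delta$, not strictly below — but since the right endpoint then satisfies $A'_{(j)}\ge B'_{(j)}\ge 0$ and cannot be the origin, $A'_{(j)}>0$ still follows. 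So your argument closes, but the paper's one-line appeal to the bi-sectrix fact is the more economical route.
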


\begin{proof} 
(a) We can follow the proof of Lemma \ref{pc-est1} (a) and see again that $p'_c>\tilde p'_c,$ unless the principal face $\pi(\tpad)$ of $\tpad$ is the edge $[(0,H),(n,0)].$  However, for $l\ge 2,$ we know  from Section \ref{remind} that 
$\pi(\tpad)$ is the edge $\ga'_{(l)},$ which lies below the bi-sectrix $\Delta,$ so that we cannot have $p'_c=\tilde p'_c.$  

(b) follows as  before.
\end{proof} 
This implies the following, stronger analogue of Corollary \ref{compth}.
\begin{cor}\label{compthl}
 \begin{itemize} Assume that $l\ge 2.$ 
\item[(a)] If $m\ge 3$ and $H\ge 2,$ or $m=2$ and $H\ge 3,$ then   $\th_c<\th_B.$ 
\item[(b)] If $h^r+1\le B,$ then $\th_c<\tilde\th_c,$ unless $B=H=h^r+1=d+1,$ where $\th_c=\tilde\th_c.$ 
\item[(c)] If $H\ge 3,$ then $\th_c< 1/3,$ unless $H=3$ and $m=2.$
 \end{itemize}
\end{cor}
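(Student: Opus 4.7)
The plan is to mimic the proof of Corollary \ref{compth}, but with Lemma \ref{pc-est2} substituted for Lemma \ref{pc-est1}. The decisive new input is part (a) of Lemma \ref{pc-est2}, which, in contrast to Lemma \ref{pc-est1}(a), gives the strict inequality $p'_c>\tilde p'_c$ \emph{without any exceptional case} for every $l\ge 2$; this is precisely the mechanism that removes the exceptional case appearing in part (a) of the previous corollary.

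For (a), the hypothesis coincides with the hypothesis of Lemma \ref{pc-est2}(b), which gives $\tilde p'_c\ge p'_B$, equivalently $\tilde\th_c\le\th_B$. Combining with $\th_c<\tilde\th_c$ from Lemma \ref{pc-est2}(a) I get $\th_c<\tilde\th_c\le\th_B$, hence $\th_c<\th_B$ with no exception, which is the genuine improvement over Corollary \ref{compth}(a). Statement (b) then follows immediately: Lemma \ref{pc-est2}(a) alone yields $\th_c<\tilde\th_c$ for every $l\ge 2$, so the exceptional equality case $B=H=h^r+1=d+1$ listed in the statement does not actually arise in the present situation, and the formulation is kept in the Corollary \ref{compth}(b) form only for uniformity.

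For (c), the condition $H\ge 3$ forces the hypothesis of Lemma \ref{pc-est2}(b) in both branches $m=2$ and $m\ge 3$, so $\tilde\th_c\le\th_H=\frac{1}{2H}+\frac{1}{6}$. If $H>3$ then $\th_H<\frac 13$, and hence $\th_c<\tilde\th_c\le\th_H<\frac 13$; and if $H=3$, so $\th_H=\frac 13$, the strict inequality $\th_c<\tilde\th_c$ from Lemma \ref{pc-est2}(a) still delivers $\th_c<\frac 13$, so even the listed exception $H=3,\,m=2$ does not actually occur for $l\ge 2$. I do not expect any real obstacle here: the entire content of the corollary is a short combination of parts (a) and (b) of Lemma \ref{pc-est2}, and all the delicate Newton-polyhedron work has already been carried out in that lemma, where the key additional geometric ingredient for $l\ge 2$ is that the face $\pi(\tpad)=\ga'_{(l)}$ lies strictly below the bi-sectrix $\Delta$.
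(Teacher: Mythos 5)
Your proof is correct and follows the same route as the paper's (implicit) argument: the paper presents Corollary~\ref{compthl} as an immediate consequence of Lemma~\ref{pc-est2} together with the definitions of $\th_H,\th_B$, exactly as you have spelled out, with the strict inequality $\th_c<\tilde\th_c$ from Lemma~\ref{pc-est2}(a) being the only new ingredient needed to discharge the exceptional case in part (a). Your side remarks that the listed exceptional cases in (b) and (c) in fact never occur for $l\ge 2$ are accurate and consistent with the paper's later observation that the stronger inequality $p'_c>\tilde p'_c$ makes all the relevant series summable without further interpolation.
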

Finally, in place of Proposition \ref{Z}, we have 
\begin{proposition}\label{Z2}
Assume that $l\ge 2,$ and that $B=3,$  $m\ge 2.$ Then $h^r+1>3.5.$
\end{proposition}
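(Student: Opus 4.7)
The case $m\ge 3$ would be dispatched first as essentially trivial: the line $\Delta^{(m)}$ enters the closed first quadrant at $(0,m+1)$ and satisfies $t_2\ge m+1\ge 4>3.5$ thereafter, while $\mathcal{N}^r(\tpad)\subset\mathbb{R}^2_{\ge 0}$, so any intersection of $\Delta^{(m)}$ with $\partial\mathcal{N}^r(\tpad)$ has $t_2\ge m+1>3.5$, giving $h^r+1>3.5$ directly.

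For $m=2$, my plan is to argue by contradiction. Assuming $h^r+1\le 3.5$, the trivial lower bound $h^r+1\ge m+1=3$ places us in the window $3\le h^r+1\le 3.5$. I would then invoke the structural consequences already established in the paper: Proposition~\ref{Z} addresses the open subrange $3<h^r+1\le 3.5$ and forces the left endpoint of the principal face to be $(1,3)$, while Corollary~\ref{B45} covers the endpoint $h^r+1=3=B$ (the $B=H=3$ case) and forces it to be $(0,3)$. Both of those proofs are phrased purely in terms of $\mathcal{N}^r(\tpad)$, the integer $B$, the principal line $L$ of $\phi$ (a supporting line of $\mathcal{N}(\tpad)$ by Varchenko's algorithm), and the principal line $L^a=L_{(l)}$ of $\tpad$ itself, and none of this input distinguishes $l=1$ from $l\ge 2$. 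Hence they transfer without change to our $\tpad=\phi^{(l+1)}$, yielding the joint conclusion that the left endpoint $(A'_{(l-1)},B'_{(l-1)})$ of $\pi(\tpad)=\gamma'_{(l)}$ must satisfy $B'_{(l-1)}=3$ and $A'_{(l-1)}\in\{0,1\}$.

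The contradiction will then fall out of the hypothesis $l\ge 2$: as recalled in Section~\ref{remind}, for $l\ge 2$ the edge $\gamma'_{(l)}$ lies in the closed half-space $\{t_2\le t_1\}$, so $B'_{(l-1)}\le A'_{(l-1)}$; combined with $B'_{(l-1)}=3$ this forces $A'_{(l-1)}\ge 3$, incompatible with $A'_{(l-1)}\in\{0,1\}$. I expect the only delicate step to be the verbatim transfer in the previous paragraph — one must check that Proposition~\ref{Z}'s slope computation on $L^a$ still runs when $L^a=L_{(l)}$ in place of $L_{(1)}$ (it does, since the relevant input is only that the slope modulus $1/a_{(l)}$ is strictly less than $1/m=1/2$), and that Corollary~\ref{B45}'s horizontal-line argument at $t_2=B$ still applies to the left endpoint of $\gamma'_{(l)}$ rather than of $\gamma'_{(1)}$ (it does, since the argument only uses that the point on $\Delta^{(2)}$ at $t_2=B$ is $(B-3,B)$ and compares it with $A'_{(l-1)}$). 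Both verifications are pure Newton-polyhedron geometry and should be routine.
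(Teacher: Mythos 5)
Your proof for $m\ge 3$ is fine, and the bisectrix idea for $m=2$ is a genuinely different route from the paper's: the paper never invokes the bisectrix in the proof of this Proposition, but instead observes that for $l\ge 2$ there is a \emph{preceding} edge $\gamma'_{(l-1)}$ whose left endpoint $(A'',B'')$ has $B''\ge 4$ (integer, because $\phi^{(l)}$ is smooth in $y_2$), and whose supporting line $L^{(l-1)}$ has slope modulus $1/a_{(l-1)}<1/m\le 1/2$; the intersection of $L^{(l-1)}$ with $\Delta^{(2)}$ then has second coordinate $>3.5$, which already contradicts $h^r+1\le 3.5$ with no reference to $A'$ or the bisectrix at all.

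The gap in your argument is in the sentence "Both of those proofs are phrased purely in terms of $\mathcal{N}^r(\tpad)$, the integer $B$, the principal line $L$, \dots, and none of this input distinguishes $l=1$ from $l\ge 2$." That is not so. The second step of Proposition~\ref{Z}'s proof explicitly says "let us here work with $\pad$ in place of $\tilde\pad$," and then uses two facts that are specific to $l=1$: (i) that $(A',B')$ is also the left endpoint of $\pi(\pad)$, which is true precisely because $\gamma'_{(1)}$ lies on the principal line of $\mathcal{N}(\pad)$ and is cut by the bisectrix — for $l\ge 2$, the edge $\gamma'_{(l)}$ lies below the bisectrix and $(A',B')$ need not be a vertex of $\mathcal{N}(\pad)$ at all; and (ii) that $A'\in\mathbb{N}$, which for $l=1$ follows from smoothness of $\pad$, whereas for $l\ge 2$ your $\tpad=\phi^{(l+1)}$ is only fractionally smooth and the first coordinate of a vertex of its Newton polyhedron need not be an integer. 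Corollary~\ref{B45}'s proof opens with the same "which, as we recall, is also the left endpoint of $\pi(\pad)$" and the same appeal to integrality of $A'$. So neither statement yields the exact conclusion $A'_{(l-1)}\in\{0,1\}$ for $l\ge 2$ by "verbatim transfer."

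What does survive — and what you actually need — is weaker: the slope computation in case (b) of Proposition~\ref{Z}'s second step applied not to an edge of $\pi(\pad)$ but to the preceding edge $\gamma'_{(l-1)}$ of $\mathcal{N}(\phi^{(l)})$, whose left endpoint has integer second coordinate $B''\ge 4$. But once you make that substitution you have reproduced the paper's own proof, and the intersection computation with $\Delta^{(2)}$ gives $h^r+1>3.5$ directly; passing through the bisectrix condition $A'\ge B'=3$ is then a detour rather than a shortcut. So the approach can be repaired, but not by the "routine" transfer you describe; and after repair it essentially collapses onto the argument the paper already gives.
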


\begin{proof} 
Assume  we had $h^r+1\le 3.5,$ and denote by  $(A',B'):=(A'_{(l-1)},B'_{(l-1)})\in L^a$ the left endpoint of the principal face $\pi(\tilde \pad)=\ga'_{(l)}$ of the Newton polyhedron of $\tilde\pad.$ Then $B'\ge B=3.$  Arguing in the same way is in the  first step of the proof of Proposition \ref{compth}, we see that $B'=B=3.$ 

\medskip
Then  the preceding edge $\ga'_{(l-1)}$ will have  a left endpoint $(A'',B'')$ with $B''\ge 4.$ Moreover, since the line $L^{(1)}$ has slope $1/a<1/m\le 1/2,$ the line $L^{(l-1)}$ containing $\ga'_{(l-1)}$ has slope strictly less than $1/2.$ But then it will intersect the line $\Delta^{(2)}$ at some point with second coordinate $z_2$ strictly bigger than $3.5,$  so that  we would again arrive at  $h^r+1\ge z_2 >3.5,$  which contradicts our assumption. 
\end{proof}

These results allow us to proceed exactly as in Sections \ref{specloc},  \ref{equalla}, even with some 
simplifications. Indeed, a careful inspection of our arguments in these sections  reveals  that here all the series which  appear do sum, and no further interpolation arguments are required.
 
This is because of the stronger estimate $p'_c>\tilde p'_c$ of Lemma \ref{pc-est2} and the stronger statement of Proposition \ref{Z2}, which implies in particular that $\th_c<1/3$ when $B=3.$ 

Moreover, in Subsection \ref{thlB}, the more delicate case where $B=3$ and $\th_c\ge 2/7$ does not appear anymore, because by Proposition \ref{compthl} we have $\th_c<2/7.$ 

Observe finally that if $m=2,\, B=3,4,5,$ $h^r+1\le B$ and $H\le H(B),$  then Corollary \ref{B45}, whose proof applies equally well  when $l\ge 2,$  shows that left endpoint of the  principal face of the Newton polyhedron of $\tpad$ is of the form $(A,B),$ where $A\le B-3.$ However, if $l\ge 2,$ this endpoint must lie on or below the bi-sectrix, which leads to a contraction. These cases therefore cannot arise when $l\ge2.$

We therefore obtain 

\begin{cor}\label{l>1}
The  estimates in Proposition \ref{Dl} hold true for every  $l\ge 2.$ 
 \end{cor}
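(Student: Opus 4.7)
My plan is to rerun, step by step, the arguments of Sections~\ref{specloc} and~\ref{equalla} that produced Corollary~\ref{l=1}, after substituting the $l=1$ data by the $l\ge 2$ data according to~\eqref{6.1II}. The whole point will be that for $l\ge 2$ the underlying geometry is strictly tighter: the edge $\gamma'_{(l)}=\pi(\tpad)$ now lies in the closed half-space below the bi-sectrix $\Delta$ rather than meeting $\Delta$, and this is precisely what produces the strict inequalities of Lemma~\ref{pc-est2}(a) and Corollary~\ref{compthl}, as well as the sharper bound $h^r+1>3.5$ in Proposition~\ref{Z2} when $B=3$.

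First I would reproduce the spectral dyadic decomposition of Section~\ref{specloc} and its case analysis according to the relative sizes of $\la_1,\la_2,\la_3$. In each of Cases~1, 2, and~3 the sum of $\|T_\de^\la\|_{p_c\to p'_c}$ over dyadic $\la$ converges provided $\th_c$ is strictly smaller than the relevant threshold ($\th_B$ or $1/3$). For $l=1$, the separate treatment of the operators $T_\de^{I_1}, T_\de^{II_1}, T_\de^{III_0}, T_\de^{III_1}, T_\de^{III_2}, T_\de^{III_{2b}}$ in Section~\ref{interpolation1} was forced by the borderline equality $\th_c=\th_B=1/3$ in the critical situation $m=2, H=B=3$. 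Corollary~\ref{compthl}(a),(c) eliminates this borderline for $l\ge 2$, so every geometric series closes with room to spare and no interpolation argument is needed.

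Next I would turn to the diagonal regime $\la_1\sim\la_2\sim\la_3$ of Section~\ref{equalla}. In the subcase $h^r+1>B$, the refined Newton-polyhedron classification of Proposition~\ref{Z} had been required only to handle $B=3, m=2$ with $\th_c\ge 2/7$; but Proposition~\ref{Z2} excludes this entirely for $l\ge 2$, forcing $h^r+1>3.5$ and therefore $\th_c<2/7$, so the estimates~\eqref{4.2II}--\eqref{4.3II} sum directly. In the subcase $h^r+1\le B$, the obstructions recorded in Corollary~\ref{B45} all require the left endpoint $(A,B)$ of $\pi(\tpad)$ to satisfy $A\le B-3<B$, i.e.\ to lie strictly above $\Delta$; this is incompatible with $\pi(\tpad)=\gamma'_{(l)}$ lying below $\Delta$, so these cases cannot arise for $l\ge 2$.

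The only real obstacle is bookkeeping: in each individual $\la$-series I would have to verify that the strict inequalities of Lemma~\ref{pc-est2}(a) and Corollary~\ref{compthl} translate into a strictly negative exponent in the summand, so that the series converges geometrically, uniformly in $\de$. Once this is checked subcase by subcase, Proposition~\ref{Dl} for $l\ge 2$ follows by exactly the same chain of reductions as for $l=1$, with no new analytical input needed.
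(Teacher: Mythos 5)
Your proposal is correct and follows essentially the same route as the paper's proof: invoke Lemma~\ref{pc-est2}(a) (strict inequality $p'_c>\tilde p'_c$), Corollary~\ref{compthl} and Proposition~\ref{Z2} to kill the borderline cases ($\th_c=\th_B=1/3$ and $\th_c\ge 2/7$ when $B=3$) that forced the interpolation machinery for $l=1$, and observe that the exceptional configurations of Corollary~\ref{B45} would put the left endpoint $(A,B)$ with $A\le B-3$ above the bi-sectrix, which is incompatible with $\gamma'_{(l)}\subset\{t_2\le t_1\}$ for $l\ge 2$. The paper packages the same observations more tersely, but the reasoning and the key structural facts used are identical.
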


\setcounter{equation}{0}
\section{The remaining cases where $m=2$ and $B=3$ or $B=4:$ Preliminaries }\label{m2b34}
We finally turn to the discussion of the remaining cases which are described by \eqref{5.17II} and \eqref{5.18II}.
In view of our definition of $B$ (cf. \eqref{2.2II}),  we see that in these cases $Q\y=c_0y_1^A,$ where $c_0\ne 0$ and $A\in\{0,\dots,B-3\}.$  Indeed,  this is obvious if $A=0,$ hence in particular if  $B=3,$ and if $B=4$ and $A=1,$ then  our assumption that $H\le H(4)<5$ implies that the Taylor support of $\tpad_{\tilde\kappa}$ cannot contain a point on the second coordinate axis. Let us assume without loss of generality that $c_0=1,$ so that by \eqref{2.2II}
\begin{equation}\label{7.1II}
\tpad_{\tilde\kappa}(x)=x_1^A x_2^B +c_1x_1^n,\qquad c_1\ne 0.
\end{equation}Then, by \eqref{phid}, \eqref{defr} and \eqref{bQ}, we may write
\begin{equation}\label{7.2II}
\phi_\de(x):= x_2^B  \,b(x_1,x_2,\de)+x_1^n\alpha(\de_1 x_1)+ r(x_1,x_2,\de),\quad (x_1\sim 1, |x_2|<\ve),
\end{equation}

where $ b(x_1,x_2,0)=x_1^A\sim 1, \al(0)\ne 0$ and 
$$
r(x_1,x_2,\de)=\sum_{j=1}^{B-1}x_2^jx_1^{n_j}\de_{j+2}\al_j(\de_1x_1).
$$
 Moreover, either $\al_j(0)\ne 0,$ and then $n_j$  is fixed (the type of the finite type function $b_j$), or $\al_j(0)=0,$ and then we may assume that $n_j$ is as large as we please.
 
 \medskip Observe also that if $A=0$ (this is necessarily so if $B=3$ ), then we have $Q(x)\equiv 1$ in \eqref{2.2II}, so that $\tilde\ka_2=1/B,$ and consequently 
\begin{equation}\label{bde}
b(x_1,x_2,\de)=b_B(\de_1 x_1, \de_2 x_2)
\end{equation}
 in \eqref{phid}.
 
 \medskip
 Recall also from Corollary \ref{B45} that  here $\tpad$ is  even smooth, not only fractionally smooth, and that  $a$ and $n$ are integers.
 
 \subsection{The fine structure of the phase $\phi_\de$}
 We shall need to derive more specific information on the phase $\phi_\de,$ and begin with the ``error term'' 
 $r(x_1,x_2,\de).$

 \medskip

 \begin{cor}\label{rem7.1II}
 By  some slight change of coordinates, we may even assume that the term with index $j=B-1$ vanishes, i.e., that 
 \begin{equation}\label{7.3II}
r(x_1,x_2,\de)=\sum_{j=1}^{B-2}x_2^jx_1^{n_j}\de_{j+2}\al_j(\de_1x_1).
\end{equation}
In particular, in view of \eqref{defde}, we may assume that 
$\de=(\de_0,\,\de_1,\,\de_2,\,\de_3,\dots,\de_{B})$ is given by 
\begin{equation}\label{defde2}
\de:=(2^{-k(\tilde\ka_2-2\tilde\ka_1)},\,2^{-k\tilde\ka_1},\,2^{-k\tilde\ka_2},\,2^{-k(n_1\tilde\ka_1+\tilde\ka_2-1)},\dots,2^{-k(n_{B-2}\tilde\ka_1+(B-2)\tilde\ka_2-1)}).
\end{equation}
Moreover, the complete phase corresponding to $\phi_\de$ is given by 
$$
\Phi(x,\,\de,\,\xi):=\xi_1x_1+\xi_2(\de_0x_2+x_1^m\omega(\de_1x_1))+\xi_3\phi_\de\x.
$$
\end{cor}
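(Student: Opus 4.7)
The plan is to perform a Weierstrass-type substitution directly in the rescaled variables $\x$ of \eqref{nud}--\eqref{phid}, eliminating the $x_2^{B-1}$ coefficient of $\phi_\de.$ I would set $x_2 = \tilde x_2 + \sigma(x_1,\de)$ and impose vanishing of the $\tilde x_2^{B-1}$ coefficient in $\phi_\de(x_1,\tilde x_2+\sigma),$ i.e., $\partial_{x_2}^{B-1}\phi_\de(x_1,\sigma(x_1,\de))=0.$ In view of \eqref{phid} this reduces to
\[
B\,\sigma\,b(x_1,0,\de) + x_1^{n_{B-1}}\de_{B+1}\al_{B-1}(\de_1 x_1) + O(\sigma^2) = 0.
\]
Since $b(x_1,0,0)=x_1^A$ and $\supp\eta\subset\{x_1\sim 1\},$ we have $b(x_1,0,\de)\sim 1$ for $\de$ small, and the implicit function theorem yields a unique small solution
\[
\sigma(x_1,\de) = -\frac{x_1^{n_{B-1}-A}\de_{B+1}\al_{B-1}(\de_1 x_1)}{B\,b(x_1,0,\de)} + O(\de_{B+1}^2) = O(\de_{B+1}).
\]

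I would then check how the ingredients of \eqref{nud} transform under the shift $x_2 = \tilde x_2+\sigma.$ The Jacobian is $1,$ the cut-off $\eta$ becomes a shifted cut-off of the same type, and $\phi_\de$ becomes a new phase $\tilde\phi_\de$ which, by construction, has no $\tilde x_2^{B-1}$ term. Re-expanding $\tilde\phi_\de$ in powers of $\tilde x_2,$ one sees that it is again of the form \eqref{7.2II}, with new coefficients $\tilde b,\tilde\al$ satisfying $\tilde b(x_1,0,0)=x_1^A$ and $\tilde\al(0)\ne 0$ (since $\sigma|_{\de=0}\equiv 0$), and with error $\tilde r$ of the shape \eqref{7.3II}; the induced corrections to the remaining coefficients are of size $O(\de_{B+1})$ and preserve the flat/finite-type structure.

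The delicate step is the transformation of the second argument of $f,$ which becomes $\de_0\tilde x_2 + \de_0\sigma(x_1,\de) + x_1^m\om(\de_1 x_1).$ Using $m=2,$ $A\tilde\ka_1+B\tilde\ka_2=1,$ and \eqref{defde}, a short bookkeeping gives the exponent identity
\[
\de_0\de_{B+1}\,x_1^{n_{B-1}-A-m} = (\de_1 x_1)^{n_{B-1}-A-m}.
\]
By \eqref{2.5II} one has $n_{B-1}-A \ge a > m,$ and by Corollary \ref{B45} the numbers $a$ and $n$ are integers, so $n_{B-1}-A-m$ is a positive integer. Consequently $\de_0\sigma/x_1^m$ equals $-(\de_1 x_1)^{n_{B-1}-A-m}\al_{B-1}(\de_1 x_1)/B$ up to $O(\de_{B+1}^2),$ and can be absorbed into a new $\tilde\om(\de_1 x_1) = \om(\de_1 x_1) + O(\de_{B+1}),$ still smooth and with $\tilde\om(0)\ne 0.$ Thus $\nu_\de$ is rewritten in the form \eqref{nud} relative to $\tilde x_2,$ with $\de_{B+1}$ entering only through higher-order perturbations of the coefficients, which gives \eqref{7.3II} and \eqref{defde2}.

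The main obstacle is precisely the bookkeeping in the third paragraph: the clean absorption of $\de_0\sigma$ into $\om$ (rather than its appearance as a separate term in the second argument of $f$) depends on the exponent identity above and on $n_{B-1}-A-m$ being a non-negative integer. Both facts rely on the specific geometry of Section \ref{m2b34}---the relation $A\tilde\ka_1+B\tilde\ka_2=1$ for the endpoint $(A,B)\in\pi(\tpad),$ the strict inequality $a>m$ from \eqref{2.5II}, and the integrality provided by Corollary \ref{B45}---so that outside this restricted situation one would not expect such a clean reduction by a single substitution in $x_2.$
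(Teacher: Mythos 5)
Your route is genuinely different from the paper's: you try to kill the $x_2^{B-1}$ Taylor coefficient by a shift performed directly in the rescaled variables $x$, whereas the paper performs the shift in the unscaled coordinates $\y$ (by solving the "lifted" equation $\pa_1^A\pa_2^{B-1}\tpad(y_1,\rho(y_1))=0$, so that the implicit function theorem applies at the origin even when $A>0$, and then deducing $b_{B-1}\equiv 0$ from the $\tilde\ka$-degree constraints) and only afterwards rescales. Your exponent identity $\de_0\de_{B+1}=\de_1^{\,n_{B-1}-A-m}$ is correct, and so is the integrality/positivity of $n_{B-1}-A-m.$ However, the proposal has two real gaps.

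First, the absorption into $\om$ is not actually established. The implicit function theorem gives an \emph{exact} $\sigma(x_1,\de),$ and your computation only shows that the \emph{leading} term of $\de_0\sigma/x_1^m$ is a function of $\de_1 x_1$; the $O(\de_{B+1}^2)$ remainder could a priori depend on $x_1$ in a way that cannot be absorbed into a smooth $\tilde\om(\de_1 x_1).$ The reason the absorption does work exactly is that $\phi_\de(x)=2^k\tpad(\de_1 x_1,\de_2 x_2)$ is a rescaling of a single $\de$-independent function, so the equation $\pa_{x_2}^{B-1}\phi_\de(x_1,\sigma)=0$ is equivalent to $\pa_2^{B-1}\tpad(\de_1 x_1,\de_2\sigma)=0,$ whence $\de_2\sigma(x_1,\de)=\tilde\rho(\de_1 x_1)$ for a single smooth branch $\tilde\rho$ vanishing to order $n_{B-1}-A>m$ at the origin, and $\de_0\sigma/x_1^m=(\de_1 x_1)^{-m}\tilde\rho(\de_1 x_1)$ is then a smooth function of $\de_1 x_1.$ You never invoke this crucial structural fact, and without it the leading-order Taylor argument does not close. (Note also that for $A>0$ the smoothness of $\tilde\rho$ at $y_1=0$ is not immediate from the implicit function theorem, since $\pa_2^B\tpad(0,0)=0$; one must first factor out $y_1^A,$ which is legitimate since $\N(\tpad)\subset\{t_1\ge A\}.$)

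Second, the claim that the shift "preserves the flat/finite-type structure" of the remaining coefficients $b_1,\dots,b_{B-2}$ is simply asserted. This is precisely the point the paper handles by noting that the change of coordinates $(z_1,z_2)=(y_1,y_2-\rho(y_1))$ preserves the Newton polyhedron of $\tpad$ and its $\tilde\ka$-principal part (because $\rho$ has $\tilde\ka$-degree strictly bigger than $a$), so the decomposition \eqref{2.3II} persists with the same degree constraints, and \eqref{phid}--\eqref{defr} follows after rescaling. Done in the scaled variables as you propose, the corresponding verification requires more care about the $\de$-dependence of the corrections, and you have not supplied it.
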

 
 \begin{proof} 
Indeed,  going back to the domain $D^a=\{\y: 0<y_1<\ve, |y_2|<\varepsilon y_1^a \},$ 
 observe that  \eqref{7.1II} implies that $\pa_1^A\pa_2^{B-1}\tpad_{\tilde \kappa}(y_1,0) \equiv 0,$  whereas $\pa_1^A\pa_2^{B}\tpad_{\tilde \kappa}(y_1,0)\equiv A!B!\ne 0$ for $|y_1|<\ve.$ 
 Moreover, since $(A,B)$ is a vertex of $\N(\tpad),$ we also have $\pa_1^A\pa_2^{B}\tpad(0,0)=A!B!\ne 0,$ whereas $\pa_1^A\pa_2^{B-1}\tpad(0,0) =0.$
 
 %Let us first assume that $\tpad$ is smooth. 
 Then the implicit function theorem  implies that, that for $\ve$ sufficiently small,
 there is a  smooth function $\rho(y_1),\  -\ve<y_1<\ve,$ such   $$
 \pa_1^A\pa_2^{B-1}\tpad(y_1,\rho(y_1)) \equiv 0.
 $$
 and comparing $\tilde\kappa$-principal parts, it is easy to see that the $\tilde\ka$-principal part of $\rho$ has $\tilde\ka$-degree strictly bigger than the degree of $y_1^a.$ Thus, if  we  perform  the  further change of coordinates $(z_1,z_2):=(y_1,y_2-\rho(y_1),$ in which $\phi$ is represented, say, by $\tilde\phi,$ it is easily seen that the Newton polyhedra of $\tpad$ and $\tilde\phi$ as well as  their $\tilde\kappa$-principal parts are the same    (cf. similar arguments in \cite{IM-ada}).  Replacing $\psi^{(2)}(y_1)$ by  $\psi^{(2)}(y_1)+\rho(y_1)$ and modifying $\om(y_1)$ accordingly,  we then find that in the corresponding  modified adapted coordinates $
 z_1=x_1,z_2=x_2-(\psi^{(2)}(x_1)+\rho(x_1)),$ the function $\tilde\phi$ satisfies
\begin{equation}\label{7.4II}
\pa_1^A\pa_2^{B-1}\tilde\phi(z_1,0) \equiv 0.
\end{equation}

 On the other hand,  like $\tpad,$ it must be of the form
 $$
 \tilde\phi(z_1,z_2)=z_2^B \,b_B(z_1,z_2)+z_1^n\al(z_1)+\sum_{j=1}^{B-1}z_2^j \, b_j(z_1),
 $$
 where $b_B(z_1,z_2)=z_1^A,$ except for terms of $\tka$-degree strictly bigger than $\tka_1 A,$
 and thus \eqref{7.4II} implies that $b_{B-1}^{(A)}(z_1)\equiv 0,$ i.e., 
 $b_{B-1}(z_1)=\sum_{k=0}^{A-1} a_k z_1^k.$
 
The corresponding terms $z_2^{B-1}a_k z_1^k$ in $z_2^{B-1}b_{B-1}(z_1)$ have $\tka$-degree 
$k\tka_1+(B-1)\tka_2\le(A-1)\tka_1+(B-1)\tka_2<A\tka_1+B\tka_2=1,$  and thus  must all vanish, since they should also have $\tka$-degree strictly bigger than $1,$ as required in \eqref{2.3II}. We thus find that
 $$
 \tilde\phi(z_1,z_2)=z_2^B \,b_B(z_1,z_2)+z_1^n\al(z_1)+\sum_{j=1}^{B-2}z_2^j \, b_j(z_1),
 $$
 which, after re-scaling, implies \eqref{7.3II}.
 
 %\medskip
 %Assume finally that $\tpad$ is only fractionally smooth. Then there is some integer $q>0$ such that the function 
 %$\Phi(y_1,y_2):=\tpad(y_1^q,y_2)$ is smooth.  Arguing as before, we then find a change of coordinates $(z_1,z_2):=(y_1,y_2-\tilde\rho(y_1)),$ with a smooth function $\tilde\rho,$  so that 
 %$$
% \tilde\Phi(z_1,z_2):=\Phi(z_1, z_2+\tilde\rho (z_1))=z_2^B \,\tilde b_B(z_1,z_2)+z_1^{nq}\al(z_1)+\sum_{j=1}^{B-2}z_2^j \, \tilde b_j(z_1).
% $$
% Putting $\rho(y_1):=\tilde\rho(y_1^{1/q}), 0<y_1<\ve,$ we then see that 
 %$$
 %\tpad(z_1,z_2+\rho(z_1))=z_2^B \,\tilde b_B(z_1^{\frac 1q},z_2)+z_1^{q}\al(z_1)+\sum_{j=1}^{B-2}z_2^j \, \tilde b_j(z_1^{\frac 1q}),
%$$
%where  $\tilde b_B(z_1^{\frac 1q},z_2)=z_1^A,$ except for terms of $\tka$-degree strictly bigger than $\tka_1 A,$
%which again implies \eqref{7.3II}.
 \end{proof}

Recall also that we are interested in the frequency domains where $|\xi_j|\sim\la_j, \ j=1,2,3,$  assuming that  $\la_1\sim\la_2\sim\la_3$.

We shall assume for the sake of simplicity that
$$
\la_1=\la_2=\la_3\gg1.
$$
With some slight change of notation, compared to Section \ref{specloc}, we shall here put $\la:=\la_1=\la_2=\la_3,$ and write accordingly
$$
\widehat{\nu^\la_\de}(\xi):=\chi_1\Big(\frac{\xi_1}{\la}\Big)\chi_1\Big(\frac{\xi_2}{\la}\Big)\chi_1\Big(\frac{\xi_3}{\la}\Big)\; \int e^{-i\Phi(y,\de,\xi)}\, \eta(y) \, dy,
$$
i.e., 
$$
\nu^{\la}_\de(x)=\la^3\int \check\chi_1\Big({\la}(x_1-y_1)\Big) \, \check\chi_1\Big({\la}(x_2-\de_0 y_2-y_1^2\omega(\de_1y_1)\Big)
\check\chi_1\Big({\la_3}(x_3-\phi_\de(y)\Big) \,\eta(y)\,dy.
$$

 Recall also that 
$\supp \eta\subset\{x_1\sim1,\,|x_2|<\ve  \}, \quad (\ve \ll 1).$

As before, we change coordinates from $\xi=(\xi_1,\,\xi_2,\,\xi_3)$ to $s_1,\,s_2$ and $s_3:=\xi_3/\la,$ by writing 
$\xi=\xi(s,\la),$ with 
$$
\xi_1=s_1\xi_3=\la s_1s_3,\quad \xi_2=s_2\xi_3=\la s_2s_3,\quad \xi_3=\la s_3.
$$
Accordingly, we write 
\begin{equation}\label{7.6II}
\Phi(y,\,\de,\,\xi)=\la s_3\Big(\Phi_1(y_1,\de_1,s)+s_2\de_0 y_2+y_2^B  \,b(y_1,y_2,\de)+ r(y_1,y_2,\de)\Big),
\end{equation}
where
$$
\Phi_1(y_1,\de_1,s):=s_1y_1+s_2y_1^2\omega(\de_1y_1)+y_1^n\alpha(\de_1 y_1).
$$
Notice that here $|s_j|\sim 1, \, j=1,2,3,$  and we have put $s:=(s_1,s_2).$ By passing from $s_j$ to $-s_j,$ if necessary, we may and shall in the sequel always assumes that 
$$s_j\sim 1,\qquad j=1,2,3
$$
(notice that these changes of signs may cause a change of sign of the $x_j$ and $\om,$ respectively of $\Phi$).

\medskip
Let us fix $s^0$ in this domain, and consider the phase function $\Phi_1(x_1,0,s^0)$  for $\de_1=0.$ In case that this phase has at worst non-degenerate critical points $x_1^c\sim 1,$  then the same is true for sufficiently small $\de_1,$  and the estimate for $\widehat{\nu_\de^\la}$ in Section \ref{equalla} can be improved to 
$$
\|\widehat{\nu_\de^\la}\|_\infty \lesssim  \la^{-\frac 12-\frac 1B},
$$
and thus in Case A we obtain the better estimate
$$
\|T_\de^\la\|_{p_c\to p'_c}\lesssim \la^{-\frac 12-\frac 1B+\frac 52 \th_c},
$$
compared to \eqref{4.2II} (of course, $T_\de^\la$ stands here for $T_\de^{(\la,\la,\la)}$). Moreover, by means of \eqref{5.17II}, \eqref{5.18II}, one checks easily that the exponent of $\la$ in this estimate is strictly negative, if $B=4,$ and zero, if $B=3.$ Thus we can sum these estimate over all dyadic $\la\gg 1$ if $B=4,$ and obtain at least a uniform estimate when $B=3.$ It is easy to see that this case can then still be treated by means of the real interpolation Proposition \ref{bsint}, since the relevant frequencies will here be  restricted essentially to cuboids in the $\xi$-space.

\medskip

In Case B,  where $\la> \de_0^{-B/(B-1)},$ we obtain the better estimate
$$
\|T_\de^\la\|_{p_c\to p'_c}\lesssim \la^{-\frac 12-\frac 1B+(\frac 32+\frac 1B) \th_c}\, \de_0^{-\th_c},
$$ 
compared to \eqref{4.3II}. The exponent of $\la$ is  strictly negative (compare the discussion leading to \eqref{4.12II}), so summing over all  $\la> \de_0^{-B/(B-1)},$ we find that 
$$
\sum_{\la> \de_0^{-B/(B-1)}}\|T_\de^\la\|_{p_c\to p'_c}\lesssim \de_0^{\frac{B+2-5B\th_c}{2B-2}}.
$$
And, since $\th_c\le \tilde\th_B,$ on easily checks that the exponent of $\de_0$ in this estimate is non-negative if $B\ge 3.$ 

\medskip

We  are thus left with the more subtle  situation where  the phase function $\Phi_1(x_1,0,s^0)$ has a degenerate critical point $x_1^c$  of Airy type. Denoting by $\Phi_1', \Phi_1^{''}$ etc. derivatives with respect to $x_1,$  and arguing as in Section 6 of \cite{IM-rest1}, we see by the implicit function theorem that for $s$ sufficiently close to $s^0$ and $\de$ sufficiently small, there is a  unique, non-degenerate critical point   $x_1^c=x_1^c(\de_1,s_2)\sim 1$  of 
$\Phi_1',$ i.e., 
$$
\Phi_1^{''}(x_1^c(\de_1,s_2), \de_1,s)=0, \quad |s-s^0|<\ve, |\de|<\ve,
$$
if $\ve$ is sufficiently small.  We then shift this critical point to the origin,  by putting
\begin{equation}\label{7.7II}
\Phi^\sharp(x,\de,\xi):=\frac 1{s_3\la}\Phi(x_1^c(\de_1,s_2)+x_1,x_2,\de,\xi), \quad |\x|\ll 1
\end{equation}
(notice that we may indeed assume that $|\x|\ll 1,$ since away from $x_1^c,$ we have at worst  non-degenerate 
 critical points, and the previous argument applies).
 
From Lemma 6.2 in \cite{IM-rest1} (with $\be:=\al, \si=1, \de_3=0$ and $b_0=0$), we then immediately get, after scaling in $x_1$ so that we may assume that
\begin{equation}\label{normalization1}
-\frac{2\om(0)}{n(n-1)\al(0)}=1,
\end{equation}
the following result:

\begin{lemma}\label{s}
$\Phi^\sharp$ is of the form
\begin{equation}\label{Phis}
\Phi^\sharp(x,\de,\xi)=x_1^3\, B_3(s_2,\de_1,x_1) -x_1\, B_1(s,\de_1) +  B_0(s,\de_1)+\phi^\sharp(x,\de, s_2),
\end{equation}

with
\begin{eqnarray}\nonumber
\phi^\sharp(x,\de,\,s_2)&:=&x_2^B\, b(x,\de,s_2)+\sum_{j=2}^{B-2}\de_{j+2}x_2^j\,\tilde\alpha_j(x_1,\de,\,s_2)\\
&+&x_2\, \Big(s_2\de_0 +\de_3(x_1^c(\de_1,s_2)+x_1)^{n_1} \al_1(\de_1(x_1^c(\de_1,s_2)+x_1))\Big) ,
\label{phases}
\end{eqnarray}
and where the following hold true:

\medskip
The functions $b$ and $\tilde\al_j$ are smooth, 
$b(x,\de,s_2)\sim 1,$ and also 
 $|\tilde\alpha_j|\sim1,$ unless $\alpha_j$  is a flat function. Moreover,
$B_0,B_1$ and $B_3$ are smooth functions, and $$
B_3(s_2,\de_1,0)=s_2^{\frac{n-3}{n-2}}G_4(\de_1s_2^{\frac 1{n-2}}),
$$
where 
$$
G_4(0)=\tfrac{n(n-1)(n-2)}{6}\al(0).
$$
Furthermore, we may write
\begin{equation}\label{ai2}
\begin{cases} \quad  x_1^c(\de_1,s_2)&=s_2^{\frac 1{n-2}}G_1(\de_1s_2^{\frac 1{n-2}}),\\
\quad B_0(s,\de_1)&=s_1s_2^{\frac 1{n-2}}G_1(\de_1s_2^{\frac 1{n-2}}) - s_2^{\frac n{n-2}} G_2(\de_1s_2^{\frac 1{n-2}}),\\
 \quad B_1(s,\de_1)&= -s_1+s_2^{\frac {n-1}{n-2}}G_3(\de_1s_2^{\frac 1{n-2}}),
\end{cases}
\end{equation}
where
\begin{equation}\label{ai3}
\begin{cases}  
\quad G_1(0)&=1,\\
\quad G_2(0)&= \frac{n^2-n-2}2 \al(0),\\
\quad G_3(0)&= n(n-2)\al(0).
\end{cases}
\end{equation}
Notice that all the numbers in \eqref{ai3} are non-zero, since we assume $n>2B>5.$  Finally,  if we also write $G_5:=G_1G_3-G_2,$ then we have 
\begin{equation}\label{gnz}
G_3(0)\ne 0, \quad  G_5(0)\ne 0.
\end{equation}
\end{lemma}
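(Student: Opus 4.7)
\textbf{Proof proposal for Lemma \ref{s}.}
The structural statement is essentially Lemma 6.2 of \cite{IM-rest1} applied to $\Phi/(s_3\la)$ with the parameter identifications indicated in the text ($\be=\al,$ $\si=1,$ $\de_3=0,$ $b_0=0$); my plan is therefore to verify how the setup matches, evaluate the coefficient functions $G_j$ at $\eps=0$ by means of \eqref{normalization1}, and finally check the non-vanishing \eqref{gnz}.

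The Airy-type critical point is located by solving $\Phi_1''(y_1,\de_1,s)=0.$ For $\de_1=0,$
\[
\Phi_1''(y_1,0,s)=2s_2\om(0)+n(n-1)y_1^{n-2}\al(0)
\]
vanishes precisely at $y_1=s_2^{1/(n-2)}$ by the normalization \eqref{normalization1}, and $\Phi_1'''=n(n-1)(n-2)y_1^{n-3}\al(0)\ne 0$ there; the implicit function theorem then gives a smooth branch $x_1^c(\de_1,s_2)\sim 1$ for small $\de_1.$ The key technical step, and essentially the only non-routine one, is to extract the homogeneity exhibited in \eqref{ai2}: substituting $y_1=s_2^{1/(n-2)}u$ and $\eps:=\de_1 s_2^{1/(n-2)},$ a direct expansion shows
\[
\Phi_1''(s_2^{1/(n-2)}u,\de_1,s)=s_2\,F(u,\eps),\qquad F(1,0)=0,\ \pa_uF(1,0)\ne 0,
\]
so IFT now yields $u=G_1(\eps)$ with $G_1(0)=1,$ whence $x_1^c(\de_1,s_2)=s_2^{1/(n-2)}G_1(\eps).$

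A Taylor expansion of $\Phi_1(x_1^c+x_1,\de_1,s)$ about $x_1=0,$ using $\Phi_1''(x_1^c,\de_1,s)=0$ to eliminate the quadratic term, identifies
\[
B_0:=\Phi_1(x_1^c,\de_1,s),\quad -B_1:=\Phi_1'(x_1^c,\de_1,s),\quad B_3(s_2,\de_1,x_1):=\tfrac12\int_0^1(1-t)^2\Phi_1'''(x_1^c+tx_1,\de_1,s)\,dt.
\]
Applying the same scaling substitution to each of these produces the dependence on $\eps$ displayed in \eqref{ai2}, and evaluating at $\eps=0$ with $\om(0)=-\tfrac12 n(n-1)\al(0)$ gives directly $G_1(0)=1,$ $G_2(0)=(n^2-n-2)\al(0)/2,$ $G_3(0)=n(n-2)\al(0),$ and $G_4(0)=n(n-1)(n-2)\al(0)/6,$ all of which are non-zero since $n>5.$ The remainder $\phi^\sharp$ collects the $x_2$-dependent contributions: the $j=1$ summand of $r(x_1^c+x_1,x_2,\de)$ combines with $s_2\de_0 x_2$ to give the last line of \eqref{phases}, the $j\ge 2$ summands produce $\de_{j+2}x_2^j\tilde\al_j(x_1,\de,s_2)$ with $\tilde\al_j(x_1,\de,s_2):=(x_1^c+x_1)^{n_j}\al_j(\de_1(x_1^c+x_1))\sim 1$ (unless $\al_j$ is flat), and the leading term becomes $x_2^Bb(x,\de,s_2)\sim 1$ since $x_1^c+x_1\sim 1.$ Finally,
\[
G_5(0)=G_1(0)G_3(0)-G_2(0)=n(n-2)\al(0)-\tfrac{n^2-n-2}{2}\al(0)=\tfrac{(n-1)(n-2)}{2}\al(0)\ne 0,
\]
establishing \eqref{gnz}.
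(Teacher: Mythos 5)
Your proof is correct and follows essentially the route the paper intends: the paper simply cites Lemma 6.2 of \cite{IM-rest1} (after the normalization \eqref{normalization1}) together with the remark that the critical-point equation becomes a function of the single parameter $\de_1^{n-2}s_2$ in the coordinate $\de_1 x_1$, whereas you reconstruct that derivation explicitly via the equivalent substitution $y_1=s_2^{1/(n-2)}u$, $\eps=\de_1 s_2^{1/(n-2)}$. The computations of $G_1(0),\dots,G_5(0)$ and of $B_0,B_1,B_3$ from the Taylor remainder all check out.
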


Observe  that we obtain here a more specific dependency of $x_1^c, B_0, B_1$ and $B_3$ on $\de_1$ and $s_2$ than in \cite{IM-rest1}, due to the fact that the equation for the critical point depends only on the parameter $\de_1^{n-2}s_2$ in the coordinate $y_1:=\de_1x_1.$ 

\medskip
Nevertheless, with a slight abuse of notation, we shall frequently also use the short-hand notation 
$G_j(s_2,\de)$ in place of 
$G_j(\de_1s_2^{\frac 1{n-2}}), \ j=1,\dots,4$
\medskip

We also remark  that the part of the measure $\nu_\de$ corresponding to the region described in \eqref{7.7II}, which 
 we shall simply again denote by $\nu_\de,$   is given by an  expression for its Fourier transform of the form
 \begin{equation}\label{7.12II}
\widehat{\nu_\de}(\xi)= \int e^{-is_3\la\Phi^\sharp(x,\de,\xi)} a(x,\de,s)\, dx \qquad (\mbox{with }\ \xi=\xi(s,\la)),
\end{equation}
where $a$ is a smooth function with compact support in $x$ such that $|x|\le \ve$ on $\supp a.$

\begin{remark}\label{fracpower}
It will be important in the sequel to observe that every single $\de_j$ is a fractional power of $2^{-k},$ hence also of $\de_0$, i.e., there is some positive rational number  $\r>0$ such that $\de_j=\de_0^{q_j\r},$  with positive integers $q_j$  (cf. \eqref{defde2}).
\end{remark}

In the sequel, we shall need more precise information on the structure of the last term of $\phi^\sharp$ in \eqref{phases}:

\begin{lemma}\label{lastterm}
Let 
$$
a_1(x_1,\de,s_2):=s_2\de_0 +\de_3(x_1^c(\de_1,s_2)+x_1)^{n_1} \al_1(\de_1(x_1^c(\de_1,s_2)+x_1))
$$
be the coefficient of $x_2$ in the last term of $\phi^\sharp$ in \eqref{phases}. Then $a_1$ can be re-written in the form
\begin{equation}\label{7.21II}
a_1(x_1,\de,s_2)=\de_{3,0}\,\tilde\al_1(x_1,\de_0^{\r},s_2)+\de_0\,x_1\al_{1,1}(x_1,\de_0^{\r},s_2),
\end{equation}
with smooth functions $\tilde\al_1$ and $\al_{1,1},$  and where $\de_{3,0}$ is of the form $\de_{3,0}=\de_0^{q_{3,0}\r},$ with some positive integer $q_{3,0}.$  Moreover, two possible cases may arrive:
\medskip

{ \bf Case ND:  The non-degenerate case:}   $\al_{1,1}\equiv 0,\ |\tilde\al_1|\sim 1$ and  $\de_{3,0}=\max\{\de_0,\de_3\}\ge \de_0,$ 

or

{ \bf Case D: The degenerate case:}  $|\al_{1,1}|\sim 1,\, \tilde\al_1=\tilde\al_1(\de_0^{\r}, s_2)$ is independent of $x_1,$  $\de_0=\de_3$  and  $\de_{3,0}\ll \de_0$. Moreover,  either $|\tilde\al_1|\sim 1, $ or we  can choose  $q_{3,0}\in \NN$ as large as we wish. \medskip

 In particular, we may write 
 \begin{eqnarray}\nonumber
\phi^\sharp(x,\de,\,s)&=&x_2^B \,b(x,\de_0^{\r},s_2)+\sum_{j=2}^{B-2}\de_{j+2}x_2^j\, \tilde\alpha_j(x_1,\de_0^{\r},\,s_2)\\
&+&\de_{3,0} \,x_2 \,\tilde\al_1(x_1,\de_0^{\r},s_2)+\de_0\, x_1x_2\,\al_{1,1}(x_1,\de_0^{\r},s_2),\label{7.13II}
\end{eqnarray}
with smooth functions $\tilde a_j$  and $b,$ where $|b|\sim 1$ and $\tilde\al_1$ and $\al_{1,1}$ are as in the Cases D, respectively ND.  Moreover, in both cases we have
$$
\max\{\de_0,\de_3\}=\max\{\de_0,\de_{3,0}\}
$$
\end{lemma}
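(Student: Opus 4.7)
The plan is to Taylor-expand the $\al_1$-term in $x_1$ about $x_1=0$, read off the leading $\de_0^{\r}$-order of the resulting $x_1$-independent part, and then distinguish according to whether $\de_0$ and $\de_3$ are comparable and whether the leading coefficients cancel. By the integral form of Taylor's remainder,
\begin{equation*}
(x_1^c+x_1)^{n_1}\al_1\bigl(\de_1(x_1^c+x_1)\bigr)=F(x_1^c,\de_1)+x_1\,\hat F(x_1,\de_1,s_2),
\end{equation*}
with $F(y,\de_1):=y^{n_1}\al_1(\de_1 y)$ and $\hat F(x_1,\de_1,s_2):=\int_0^1 F_y(x_1^c+tx_1,\de_1)\,dt$ smooth (and $x_1^c=x_1^c(\de_1,s_2)$), so $a_1=[s_2\de_0+\de_3 F(x_1^c,\de_1)]+\de_3 x_1\hat F$. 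A direct check using \eqref{defde2} and $\tka_1=1/n$ shows $\de_3\lessgtr\de_0$ iff $n_1\gtrless n-2$; so in the flat case $\al_1(0)=0$ I may enlarge $n_1$ to force $\de_3<\de_0$, which confines any genuine degeneracy to the non-flat case with $n_1=n-2$.

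In Case (I), $\de_3\ne\de_0$, I would set $\de_{3,0}:=\max\{\de_0,\de_3\}$, $\al_{1,1}\equiv 0$, and $\tilde\al_1:=a_1/\de_{3,0}$. At $\de=0$ one computes $\tilde\al_1=s_2$ if $\de_3<\de_0$, and $\tilde\al_1=s_2^{n_1/(n-2)}\al_1(0)$ if $\de_3>\de_0$ (necessarily the non-flat case); both limits are non-zero for $s_2\sim 1$, so $|\tilde\al_1|\sim 1$, yielding Case ND.

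In Case (II), $\de_3=\de_0$ (so $n_1=n-2$), I would write $a_1=\de_0 G(\de_1,s_2)+\de_0 x_1\hat F$ with $G=s_2[1+h(u)]$, $u:=\de_1 s_2^{1/(n-2)}$, and $h(u):=G_1(u)^{n-2}\al_1(u G_1(u))$. Since $G_1(0)=1$, we have $h(0)=\al_1(0)$: if $\al_1(0)\ne -1$ then $|G|\sim 1$ and choosing $\de_{3,0}:=\de_0$, $\tilde\al_1:=G+x_1\hat F$, $\al_{1,1}\equiv 0$ again gives Case ND. Otherwise $h(0)=-1$, and smoothness yields $1+h(u)=u^k\tilde h(u)$ for some integer $k\ge 1$ with $\tilde h(0)\ne 0$ (or, in the totally flat residual situation, for every $k$), hence $G=\de_1^k s_2^{1+k/(n-2)}\tilde h(u)$. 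I then take
\begin{equation*}
\de_{3,0}:=\de_0\de_1^k,\quad \tilde\al_1(\de_0^{\r},s_2):=s_2^{1+k/(n-2)}\tilde h(u),\quad \al_{1,1}:=\hat F,
\end{equation*}
with $\tilde\al_1$ manifestly independent of $x_1$ and $\de_{3,0}\ll\de_0$. An elementary evaluation at $\de=0$ gives $\hat F=-(n-2)s_2^{(n-3)/(n-2)}\ne 0$, so $|\al_{1,1}|\sim 1$; and either $\tilde h(0)\ne 0$ (so $|\tilde\al_1|\sim 1$) or $k$, and hence $q_{3,0}$, can be chosen arbitrarily large. The equality $\max\{\de_0,\de_3\}=\max\{\de_0,\de_{3,0}\}$ is immediate in both cases, and substitution back into \eqref{phases} then produces \eqref{7.13II}.

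The hard part will be carefully controlling the totally flat residue in subcase (II) with $\al_1(0)=-1$, where no finite number of $\de_1$-extractions produces a coefficient of exact size $\sim 1$; the alternative clause of the lemma, permitting $q_{3,0}$ to be taken as large as we please, is precisely what absorbs this residue.
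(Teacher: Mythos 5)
Your proposal is correct and follows essentially the same route as the paper: Taylor-expand the $\al_1$-term in $x_1$ about $x_1=0$, observe $\de_3\gtrless\de_0$ according to $n_1\lessgtr n-2$ (enlarging $n_1$ in the flat case), and in the critical case $n_1=n-2$ extract the leading power $u^k$ of $\de_1 s_2^{1/(n-2)}$ from the $x_1$-independent part. Your write-up is slightly more explicit — in particular the integral-remainder definition of $\hat F$ and the evaluation $\hat F(0,0,s_2)=-(n-2)s_2^{(n-3)/(n-2)}$ make the assertion $|\al_{1,1}|\sim 1$ concrete where the paper only states it — but the underlying argument, case split, and conclusions coincide with the paper's.
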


\begin{proof} 
Recall that $\de_0= 2^{-k(\tilde\ka_2-2\tilde\ka_1)}$ and $\de_3=2^{-k(n_1\tilde\ka_1+\tilde\ka_2-1)}.$  We therefore distinguish two cases:
\medskip

{\bf 1. Case: $n_1\ne n-2.$}  Then $\tilde\ka_2-2\tilde\ka_1\ne n_1\tilde\ka_1+\tilde\ka_2-1,$  since $\tilde\ka_1=1/n,$ and thus either $\de_0\gg \de_3,$ or $\de_0\ll\de_3,$ for $k$ sufficiently large. Notice that we may assume this to be true in particular if the function $ \al_1$ is flat, since we  may then choose $n_1$ as large as we want. By putting 
$$
\de_{3,0}:=\max\{\de_0,\de_3\},
$$
we then  clearly may write $a_1$ as in  Case ND.

\medskip

{\bf 2. Case: $n_1= n-2.$} Then $\de_0=\de_3,$   and  we may assume that  $|\al_1|\sim 1.$  Thus, expanding around $x_1=0$ and applying  \eqref{ai2}, we see that  we may write
\begin{eqnarray*}
a_1(x_1,\de,s_2)&=&\de_0 \Big(s_2 +x_1^c(\de_1,s_2)^{n_1} \al_1(\de_1(x_1^c(\de_1,s_2)) +x_1\al_{1,1}(x_1,\de_0^{\r},s_2)\Big)\\
&=& \de_0 s_2\Big(1+G_1(\de_1 s_2^{\frac 1{n-2}})\, \al_1(\de_1 s_2^{\frac 1{n-2}}G_1(\de_1 s_2^{\frac 1{n-2}}))\Big)+\de_0 x_1\al_{1,1}(x_1,\de_0^{\r},s_2)\\
&=& \de_0 s_2\Big(1+g(\de_1 s_2^{\frac 1{n-2}})\Big)+\de_0 x_1\al_{1,1}(x_1,\de_0^{\r},s_2),
\end{eqnarray*}
with smooth functions $1+g$ and $\al_{1,1},$ where $|\al_{1,1}|\sim 1.$ By means of a Taylor expansion of $g$ around the origin we thus find that 
\begin{eqnarray*}
a_1(x_1,\de,s_2)&=&\de_0 s_2(\de_1 s_2^{\frac 1{n-2}})^{N} g_N(\de_1 s_2^{\frac 1{n-2}})+\de_0 x_1\al_{1,1}(x_1,\de_0^{\r},s_2)\\
&=&(\de_0\de_1^N)\tilde\al_1(\de_0^{\r},s_2)+\de_0 x_1\al_{1,1}(x_1,\de_0^{\r},s_2),
\end{eqnarray*}
with $N\in\NN$ and $g_N$ smooth. Moreover,  we may either assume that $|\tilde\al_1|\sim 1$ (if $1+g$ is a finite type $N$ at the origin), or that we may choose $N$ as large as we wish (if $1+g$ is flat).  Notice that if $N=0,$ then we can include the second term into the first term and arrive again at Case ND. In all other cases we arrive at  the situation described by Case D, where the second term cannot be included into the first term.
Notice that then $ \de_{3,0}:=\de_0\de_1^N\ll \de_0.$
\end{proof}

Let us next  introduce the quantity
\begin{equation}\label{rho}
\rho:=\begin{cases}   \de_{3,0}^{\frac B{B-1}}+\sum_{j=2}^{B-2}\de_{j+2}^\frac{B}{B-j}&   \mbox{in Case ND},\\
 \de_0^{\frac{3B}{2B-3}}+\de_{3,0}^{\frac B{B-1}}+\sum_{j=2}^{B-2}\de_{j+2}^\frac{B}{B-j} &       \mbox{in Case D},
\end{cases}
\end{equation}
which we shall view as a function $\rho(\tilde\de)$ of the coefficients 
$$
\tilde\de:=\begin{cases}   (\de_{3,0}, \de_4,\dots,\de_B) &   \mbox{in Case ND},\\
 (\de_0,\de_{3,0}, \de_4,\dots,\de_B) &       \mbox{in Case D}.
\end{cases}
$$

\begin{remark}\label{duist}
Observe that if we scale  the complete phase $\Phi^\sharp$ from  \eqref{Phis} in $x_1$ by the factor $r^{-1/3}$ and in $x_2$ by $r^{-1/B}, \ r>0,$ and multiply by $r,$ i.e., if we look at 
$$ 
\Phi^\sharp_r(u_1,u_2,\de,s):=r\Phi^\sharp(r^{-1/3} u_1,r^{-1/B}u_2,\de,s),
$$ 
 then the  effect is essentially that $\tilde\de$ is replaced by 
\begin{equation}\label{deldil}
\tilde \de^r:=\begin{cases}   (r^{(B-1)/B} \de_{3,0}, \dots, r^{(B-j)/B}\de_{j+2},\dots, r^{2/B}\de_B) &   \mbox{in Case ND},\\
 (r^{\frac {2B-3}{3B}}\de_0,r^{(B-1)/B} \de_{3,0}, \dots, r^{(B-j)/B}\de_{j+2},\dots, r^{2/B}\de_B) &       \mbox{in Case D},
\end{cases}
\end{equation}
whereas $B_1(s,\de_1)$ is replaced by  $r^{2/3}B_1(s_2,\de_1)$ and $B_0(s,\de_1)$ by $rB_0(s,\de_1).$ More precisely, if denote by  $\si_r$ the dilations $\si_r(x_1,x_2):= (r^{-\frac 13} x_1,r^{-\frac 1B} x_2),$ then  we have 

\begin{equation}\label{Phir}
\Phi^\sharp_r(u_1,u_2,\de,s)=u_1^3\, B_3(s_2,\de_1,r^{-\frac 13}u_1) -u_1\, r^{\frac 23} B_1(s,\de_1) +  rB_0(s,\de_1)+\phi^\sharp_r(u_1,u_2,\tilde\de^r, s_2),   
\end{equation}
where
\begin{eqnarray}\nonumber
\phi^\sharp_r(u,\tilde\de^r, s)&:=&u_2^B \,b(\si_{r^{-1}}u,\de_0^{\r},s_2)+\sum_{j=2}^{B-2}\tilde\de^r_{j+2}u_2^j\, \tilde\alpha_j(r^{-\frac 13}u_1,\de_0^{\r},\,s_2)\\
&+&\tilde\de^r_{3,0} \,u_2 \,\tilde\al_1(r^{-\frac 13}u_1,\de_0^{\r},s_2)+\tilde\de^r_0\, u_1u_2\,\al_{1,1}(r^{-\frac 13}u_1,\de_0^{\r},s_2),\label{phir}
\end{eqnarray}

And, under these  dilations, $\rho$ is homogeneous of  degree $1,$ i.e., 
 \begin{equation}\label{rhoinv}
\rho(\tilde\de^r)=r  \rho(\tilde\de), \quad r>0.
\end{equation}
  In particular, after scaling $\Phi^\sharp$ in this way by $r:=1/\rho(\tilde\de),$ we see that we have normalized the coefficients of $\phi^\sharp$ in such a way that $\rho(\tilde\de)=1.$ 
   \end{remark}
 This observation, which is based on ideas by  Duistermaat \cite{duistermaat}, will become  important in the sequel.

\medskip

\subsection{The case where $\la\rho(\tilde\de)\lesssim 1$}\label{larhos}

Assume now that $B\in\{3,4\}.$ Following the proof of Proposition 5.2 (c) in  \cite{IM-rest1},  we define the functions $\nu_{\de,\, Ai}^\la$ and $\nu_{\de,\, l}^\la$ by
\begin{equation}\label{aismall}
\widehat{\nu_{\de,\, Ai}^\la}(\xi):=\chi_0(\la^\frac23B_1(s,\,\de_1))\widehat{\nu_\de^\la}(\xi),
\end{equation}
\begin{equation}\label{lsmall}
\widehat{\nu_{\de,\, l}^\la}(\xi):=\chi_1((2^{-l}\la)^\frac23B_1(s,\,\de_1))\widehat{\nu_\de^\la}(\xi),\quad M_0\le 2^l\le \frac{\la}{M_1},
\end{equation}
so that
$$
\nu_\de^\la=\nu_{\de,\, Ai}^\la+\sum_{M_0\le 2^l\le \frac{\la}{M_1}}\, \nu_{\de,\, l}^\la.
$$
Here, $\chi_0, \chi_1\in C^\infty_0(\RR),$  and $\chi_1(t)$ is supported where $2^{-4/3}\le |t|\le 2^{4/3},$ whereas $\chi_0(t)\equiv 1$ for $|t|\le M_0^{2/3}.$  Thus, by choosing $M_0$ sufficiently large,  we may assume that $2^{-l}\le 1/M_0\ll 1.$ Denote by $T_{\de,\, Ai}^\la$ and $T_{\de,\, l}^\la$ the corresponding operators  of  convolution with the Fourier transforms of these functions.

\medskip
Our goal will be to adjust the proofs of the estimates in Lemma 6.5 and Lemma 6.6 of \cite{IM-rest1} to our present situation in order to derive the following estimates, which are analogous to the corresponding ones in \cite{IM-rest1} (formally, we  only have to replace a factor $\la^{-1/2}$ in the estimates in \cite{IM-rest1} by  the factor $\la^{-1/B}$):

\begin{equation}\label{analog11}
\|\widehat{\nu_{\de,\, Ai}^\la}\|_\infty \le C_1 \la^{-\frac1B-\frac13}
\end{equation}
\begin{equation}\label{analog12}
\|\nu_{\de,\, Ai}^\la\|_\infty \le C_2 \la^{\frac53-\frac1{B}},
\end{equation}
as well as 
\begin{equation}\label{analog21}
\|\widehat{\nu_{\de,\, l}^\la}\|_\infty \le C_1 2^{-\frac{l}6}\la^{-\frac1B-\frac13}
\end{equation}
\begin{equation}\label{analog22}
\|\nu_{\de,\, l}^\la\|_\infty \le C_2 2^{\frac l3}\la^{\frac53-\frac1{B}}
\end{equation}

\medskip
\subsubsection{Estimates for $\nu_{\de,\, Ai}^\la$}

Changing coordinates from $x$ to $u$ by putting 
$x=\si_{1/\la} u=(\la^{-1/3}u_1,\la^{-1/B}u_2)$ in the integral   \eqref{7.12II}, and making use of Remark \ref{duist} (with $r:=\la$),  we find that 
\begin{eqnarray}\nonumber
&&\widehat{\nu_{\de,Ai}^\la}(\xi)=\la^{-\frac 1B-\frac 13} \chi_1(s,s_3) \,\chi_0(\la^\frac23 B_1(s,\,\de_1))  \,e^{-is_3 \la B_0(s,\de_1)}\\
&&\times\iint e^{-i s_3\Big(u_1^3\, B_3(s_2,\de_1,\la^{-\frac 13}u_1) -u_1  
\la^{\frac 23}B_1(s,\de_1)+\phi^\sharp_\la(u_1,u_2,\, \tilde\de^\la, s_2) \Big)} 
 a(\si_{\la^{-1}}u, \de,s) \,du_1du_2, \label{naihat}
 \end{eqnarray}
where $\chi_1(s,s_3):= \chi_1(s_1s_3)\chi_1(s_2s_3)\chi_1(s_3)$ localizes to the region where $s_j\sim 1, j=1,2,3.$ Observe that  here we are  integrating over the large domain where $|u_1|\le \ve \la^{1/3}$ and $|u_2|\le \ve \la^{1/B}.$ Recall also that $\phi^\sharp_\la$ is given by \eqref{phir}, and that $\rho(\tilde\de^\la)=\la \rho(\tilde\de)\lesssim 1,$ and so  we have 
$$
|\tilde\de^\la|\lesssim 1\quad \mbox{and}\quad \la^\frac23|B_1(s,\,\de_1)|\lesssim 1.
$$
By means of this  integral formula for $\widehat{\nu_{\de,Ai}^\la}(\xi),$  we easily obtain

\begin{lemma}\label{duist2}
If $\la\rho(\tilde\de)\lesssim 1,$ then we may write
\begin{eqnarray}\label{hatnuai} &&\widehat{\nu_{\de,Ai}^\la}(\xi)=\la^{-\frac 1B-\frac 13}  \chi_1(s,s_3) 
\,\chi_0(\la^\frac23 B_1(s,\,\de_1)) \,e^{-is_3 \la B_0(s,\de_1)} \\
 &&\hskip2cm \times a(\la^\frac23 B_1(s,\,\de_1),\, \tilde\de^\la, s,s_3,\,\de_0^{\r},\la^{-\frac 1{3B}}), \nonumber
 \end{eqnarray}
 where $a$ is  again a smooth function of all its (bounded) variables.
\end{lemma}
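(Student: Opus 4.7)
\medskip

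The plan is to write the double integral in \eqref{naihat} as a smooth function of the indicated parameters by splitting the integration domain into a compact piece and a tail, and showing smoothness on each. I will first observe that under our hypothesis $\la\rho(\tilde\de)\lesssim 1$, together with $\chi_0(\la^{2/3}B_1(s,\de_1))$, all of $\la^{2/3}B_1$, $\tilde\de^\la$ and $\de_0^\r$ are bounded, and $B_3(s_2,\de_1,\la^{-1/3}u_1)\sim 1$, $b(\si_{\la^{-1}}u,\de_0^\r,s_2)\sim 1$ uniformly. The key observation is that $\la^{-1/3}=(\la^{-1/(3B)})^B$ and $\la^{-1/B}=(\la^{-1/(3B)})^3$ are smooth functions of $\la^{-1/(3B)}$, so all the $\la$-dependence appearing inside the amplitude $a(\si_{\la^{-1}}u,\de,s)$ and inside $B_3$ can be rewritten as smooth dependence on $\la^{-1/(3B)}$.

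For the compact piece, I would introduce a smooth cutoff $\eta\in C_c^\infty(\RR^2)$ equal to one near the origin and supported in $|u|\le 2M$ for a large fixed constant $M$ (independent of $\la$), and consider the integral $I_1$ obtained by multiplying the amplitude by $\eta(u/M)$. Since the integration region is compact and the integrand—both phase (via formulas \eqref{Phir}--\eqref{phir} and Lemma \ref{lastterm}) and amplitude—is a smooth function of $u$, $\la^{2/3}B_1$, $\tilde\de^\la$, $s$, $s_3$, $\de_0^\r$ and $\la^{-1/(3B)}$ on its support, one may differentiate under the integral sign arbitrarily often and conclude that $I_1$ has the required form.

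For the tail $I_2$, supported where $|u_1|>M$ or $|u_2|>M$, the argument is non-stationary phase via repeated integration by parts. In the region $|u_1|\ge M$ (for $M$ sufficiently large, uniformly in $\la$ and the parameters), the term $3u_1^2B_3$ dominates in $\pa_{u_1}\Phi_\la^\sharp$, giving $|\pa_{u_1}\Phi_\la^\sharp|\gtrsim u_1^2$; in the complementary region $|u_1|\le 2M$, $|u_2|\ge M$, the term $Bu_2^{B-1}b$ dominates in $\pa_{u_2}\Phi_\la^\sharp$, giving $|\pa_{u_2}\Phi_\la^\sharp|\gtrsim |u_2|^{B-1}$. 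Using the standard operators $L_i=(is_3\pa_{u_i}\Phi_\la^\sharp)^{-1}\pa_{u_i}$ in each region and integrating by parts $N$ times produces amplitudes with decay $|u_1|^{-2N}$, respectively $|u_2|^{-(B-1)N}$, which are integrable over the tail for $N$ large. Crucially, parameter derivatives of the phase and amplitude grow only polynomially in $u$ (since $\la^{-1/3}u_1$ and $\la^{-1/B}u_2$ appear as arguments), and this polynomial growth is absorbed by performing additional integrations by parts; this yields the smoothness of $I_2$ jointly in all listed variables, including as $\la^{-1/(3B)}\to 0$.

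The main obstacle is precisely this tail analysis: the integration domain is non-compact as $\la\to\infty$, so pointwise boundedness is not enough and one must extract decay uniformly in the parameters while differentiating arbitrarily often. Once the non-vanishing lower bounds on $\pa_{u_1}\Phi_\la^\sharp$ and $\pa_{u_2}\Phi_\la^\sharp$ in the respective tail regions are established—using only the uniform boundedness of $\la^{2/3}B_1$ and $\tilde\de^\la$ coming from $\la\rho(\tilde\de)\lesssim 1$—everything reduces to bookkeeping of polynomial factors against the gained decay, and the representation \eqref{hatnuai} follows.
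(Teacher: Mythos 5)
Your overall plan (compact piece plus non-stationary phase on the tail via iterated integration by parts, with the $\la$-dependence repackaged through $\la^{-1/(3B)}$) is the same as the paper's, and the compact piece and the region $\{|u_1|\le 2M,\,|u_2|\ge M\}$ are handled correctly. However, the partition of the tail is wrong, and the claimed lower bound in your first tail region fails in the degenerate Case D of Lemma \ref{lastterm}. In that case $\phi^\sharp_\la$ contains the mixed term $\tilde\de^\la_0\,u_1u_2\,\al_{1,1}(\la^{-1/3}u_1,\de_0^\r,s_2)$ with $|\al_{1,1}|\sim 1$, and $\tilde\de^\la_0$ may be of size $1$ when $\la\rho(\tilde\de)\sim 1$. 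Its $u_1$-derivative contributes a term $\tilde\de^\la_0\,u_2\,\al_{1,1}+O(\la^{-1/3}|u_1u_2|)$, so what one actually gets is only
$$
|\pa_{u_1}\Phi^\sharp_\la|\;\gtrsim\; u_1^2-c\la^{-1/3}|u_2|^B-c|u_2|,
$$
as in \eqref{padu1}. On your region $\{|u_1|\ge M\}$ there are points on the support of the amplitude (which permits $|u_1|\lesssim\ve\la^{1/3}$, $|u_2|\lesssim\ve\la^{1/B}$) with $|u_2|\gtrsim u_1^2$ and a sign of $u_2$ making $3u_1^2B_3$ and $\tilde\de^\la_0 u_2\al_{1,1}$ cancel, so $|\pa_{u_1}\Phi^\sharp_\la|\gtrsim u_1^2$ simply does not hold there, and the integrations by parts in $u_1$ break down.

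The fix, which is what the paper does, is to split the tail according to the relative sizes $|u_2|^{B-1}\gg|u_1|$ versus $|u_2|^{B-1}\lesssim|u_1|$ rather than according to which coordinate exceeds $M$. On $\{|u_2|^{B-1}\gg|u_1|\}$ one integrates by parts in $u_2$ using $|\pa_{u_2}\Phi^\sharp_\la|\gtrsim|u_2|^{B-1}-c|u_1|\gtrsim|u_2|^{B-1}$; on $\{|u_2|^{B-1}\lesssim|u_1|,\ |u|\ge L\}$ one has $|u_1|\gg1$, hence $|u_2|\lesssim|u_1|^{1/(B-1)}\ll u_1^2$ and $|u_2|^B\lesssim|u_1|^{B/(B-1)}\ll u_1^2$ (here $B\ge 3$ is essential), so both error terms in \eqref{padu1} are genuinely subordinate to $u_1^2$ and the $u_1$-integration by parts goes through. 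With this corrected decomposition, your bookkeeping of polynomial growth against the gained decay, and the reduction to $\la^{-1/(3B)}$-smoothness, are fine.
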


\begin{proof} 
We decompose the integral in \eqref{naihat}  by means of suitable smooth cut-off functions into the integral $I_1$ over the region where $|(u_1,u_2)|\le L,$  the integral $I_2$ over the region where $|(u_1,u_2)|> L$  and $ |u_2|^{B-1}\gg |u_1|,$ and  the integral $I_3$ over the region where $|(u_1,u_2)|> L$  and $ |u_2|^{B-1}\lesssim |u_1|.$  For each of these contributions $I_j,$ we then  show that it is of the form $a_j(\la^\frac23 B_1(s,\,\de_1),\, \tilde\de^\la, s,s_3,\,\de_0^{\r},\la^{-\frac 1{3B}}),$ with a suitable smooth function $a_j,$ provided   $L$ is sufficiently large. 
For $I_1,$ this claim is obvious. 

\smallskip
On the  remaining region where  $|(u_1,u_2)|\ge L,$  we may use iterated integrations by parts with respect to $u_1,$ or  $u_2,$  in order to convert the integral into an absolutely convergent integral, to which we may apply the standard rules  for differentiation with respect to parameters  (such as $s_j, $ etc.).  Denote to this end by $\Phi_c$ the complete phase function appearing in this integral. It is then easily seen that we may estimate
\begin{eqnarray}\label{padu2}
|\pa_{u_2} \Phi_c|&\gtrsim& |u_2|^{B-1}- c|u_1|,\\ 
|\pa_{u_1} \Phi_c|&\gtrsim& u_1^2-c\la^{-\frac 13} |u_2|^B-c |u_2| \label{padu1},
\end{eqnarray}
with a fixed constant $c>0.$ The last terms appear only in the degenerate case D, due to the presence of the term $\tilde\de^\la_0\, u_1u_2\,\al_{1,1}(\la^{-\frac 13}u_1,\de_0^{\r},s_2)$ in $\phi^\sharp_\la.$  

\smallskip
Denote by $I_2$ the contribution by the sub-region on which $ |u_2|^{B-1}\gg |u_1|.$  On this region, we may gain factors of order $|u_2|^{-2N(B-1)}$  (for any $N\in\NN$) in the amplitude, by means of iterated integrations by parts in $u_2.$ And, since $|u_2|^{-2N(B-1)}\lesssim |u_2|^{-N}|u_2|^{-N(B-1)},$ wee that we arrive at an absolutely convergent integral. 

\smallskip
Similarly, denote by $I_3$ the contribution by the sub-region on which $ |u_2|^{B-1}\lesssim |u_1|.$ Observe that 
$|u_2|^B\lesssim |u_1|^{B/(B-1)}\ll u_1^2,$ since $B\ge 3,$  and since we may assume that $|u_1|\gg 1.$ This shows that in this region, we have $|\pa_{u_1} \Phi_c|\gtrsim u_1^2,$ and thus we may gain factors of order $|u_1|^{-2N}$  (for any $N\in\NN$) in the amplitude, by means of iterated integrations by parts in $u_1.$ And, since $|u_1|^{-2N}\lesssim |u_1|^{-N}|u_2|^{-N(B-1)},$ we see that we arrive again at an absolutely convergent integral.

\end{proof}

Lemma \ref{duist} implies in particular  estimate  \eqref{analog11}.   As for the more involved estimate \eqref{analog12}, with Lemma \ref{duist2} at hand, we can basically follow the arguments from Subsection 6.1 in \cite{IM-rest1}, only with the factor $\la^{-1/2}$ appearing there replaced by the factor $\la^{-1/B}$  here, and with the amplitude $g(\la^{2/3}B_1(s',\de,\si),\la,\de, \si,s)$ in \cite{IM-rest1} replaced by our amplitude $a(\la^\frac23 B_1(s,\,\de_1),\, \tilde\de^\la, s,s_3,\,\de_0^{\r},\la^{-\frac 1{3B}})$ here (compare with (6.18) in \cite{IM-rest1}). 

 \medskip
 \subsubsection{Estimates for $\nu_{\de,l}^\la$}\label{nudell}
 Changing coordinates from $x$ to $u$ by putting here
$x=\si_{2^l/\la} u$ in the integral   \eqref{7.12II}, and making use of Remark \ref{duist} (with $r:=\la/2^l$),  we find that 
\begin{eqnarray}\nonumber
&&\widehat{\nu_{\de,l}^\la}(\xi)=(2^{-l}\la)^{-\frac 1B-\frac 13} \chi_1(s,s_3) \,\chi_1((2^{-l}\la)^\frac23 B_1(s,\,\de_1))  \,e^{-is_3 \la B_0(s,\de_1)}\\
&&\hskip1cm\times\iint e^{-i s_3 2^l\Phi(u_1,u_2,s, \de,\la, l)} 
a(\si_{2^l\la^{-1}}u, \de,s) \,du_1du_2,   \label{hatnul}
 \end{eqnarray}
 with phase function 
 \begin{eqnarray}\nonumber
&& \Phi(u_1,u_2,s, \de,\la, l):= \\
 &&\quad u_1^3\, B_3(s_2,\de_1, (2^{-l}\la)^{-\frac 13}u_1) -u_1  
(2^{-l}\la)^{\frac 23}B_1(s,\de_1)+\phi^\sharp_{2^{-l}\la}(u_1,u_2,\, \tilde\de^{2^{-l}\la}, s_2).\label{Philla}
 \end{eqnarray}
 Observe that   we are here  integrating over the large domain where $|u_1|\le (\ve 2^{-l}\la)^{1/3}$ and $|u_2|\le \ve (2^{-l}\la)^{1/B}.$ Recall also that $\phi^\sharp_{2^{-l}\la}$ is given by \eqref{phir}, and that $\rho(\tilde\de^{2^{-l}\la})=2^{-l}\la \rho(\tilde\de)\lesssim 2^{-l},$  so that we have 
$$
|\tilde\de^{2^{-l}\la}|\ll 1\quad \mbox{and}\quad (2^{-l}\la)^\frac23|B_1(s,\,\de_1)| \sim 1.
$$
Notice that this implies that 
\begin{eqnarray}\nonumber
&&\phi^\sharp_{2^{-l}\la}(u_1,u_2,\, \tilde\de^{2^{-l}\la}, s_2) =u_2^B \,b(\si_{(2^{-l}\la)^{-1}}u,\de_0^{\r},s_2)
+\tilde\de_0\, u_1u_2\,\al_{1,1}((2^{-l}\la)^{-\frac 13}u_1,\de_0^{\r},s_2)\\
&&\hskip5cm +\mbox{ small error}. \label{philla}
\end{eqnarray}
where we have abbreviated $\tilde\de_0:=\tilde\de^{2^{-l}\la}_0.$ Recall also that the second term appears only in the case D and that we are here assuming that $2^{-l}\la\gg 1.$  

\medskip
Arguing somewhat like in Section 7 of \cite{IM-rest1}, we first decompose 
$$
\nu_{\de,l}^\la=\nu_{l,0}^\la+\nu_{l,\infty}^\la,
$$
where 
\begin{eqnarray*}
&&\widehat{\nu_{l,0}^\la}(\xi):=(2^{-l}\la)^{-\frac 1B-\frac 13} \chi_1(s,s_3) \,\chi_1((2^{-l}\la)^\frac23 B_1(s,\,\de_1))  \,e^{-is_3 \la B_0(s,\de_1)}\\
&&\hskip1cm\times\iint e^{-i s_3 2^l\Phi(u_1,u_2,s, \de,\la, l)} 
a(\si_{2^l\la^{-1}}u, \de,s) \, \chi_0(u) \,du_1du_2,  \\
&&\widehat{\nu_{l,\infty}^\la}(\xi):=(2^{-l}\la)^{-\frac 1B-\frac 13} \chi_1(s,s_3) \,\chi_1((2^{-l}\la)^\frac23 B_1(s,\,\de_1))  \,e^{-is_3 \la B_0(s,\de_1)}\\
&&\hskip1cm\times\iint e^{-i s_3 2^l\Phi(u_1,u_2,s, \de,\la, l)} 
a(\si_{2^l\la^{-1}}u, \de,s) \, (1-\chi_0(u)) \,du_1du_2. 
\end{eqnarray*}
Here, we choose $\chi_0\in C_0^\infty(\RR^2)$ such that  $\chi_0(u)\equiv 1$ for $|u|\le L,$ where $L$ is supposed to be a sufficiently large positive constant.

\medskip
Let us first  consider the contribution given by the $\nu_{l,\infty}^\la:$ Arguing as in the proof of Lemma \ref{duist2}, we can easily see by means of integrations by parts that, given $k\in\NN,$   then for every $N\in\NN,$ we may write
 
 \begin{eqnarray}\label{hatnulinf} &&\widehat{\nu_{l,\infty}^\la}(\xi)=2^{-lN}\, (2^{-l}\la)^{-\frac 1B-\frac 13}  \chi_1(s,s_3) 
\,\chi_1((2^{-l}\la)^\frac23 B_1(s,\,\de_1))\,e^{-is_3 \la B_0(s,\de_1)} \\
 &&\hskip2cm \times\,  a_{N,l}\Big((2^{-l}\la)^\frac23 B_1(s,\,\de_1), s,s_3,\,\tilde\de^{2^{-l}\la}, \de_0^{\r},(2^{-l}\la)^{-\frac13},\la^{-\frac 1{3B}}\big), \nonumber
 \end{eqnarray}
 where $a_{N,l}$ is  a smooth function of all its (bounded) variables such that $\|a_{N,l}\|_{C^k}$ is uniformly bounded in $l.$ 
 In particular, we see that
\begin{equation}\label{8.37n}
\|\widehat{\nu_{l,\,\infty}^\la}\|_\infty \lesssim  2^{-lN}\la^{-\frac 1B-\frac 13}  \qquad \forall N\in\NN.
\end{equation}
 
 Next, applying the Fourier inversion formula and changing coordinates from $s_1$ to 
 \begin{equation}\label{ctoz}
z:= (2^{-l}\la)^{\frac 23}B_1(s,\de_1), \quad \mbox{ i.e., }\quad s_1=s_2^{\frac {n-1}{n-2}}G_3(s_2,\de_1)-(2^{-l}\la)^{-\frac 23}z,
\end{equation}
as in \cite{IM-rest1}, we find that 
\begin{eqnarray}  \nonumber
\nu_{l,\infty}^\la(x)&=&\la^3 2^{-lN}\, (2^{-l}\la)^{-\frac 1B-1}\int e^{-is_3\la\Psi(z,s_2,\de)}  \chi_1\Big(s_2^{\frac {n-1}{n-2}}G_3(s_2,\de_1)-(2^{-l}\la)^{-\frac 23}z,s_2,s_3\Big)   \\
&&\hskip1cm \times \chi_1(z)  \,a_{N,l} \Big(z,  s_2,s_3,\,\de_0^{\r},\tilde\de^{2^{-l}\la},(2^{-l}\la)^{-\frac13}, \la^{-\frac 1{3B}}\Big) \, dz ds_2 ds_3 \label{8.38n},
\end{eqnarray}
where the phase function $\Psi$ is given by 
\begin{eqnarray}  \nonumber
\Psi(z,s_2,\de)&:=& s_2^{\frac {n}{n-2}}G_5(s_2,\de)- x_1 s_2^{\frac {n-1}{n-2}}G_3(s_2,\de)- s_2x_2-x_3\\
 &&+(2^l\la^{-1})^\frac23 z\, \Big(x_1-s_2^\frac1{n-2}G_1(s_2,\de)\Big)\label{8.39n}
\end{eqnarray}
(compare (7.4) in \cite{IM-rest1}).  Applying our van der Corput type lemma (of order $3$) to the integration in $s_2,$ which allows for the gain of a factor $\la^{-1/3},$ this easily implies that 
 \begin{equation}\label{8.40n}
\|\nu_{l,\,\infty}^\la\|_\infty \lesssim 2^{-lN} \la^{\frac 53-\frac 1B}  \qquad \forall N\in\NN.
\end{equation}

\medskip
Let us next  look at  the contribution given by the $\nu_{l,0}^\la:$  Observe first that on a region where $|u_1|$ is sufficiently small, iterated integrations  by parts with respect to $u_1$ allow to gain factors $2^{-lN}$ in the integral defining $\nu^\la_{l,0},$ for every $N\in\NN.$ Freezing afterwords $u_1,$ we can reduce to the integration in $u_2$ alone.  A similar argument applies whenever we are allowed to integrate by parts in $u_1$ (this is also the case when $B_1$ and $B_3$ have opposite signs).  We shall therefore  assume from now on that $B_1$ and $B_3$ have  the same sign. Moreover, let us assume without loss of generality that $u_1>0.$ Then  there is a unique non-degenerate  critical point $u^c_1=u^c_1(2^{-l}\la)^{-1/B} u_2, \tilde\de^{2^{-l}\la},...)$ of the phase $\Phi$ in \eqref{Philla}, of size $|u^c_1|\sim 1,$ and we may restrict the integration in $u_1$ to a small neighborhood of $u_1^c.$ I.e., we may replace the cut-off function $\chi_0(u_1)$  by a cut-off function $\chi_1(u_1)$  supported in a sufficiently small neighborhood of a point $u_1^0$ containing $u_1^c.$  Then  the method of stationary phase  shows that the corresponding integral in $u_1$ will be of order $2^{-l/2},$ so that this term will give the main contribution. 

For the sake of simplicity, we shall therefore restrict ourselves in the sequel  to the discussion of this main term  $\nu_{l,1}^\la,$  given by 
\begin{eqnarray}\nonumber
&&\widehat{\nu_{l,1}^\la}(\xi):=(2^{-l}\la)^{-\frac 1B-\frac 13} \chi_1(s,s_3) \,\chi_1((2^{-l}\la)^\frac23 B_1(s,\,\de_1))  \,e^{-is_3 \la B_0(s,\de_1)}\\
&&\hskip1cm\times\iint e^{-i s_3 2^l\Phi(u_1,u_2,s, \de,\la, l)} 
a(\si_{2^l\la^{-1}}u, \de,s) \, \chi_0(u) \chi_1(u_1) \,du_1du_2,   \label{hatnul1}
\end{eqnarray}
where  $|u_1|\sim 1$ on the support of $\chi_1(u_1).$ 

Applying first the method of stationary phase to the integration in $u_1,$ and subsequently van der Corput's estimate of order $B$ to the integration in $u_2,$ we  easily arrive at the estimate 
\begin{equation}\label{}
\|\widehat{\nu_{l,1}^\la}\|_\infty \lesssim  (2^{-l}\la)^{-\frac 1B-\frac 13} 2^{-\frac l2 -\frac lB},
\end{equation}
which is exactly what we need to verify \eqref{analog21} (recall here also estimate \eqref{8.37n}). 

\medskip
As for the more involved estimation of $\nu_{l,1}^\la(x),$ Fourier inversion allows to write 
\begin{eqnarray}\nonumber
&&\nu_{l,1}^\la(x)=\la^3\, (2^{-l}\la)^{-\frac 1B-\frac 13}  \int e^{-i s_3 \Psi(u,s,x, \de,\la, l)}
\chi_1(s,s_3) \,\chi_1((2^{-l}\la)^\frac23 B_1(s,\,\de_1)) \\
&&\hskip6cm\times a(\si_{2^l\la^{-1}}u, \de,s) \, \chi_0(u) \chi_1(u_1) \,du_1du_2 ds   \label{nul1x}
\end{eqnarray}
where the complete phase $\Psi$ is given by 
\begin{equation}\label{Psilla}
\Psi(u,s,x, \de,\la, l):=2^l \Phi(u_1,u_2,s, \de,\la, l) +\la \Big(B_0(s,\de_1)-s_1x_1-s_2x_2-x_3\Big),
\end{equation}
with $\Phi$ given by \eqref{Philla}. Changing again from the coordinate $s_1$ to $z$ as in \eqref{ctoz}, we may also write 
\begin{eqnarray}\nonumber
&&\nu_{l,1}^\la(x)=\la^3\, (2^{-l}\la)^{-\frac 1B-1} \int e^{-i s_3 \tilde\Psi(u,z,s_2,x, \de,\la, l)}
\chi_1(s,s_3) \,\chi_1(z) \\
&&\hskip3cm\times \tilde a(\si_{2^l\la^{-1}}u, (2^{l}\la^{-1})^{\frac23}z,s_2,\de) \, \chi_0(u) \chi_1(u_1) \,du_1du_2 dz ds_2 ds_3   \label{nul1x2},
\end{eqnarray}
with phase 
\begin{eqnarray}\nonumber
&&\tilde\Psi(u,z,s_2,x, \de,\la, l):=\la (2^l\la^{-1})^\frac23 z\, \Big(x_1-s_2^\frac1{n-2}G_1(s_2,\de)\Big) \nonumber \\
&&\qquad+ \la \Big(  s_2^{\frac {n}{n-2}}G_5(s_2,\de)- x_1 s_2^{\frac {n-1}{n-2}}G_3(s_2,\de)- s_2x_2-x_3
\Big) \label{Psillat}\\
 &&+2^l\Big(u_1^3\, B_3(s_2,\de_1, (2^{-l}\la)^{-\frac 13}u_1) - z u_1  
+\phi^\sharp_{2^{-l}\la}(u_1,u_2,\, \tilde\de^{2^{-l}\la}, s_2)\Big).\nonumber
\end{eqnarray}

 We shall prove the following estimate:
 \begin{equation}\label{8.48n}
|\nu_{l,1}^\la (x)|\le C 2^{\frac l3}\la^{\frac53-\frac1{B}},
\end{equation}
with a constant $C$ which is independent of $\la, l, x$ and $\de.$

\medskip
As in \cite{IM-rest1}, this estimate is easily verified if $|x|\gg1,$ simply by means of integrations by parts in  the variables $s_2, s_3$ and $z,$ in combination with the method of stationary phase in $u_1$ and van der Corput's estimate of order $B$ in $u_2.$ Similarly, if $|x|\lesssim 1$ and $|x_1|\ll 1,$ we can arrive at the same conclusion, by first integrating by parts in $z.$ Indeed, in these situations we may gain factors $2^{-2Nl/3}\la^{-N/3} $ through integrations by parts, so that the corresponding estimates can be summed in a trivial way. 

\medskip 
Let us thus assume that $|x|\lesssim 1$ and $|x_1|\sim 1.$  Following Section 7 in \cite{IM-rest1}, we  then decompose 
 $$
\nu_{l,1}^\la=\nu_{l,I}^\la+\nu_{l,II}^\la+\nu_{l,III}^\la.
$$
where $\nu_{l,I}^\la, \nu_{l,II}^\la$  and $\nu_{l,III}^\la$ correspond to the contribution to the integral in \eqref{nul1x2} given by the regions where $\la (2^l\la^{-1})^\frac23|x_1-s_2^\frac1{n-2}G_1(s_2,\de)|\gg 2^l,$ 
$\la (2^l\la^{-1})^\frac23|x_1-s_2^\frac1{n-2}G_1(s_2,\de)|\sim 2^l$  and $\la (2^l\la^{-1})^\frac23|x_1-s_2^\frac1{n-2}G_1(s_2,\de)|\ll2^l,$  respectively. The first and the last term can easily be handled as in 
\cite{IM-rest1} by means of integrations by parts in $z,$ with subsequent exploitation of the oscillations with respect to $u_1$ and $u_2,$  which leads to the following estimate:
 \begin{equation}\label{8.49n}
|\nu_{l,I}^\la(x)|+|\nu_{l,III}^\la(x)| \le C 2^{-\frac {l}3}\la^{\frac53-\frac1{B}},
\end{equation}
which better than \eqref{8.48n} by a factor $2^{-2l/3},$ so that summation in $l$ is no problem for these terms. Nevertheless, summation in $\la$ still will require an interpolation argument if $B=3.$ 

\medskip
Let us next  concentrate on $\nu_{l,II}^\la(x),$ which is of the form
\begin{eqnarray}\nonumber
&&\nu_{l,II}^\la(x)=\la^3\, (2^{-l}\la)^{-\frac 1B-1}   \int e^{-i s_3 \tilde\Psi(u,z,s_2,x, \de,\la, l)}\tilde a(\si_{2^l\la^{-1}}u, (2^{l}\la^{-1})^{\frac23}z,s_2,\de) \,  \chi_1(s_2,s_3)\\
&&\hskip2cm\times   \,\chi_1\Big((2^l\la^{-1})^{-\frac13}(x_1-s_2^\frac1{n-2}G_1(s_2,\de))\Big)\,\chi_0(u) \chi_1(u_1) \,\chi_1(z)\,du_1 du_2\, dz  \,ds_2 ds_3   \label{nuII}.
\end{eqnarray}
Writing
\begin{eqnarray*}
&&\tilde\Psi(u,z,s_2,x, \de,\la, l)=\la \Big(  s_2^{\frac {n}{n-2}}G_5(s_2,\de)- x_1 s_2^{\frac {n-1}{n-2}}G_3(s_2,\de)- s_2x_2-x_3
\Big) \\
&&+ 2^l\Big[z\,  (2^{-l}\la)^{\frac 13})(x_1-s_2^\frac1{n-2}G_1(s_2,\de)) - zu_1
+u_1^3\, B_3(s_2,\de_1, (2^{-l}\la)^{-\frac 13}u_1)   \\
&&\hskip4cm +\phi^\sharp_{2^{-l}\la}(u_1,u_2,\, \tilde\de^{2^{-l}\la}, s_2)
\Big],
\end{eqnarray*}
and observing that here $ |(2^{-l}\la)^{\frac 13}(x_1-s_2^\frac1{n-2}G_1(s_2,\de))|\sim 1$ and $|u_1|\sim 1,$ we see that the phase $\tilde\Psi$ may have a critical point $(u_1^c,z^c)$ within the support of the amplitude, as a function of $u_1$ and $z.$  Moreover, in a similar way as in \cite{IM-rest1}, we see that this critical point will be non-degenerate. 
Of course, if  there is no critical point, we may obtain even better estimates by means of integrations by parts. Let us thus in the sequel assume that there is such a critical point. 

Applying then the method of stationary phase to the integration in $(u_1,z),$ we see that we essentially  may  write 
\begin{eqnarray*}\nonumber
&&\nu_{l,II}^\la(x)=\la^2\, (2^{-l}\la)^{-\frac 1B}   \int e^{-i s_3 \Psi_2(u_2,s_2,x, \de,\la, l)}
 a_2((2^{l}\la^{-1})^{\frac13}u_2,s_2, x,(2^{l}\la^{-1})^{\frac13},\de) \,  \chi_1(s_2,s_3)\\
&&\hskip2cm\times   \,\chi_1\Big((2^l\la^{-1})^{-\frac13}(x_1-s_2^\frac1{n-2}G_1(s_2,\de))\Big)\,\chi_0(u_2)  \, du_2 ds_2 ds_3,   
\end{eqnarray*}
where the phase $\Psi_2$ arises from $\tilde\Psi$ by replacing $(u_1,z)$ by the critical point $(u_1^c,z^c)$ (which of course also depends on the other variables).

\medskip
In order to compute $\Psi_2$  more explicitly, we go back to our original coordinates, in which our complete phase is given by (compare \eqref{7.6II})
\begin{equation}\label{phaseorig}
\la \Big(s_1y_1+s_2y_1^2\omega(\de_1y_1)+y_1^n\alpha(\de_1 y_1)+s_2\de_0 y_2+y_2^B  \,b(y_1,y_2,\de)+ r(y_1,y_2,\de) -s_1x_1 -s_2x_2 -x_3\Big).
\end{equation}

Recall also that we passed from the coordinates $(y_1,s_1)$ to the coordinates $(u_1,z)$ by means of a smooth change of coordinates (depending on the remaining variables $(y_2, s_2)$). Since the value of a function at a critical point does not depend on the chosen coordinates, arguing by means of Lemma 7.1 in \cite{IM-rest1}, we find that 
in the coordinates $(y_2,s_2)$ the phase $\Psi_2$ is given by 
\begin{equation}\label{phasenew}
\la \Big(s_2x_1^2\omega(\de_1x_1)+x_1^n\alpha(\de_1 x_1)+s_2\de_0 y_2+y_2^B  \,b(x_1,y_2,\de)+ r(x_1,y_2,\de)  -s_2x_2 -x_3\Big).
\end{equation}

And, since $y_2=(2^{-l}\la)^{-1/B}u_2,$ this means that
\begin{eqnarray*}\nonumber
&&\Psi_2(u_2,s_2,x, \de,\la, l)=\la \Big(s_2x_1^2\omega(\de_1x_1)+x_1^n\alpha(\de_1 x_1)-s_2x_2 -x_3\Big)\\
&&\hskip2cm+ 2^l\Big(u_2^B  \,b(x_1,(2^{-l}\la)^{-\frac1B}u_2,\de)+
\sum_{j=2}^{B-3}u_2^j\, \tilde\de^{2^{-l}\la}_{j+2}\, x_1^{n_j}\al_j(\de_1x_1)  \\
&&\hskip4cm+(2^{-l}\la)^{\frac {B-1}B} \, u_2[ \de_0s_2 +\de_3x_1^{n_1}\al_1(\de_1x_1)]\Big)\nonumber
\end{eqnarray*}
(compare \eqref{7.3II}).
Note  that $\pa_{s_2} (s_2^\frac1{n-2}G_1(s_2,\de))\sim 1$ because $s_2\sim1$ and $G_1(s_2,0)=1.$ Therefore, the relation $|x_1-s_2^\frac1{n-2}G_1(s_2,\de)|\sim (2^l\la^{-1})^\frac13$ can be re-written as 
$|s_2-\tilde G_1(x_1,\de)| \sim (2^l\la^{-1})^\frac13,$ where $\tilde G_1$ is again a smooth function such that $|\tilde G_1|\sim 1.$
If we write  
$$
s_2=(2^l\la^{-1})^\frac13v+\tilde G_1(x_1,\de),
$$
 then this means that $|v|\sim 1.$ We  shall therefore change variables  from $s_2$ to $v,$ which leads to 
\begin{eqnarray}\nonumber
&&\nu_{l,II}^\la(x)=\la^2\, (2^{-l}\la)^{-\frac 1B-\frac 13}   \int e^{-i s_3 \Psi_3(u_2,v,x, \de,\la, l)}
 a_3((2^{l}\la^{-1})^{\frac13}u_2,v, x,(2^{l}\la^{-1})^{\frac13},\de) \\
&&\hskip6cm\times   \chi_1(s_3) \,\chi_1(v)\,\chi_0(u_2)  \, du_2 dv ds_3   \label{nuII2},
\end{eqnarray}
with a smooth amplitude $a_3$ and the new phase function
 \begin{eqnarray}\nonumber
&&\Psi_3(u_2,v,x, \de,\la, l)=\la \Big(v\, (2^{-l}\la)^{-\frac 13}(x_1^2\omega(\de_1x_1)-x_2)+ 
 (2^{-l}\la)^{-\frac1B-\frac 13}\de_0 \, v u_2+  Q_A(x,\de) \Big)\\
&&\hskip2cm+ 2^l\Big(u_2^B  \,b(x_1,(2^{-l}\la)^{-\frac1B}u_2,\de)+
\sum_{j=2}^{B-3}u_2^j \, \tilde\de^{2^{-l}\la}_{j+2}\, x_1^{n_j}\al_j(\de_1x_1)  \label{8.52n}\\
&&\hskip4cm+ u_2\, (2^{-l}\la)^{\frac {B-1}B} [ \de_0 \tilde G_1(x_1,\de) +\de_3x_1^{n_1}\al_1(\de_1x_1)]\Big),\nonumber
\end{eqnarray}
where 
$$
Q_A(x,\de):=\tilde G_1(x_1,\de)(x_1^2\omega(\de_1x_1)-x_2)+x_1^n\alpha(\de_1 x_1)-x_3. 
$$
Applying  van der Corput's estimate of order $B$ to the integration in $u_2$ in \eqref{nuII2},  we find that 
$$
|\nu_{l,II}^\la(x)|\le C \la^2\, (2^{-l}\la)^{-\frac 1B- \frac 13}  2^{-\frac lB}=2^{\frac l3} \la^{\frac 53-\frac 1B}.
$$
This proves \eqref{8.48n},  which completes the proof of estimate \eqref{analog22}.

\bigskip
\subsubsection{Consequences of the estimates \eqref{analog11} - \eqref{analog22}}

By interpolation, these estimates imply 
\begin{eqnarray}\label{7.19II}
\|T^\la_{\de,\,Ai}\|_{p_c\to p_c'}&\lesssim &\la^{-\frac13-\frac1B+2\th_c},\\ 
\|T^\la_{\de,\,l}\|_{p_c\to p_c'}&\lesssim&2^{-\frac{l(1-3\th_c)}6} \la^{-\frac13-\frac1B+2\th_c}.\label{7.20II}
\end{eqnarray}
But, by Lemma \ref{pc-est1}, we have $\th_c\le \tilde\th_B=3/(2B+3),$ and this easily implies that the exponents of $\la$ and $2^l$ in the preceding estimates is strictly negative, if $B=4,$ and zero, if $B=3$  (where $\th_c=1/3$). We can therefore sum these estimates over all $l,$ as well as $\la\gg1,$ if $B=4,$ and the desired estimate, whereas if $B=3,$  then we only get a  uniform estimates
$$
\|T^\la_{\de,\,Ai}\|_{p_c\to p_c'}\le C, \quad \|T^\la_{\de,\,l}\|_{p_c\to p_c'}\le C, \qquad (\la \rho(\tilde\de)\lesssim 1),
$$
with a constant $C$ not depending on $\de.$ 
The case  where $B=3$  will therefore again require a complex interpolation argument in order to capture the endpoint, as in the proof of Proposition 5.2 (c) in \cite{IM-rest1}.

In particular, we have proven

\begin{proposition}\label{notbigla}
If $B=4,$ then under the assumptions in this section
$$
\sum_{\{\la\ge 1:\, \la \rho(\tilde\de)\lesssim1\}} \|T^\la_\de\|_{p_c\to p_c'}\lesssim1,
$$
where the constant in this estimate will not depend on $\de.$ 
\end{proposition}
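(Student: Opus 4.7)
The plan is to assemble Proposition \ref{notbigla} directly from the decomposition
$$
\nu_\de^\la=\nu_{\de,\,Ai}^\la+\sum_{M_0\le 2^l\le \la/M_1}\nu_{\de,\,l}^\la
$$
and the interpolated operator estimates \eqref{7.19II}, \eqref{7.20II}, together with a quantitative use of Lemma \ref{pc-est1} in the case $B=4$.

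First, I will verify that the relevant exponents are strictly negative when $B=4$. By Lemma \ref{pc-est1}(a)--(b), under the present hypotheses, $\th_c\le\tilde\th_B=3/(2B+3)=3/11$. Therefore
$$
-\tfrac13-\tfrac1B+2\th_c \;\le\; -\tfrac13-\tfrac14+\tfrac{6}{11}\;=\;-\tfrac{5}{132}\;<\;0,
$$
and, since $\th_c\le 3/11<1/3$,
$$
-\tfrac{1-3\th_c}{6}\;\le\;-\tfrac{1-9/11}{6}\;=\;-\tfrac{1}{33}\;<\;0.
$$
Denoting these strictly negative exponents by $-\al_1$ and $-\al_2$ respectively, one has $\|T_{\de,\,Ai}^\la\|_{p_c\to p_c'}\lesssim \la^{-\al_1}$ and $\|T_{\de,\,l}^\la\|_{p_c\to p_c'}\lesssim 2^{-l\al_2}\la^{-\al_1}$ for every admissible $\la$ and every $l$ with $M_0\le 2^l\le \la/M_1$.

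Next, I sum in $l$ first and then in $\la$. Since $\al_2>0$, for each dyadic $\la\ge 1$ with $\la\rho(\tilde\de)\lesssim 1$,
$$
\|T^\la_\de\|_{p_c\to p_c'}\;\le\;\|T^\la_{\de,\,Ai}\|_{p_c\to p_c'}+\sum_{M_0\le 2^l\le \la/M_1}\|T^\la_{\de,\,l}\|_{p_c\to p_c'}\;\lesssim\;\la^{-\al_1}\Bigl(1+\sum_{l\ge 0}2^{-l\al_2}\Bigr)\;\lesssim\;\la^{-\al_1},
$$
uniformly in $\de$. Since $\al_1>0$, a further summation over dyadic $\la\ge 1$ (a fortiori over the smaller set $\la\rho(\tilde\de)\lesssim 1$) converges and is bounded independently of $\de$, which yields the claimed estimate. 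Strictly speaking, one should verify that the decomposition is essentially disjointly supported in frequency so that $T_\de^\la$ is indeed controlled by the triangle inequality, but this follows from the definitions \eqref{aismall} and \eqref{lsmall} and the partition property of the cut-offs $\chi_0,\chi_1$.

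There is no real obstacle here: the hard work was already done in establishing the Airy-type estimates \eqref{analog11}--\eqref{analog22} and in Lemma \ref{pc-est1}. The only delicate point is to confirm that $\th_c$ is indeed bounded by $\tilde\th_B=3/11$ under our hypotheses for $B=4$, which is precisely the content of Lemma \ref{pc-est1}(b); the case $B=3$ is excluded because the strict positivity of $\al_1,\al_2$ fails there (one gets $\th_c=1/3$ and both exponents vanish), which is why that case is deferred to the subsequent complex interpolation argument.
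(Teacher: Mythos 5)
Your proposal is correct and takes essentially the same route as the paper: the paper likewise derives the operator bounds \eqref{7.19II}, \eqref{7.20II} by interpolating \eqref{analog11}--\eqref{analog22}, invokes Lemma~\ref{pc-est1} to get $\th_c\le\tilde\th_B=3/(2B+3)=3/11$, checks both exponents are strictly negative for $B=4$, and sums the resulting geometric series in $l$ and $\la$. Your parenthetical remark about frequency-disjointness is unnecessary (the triangle inequality for operator norms needs no such disjointness), but that is a harmless aside.
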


In combination with the following proposition, which will be proved in the next two sections, this will complete the discussion of the remaining case where $B=4$ in \eqref{5.17II}, hence also the proof  of Proposition \ref{rdomain} for this case.  
\begin{proposition}\label{biglaB4}
If $B=4,$ then under the assumptions in this section
$$
\sum_{\{\la\ge 1:\, \la \rho(\tilde\de)\gg 1\}} \|T^\la_\de\|_{p_c\to p_c'}\lesssim1,
$$
where the constant in this estimate will not depend on $\de.$ 
\end{proposition}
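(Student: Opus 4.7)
To handle the regime $\la\rho(\tilde\de)\gg 1$, the plan is to apply the Duistermaat-type rescaling of Remark \ref{duist} with parameter $r_0 := 1/\rho(\tilde\de)$, reducing to a normalized phase in which $\rho(\tilde\de^{r_0})=1$, and then repeat the Airy analysis of Subsection \ref{larhos} at the new effective frequency scale $\la_0 := \la\rho(\tilde\de)\gg 1$. After rescaling $x \mapsto \si_{r_0}u$, the phase $\Phi^\sharp_{r_0}$ of \eqref{Phir}--\eqref{phir} has coefficient vector $\tilde\de^{r_0}$ and, by \eqref{rhoinv}, at least one of the monomial coefficients of $\phi^\sharp_{r_0}$ among $\tilde\de^{r_0}_4$, $\tilde\de^{r_0}_{3,0}$, and (in Case D) $\tilde\de^{r_0}_0$ has size $\sim 1$, while the remainder are $\lesssim 1$.

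Decomposing $\nu_\de^\la = \nu_{\de,Ai}^\la + \sum_l \nu_{\de,l}^\la$ as in \eqref{aismall}--\eqref{lsmall} at the scale $\la_0$, the goal is to establish analogues of \eqref{analog11}--\eqref{analog22} with $\la$ replaced by $\la_0$ (times Jacobian factors which will cancel in the final operator norm). The $L^\infty$-bound on $\widehat{\nu_\de^\la}$ follows from stationary phase in $u_1$---justified because $B_3(s_2,\de_1,0) \sim 1$ still holds---combined with a van der Corput estimate of order $B=4$ in $u_2$; the latter remains applicable because $|b(\cdot,\cdot,\de)|\sim 1$ implies $|\pa_{u_2}^B \phi^\sharp_{r_0}| \sim B!$, independently of the possibly non-small lower-order $u_2$-coefficients. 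The $L^\infty$-bound on $\nu_\de^\la$ then follows by the Fourier inversion and change-of-variable argument of \eqref{nul1x2}--\eqref{8.52n}, whose key hypotheses $G_1, G_3, G_5 \ne 0$ and $|b|\sim 1$ continue to hold.

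Interpolation then yields, in analogy with \eqref{7.19II}--\eqref{7.20II},
\begin{equation*}
\|T^\la_{\de,Ai}\|_{p_c\to p'_c} \lesssim \la_0^{-\tfrac13-\tfrac1B+2\th_c},\qquad
\|T^\la_{\de,l}\|_{p_c\to p'_c} \lesssim 2^{-\tfrac{l(1-3\th_c)}{6}}\la_0^{-\tfrac13-\tfrac1B+2\th_c},
\end{equation*}
with the Jacobian factors cancelling because $\th_c = 2/p'_c$ is the interpolation exponent preserved by the dilations $\si_r$ (this is the content of the homogeneity \eqref{rhoinv} of $\rho$ combined with the choice $p'_c = 2(h^r+1)$). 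For $B=4$ we have $\th_c \le \tilde\th_c \le \tilde\th_B = 3/11$, where the first inequality comes from Corollary \ref{compth} (b) and the second from $H \ge B$. Hence $-\tfrac13 - \tfrac14 + 2\th_c \le -\tfrac{5}{132} < 0$ and $1 - 3\th_c \ge \tfrac{2}{11} > 0$, so the sum over $l$ is a convergent geometric series and the sum over dyadic $\la$ with $\la\rho(\tilde\de) \gg 1$ is bounded by a $\de$-independent constant, yielding Proposition \ref{biglaB4}.

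The main obstacle is the bookkeeping of the Jacobian cancellation at the critical exponent: one must check that the rescaling $\si_{r_0}$ is ``free'' at $p = p_c$, in the sense that the volume factor from $\si_{r_0}$ combines with the $\la_0^{-1/3-1/B+2\th_c}$ bound to produce an operator norm depending only on $\la_0 = \la\rho(\tilde\de)$ up to $\de$-independent constants. A secondary subtlety is that the non-perturbative lower-order $u_2$-monomials in $\phi^\sharp_{r_0}$ might introduce additional critical points in the $u_2$-integration; however, since the bound on that integration comes from van der Corput at order $B$, only the top-order derivative $\pa_{u_2}^B$ is relevant, and this is controlled by the leading $u_2^B b$ term alone, regardless of the sizes of the remaining coefficients.
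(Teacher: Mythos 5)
The Duistermaat rescaling by $\si_{1/\rho}$ followed by an Airy-cone decomposition is indeed the route taken in Section 10 of the paper for the $A=0$ case, but the claimed Jacobian cancellation is not correct, and this invalidates your interpolation step. The rescaling $x=\si_\rho u$ acts only on the $x$-variable of the integral defining $\widehat{\nu_\de^\la}$; when you pass back to $\nu_\de^\la$ via Fourier inversion, the volume factor is $\la^3$ (coming from $\xi=\la s_3(s_1,s_2,1)$) and this cannot be converted to $\la_0^3$. Concretely, the paper's estimate \eqref{10.12} (for the Airy piece) is $\|\nu^\la_{\de,0}\|_\infty\lesssim\rho^{7/12}\la^2(\la\rho)^{-1/4}=\rho^{1/3}\la^{7/4}$, while your claimed $\la_0^{5/3-1/B}=(\la\rho)^{17/12}$ is strictly smaller: the ratio is $\rho^{-13/12}\la^{1/3}\gtrsim\rho^{-17/12}\gg1$ whenever $\la\rho\gg1$. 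So the $L^\infty$-bound you assert for $\nu^\la_{\de,Ai}$ is not provable, and the resulting exponent $\la_0^{-1/3-1/B+2\th_c}$ is unjustified. Interpolating the correct estimates \eqref{10.11}--\eqref{10.12} instead gives $\|T_{\de,0}^\la\|_{p_c\to p'_c}\lesssim\rho^{7/12-2\th_c}(\la\rho)^{29\th_c/12-2/3}$: the $\la\rho$-exponent is $29\th_c/12-2/3$, differing from your claimed exponent by $(5\th_c-1)/12\ne0$, and the $\rho$-prefactor does not cancel but is merely bounded by $1$ thanks to $\th_c\le 13/48<7/24$. Both exponents happen to be negative in the allowed range, so your conclusion is correct, but not for the reason you give.

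There are two further issues. First, your Airy localization at $|B_1|\lesssim\la_0^{-2/3}$ is not matched to the $\si_\rho$ rescaling. After that rescaling the $u_1$-phase has linear coefficient $\rho^{-2/3}B_1$, so the natural Airy cone is $|B_1|\lesssim\rho^{2/3}$, which is what the paper uses in \eqref{10.2}--\eqref{10.3}; for $\la\ne\rho^{-2}$ your "Airy" pieces are not in the Airy regime of the rescaled phase, so the stationary-phase/van der Corput analysis you invoke would have to be re-derived (and your assertion that only $\pa_{u_2}^B$ matters is also not quite right: after normalization the lower-order $u_2$-coefficients are of unit size, producing e.g.\ a $D_4^+$-type singularity whose sharp decay $\la_0^{-2/3}$ comes from a mixed analysis, not from van der Corput at order $B$ alone, which only gives $\la_0^{-1/3-1/B}$). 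Second, the paper treats $A=1$ by a different and simpler argument (Section 9): since $\th_c=1/4$ there, interpolating the global bound $\|\nu_\de^\la\|_\infty\lesssim\la^{7/4}$ with the Duistermaat estimate $\|\widehat{\nu_\de^\la}\|_\infty\lesssim\rho^{-1/12}\la^{-2/3}$ from Lemma \ref{duistuniest} already yields $(\la\rho)^{-1/16}$, which sums with no Airy decomposition in $B_1$; for $A=0$ the same computation leaves an uncontrolled factor $\rho^{(7-28\th_c)/12}$ with negative exponent, which is exactly why the refined decomposition of Section 10 is needed. Your sketch does not separate these cases and so does not actually cover $A=1$.
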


\medskip

\setcounter{equation}{0}
\section{The case where $m=2,$ $B=4$  and $A=1$ }\label{B4A1}
According to Proposition \ref{notbigla}, we are left with  controlling the operators $T^\la_\de$ with $\la \rho\gg 1,$  where we have used the abbreviation $\rho=\rho(\tilde\de).$
 If $A=1,$ then according to \eqref{5.18II}, we have 
$h^r+1=4,$ hence $\th_c=1/4$ and $p_c'=8.$ We shall then use the first estimate in \eqref{nuest}, i.e.,
\begin{equation}\label{9.1}
\|\nu_\de^\la\|_\infty\lesssim \la^{\frac 74}.
\end{equation}
The crucial observation is that under the assumption $\la \rho\gg 1,$ we can here improve on the estimate \eqref{hatnuest} for $\widehat{\nu_\de^\la}.$ Indeed, we shall prove that 
\begin{equation}\label{9.2}
\|\widehat{\nu_\de^\la}\|_\infty\lesssim \rho^{-\frac 1{12}}\la^{-\frac 23}.
\end{equation}
Under the assumption that this estimate is valid, we obtain  from \eqref{9.1} and \eqref{9.2}  by interpolation that 
$$
\|T_\de^\la\|_{p_c\to p'_c}\lesssim (\la\rho)^{-\frac 1{16}},
$$
hence the desired remaining estimate
$$
\sum_{\la\rho\gg 1 }\|T_\de^\la\|_{p_c\to p'_c}\lesssim 1.
$$

In order to prove \eqref{9.2},  recall that 
from  \eqref{7.12II} that 
$$
\widehat{\nu_\de}(\xi)=\iint e^{-i\la s_3\Phi^\sharp(x,\de,s)} \, a(x,\de,s)\, dx_2dx_1,
$$
where  the complete phase $\Phi^\sharp$ is given by 
$$
\Phi^\sharp(x,\de,s)=x_1^3\, B_3(s_2,\de_1,x_1) -x_1 B_1(s,\de_1) +  B_0(s,\de_1)+\phi^\sharp(x,\de,\,s_2),
$$
with 
\begin{eqnarray}\nonumber
\phi^\sharp(x,\de,\,s_2)&=& x_2^4 \,b(x,\de_0^r,s_2)+\de_{4} x_2^2\, \tilde\alpha_2(x_1,\de_0^{\r},\,s_2)
+\de_{3,0} x_2 \,\tilde\al_1(x_1,\de_0^{\r},s_2)\\
&& \hskip2cm \de_{0} x_1x_2 \, \al_{1,1}(x_1,\de_0^{\r},s_2), \label{9.4}
\end{eqnarray}
and where $a$ is a smooth amplitude supported  in a small neighborhood of the origin in $x.$  Recall also from Lemma \ref{lastterm}  that  $| \al_{1,1}|\equiv 0$ and $|\tilde\al_1|\sim 1$ in Case ND,  whereas 
 $| \al_{1,1}|\sim 1$  and $\tilde\al_1$ is independent of $x_1$ in Case D. Recall also that $s_j\sim 1$ for $j=1,2,3.$ 
 
  Moreover, in Case ND we have 
  $$
\rho=\de_{3,0}^{\frac 43}+\de_4^2,
$$ 
whereas in Case D   
$$
\rho=\de_0^{\frac {12}{5}}+\de_{3,0}^{\frac 43}+\de_4^2,
$$ 
where
$\de_{3,0}\ll \de_0.$  We shall treat both cases ND and D  at the same time, assuming implicitly that $\de_0=0$ in Case ND. 

Estimate \eqref{9.2} will thus be a direct consequence of the following lemma, which can be derived from  more general results by Duistermaat (cf. Proposition 4.3.1 in  \cite{duistermaat}).
\color{black} For the  convenience of the reader, we shall give a more elementary,  direct proof for our situation,  which  requires only $C^2$- smoothness of the amplitude. Our approach will be  based on arguments similar to the ones used on  pp.196- 205 in  \cite{IKM-max}.

\begin{lemma}\label{duistuniest}
Denote by $J(\la,\de,s)$ the oscillatory
$$
J(\la,\de,s)= \chi_1(s_1,s_2)\iint e^{-i\la\Phi(x,\de,s)} \, a(x,\de,s)\, dx_1dx_2,
$$
with phase 
$$
\Phi(x,\de,s)=x_1^3\, B_3(s_2,\de_1,x_1) -x_1 B_1(s,\de_1) +\phi^\sharp(x_1,x_2,\de,\,s_2),
$$
where $\phi^\sharp$ is given by \eqref{9.4}, and where $ \chi_1(s_1,s_2)$ localizes to the region where $s_j\sim 1, j=1,2.$ Let us also put 
\begin{eqnarray}\nonumber
\tilde\rho:=\rho+|B_1(s,\de_1)|^\frac32.
\end{eqnarray}
Then the following estimate
\begin{equation}\label{duistypeg}
|J(\la,\de,s)|\le \frac{C}{\tilde\rho^\frac1{12}\la^\frac23}
\end{equation}
 holds true, provided  the amplitude  $a$ is supported in a sufficiently small
neighborhood of the origin. The constant $C$ in this estimate is independent of $\de$  and $s.$
\end{lemma}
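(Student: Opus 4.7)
I would prove Lemma~\ref{duistuniest} by a Duistermaat-type rescaling, implementing the scheme of pp.~196--205 of \cite{IKM-max}. The key observation, already implicit in the dilations $\si_r$ of Remark~\ref{duist} and in the definition of $\tilde\rho$, is that the phase $\Phi$ admits a quasi-homogeneous scaling for which $\tilde\rho$ is the natural size parameter. Concretely, I would change variables by
\[
x_1 = \tilde\rho^{1/3} u_1,\qquad x_2 = \tilde\rho^{1/4} u_2
\]
(i.e., apply $\si_r$ with $r = 1/\tilde\rho$), producing a Jacobian $\tilde\rho^{7/12}$ and the identity $\Phi(\tilde\rho^{1/3}u_1,\tilde\rho^{1/4}u_2,\de,s) = \tilde\rho\,\tilde\Phi(u,\tilde\de^{1/\tilde\rho},s)$. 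By the very definition of $\tilde\rho$, combined with \eqref{rhoinv} and the scaling $B_1\mapsto r^{2/3}B_1$ from Remark~\ref{duist}, every coefficient of the normalized phase $\tilde\Phi$ is bounded by $1$ in modulus, and at least one of them --- the one realizing the maximum in $\tilde\rho = \rho(\tilde\de) + |B_1|^{3/2}$ --- has size $\sim 1$. The integral becomes
\[
J(\la,\de,s) = \tilde\rho^{7/12}\iint e^{-i\mu\tilde\Phi(u,\tilde\de^{1/\tilde\rho},s)}\,\tilde a(u,\de,s)\,\chi_1(s_1,s_2)\,du
\]
with effective frequency $\mu := \la\tilde\rho$, and \eqref{duistypeg} is equivalent to the uniform model bound $|\tilde J(\mu,\tilde\de^{1/\tilde\rho},s)|\lesssim\mu^{-2/3}$.

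\textbf{Reduction to a compact region.} The domain of integration in $u$ is contained in $|u_1|\lesssim\tilde\rho^{-1/3}$, $|u_2|\lesssim\tilde\rho^{-1/4}$, which may be large when $\tilde\rho$ is small. To truncate to a fixed ball $\{|u|\le L\}$, I would split the exterior into $\{|u_2|^{B-1}\gtrsim|u_1|\}$ and $\{|u_2|^{B-1}<|u_1|\}$ (here $B=4$). On the first region, the dominance of $u_2^B b$ with $b\sim 1$ yields $|\pa_{u_2}\tilde\Phi|\gtrsim|u_2|^{B-1}$, the coupling term $u_1u_2$ of Case~D being absorbed since $|u_1|\lesssim|u_2|^{B-1}$; on the second, the dominance of the cubic $u_1^3 B_3$ gives $|\pa_{u_1}\tilde\Phi|\gtrsim u_1^2$. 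Two integrations by parts --- permitted because the amplitude is only $C^2$ --- then make the exterior contributions negligible for $L$ large, exactly as in the proof of Lemma~\ref{duist2}.

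\textbf{Verification of the model bound.} On $\{|u|\le L\}$ I would prove $|\tilde J|\lesssim\mu^{-2/3}$ by casework on which of the normalized coefficients of $\tilde\Phi$ is of size $\sim 1$. If $\tilde B_1 := B_1/\tilde\rho^{2/3}$ is $\sim 1$, the $u_1$-phase has two non-degenerate critical points, so stationary phase in $u_1$ yields $\mu^{-1/2}$; van der Corput of order $4$ in $u_2$ (using $\pa_{u_2}^4\tilde\phi^\sharp\sim 1$) then provides $\mu^{-1/4}$, for a total of $\mu^{-3/4}\le\mu^{-2/3}$. If instead one of the normalized $\rho$-coefficients (of $u_2^2$, $u_2$, or $u_1u_2$) is $\sim 1$, the $u_2$-phase acquires a non-degenerate critical point of size $\sim 1$, yielding $\mu^{-1/2}$ from stationary phase in $u_2$; combined with the uniform Airy-type bound $\mu^{-1/3}$ in $u_1$ (in the spirit of Lemma~6.2 of \cite{IM-rest1}), this gives $\mu^{-5/6}\le\mu^{-2/3}$. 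In the transition regimes, where two or more normalized coefficients are of comparable intermediate size, one interpolates by dyadic decomposition, in the spirit of pp.~196--205 of \cite{IKM-max}. Multiplying the resulting model bound by the Jacobian factor $\tilde\rho^{7/12}$ produces the desired estimate $\tilde\rho^{-1/12}\la^{-2/3}$.

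\textbf{Main obstacle.} The hard part is the uniform treatment of the transition regimes and, above all, of the coupling case (Case~D) where the normalized coefficient of $u_1u_2$ is $\sim 1$. Here the $u_1$-Airy and $u_2$-stationary phase steps must be coordinated, since after the $u_1$-integration the residual amplitude inherits a non-trivial dependence on $u_2$ through the critical point $u_1^c(u_2)$, and one must verify that the effective $u_2$-phase still has a non-degenerate critical point of the right size uniformly in $(\tilde B_1,\tilde\de^{1/\tilde\rho})$. The only-$C^2$ regularity of the amplitude forces all integrations by parts to be kept to low order, so the decompositions must be done with care. This is precisely where the techniques of \cite{IKM-max} are brought to bear.
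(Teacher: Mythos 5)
Your strategy — the Duistermaat rescaling $x=\si_{\tilde\rho}u$ with Jacobian $\tilde\rho^{7/12}$, truncation of the $u$-domain by integrations by parts, then casework on which normalized coefficient is of size $\sim 1$ — is essentially the paper's own route. However, the casework you present is incomplete and in fact misses the sharp case, which is where the exponent $2/3$ actually comes from. Both of your explicitly treated cases produce $\mu^{-3/4}$ and $\mu^{-5/6}$, strictly better than $\mu^{-2/3}$, and your catch-all ``interpolate by dyadic decomposition in the transition regimes'' does not reach the true worst case. Your claim that ``if one of the normalized $\rho$-coefficients is $\sim 1$, the $u_2$-phase acquires a non-degenerate critical point of size $\sim 1$'' is false in general: with $\phi = u_2^4 b + \de_4' u_2^2\tilde\al_2 + \cdots$ and $b,\tilde\al_2$ of opposite signs, $\pa_{u_2}^2\phi$ can vanish at a point $u_2^0$ with $|u_2^0|\sim 1$ even when $\de_4'\sim 1$. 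At such a base point with $u_1^0=0$, $B_1'=\de_0'=0$, the phase is a small perturbation of $c_3 u_1^3 + b_3(u_2)(u_2-u_2^0)^3 + c_0$, i.e.\ a $D_4^+$-type singularity, with degenerate $u_2$-critical point. Stationary phase in $u_2$ is unavailable, and this is precisely where one obtains $\mu^{-1/3-1/3}=\mu^{-2/3}$ via the uniform $D_4^+$ bound (Duistermaat's Proposition~4.3.1, or alternatively Lemma~6.3 in \cite{IM-rest1} in $u_1$ followed by van der Corput in $u_2$). Without this case your argument proves a stronger, false bound.

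Two smaller gaps: (i) In your first case (``$\tilde B_1\sim 1$''), after stationary phase in $u_1$ the residual phase is $\Psi(u_2)=\Phi(u_1^c(u_2),u_2)$, and in Case D the critical point $u_1^c$ depends on $u_2$ through the coupling term $\de_0' u_1 u_2\al_{11}$; one cannot simply invoke $\pa_{u_2}^4\tilde\phi^\sharp\sim 1$. The paper verifies only that $\max_{j=4,5}|\pa_{u_2}^j\Psi|\ne 0$, and the order-$5$ van der Corput bound $\mu^{-1/2-1/5}$ is what one gets in general. (ii) You never treat the regime $\la\tilde\rho\lesssim 1$, where the rescaled frequency $\mu$ is $O(1)$ and stationary-phase estimates are vacuous; there one must instead scale by $1/\la$ (as in Lemma~\ref{duist2}) to get the stronger bound $\la^{-7/12}$, which dominates $\tilde\rho^{-1/12}\la^{-2/3}$ in that range.
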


\begin{proof} Note that $\tilde\rho\lesssim 1.$   We may indeed even assume that $\tilde\rho\ll 1.$ 

For, if $|B_1(s,\de_1)|\sim 1,$ and if we choose the support of $a$ sufficiently small in $x,$ then it is easily seen that the phase $\Phi$ has no critical point with respect to $x_1$ on the support of the amplitude, and thus an integration by parts in $x_1$ allows to estimate $|J(\la,\de,s)|\le C\la^{-1},$ which is better than what is needed for $\eqref{duistypeg}.$  And, if $|B_1(s,\de_1)|\ll 1,$ then we  also have $\tilde\rho\ll 1.$

\medskip

 We begin with the case where  $\tilde\rho\la\lesssim1.$ Here we can argue as in the proof of Lemma \ref{duist2}: Changing coordinates from $x$ to $u$ by putting 
$x=\si_{1/\la} u=(\la^{-1/3}u_1,\la^{-1/4}u_2)$ and making use of Remark \ref{duist} (with $r:=\la$),  we find that 
\begin{eqnarray*}
&&J(\la,\de,s)=\la^{-\frac 14-\frac 13}\,  \chi_1(s_1,s_2)\\
&&\hskip1cm \times\iint e^{-i s_3\Big(u_1^3\, B_3(s_2,\de_1,\la^{-\frac 13}u_1) -u_1  
\la^{\frac 23}B_1(s,\de_1)+\phi^\sharp_\la(u_1,u_2,\, \tilde\de^\la, s_2) \Big)} 
 a(\si_{\la^{-1}}u, \de,s) \,du_1du_2\, .
 \end{eqnarray*}
 Observe that  here we are  integrating over the large domain where $|u_1|\le \ve \la^{1/3}$ and $|u_2|\le \ve \la^{1/4}.$ Recall also that $\phi^\sharp_\la$ is given by \eqref{phir}, and that $\rho(\tilde\de^\la)=\la \rho(\tilde\de)\lesssim 1,$ and so  we have 
$$
|\tilde\de^\la|\lesssim 1\quad \mbox{and}\quad \la^\frac23|B_1(s,\,\de_1)|\lesssim 1.
$$
It is then easily seen by means of integrations by parts in $u_1$ respectively $u_2$ (whenever these quantities are large) that the double-integral in this expression is uniformly bounded in $\de$ and $s,$ and thus we arrive at the uniform estimate
\begin{eqnarray}\nonumber
|J(\la,\de,s)|\le \frac{C}{\la^\frac7{12}}.
\end{eqnarray}
This estimate is stronger than estimate 
\eqref{duistypeg}  when $\tilde\rho\la\lesssim1.$

\medskip
From  now on, we may thus  assume that $\La:=\tilde\rho\la\gtrsim1$. We then apply the change of coordinates 
$x=\si_{\tilde \rho}u=(\tilde\rho^{1/3} u_1,\tilde\rho^{1/4} u_2)$ and  find that 
$$J(\la,\de,s)=\tilde\rho^\frac7{12} I(\la\tilde\rho,\de,s),
$$
 where we have put
\begin{eqnarray}\label{9.5II}
I(\La,\de,s)=\chi_1(s_1,s_2)\, \iint
e^{-i\La\Phi_1(u,\de,s)}a(\sigma_{\tilde\rho}u,\de,s)\,du_1 du_2,\qquad (\La\gtrsim 1),
\end{eqnarray}
with
$$
\Phi_1(u,\de,s):=u_1^3\, B_3(s_2,\de_1, \tilde\rho^\frac13u_1)
-u_1 
B'_1(s,\de_1)+\phi(u,\tilde\rho,\de,\,s_2)
$$
and 
\begin{eqnarray}\nonumber
\phi(u,\tilde\rho,\de,\,s_2)&:=& u_2^4 \,b(\sigma_{\tilde\rho}u,\de_0^r,s_2)+\de'_{4} u_2^2\, \tilde\alpha_2(\tilde\rho^{\frac 13}u_1,\de_0^{\r},\,s_2)
+\de'_{3,0} u_2 \,\tilde\al_1(\tilde\rho^{\frac 13}u_1,\de_0^{\r},s_2)\\
&& \hskip2cm \de'_{0} u_1u_2 \, \al_{1,1}(\tilde\rho^{\frac 13}u_1,\de_0^{\r},s_2). \label{phiII}
\end{eqnarray}
Here, 
$$
B_1'(s,\de_1):=\frac{B_1(s,\de_1)}{\tilde\rho^\frac23},\
\de_0':=\frac{\de_0}{\tilde\rho^\frac5{12}}
,\ \de_{3,0}':=\frac{\de_{3,0}}{\tilde\rho^\frac34},\ 
\de_4':=\frac{\de_4}{\tilde\rho^\frac12},
$$
so  that,  in analogy with Remark \ref{duist}, we  have 
\begin{equation}\label{order1}
(\de'_0)^{\frac {12}{5}}+(\de'_{3,0})^{\frac 43}+(\de'_4)^2+|B'_1(s,\de_1)|^{\frac 32}=1.
\end{equation}
Note that in particular 
$$
\de'_0+\de'_{3,0}+\de'_4+|B'_1(s,\de_1)|\sim 1.
$$ 
In order to prove \eqref{duistypeg}, we have thus to verify the following estimate:
\begin{equation}\label{duistypeg2}
|I(\La,\de,s)|\le C\La^{-\frac 23}.
\end{equation}

Take again a  smooth cut-off function $\chi_0\in C^\infty_0(\RR^2)$ such that $\chi_0(u)=1$ for $|u|\le
L$ where $L$ is a sufficiently big fixed positive  number, and  decompose 
\begin{eqnarray}\nonumber
I(\La,\de,s)=I_0(\La,\de,s)+I_\infty(\La,\de,s),
\end{eqnarray}
with
\begin{eqnarray}\nonumber
I_0(\La,\de,s):=\chi_1(s_1,s_2)\, \iint
e^{-i\La\Phi_1(u,\de,s)}a(\sigma_{\tilde\rho}u,\de,s)\, \chi_0(u)\,du_1 du_2,
\end{eqnarray}
and
\begin{eqnarray}\nonumber
I_\infty(\La,\de,s):=\chi_1(s_1,s_2)\, \iint
e^{-i\La\Phi_1(u,\de,s)}a(\sigma_{\tilde\rho}u,\de,s)\, (1-\chi_0(u))\,du_1 du_2.
\end{eqnarray}
Note that on the support of $1-\chi_0$ we have $|u_1|\gtrsim L$ or  $|u_2|\gtrsim L.$ Thus, by  choosing $L$ sufficiently large, we see by \eqref{order1} that  the phase $\Phi_1$ has no critical point on the support of $1-\chi_0,$ and in fact  we may use integrations by parts in $u_1$ respectively $u_2$  in order to prove  that the double-integral in the  expression for $I_\infty(\La,\de,s)$  is of order $O(\La^{-1}),$  uniformly in $\de$ and $s.$  This is stronger than what is required for \eqref{duistypeg2}.

\medskip
There remains the integral $I_0(\La,\de,s)$.
Here we use arguments from \cite{IKM-max} (compare pp. 
203-205).  Recall from \eqref{order1} that $(B_1'(s,\de_1),\de'_0,
\de_{3,0}',\de_4')$ lies on the``unite sphere''
\begin{eqnarray}\nonumber
\Sigma:=\{(B_1', \de_0',\de_{3,0}',\de_4')\in \RR^4:\,|B'_1|^{\frac 32}+ (\de'_0)^{\frac {12}{5}}+(\de'_{3,0})^{\frac 43}+(\de'_4)^2=1 \}.
\end{eqnarray}
Following \cite{IKM-max} let us fix a point $((B_1')^0,\,
(\de_0')^0,(\de_{3,0}')^0, (\de_4')^0 )\in \Sigma$, a point $s^0$ on the support of $\chi_1$ and a point
$u^0=(u_1^0,u_2^0)\in \supp \chi_0$, and denote  by $\eta$ a
smooth cut-off function supported near $u^0$. By 
$I_0^\eta$ we denote the corresponding oscillatory integral containing $\eta$ as a factor in the amplitude:
\begin{eqnarray}\nonumber
I_0^\eta(\La,\de,s):=\chi_1(s_1,s_2)\, \iint
e^{-i\La\Phi_2(u,B_1',\de'_0,\de_{3,0}',\de_4',\tilde\rho,s)}a(\sigma_{\tilde\rho}u,\de,s)\, \chi_0(u)\eta(u)\,du_1 du_2,
\end{eqnarray}
where 
\begin{equation}\label{Phi2II}
\Phi_2(u,B_1',\de'_0,\de_{3,0}',\de_4',\tilde\rho,s):=u_1^3\, B_3(s_2,\de_1, \tilde\rho^\frac13u_1)
-u_1 B'_1+\phi(u,\tilde\rho,\de,\,s_2),
\end{equation}
with $\phi$ as before.

 We shall prove that $I_0^\eta$ satisfies the estimate
\begin{equation}\label{9.6II}
|I_0^\eta(\La,\de,s)|\le C\|a(\cdot, \,\de,s)\|_{C^2}\, \La^{-\frac 23}.
\end{equation}
provided $\eta$ is supported in a sufficiently small neighborhood of
$U$ of $u^0,$ $s$ lies in a sufficiently small neighborhood $S$ of $s^0$  and $(B_1',\de'_0,
\de_{3,0}',\de_4')$ in a sufficiently small neighborhood  $V$ of the point $((B_1')^0,
(\de_0')^0,(\de_{3,0}')^0, (\de_4')^0 )$ in $\Sigma.$  The constant $C$ in these estimates may depend on the ``base points'' $u^0,s^0$ and $((B_1')^0,(\de_0')^0,(\de_{3,0}')^0, (\de_4')^0 ),$ as well as on the chosen neighborhoods,  but not on $\La, \de$ and $s.$

By means of a partition of the identity argument this will imply the same type estimate for $I_0$,
hence for $I$, which will conclude the proof of Lemma
\ref{duistuniest}.
\medskip

Now, if $\nabla_u \Phi_2(u^0,\,(B_1')^0,\,
(\de_0')^0,(\de_{3,0}')^0, (\de_4')^0,\tilde\rho,s^0)\neq 0,$  then by using an integration by parts argument in a similar way as for $I_\infty$ we arrive at the same  estimate  for $I_0^\eta$ as for  $I_\infty,$ which is better than what is required.
\medskip

We may therefore assume  from now on  that $\nabla_u \Phi_2(u^0,(B_1')^0,\,(\de_0')^0,(\de_{3,0}')^0, (\de_4')^0,\tilde\rho,s^0)= 0,$ and shall distinguish two cases.

\medskip
{\bf Case 1: $u_1^0\neq0$.} In this case,  it is easy to see from  \eqref{Phi2II} and \eqref{phiII} that
$$\pa_{u_1}^2\Phi_2(u^0,(B_1')^0,\,(\de_0')^0,(\de_{3,0}')^0, (\de_4')^0,\tilde\rho,s^0)\neq 0,$$
provided $\tilde\rho$ is sufficiently small. 
Then, by the implicit function  theorem,  the phase $\Phi_2$ has a unique  critical point $u_1^c(u_2,B_1',\de'_0,
\de_{3,0}',\de_4',\tilde\rho,s)$ 
 with respect to $x_1,$ which is a smooth function of 
its variables,  provided we choose the neighborhoods $U$ etc. sufficiently small.  Indeed,  and  when   $\tilde\rho=0,$  then by \eqref{Phi2II} and  \eqref{phiII},
\begin{equation}\label{u1cII}
u_1^c(u_2,B_1',\de'_0,\de_{3,0}',\de_4',0,s )= \left(\frac{
B_1'-\de_0' u_2 \alpha_{11}(0,\de_0^{\r},s_2)}{3B_3(s_2,\de_1,0)}
\right)^\frac12. 
\end{equation}
We may thus apply the method of stationary phase to the integration with respect to the variable $u_1$ in the integral defining $I_0^\eta.$ 
Let us denote by $\Psi$  the phase function
\begin{eqnarray}\nonumber
\Psi(u_2,B_1',\de'_0,\de_{3,0}',\de_4',\tilde\rho,s ):=\Phi_2\big(u_1^c(u_2,B_1',\de'_0,
\de_{3,0}',\de_4',\tilde\rho,s),B_1',\de'_0,\de_{3,0}',\de_4',\tilde\rho,s \big)
\end{eqnarray}
which arises through this application of  the method of stationary phase. We claim that we then have 
\begin{equation}\label{bjork5}
\max_{j=4,5} |\pa_{u_2}^j\Psi(u^0_2,B_1',\de'_0,\de_{3,0}',\de_4',\tilde\rho,s )|\neq 0.
\end{equation}
Notice that it suffices to prove this for $\tilde \rho=0,$ since then the results follows also for $\tilde\rho$ sufficiently small. 

In order to prove \eqref{bjork5} when $\tilde \rho=0,$ we make use of \eqref{u1cII}. Since $|B_3|\sim 1,$ \eqref{u1cII} shows that we may assume that 
\begin{equation}\label{b1size}
|B_1'-\de_0' u_2 \alpha_{11}(0,\de_0^{\r},s_2)|\sim |u_1^c|\sim |u_1^0|.
\end{equation}
Note also that by \eqref{u1cII} we have 
$$
\Psi(u_2,B_1',\de'_0,\de_{3,0}',\de_4',0,s)=\Gamma(u_2)+u_2^4 \,b(0,\de_0^r,s_2)+\de'_{4} u_2^2\, \tilde\alpha_2(0,\de_0^{\r},\,s_2)
+\de'_{3,0} u_2 \,\tilde\al_1(0,\de_0^{\r},s_2),
$$
where we have put 
$$
\Gamma(u_2):= -2\cdot 3^{-\frac 32} B_3(s_2,\de_1,0)^{-\frac 12}\big(B_1'-\de_0' u_2 \alpha_{11}(0,\de_0^{\r},s_2)\big)^{\frac 32}.
$$
In the case ND we have $\al_{11}\equiv 0,$ and thus  $|\pa_{u_2}^4\Psi(u^0_2,B_1',\de'_0,\de_{3,0}',\de_4',\tilde\rho,s )|\neq 0.$ 

Next, if we are in  Case D, then $|\al_{11}|\sim 1,$ and  \eqref{b1size}  implies that $|\Gamma^{(j)}(u_2)|\sim |u_1^0|^{3/2}|\de'_0/u_1^0|^{j}.$ Therefore, if $\de'_0\ll |u_1^0|,$ then we find  that 
$|\pa_{u_2}^4\Psi(u^0_2,B_1',\de'_0,\de_{3,0}',\de_4',\tilde\rho,s )|\neq 0,$ and if  $\de'_0\gtrsim |u_1^0|,$ then 
$$
|\pa_{u_2}^5\Psi(u^0_2,B_1',\de'_0,\de_{3,0}',\de_4',\tilde\rho,s )|\gtrsim |u_1^0|^{\frac 32}\ne 0.
$$
This verifies \eqref{bjork5} also in this case. But, \eqref{bjork5} allows to apply van der Corput's lemma to the integration in $u_2$ (after the application of the method of stationary phase in $u_1$) and thus altogether obtain the estimate
$$
|I_0^\eta(\La,\de,s)|\le C\La^{-\frac 12-\frac15},
$$
which is again even stronger than what is required by  \eqref{9.6II}.

\medskip
{\bf Case 2: $u_1^0=0.$} Assume first, $(\de_0')^0\neq0$ and $|\alpha_{11}|\sim1$  (this situation  can occur only in the  case D). Then 
\begin{eqnarray*}
&&\pa_{u_1}\pa_{u_2}\Phi_2(u^0,(B_1')^0,
(\de_0')^0,(\de_{3,0}')^0, (\de_4')^0,0,s^0)=\de'_0\alpha_{11}(0,0,s^0_2)\neq 0,\\
&&\pa_{u_1}^2\Phi_2(u^0,(B_1')^0,
(\de_0')^0,(\de_{3,0}')^0, (\de_4')^0,0,s^0)=0.
\end{eqnarray*}
Therefore we can apply the method of  stationary phase to the integration in both variables $(u_1,u_2)$ and again
obtain  an  estimate of order $O(\La^{-1}),$ which is again  stronger  than what we need.

\medskip
From now on, we may thus assume that $(\de_0')^0=0$ (recall that in Case ND, we have  
$\alpha_{11}\equiv 0$ and are assuming that $\de_0=0,$ hence also $\de'_0=0,$ so that this assumption is automatically satisfied). 

Then necessarily also $(B'_1)^0=0,$ for otherwise, in view of \eqref{u1cII} we would have 
$|u_1^c|\sim |(B'_1)^0|\ne 0$  when $\tilde\rho=0,$ which would contradict our assumption that $u_1^0=0.$ 
Since $((B_1')^0,(\de_0')^0,(\de_{3,0}')^0, (\de_4')^0 )\in \Sigma,$ we thus see that 
$((\de'_{3,0})^0)^{\frac 43}+((\de'_4)^0)^2=1$. 

\medskip
Therefore at the ``base point'' $((B_1')^0,(\de_0')^0,(\de_{3,0}')^0, (\de_4')^0 )$ the function $\phi$ satisfies for $\tilde\rho=0$ the  inequality
$$
\sum_{j=2}^3|\pa_{u_2}^j \phi(0,u_2^0, 0,\de, s_2^0)|\neq 0,
$$
and this inequality will persist for parameters sufficiently close to this base point.

Assume first that we even have
$
\pa_{u_2}^2 \phi(0,u_2^0, 0,\de, s_2^0)\neq 0.
$
Then we can first apply the method of  stationary phase to the  $u_2$ integration, and 
subsequently van der Corput's  estimate in $u_1$ (with $N=3$), which results in  the estimate 
$
|I_0^\eta(\La,\de,s)|\le C\La^{-\frac 12-\frac13}.
$
This is again stronger than what we need. 
\medskip

There remains the case where
$
\pa_{u_2}^2 \phi(0,u_2^0, 0,\de, s_2^0)= 0
$
and 
$
\pa_{u_2}^3 \phi(0,u_2^0, 0,\de, s_2^0)\neq 0.
$
In this case the phase function $\Phi_2$ is a small smooth perturbation
of a function  $\Phi_2^0$ of the form
$$
\Phi_2^0(u_1,u_2)=c_3u_1^3 +(u_2-u_2^0)^3b_3(u_2) +c_0,
$$
where $c_3:=B_3(s_2^0,\de_1,0)\ne 0$ and where  $b_3(u_2)$ is
a smooth function such that $b_3(u_2^0)\neq0$. This means that  $\Phi_2$  has a 
so-called $D_4^+$-type singularity in the sense of \cite{agv} and the distance between  the associated Newton polyhedron and the origin is $3/2$. The estimate \eqref{9.6II} therefore follows in this situation  from  the  particular case of $D_4^+$-type singularities in  Proposition 4.3.1 of   \cite{duistermaat}.  

Alternatively, one could also first treat the integration with respect to $u_1$ by means of Lemma 6.3 in \cite{IM-rest1}, with $B=3,$ and subsequently estimate the integration in $u_2$ by means of van der Corput's lemma (we leave the details to the interested reader).

This concludes the proof of Lemma \ref{duistuniest}.
\end{proof}

\begin{remark} Notice that our phase $\Phi$ in Lemma \ref{duistuniest} is   a small perturbation of a phase of the form $c_1 x_1^3+c_2 x_2^4,$ with $c_1\ne 0\ne c_2,$ at least if we assume that $|B_1(s,\de_1)|\ll 1$ (this has been the interesting case the preceding proof).
It is, however, not true that for arbitrary perturbations of such a phase function with a small perturbation parameter $\de>0$  an estimate  analogous to \eqref{duistypeg}  of order $O\big(c(\de)\la^{-2/3}\big)$ as $\la\to\infty$  holds true. A counter example is given by the following function
\begin{eqnarray}\nonumber
\Phi(x,\de):=x_1^3+(x_2-\de)^4+4\de(x_2-\de)^3-3
\sqrt[3]{4\de^2}x_1(x_2-\de)^2+C(\de),
\end{eqnarray}
where $C(\de)$ is chosen such that $\Phi(0,\,\de)\equiv0$. Note that  $\Phi(x,0)=x_1^3+x_2^4$. 
\end{remark}

To see this,  consider an 
oscillatory integral $J(\la,\de):=\int e^{i\la\Phi(x,\de)} a(x) \, dx$ with phase function $\Phi,$  whose  amplitude is supported  in a sufficiently small neighborhood of the origin and such that $a(0)=1.$

When $\de>0$ is sufficiently small, then $\Phi$ has exactly two critical points, namely  the degenerate critical point  $x_d:=(0,\de)$ and the non-degenerate  critical point
$x_{nd}:=(6\sqrt[3]{2}\de^{4/3},-6\de)$.

Let us  consider the contribution  of  the degenerate critical point $x_d$ to
the oscillatory integral. The linear change of variables
$$
z_1=x_1-\sqrt[3]{2\de}(x_2-\de),\quad
z_2=x_1+2\sqrt[3]{2\de}(x_2-\de)
$$
transforms $x_d$ into $z_d=(0,0)$ and the  phase function into $\tilde\Phi(z)+C(\de),$ where
$$
\tilde\Phi(z) :=z_1^2z_2+\left(\frac{z_2-z_1}{3\sqrt[3]{2\de}} \right)^4.
$$
A look at the Newton polyhedron of $\tilde\Phi$ reveals  that the principal face of $\N(\tilde\Phi)$ is given by the 
compact edge $[(0,4),(2,1)]$ which lies on the line given by $\ka_1t_1+\ka_2 t_2=1,$ with associated weight  $\ka=(\ka_1,\ka_2):=(3/8,\,1/4),$ and the principal part of  $\tilde\Phi$ is given by 
$$
\tilde\Phi_{\pr}(z) =z_1^2z_2+\frac{z_2^4}{81\sqrt[3]{16\de^4}}.
$$
Moreover, the Newton distance is given by $d=8/5,$  whereas the non-trivial roots of $\tilde\Phi_{\pr}$ have multiplicity $1.$ Therefore, by Theorem 3.3 in \cite{IM-ada} the coordinates $(z_1,z_2)$ are adapted to $\tilde\Phi$
in a sufficiently small neighborhood of the origin, so that the height $h$ of $\tilde\Phi$ in the sense of Varchenko  is also given by $h=d=8/5.$ This implies that for  every sufficiently small, fixed  $\de>0$ we have that 
\begin{eqnarray}\nonumber
J(\la,\,\de)=C(\de) \la^{-\frac58}+O\left(\la^{-\frac78}\right) \qquad \mbox{as} \quad \la\to\infty,
\end{eqnarray}
with a non-trivial constant $C(\de),$ 
because the contribution of the non-degenerate critical point  $x_{nd}$ is of order $O(\la^{-1})$ (compare for instance \cite{IM-uniform}).
This  shows that an estimate of the  type
$
|J(\la,\,\de)|\le C(\de)\la^{-2/3}
$
can not hold in this example.

 \color{red}

 \color{black}

\setcounter{equation}{0}
\section{The case where $m=2,$ $B=4$  and $A=0$ }\label{B4A0}
Again, we are assuming that $\la\rho\gg 1,$  where $\rho=\rho(\tilde\de)$ is given by \eqref{rho}. Observe that if $A=0,$ then according to \eqref{5.18II}
we have
$h^r+1=4(n+3)/(n+4),$  where $n\ge 9,$  so that 
\begin{equation}\label{10.1}
\th_c\le \frac {13}{48}
\end{equation}

Here we shall again perform a frequency  decomposition  near the Airy cone,  by defining  functions $\nu_{\de,\, Ai}^\la$ and $\nu_{\de,\, l}^\la$  as follows:
$$
\widehat{\nu_{\de,\, Ai}^\la}(\xi):=\chi_0(\rho^{-\frac 23}B_1(s,\,\de_1))\widehat{\nu_\de^\la}(\xi),
$$
$$
\widehat{\nu_{\de,\, l}^\la}(\xi):=\chi_1((2^{l}\rho)^{-\frac 23}B_1(s,\,\de_1))\widehat{\nu_\de^\la}(\xi),\quad M_0\le 2^l\le \frac{\rho^{-1}}{M_1},
$$
so that
\begin{equation}\label{airydecomp}
\nu_\de^\la=\nu_{\de,\, Ai}^\la+\sum_{\{l: M_0\le 2^l\le \frac{\rho^{-1}}{M_1}\}}\, \nu_{\de,\, l}^\la.
\end{equation}

Denote by $T_{\de,\, Ai}^\la$ and $T_{\de,\, l}^\la$ the corresponding operators  of  convolution with the Fourier transforms of these functions.

\subsection{Estimation of  $T_{\de,\, Ai}^\la$} \label{Aig}
 Here we have $|\rho^{-2/3}B_1(s,\,\de_1)|\lesssim 1.$ In this case, we use the change of variables $x=:\si_\rho u:=(\rho^{1/3} u_1,\, \rho^{1/4} u_2)$ in the
integral \eqref{7.12II} defining  $\widehat{\nu_\de},$ and obtain
\begin{equation}\label{10.2}
\widehat{\nu_\de}(\xi)=\rho^\frac7{12}e^{-i\la s_3 B_0(s,\de_1)} \int_{|\si_\rho u|<\ve} e^{-i\la\rho\, s_3
\Phi_1(u,s,\de))}a(\si_\rho u,\de,s)\, du,
\end{equation}
where  the phase $\Phi_1$  has the form
\begin{eqnarray}  \label{10.3}
&&\hskip1cm\Phi_1(u,s,\de)= u_1^3\, B_3(s_2,\de_1,\rho^{\frac 13}u_1) 
-u_1  (\rho^{-\frac 23}B_1(s,\de_1))\\
&& + u_2^4 \,b(\si_\rho u,\de_0^\r,s_2)+\de'_{4} u_2^2\, \tilde\alpha_2(\rho^{\frac 13}u_1,\de_0^{\r},\,s_2)
+\de'_{3,0} u_2 \,\tilde\al_1(\de_0^{\r},s_2)
+\de'_{0} u_1u_2 \, \al_{1,1}(\rho^{\frac 13}u_1,\de_0^{\r},s_2), \nonumber
\end{eqnarray}
and where according to Remark \ref{duist} (in which we choose $r:=1/\rho$) $\rho(\widetilde{\de'})=1,$ so that 
$$
\de'_0+\de'_{3,0}+\de'_4\sim 1
$$
(recall that the coefficient $\de'_{3,0}$ does not appear in Case ND, where $\al_{1,1}=0$). We have also indicated that the amplitude $a(\si_\rho u,\de,s)$ is supported where $|\si_\rho u|<\ve,$  where we may assume that $\ve>0$ is sufficiently small, since this will become important soon.

\medskip
We shall proceed  in a somewhat  similar way as in Section 7 of \cite{IM-rest1}, by choosing a cut-off function $\chi_0\in C_0^\infty(\RR^2)$ such that $\chi_0(u)=1$  for $|u|\le R,$ where $R$ will be chosen sufficiently  large, and further decomposing
$$
\widehat{\nu_{\de}}(\xi)=\widehat{\nu_{\de,0}}(\xi)+\widehat{\nu_{\de,\infty}}(\xi),
$$
where
$$
\widehat{\nu_{\de,0}}(\xi):=\rho^\frac7{12}e^{-i\la s_3 B_0(s,\de_1)} \int e^{-i\la\rho\, s_3
\Phi_1(u,s,\de)}a(\si_\rho u,\de,s)\chi_0(u) \, du,
$$
and 
$$
\widehat{\nu_{\de,\infty}}(\xi):=\rho^\frac7{12}e^{-i\la s_3 B_0(s,\de_1)} \int e^{-i\la\rho\, s_3
\Phi_1(u,s,\de,s)}a(\si_\rho u,\de,s)(1-\chi_0(u)) \, du.
$$
 Accordingly, we decompose 
$$
\nu_{\de,\, Ai}^\la=\nu_{\de,0}^\la+\nu_{\de,\infty}^\la,
$$
where  we have put 
\begin{eqnarray*}
\widehat{\nu_{\de,0}^\la}(\xi)&:=&\chi_0(\rho^{-\frac 23}B_1(s,\,\de_1))\chi_1(s,s_3) \,\widehat{\nu_{\de,0}}(\xi),\\
\widehat{\nu_{\de,\infty}^\la}(\xi)&:=&\chi_0(\rho^{-\frac 23}B_1(s,\,\de_1))\chi_1(s,s_3) \,\widehat{\nu_{\de,\infty}}(\xi).
\end{eqnarray*}
 Recall from   \eqref{hatnuai} that  $\chi_1(s,s_3)=\chi_1(s_1,s_2,s_3)$ localizes to the region where $s_j\sim 1, j=1,2,3.$
The corresponding   operators of convolution with  $\widehat{\nu_{\de,0}^\la}$ and $\widehat{\nu_{\de,\infty}^\la}$ will be denoted by  $T_{\de,0}^\la$ and $T_{\de,\infty}^\la,$ respectively.

\medskip
Let us first  consider the operators $T_{\de,\infty}^\la:$ 
By means of integrations by parts, we easily  see that if $R$ is chosen sufficiently large, then the phase will have no critical point, and thus for  every $N\in\NN$ we have 
\begin{equation}\label{10.4}
\|\widehat{\nu_{\de,\infty}^\la}\|_\infty \lesssim \rho^{\frac 7{12}}(\la\rho)^{-N}.
\end{equation}
Moreover, by  Fourier inversion  we find that 
\begin{equation}\label{10.5}
\nu_{\de,\infty}^\la(x)=\la^3\int_{\RR^3} e^{i\la s_3(s_1x_1+s_2x_2+x_3)} \widehat{\nu_{\de,\infty}^\la}(\xi)\, ds
\end{equation}
(with $\xi=\la s_3(s_1,s_2,1)$). 
We then use the change of variables  from $s=(s_1,s_2)$ to $(z,s_2),$ where 
$$
z:= \rho^{-\frac 23}B_1(s,\de_1),
$$ 
and  find that (compare \eqref{ai2})
\begin{equation}\label{10.6}
s_1=s_2^{\frac {n-1}{n-2}}G_3(s_2,\de)-\rho^{\frac 23}z,
\end{equation}
and in particular
 \begin{equation}\label{10.7}
B_0(s,\de,\si)=-\rho^{\frac 23}z\,s_2^{\frac {1}{n-2}}G_1(s_2,\de)+s_2^{\frac {n}{n-2}}
G_5(s_2,\de).
\end{equation}
 And, if we plug the previous formula for 
$\widehat{\nu_{\de,\infty}^\la}$ into \eqref{10.5}, we see that we may  write  $\nu_{\de,\infty}^\la(x)$ as an oscillatory 

\begin{equation}\label{10.8}
\nu_{\de,\infty}^\la(x)=\rho^{\frac 7{12}+\frac 23}\la^3\int e^{-i\la s_3 \Phi_2(u,z,s_2,\de)}
\chi_0(z)(1-\chi_0(u)) a(\si_\rho u, \rho^{\frac 23} z,s,\de) \tilde\chi_1(s_2, s_3)\, du dz ds_2 ds_3
\end{equation}
with respect to the variables $u_1,u_2, z,s_2,s_3,$ where the  complete phase is given by 
\begin{eqnarray}\nonumber
&&\Phi_2(u,z,s_2,\de):= s_2^{\frac {n}{n-2}}G_5(s_2,\de)- x_1 s_2^{\frac {n-1}{n-2}}G_3(s_2,\de)\\
&&\hskip2cm - s_2x_2-x_3+ \rho^{\frac 23}z\big(x_1-s_2^{\frac {1}{n-2}}G_1(s_2,\de)\big)+\rho\,\Phi_1(u,z,s_2,\de),\label{10.9}
\end{eqnarray}
where 
according to \eqref{10.3}, the phase $\Phi_1$ is given in the new coordinates by 
\begin{eqnarray}  \label{10.13}
&&\hskip2cm\Phi_1(u,z,s_2,\de)= u_1^3\, B_3(s_2,\de_1,\rho^{\frac 13}u_1) 
-u_1 z\\
&& + u_2^4 \,b(\si_\rho u,\de_0^\r,s_2)+\de'_{4} u_2^2\, \tilde\alpha_2(\rho^{\frac 13}u_1,\de_0^{\r},\,s_2)
+\de'_{3,0} u_2 \,\tilde\al_1(\de_0^{\r},s_2)
+\de'_{0} u_1u_2 \, \al_{1,1}(\rho^{\frac 13}u_1,\de_0^{\r},s_2). \nonumber
\end{eqnarray}
\smallskip

Recall also from  \eqref{gnz} that  $|G_5|\sim 1\sim|G_3|$ (where $G_5=G_1G_3-G_2$).  The new amplitude $a$ is again a smooth function of its arguments, and $\tilde\chi_1(s_2, s_3)$ localizes to the region where $|s_2|\sim 1\sim |s_3|.$ 
Observe also  that here  $|z|\lesssim 1$ and $|\rho^{1/3}u_1|\le \ve,  |\rho^{1/4}u_2|\le \ve ,$  so that the sum of the last two terms  
in $\Phi_2$ can be viewed as a small  error term of order $O(\rho^{2/3}+\ve),$   provided $|x|\lesssim 1.$ 

\medskip
Applying  first  again  $N$ integrations by parts with respect to the variables  $u_1,u_2,$  and then van der Corput's lemma to the integration in $s_2,$  we thus find  that 
$$
|\nu_{\de,\,\infty}^{\la}(x)|\lesssim \rho^{\frac 7{12}+\frac 23}  \la^3 (\la\rho)^{-N} \la^{-\frac 13},
$$
if $|x|\lesssim 1.$  

However, if $|x|\gg 1,$ then  we may argue as in Section 7.2 of \cite{IM-rest1}:  If $|x_1|\ll |(x_2,x_3)|,$ then we easily see by means a further  integration by parts with  respect to the variables $s_2$ or  $s_3$  that 
$|\nu_{\de,\,\infty}^{\la}(x)|\lesssim \rho^{\frac 7{12}+\frac 23}  \la^3 (\la\rho)^{-N} \la^{-1},$ and if $|x_1|\gtrsim |(x_2,x_3)|,$ then an integration by parts in $z$ leads to $|\nu_{\de,\,\infty}^{\la}(x)|\lesssim \rho^{\frac 7{12}+\frac 23}  \la^3 (\la\rho)^{-N} (\la\rho^{2/3})^{-1}.$   Both these estimates are stronger than the previous one, and so altogether we have shown that 
\begin{equation}\label{10.10}
\|\nu_{\de,\,\infty}^{\la}\|_\infty \lesssim  \rho^{\frac 7{12}+\frac 23} \la^{\frac 83}(\la\rho)^{-N}.
\end{equation}

Interpolating between this estimate and \eqref{10.4}, we obtain
$$
\|T_{\de,\,\infty}^\la\|_{p_c\to p_c'}\lesssim \rho^\frac7{12}(\la\rho)^{-N}\rho^\frac{2\th_c}3\la^\frac{8\th_c}3,
$$
which implies the desired estimate
$$
\sum_{\la\rho\gg 1} \|T_{\de,\,\infty}^\la\|_{p_c\to p'_c}\lesssim
\rho^{\frac7{12}-2\th_c}\le 1,
$$
since $\th_c\le 13/48.$

\bigskip
We next turn to the main terms $\nu_{\de,0}^\la$  and the corresponding  operators $T_{\de,0}^\la.$ First we claim that 
\begin{equation}\label{10.11}
\|\widehat{\nu_{\de,0}^\la}\|_\infty \lesssim \rho^{\frac 7{12}}(\la\rho)^{-\frac 23}.
\end{equation}
This is  an immediate consequence of Lemma \ref{duistuniest}. Indeed our phase $\Phi_1$ in \eqref{10.3} is of the form required in this lemma, with $\rho$ in the lemma of size $1,$ but $\la$ in the lemma replaced by $\la\rho$ here, so that the oscillatory integral in the definition of $\widehat{\nu_{\de,0}^\la}$  can be estimated by $C/(1^{1/12}(\la\rho)^{2/3})$.

%\begin{remark}
%The estimate \eqref{10.11} is indeed also a consequence of the results in \cite{duistermaat} (see again Proposition 4.3.1 on p. 266  and the sub-sequent remark on p. 267 in \cite{duistermaat}), but we preferred to give a self-contained proof.
%\end{remark}

\medskip
We  finally want to estimate $\|\nu_{\de,0}^\la\|_\infty.$ In analogy with \eqref{10.8}, we may write $\nu_{\de,0}^\la(x)$ as an oscillatory integral of the form
$$
\nu_{\de,0}^\la(x)=\rho^{\frac 7{12}+\frac 23}\la^3\int e^{-i\la s_3 \Phi_2(u,z,s_2,\de)}
\chi_0(z) \chi_0(u) \tilde a(\si_\rho u, \rho^{\frac 23} z,s,\de) \tilde\chi_1(s_2, s_3)\, du dz ds_2 ds_3,
$$
with $\Phi_2$ given by \eqref{10.9}. We can reduce this to the following situation in which the amplitude is independent of $z,$ i.e., where

\begin{eqnarray}\nonumber
\nu_{\de,0}^\la(x)&=&\rho^{\frac 7{12}+\frac 23}\la^3\int e^{-i\la s_3 \Phi_2(u,z,s_2,\de)}\\
&&\times \chi_0(z) \chi_0(u) a(\si_\rho u, s,\rho^{\frac 13}, \de) \tilde\chi_1(s_2, s_3)\, du dz ds_2 ds_3.\label{nudon}
\end{eqnarray}
In fact,  we may  develop the amplitude $\tilde a$ into a convergent  series of smooth functions, each of which is a tensor product of  a smooth function of the variable $z$  with  a smooth function depending on the remaining variables only.  Thus, by considering each of the corresponding terms separately, we can reduce to the situation \eqref{nudon} (the function $\chi_0(z)$ will of course have to be different from the previous one).

\medskip

 We  claim that 
\begin{equation}\label{10.12}
\|\nu_{\de,0}^{\la}\|_\infty \lesssim  \rho^{\frac 7{12}} \la^2(\la\rho)^{-\frac 14}.
\end{equation}

Indeed, if $|x|\gg1,$ arguing in a similar way as for $\nu_{\de,\infty}^{\la}(x),$ we see that 
$|\nu_{\de,0}^{\la}(x)|\lesssim \rho^{\frac 7{12}+\frac 23}\la^3(\la\rho^{2/3})^{-N}$ for every $N\in\NN,$ which is stronger than what is needed for \eqref{10.12}.

\medskip
From now  on we shall therefore  assume that $|x|\lesssim 1.$  For such $x$ fixed,  we can argue in a  similar way as in  in Section 7.2 of \cite{IM-rest1} (compare also with Subsection \ref{tdII}):
We decompose 
\begin{equation}\label{nudedec}
\nu^\la_{\de,0}=\nu_{0,I}^\la+\nu_{0,II}^\la,
\end{equation}
where $\nu_{0,I}^\la$  and $\nu_{0,II}^\la$ denote the contributions to the integral \eqref{nudon}  by the  region $L_{I}$  given by 
$$
|x_1-s_2^{\frac {1}{n-2}}G_1(s_2,\de)|\gg \rho^\frac13, 
$$
and the region $L_{II}$  where 
$$
|x_1-s_2^{\frac {1}{n-2}}G_1(s_2,\de)|\lesssim \rho^\frac13, 
$$
respectively. Recall from \eqref{10.9} and \eqref{10.13} that 

\begin{eqnarray}\nonumber
&&\Phi_2(u,z,s_2,\de)= s_2^{\frac {n}{n-2}}G_5(s_2,\de)- x_1 s_2^{\frac {n-1}{n-2}}G_3(s_2,\de)- s_2x_2-x_3\\
&&\hskip2.5cm +z\rho\Big(\rho^{-\frac 13} \big(x_1-s_2^{\frac {1}{n-2}}G_1(s_2,\de)\big)-  u_1\Big)+\rho u_1^3\, B_3(s_2,\de_1,\rho^{\frac 13}u_1) \label{Phi2n}\\
&&\hskip-0.5cm+ \rho\Big(u_2^4 \,b(\si_\rho u,\de_0^\r,s_2)+\de'_{4} u_2^2\, \tilde\alpha_2(\rho^{\frac 13}u_1,\de_0^{\r},\,s_2)
+\de'_{3,0} u_2 \,\tilde\al_1(\de_0^{\r},s_2)
+\de'_{0} u_1u_2 \, \al_{1,1}(\rho^{\frac 13}u_1,\de_0^{\r},s_2)\Big).\nonumber
\end{eqnarray}
Let us change variables from $s_2$ first to $v:=x_1-s_2^{\frac {1}{n-2}}G_1(s_2,\de),$ and then to $w:=
\rho^{-\frac 13}v= \rho^{-\frac 13}\big(x_1-s_2^{\frac {1}{n-2}}G_1(s_2,\de)\big).$ In these new coordinates, $\Phi_2$ can be written as 
$$
\Phi_2=z\rho(w-u_1)+\Phi_3,
$$
with $\Phi_3$ of the form
\begin{eqnarray}\nonumber
&&\Phi_3(u,w,x,\de)= \Psi_3(\rho^{\frac 13} w,x,\de)+\rho u_1^3\,  B_3(\rho^{\frac 13} w,\rho^{\frac 13}u_1,x,\de)  \nonumber \\
&&\hskip3cm+ \rho\Big(u_2^4 \,b(\si_\rho u,\rho^{\frac 13} w,x,\de_0^\r)+\de'_{4} u_2^2\, \tilde\alpha_2(\rho^{\frac 13}u_1,\rho^{\frac 13} w,x,\de_0^{\r}) \label{Phi3n}\\
&&\hskip3cm+\de'_{3,0} u_2 \,\tilde\al_1(\rho^{\frac 13} w,x,\de_0^{\r})
+\de'_{0} u_1u_2 \, \al_{1,1}(\rho^{\frac 13}u_1,\rho^{\frac 13} w,x,\de_0^{\r})\Big)\, ,\nonumber
\end{eqnarray}
where $\Psi_3$ is a smooth, real-valued function. With a slight abuse of notation, we have here used the same symbols $B_3,b,\dots, \al_{1,1}$ as before, since these functions will have the same basic properties here as the corresponding  ones in \eqref{Phi2n}. A similar remark will apply to the amplitudes, which we shall always denote by the letter $a,$ even though they may change form line to line.
Moreover, we may write
\begin{eqnarray}\nonumber
\nu_{0,I}^\la(x)&=&(\rho^{\frac 7{12}+\frac 23}\la^3)\,\rho^{\frac 13}\int e^{-i\la s_3 \Phi_3(u,w,x,\de)}
 \widehat{\chi_0}\big(\la\rho s_3(w-u_1))\, (1-\chi_0(w)\big)\, \chi_0(u)\\
&&\hskip4cm\times a(\si_\rho u, \rho^{\frac 13} w,s_1,\rho^{\frac 13},x, \de) \tilde\chi_1(s_3)\, dw du  ds_3.\label{nudon2}\\
\nu_{0,II}^\la(x)&=&(\rho^{\frac 7{12}+\frac 23}\la^3)\,\rho^{\frac 13}\int e^{-i\la s_3 \Phi_3(u,w,x,\de)}
 \widehat{\chi_0}\big(\la\rho s_3(w-u_1))\, \chi_0(w)\, \chi_0(u)\nonumber \\
&&\hskip4cm\times a(\si_\rho u, \rho^{\frac 13} w,s_1,\rho^{\frac 13},x, \de) \tilde\chi_1(s_3)\, dw du ds_3.\label{nudon3}
\end{eqnarray}
Here, $\chi_0(w)$ will again denote a smooth function with compact support which is identically $1$ on a sufficiently large neighborhood of the origin.

Observe that in \eqref{nudon2} we have $|w|\gg 1\gtrsim |u_1|,$ so that $ |\widehat{\chi_0}\big(\la\rho s_3(w-u_1))|\le C_N(\la\rho|w|)^{-(N+1)}$ for every $N\in\NN,$ and we immediately obtain the estimate
\begin{equation}\label{nuIest}
\|\nu_{0,I}^\la(x)\|_\infty \le C_N (\rho^{\frac 7{12}+\frac 23}\la^3)\,\rho^{\frac 13}(\la\rho)^{-(N+1)}=
\rho^{\frac 7{12}}\la^2\,(\la\rho)^{-N},
\end{equation}
which is even stronger than \eqref{10.12}.

\medskip
In order to estimate the second term, we perform yet another change of variables from $u_1$ to $y_1$ so that $u_1=w -(\la\rho)^{-1} y_1,$ i.e.,  $y_1=\la\rho (w-u_1).$ This leads to the following expression for $\nu_{0,II}^\la(x):$

\begin{eqnarray}\nonumber
\nu_{0,II}^\la(x)&=&(\rho^{\frac 7{12}+\frac 23}\la^3)\,\rho^{\frac 13}\,(\la\rho)^{-1}  \int e^{-i\la s_3 \Phi_4(y_1,u_2,w,x,\de)}
 \widehat{\chi_0}(s_3y_1)\, \chi_0(w)\, \chi_0(w -(\la\rho)^{-1} y_1) \nonumber \\
&&\hskip1cm\times \chi_0(u_2)\, a_4\big((\la\rho^{\frac 23})^{-1} y_1,\rho^{\frac 14} u_2,w,s_1,\rho^{\frac 13},x, \de\big) \tilde\chi_1(s_3)\, dy_1du_2dw ds_3\, ,\label{nudon4}
\end{eqnarray}
with phase $\Phi_4$ of the form
\begin{eqnarray}\nonumber
&&\Phi_4(y_1,u_2,w,x,\de)= \Psi_3\big(\rho^{\frac 13}w, x,\de\big)+\rho \big(w -(\la\rho)^{-1} y_1\big)^3\, \tilde B_3\big(\rho^{\frac 13}w,(\la\rho^{\frac 23})^{-1} y_1,x,\de\big)  \nonumber \\
&&\hskip1.3cm+ \rho\Big(u_2^4 \,b\big(\rho^{\frac 13}w,(\la\rho^{\frac 23})^{-1} y_1,\rho^{\frac 14}u_2,x,\de_0^{\r}\big)+\de'_{4} u_2^2\, \tilde\alpha_2\big(\rho^{\frac 13}w,(\la\rho^{\frac 23})^{-1} y_1,\rho^{\frac 14}u_2,x,\de_0^{\r}\big) \label{Phi4n}\\
&&\hskip0.5cm+\de'_{3,0} u_2 \,\tilde\al_1\big(\rho^{\frac 13}w,x,\de_0^{\r}\big)
+\de'_{0} u_2 \,\big( w -(\la\rho)^{-1} y_1\big)\, \al_{1,1}\big(\rho^{\frac 13}w,(\la\rho^{\frac 23})^{-1} y_1,x,\de_0^{\r}\big)\Big)\, .\nonumber
\end{eqnarray}
Observe that in this integral, $|u_2|+|w|\lesssim 1$ and $|y_1|\lesssim \la\rho.$ 
Moreover, the factor $\widehat{\chi_0}(s_3y_1)$ guarantees the absolute convergence of this integral with respect to the variable $y_1.$ We can thus first apply van der Corput's estimate  of order  $M=4$ for the integration in $u_2,$ which leads to an additional factor of order $(\la\rho)^{-1/4},$ and then perform the remaining integrations in $w,y_1$ and $s_3.$ Altogether, this leads to the estimate
\begin{equation}\label{nuIIest}
\|\nu_{0,I}^\la(x)\|_\infty \le C(\rho^{\frac 7{12}+\frac 23}\la^3)\,\rho^{\frac 13}(\la\rho)^{-1)}(\la\rho)^{-\frac 14}=\rho^{\frac 7{12}}\la^2\,(\la\rho)^{-\frac 14}.
\end{equation}
In combination with \eqref{nuIest}, this proves \eqref{10.15}.

\medskip
Finally, interpolating between the estimates \eqref{10.11} and \eqref{10.12}, we obtain
$$
\|T_{\de,0}^\la\|_{p_c\to p_c'}\lesssim \rho^{\frac7{12}-2\th_c}(\la\rho)^{\frac{29}{12}\th_c-\frac 23}.
$$
But, since $\th_c\le 13/48,$ we have $\frac{29}{12}\th_c-\frac 23<0$  and $7/12-2\th_c>0,$which implies the desired estimate
$$
\sum_{\la\rho\gg 1} \|T_{\de,0}^\la\|_{p_c\to p'_c}\lesssim\rho^{\frac7{12}-2\th_c}\le 1.
$$

Altogether, we have thus proved that 
\begin{equation}\label{10.14}
\sum_{\la\rho\gg 1} \|T_{\de,Ai}^\la\|_{p_c\to p'_c}\lesssim 1.
\end{equation}

\bigskip

\subsection{Estimation of  $T_{\de,\, l}^\la$}  \label{Tdel}
Here we have $|(2^l\rho)^{-2/3}B_1(s,\,\de_1)|\sim 1.$ Recall also that $2^l\rho\le 1/M_1\ll 1.$ In this case, we use the change of variables $x=:\si_{2^l\rho} u:=((2^l\rho)^{1/3} u_1,\, (2^l\rho)^{1/4} u_2)$ in the
integral \eqref{7.12II} defining  $\widehat{\nu_\de},$ and obtain
\begin{equation} \label{10.15}
\widehat{\nu_{\de}}(\xi)=(2^l\rho)^\frac7{12}
\,e^{-i\la s_3 B_0(s,\de_1)} \int_{|\si_{2^l\rho} u|<\ve} e^{-i\la2^l\rho\, s_3
\Phi_1(u,s,\de)}a(\si_{2^l\rho} u,\de,s)\, du, 
\end{equation}
where now  the phase $\Phi_1=\Phi_{1,l}$  has the form
\begin{eqnarray}  \label{10.16}
&&\hskip0.3cm\Phi_1(u,s,\de)= u_1^3\, B_3(s_2,\de_1,(2^l\rho)^{\frac 13}u_1) 
-u_1  ((2^l\rho)^{-\frac 23}B_1(s,\de_1))  + u_2^4 \,b(\si_{2^l\rho} u,\de_0^\r,s_2)\\
&& +\de'_{4} u_2^2\, \tilde\alpha_2((2^l\rho)^{\frac 13}u_1,\de_0^{\r},\,s_2)
+\de'_{3,0} u_2 \,\tilde\al_1(\de_0^{\r},s_2)
+\de'_{0} u_1u_2 \, \al_{1,1}((2^l\rho)^{\frac 13}u_1,\de_0^{\r},s_2), \nonumber
\end{eqnarray}
and where according to Remark \ref{duist} (in which we choose $r:=1/(2^l\rho)$)  $\rho(\widetilde{\de'})=2^{-l},$ so that 
$$
(\de'_4)^2+(\de'_{3,0})^{\frac 43}+(\de'_0)^{\frac {12}5}= 2^{-l}\le \frac 1{M_0}\ll 1.
$$
(recall that the coefficient $\de'_{3,0}$ does not appear in Case ND, where $\al_{1,1}=0$). We have also indicated that the amplitude $a(\si_{2^l\rho} u,\de,s)$ is supported where $|\si_{2^l\rho} u|<\ve,$  and that  we may assume that $\ve>0$ is sufficiently small.

Observe that the second row in \eqref{10.16} is a small perturbation of the leading term, given by the first row.

\medskip
Again, we choose a cut-off function $\chi_0\in C_0^\infty(\RR^2)$ such that $\chi_0(u)=1$  for $|u|\le R,$ where $R$ will be chosen sufficiently  large, and further decompose
$$
\widehat{\nu_{\de}}(\xi)=\widehat{\nu_{\de,0}}(\xi)+\widehat{\nu_{\de,\infty}}(\xi),
$$
where now 
$$
\widehat{\nu_{\de,0}}(\xi):=(2^l\rho)^\frac7{12}e^{-i\la s_3 B_0(s,\de_1)} \int_{|\si_{2^l\rho} u|<\ve} e^{-i\la2^l\rho\, s_3
\Phi_1(u,s,\de))}a(\si_{2^l\rho} u,\de,s)\chi_0(u) \, du,
$$
and 
$$
\widehat{\nu_{\de,\infty}}(\xi):=(2^l\rho)^\frac7{12}e^{-i\la s_3 B_0(s,\de_1)} \int_{|\si_{2^l\rho} u|<\ve} e^{-i\la2^l\rho\, s_3
\Phi_1(u,s,\de))}a(\si_{2^l\rho} u,\de,s)(1-\chi_0(u)) \, du,
$$
 Accordingly, we decompose 
$$
\nu_{\de,l}^\la=\nu_{l,0}^\la+\nu_{l,\infty}^\la,
$$
where  we have put 
\begin{eqnarray*}
\widehat{\nu_{l,0}^\la}(\xi)&:=&\chi_1((2^l\rho)^{-\frac 23}B_1(s,\,\de_1))\chi_1(s,s_3) \,\widehat{\nu_{\de,0}}(\xi),\\
\widehat{\nu_{l,\infty}^\la}(\xi)&:=&\chi_1((2^l\rho)^{-\frac 23}B_1(s,\,\de_1))\chi_1(s,s_3) \,\widehat{\nu_{\de,\infty}}(\xi).
\end{eqnarray*}

The corresponding   operators of convolution with  $\widehat{\nu_{l,0}^\la}$ and $\widehat{\nu_{l,\infty}^\la}$ will  be denoted by  $T_{l,0}^\la$ and $T_{l,\infty}^\la,$ respectively. 

\medskip
Let us again first  consider the operators $T_{l,\infty}^\la:$ 
By means of integrations by parts, we easily  see that if $R$ is chosen sufficiently large, then the phase will have no critical point, and thus  for every $N\in\NN$ we have 
\begin{equation}\label{10.17}
\|\widehat{\nu_{l,\infty}^\la}\|_\infty \lesssim (2^l\rho)^{\frac 7{12}}(\la2^l\rho)^{-N}.
\end{equation}
Moreover, Fourier inversion again leads to \eqref{10.5}, and performing the change of variables  from $s=(s_1,s_2)$ to $(z,s_2),$ where here
$$
z:= (2^l\rho)^{-\frac 23}B_1(s,\de_1),
$$ 
we find that 
\begin{equation}\label{10.18}
s_1=s_2^{\frac {n-1}{n-2}}G_3(s_2,\de)-(2^l\rho)^{\frac 23}z,
\end{equation}
and in particular
 \begin{equation}\label{10.19}
B_0(s,\de,\si)=-(2^l\rho)^{\frac 23}z\,s_2^{\frac {1}{n-2}}G_1(s_2,\de)+s_2^{\frac {n}{n-2}}
G_5(s_2,\de).
\end{equation}
In a similar way as before, this leads to

\begin{eqnarray}\label{10}\label{10.20}
&&\hskip2cm\nu_{l,\infty}^\la(x)=(2^l\rho)^{\frac 7{12}+\frac 23}\la^3\\
&&\times\int e^{-i\la s_3 \Phi_2(u,z,s_2,\de)}
\chi_1(z)(1-\chi_0(u)) a(\si_{2^l\rho} u, (2^l\rho)^{\frac 23} z,s,\de) \tilde\chi_1(s_2, s_3)\, du dz ds_2 ds_3
\nonumber
\end{eqnarray}
with respect to the variables $u_1,u_2, z,s_2,s_3,$ where the  complete phase is now given by 
\begin{eqnarray}\nonumber
&&\Phi_2(u,z,s_2,\de):= s_2^{\frac {n}{n-2}} G_5(s_2,\de)- x_1 s_2^{\frac {n-1}{n-2}}G_3(s_2,\de)\\
&&\hskip2cm - s_2x_2-x_3+ (2^l\rho)^{2/3}z(x_1-s_2^{\frac {1}{n-2}}G_1(s_2,\de)+2^l\rho\,\Phi_1(u,z,s_2,\de).\label{10.21}
\end{eqnarray}
According to  \eqref{10.16}, the phase $\Phi_1$ is given in the new coordinates by 
\begin{eqnarray}  \label{10.22}
&&\hskip2cm\Phi_1(u,z,s_2,\de)= u_1^3\, B_3(s_2,\de_1,(2^l\rho)^{\frac 13}u_1) 
-u_1 z+ u_2^4 \,b(\si_{2^l\rho} u,\de_0^\r,s_2)\\
&& +\de'_{4} u_2^2\, \tilde\alpha_2((2^l\rho)^{\frac 13}u_1,\de_0^{\r},\,s_2)
+\de'_{3,0} u_2 \,\tilde\al_1(\de_0^{\r},s_2)
+\de'_{0} u_1u_2 \, \al_{1,1}((2^l\rho)^{\frac 13}u_1,\de_0^{\r},s_2). \nonumber
\end{eqnarray}
\smallskip

Arguing as in the preceding subsection, by applying  first   $N$ integrations by parts with respect to the variables  $u_1,u_2,$  and then van der Corput's lemma (of order $M=3$) to the integration in $s_2,$   we thus find  that 
$$
|\nu_{l,\,\infty}^{\la}(x)|\lesssim (2^l\rho)^{\frac 7{12}+\frac 23}  \la^3 (\la2^l\rho)^{-N} \la^{-\frac 13},
$$
first if $|x|\lesssim 1,$  and then also for   $|x|\gg 1,$ by the same kind of arguments. We thus obtain
\begin{equation}\label{10.23}
\|\nu_{l,\,\infty}^{\la}\|_\infty \lesssim  (2^l\rho)^{\frac 7{12}+\frac 23}  \la^{\frac 83} (\la 2^l\rho)^{-N}.
\end{equation}
Interpolating between this estimate and \eqref{10.17}, we find here that
$$
\|T_{l,\,\infty}^\la\|_{p_c\to p_c'}\lesssim (2^l\rho)^\frac7{12}(\la2^l\rho)^{-N}(2^l\rho)^\frac{2\th_c}3\la^\frac{8\th_c}3,
$$
which implies that 
$$
\sum_{\la\rho\gg 1} \|T_{l,\,\infty}^\la\|_{p_c\to p'_c}\lesssim
2^{(\frac 7{12} +\frac{2\th_c}3- N)l}\rho^{\frac7{12}-2\th_c}\le 1,
$$
hence the desired estimate 
$$
\sum_{\{l:M_0\le 2^l\le \frac{\rho^{-1}}{M_1}\}}\sum_{\la\rho\gg 1} \|T_{l,\,\infty}^\la\|_{p_c\to p'_c}\lesssim
\rho^{\frac7{12}-2\th_c}\le 1,
$$
since $\th_c\le 13/48.$

\bigskip
We next turn to the main terms $\nu_{l,0}^\la$  and the corresponding  operators $T_{l,0}^\la.$ First we show that 
\begin{equation}\label{10.24}
\|\widehat{\nu_{l,0}^\la}\|_\infty \lesssim (2^l\rho)^{\frac 7{12}}(\la2^l\rho)^{-\frac 34}.
\end{equation}
 Indeed,  \eqref{10.16} in combination with \eqref{bde} shows that the phase $\Phi_1$ is a small perturbation of the phase
$$
 \Phi_{1,0}(u,s):= u_1^3\, B_3(s_2,0,0) -c_1 u_1 + u_2^4 \,b_4(0) , 
$$
where   $c_1$ corresponds to a fixed value of $(2^l\rho)^{-2/3}B_1(s,\de_1),$ so that $|c_1|\sim 1.$ Now, this phase will either have a critical point with respect to the variable  $u_1$  (recall that $u\in\supp \chi_0$), and then we can apply the method of stationary phase to the $u_1$-integration, or, otherwise we may use integrations by parts in $u_1$ (for instance, if $c_1$ and $B_3$ have opposite signs).  Applying subsequently van der Corput's estimate (of order $M=4$) to the $u_1$-integration, we immediately get \eqref{10.24}.

\bigskip
We  finally want to estimate $\|\nu_{l,0}^\la\|_\infty.$ In analogy with \eqref{10.20}, we may write $\nu_{l,0}^\la(x)$
 as an oscillatory integral of the form
 \begin{eqnarray*}
&&\hskip2cm\nu_{l,0}^\la(x)=(2^l\rho)^{\frac 7{12}+\frac 23}\la^3\\
&&\times\int e^{-i\la s_3 \Phi_2(u,z,s_2,\de)}
\chi_1(z)\chi_0(u) a(\si_{2^l\rho} u, (2^l\rho)^{\frac 23} z,s,\de) \tilde\chi_1(s_2, s_3)\, du dz ds_2 ds_3
\end{eqnarray*}
with $\Phi_2$ given by \eqref{10.21}.  We can then basically argue as we did for $\nu_{\de,Ai}^\la$ in the previous subsection, only with $\rho$ replaced by $2^l\rho,$  and arrive correspondingly at the following analogue of estimate \eqref{10.12}:
\begin{equation}\label{10.25}
\|\nu_{l,0}^{\la}\|_\infty \lesssim  (2^l\rho)^{\frac 7{12}} \la^2(\la2^l\rho)^{-\frac 14}.
\end{equation}

\medskip
Finally, interpolating between the estimates \eqref{10.24} and \eqref{10.25}, we obtain
\begin{equation}\label{10.26}
\|T_{l,0}^\la\|_{p_c\to p_c'}\lesssim (2^l\rho)^{\frac 7{12}} \la^{2\th_c}(\la2^l\rho)^{\frac {\th_c}2-\frac 34}
=2^{\frac l 6(3\th_c-1)}(\la\rho)^{\frac{10\th_c-3}4}\rho^{\frac 7{12}-2\th_c}.
\end{equation}
But, since $\th_c\le 13/48,$ we have $10\th_c-3<0$  and $3\th_c-1<0,$ which implies 
 the desired estimate
 $$
\sum_{\{l:M_0\le 2^l\le \frac{\rho^{-1}}{M_1}\}}\sum_{\la\rho\gg 1} \|T_{l,0}^\la\|_{p_c\to p'_c}\lesssim 1.
$$
Altogether, we have thus proved that 
\begin{equation}\label{10.27}
\sum_{\{l:M_0\le 2^l\le \frac{\rho^{-1}}{M_1}\}}\sum_{\la\rho\gg 1} \|T_{\de,l}^\la\|_{p_c\to p'_c}\lesssim 1.
\end{equation}

This completes the proof of Proposition \ref{biglaB4} also for the case where $A=0.$

\bigskip

\setcounter{equation}{0}
\section{The case where $m=2,$ $B=3$  and $A=0:$ What still needs to be done}\label{B3A0}

What remains open in \eqref{5.17II}, hence also in the proof  of Proposition \ref{rdomain}, is the case where $B=3$ and $A=0,$ in which $\th_c=1/3$ and $p_c=6/5.$ Notice also that in  this case \eqref{rho} means that
\begin{equation}\nonumber
\rho:=\begin{cases}   \de_{3,0}^{\frac 3{2}}&   \mbox{in Case ND},\\
 \de_0^3+\de_{3,0}^{\frac 3{2}} &       \mbox{in Case D},
\end{cases}
\end{equation}
where in Case ND $\de_{3,0}\ge \de_0.$ This shows that $\rho\ge \de_0^3$ in both cases.

\medskip
Let us first observe  that estimate \eqref{4.3II} shows that we  ``trivially'' have 
$$
\sum_{\la\gtrsim\de_0^{-3}}\|T_\de^\la\|_{p_c\to p'_c}\lesssim1,
$$
since $B=3$ and $\th_c=1/3.$ 
In the sequel, we may and shall therefore always assume that $\la\ll\de_0^{-3}.$

\medskip
According to our discussion in Subsection \ref{larhos},  if  $\la\rho\lesssim1$  (where $\rho=\rho(\tilde\de)$), the 
endpoint $p=p_c$ is still left open. 
\medskip

On the other hand, if $\la\rho\gg1,$ we can basically follow our approach from the previous section, with only minor modifications. Let us describe  some more details.

\medskip
 Again, we perform  the ``Airy-cone decomposition''  \eqref{airydecomp}. In order to estimate $T_{\de,\, Ai}^\la$
 we may  here  use the scaling $x=:\si_\rho u:=(\rho^{1/3} u_1,\, \rho^{1/3} u_2),$ which leads to 
\begin{equation}\label{11.1}
\widehat{\nu_\de}(\xi)=\rho^\frac2{3}e^{-i\la s_3 B_0(s,\de_1)} \int_{|\si_\rho u|<\ve} e^{-i\la\rho\, s_3
\Phi_1(u,s,\de)}a(\si_\rho u,\de,s)\, du,
\end{equation}
where  the phase $\Phi_1$ now  has the form
\begin{eqnarray}  \label{11.2}
&&\hskip1cm\Phi_1(u,s,\de)= u_1^3\, B_3(s_2,\de_1,\rho^{\frac 13}u_1) 
-u_1  (\rho^{-\frac 23}B_1(s,\de_1))\\
&& + u_2^3 \,b(\si_\rho u,\de_0^\r,s_2)
+\de'_{3,0} u_2 \,\tilde\al_1(\de_0^{\r},s_2)
+\de'_{0} u_1u_2 \, \al_{1,1}(\rho^{\frac 13}u_1,\de_0^{\r},s_2), \nonumber
\end{eqnarray}
in place of \eqref{10.2} and \eqref{10.3}, and where
$$
\de'_0+\de'_{3,0}\sim 1.
$$
Recall that the coefficient $\de'_{0}$ does not appear in Case ND, in which  $\al_{1,1}= 0.$ If we again decompose 
$$
\nu_{\de,\, Ai}^\la=\nu_{\de,0}^\la+\nu_{\de,\infty}^\la,
$$
then one finds here that, in place of \eqref{10.4} and \eqref{10.10}, we obtain $\|\widehat{\nu_{\de,\infty}^\la}\|_\infty \lesssim \rho^{ 2/3}(\la\rho)^{-N}$ and $\|\nu_{\de,\,\infty}^{\la}\|_\infty \lesssim  \rho^{4/3} \la^{8/3}(\la\rho)^{-N}$, for every $N\in\NN.$ By interpolation, this leads to 
$$
\|T_{\de,\,\infty}^\la\|_{p_c\to p_c'}\lesssim (\la\rho)^{\frac 89-N},
$$
which implies the desired estimate
\begin{equation}\label{11.3}
\sum_{\la\rho\gg 1} \|T_{\de,\,\infty}^\la\|_{p_c\to p'_c}\lesssim 1.
\end{equation}

As for the main term $\nu_{\de,0}^\la,$ a similar type of discussion that led to \eqref{10.11} here yields the following estimate:
\begin{equation}\label{11.4}
\|\widehat{\nu_{\de,0}^\la}\|_\infty \lesssim \rho^{\frac 2{3}}(\la\rho)^{-\frac 12-\frac 13}= \rho^{\frac 2{3}}(\la\rho)^{-\frac 56}.
\end{equation}
Indeed, recall that we are assuming that $\la\rho\gg 1$ and $\la\ll \de_0^{-3},$ so that $\rho\gg \de_0^3.$ The estimate \eqref{11.4} therefore follows from the following analog of Lemma \ref{duistuniest} for the case where $B=3:$

\begin{lemma}\label{duistuniest3}
Denote by $J(\la,\de,s)$ the oscillatory
$$
J(\la,\de,s)= \chi_1(s_1,s_2)\iint e^{-i\la\Phi(x,\de,s)} \, a(x,\de,s)\, dx_1dx_2,
$$
with phase 
$$
\Phi(x,\de,s)=x_1^3\, B_3(s_2,\de_1,x_1) -x_1 B_1(s,\de_1) +\phi^\sharp(x_1,x_2,\de,\,s_2),
$$
where 
$$
\phi^\sharp(x,\de,\,s_2):= x_2^3 \,b(x,\de_0^r,s_2)
+\de_{3,0} x_2 \,\tilde\al_1(x_1,\de_0^{\r},s_2) +\de_{0} x_1x_2 \, \al_{1,1}(x_1,\de_0^{\r},s_2), 
$$
and where $ \chi_1(s_1,s_2)$ localizes to the region where $s_j\sim 1, j=1,2.$ Let us also put 
\begin{eqnarray}\nonumber
\tilde\rho:=\rho+|B_1(s,\de_1)|^\frac32,
\end{eqnarray}
and assume  that $\tilde \rho\ge M \de_0^3.$ Then the following estimate
\begin{equation}\label{duistypeg3}
|J(\la,\de,s)|\le \frac{C}{\tilde\rho^\frac1{6}\la^\frac56}
\end{equation}
 holds true, provided  the amplitude  $a$ is supported in a sufficiently small
neighborhood of the origin and $M\ge1$ is sufficiently large. The constant $C$ in this estimate is independent of $\de$  and $s.$
\end{lemma}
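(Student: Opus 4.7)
The plan is to adapt the proof of Lemma \ref{duistuniest} to exponent $B=3$, with one crucial simplification afforded by the hypothesis $\tilde\rho\ge M\de_0^3$: the ``$D_4^+$''-type degeneration that forced an appeal to Duistermaat's proposition in the $B=4$ case will be excluded here, and the improved exponent $\La^{-5/6}$ will follow from stationary phase combined with a single van der Corput estimate of order~$3$.

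As in Lemma \ref{duistuniest}, if $|B_1(s,\de_1)|\sim 1$ then integration by parts in $x_1$ yields $|J|\lesssim\la^{-1}$, so I may assume $\tilde\rho\ll 1$. In the range $\tilde\rho\la\lesssim 1$, rescaling $x=(\la^{-1/3}u_1,\la^{-1/3}u_2)$ reduces $J$ to $\la^{-2/3}$ times an integral bounded uniformly by iterated integrations by parts in $u_1,u_2$ (separating the regions $|u_2|^2\gg|u_1|$ and $|u_2|^2\lesssim|u_1|$), giving $|J|\lesssim\la^{-2/3}\le C\tilde\rho^{-1/6}\la^{-5/6}$. In the main regime $\La:=\la\tilde\rho\gg 1$, I apply Duistermaat's rescaling $x=(\tilde\rho^{1/3}u_1,\tilde\rho^{1/3}u_2)$, obtaining $J=\tilde\rho^{2/3}I(\La)$ with rescaled phase whose coefficients satisfy the normalization $(\de'_0)^3+(\de'_{3,0})^{3/2}+|B'_1|^{3/2}=1$. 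The hypothesis forces $\de'_0\le M^{-1/3}\ll 1$, so effectively $(\de'_{3,0})^{3/2}+|B'_1|^{3/2}\sim 1$; it remains to prove $|I(\La)|\le C\La^{-5/6}$. Decomposing $I=I_0+I_\infty$ by a large-ball cutoff $\chi_0$ in $u$, the tail $I_\infty$ is $O(\La^{-1})$ by integration by parts (no critical points outside a large ball, thanks to the normalization), and a partition of unity reduces $I_0$ to localized pieces $I_0^\eta$ near base points on the normalization surface with vanishing phase gradient (otherwise integration by parts again gives $O(\La^{-1})$).

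Three cases then arise. \emph{Case A: $u_1^0\ne 0$.} Here $\pa_{u_1}^2$ applied to the rescaled phase is $\sim 6u_1^0 B_3\ne 0$, so stationary phase in $u_1$ yields $\La^{-1/2}$; since $\pa_{u_2}^3\phi^\sharp\equiv 6b\ne 0$ (modulo $O(\tilde\rho^{1/3})$ terms), van der Corput of order $3$ in $u_2$ produces the factor $\La^{-1/3}$, and $|I_0^\eta|\lesssim\La^{-5/6}$. \emph{Case B: $u_1^0=0$ and $(\de'_0)^0\ne 0$ (only in Case D).} The mixed partial $\pa_{u_1}\pa_{u_2}=\de'_0\al_{1,1}\ne 0$, combined with $\pa_{u_1}^2=0$ at $u_1=0$, renders the Hessian non-degenerate with determinant $-(\de'_0\al_{1,1})^2$, so $2$D stationary phase yields the stronger bound $|I_0^\eta|\lesssim\La^{-1}$. \emph{Case C: $u_1^0=0$ and $(\de'_0)^0=0$.} The critical-point equation in $u_1$ then forces $(B'_1)^0=0$, whence the normalization gives $(\de'_{3,0})^0=1$. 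Here is the decisive role of $\tilde\rho\ge M\de_0^3$: if $\tilde\al_1$ were flat, $\de_{3,0}$ could be taken so small that $\rho\sim\de_0^3$, and then $\tilde\rho\sim\de_0^3$ would contradict $\tilde\rho\ge M\de_0^3$ for $M$ large. Hence $|\tilde\al_1|\sim 1$; solving $3(u_2^0)^2 b+\tilde\al_1=0$ gives $u_2^0\ne 0$, so $\pa_{u_2}^2\phi^\sharp|_{u_2^0}=6u_2^0 b\ne 0$. Stationary phase in $u_2$ and van der Corput of order $3$ in $u_1$ (using $\pa_{u_1}^3=6B_3\ne 0$) again yield $|I_0^\eta|\lesssim\La^{-5/6}$.

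The main obstacle is Case C: the unperturbed phase $u_1^3+u_2^3$ gives only $\La^{-2/3}$ (the $D_4^+$-singularity bound). The hypothesis $\tilde\rho\ge M\de_0^3$ is calibrated precisely so that the perturbation $\de'_{3,0}\tilde\al_1 u_2$ is genuinely present, displacing the critical point in $u_2$ away from the origin and promoting the cubic degeneracy to a quadratic one. This recovers the optimal exponent $5/6=1/2+1/3$ and makes the elementary direct proof (in the spirit of pp.~196--205 of \cite{IKM-max}) sufficient, avoiding the $D_4^+$-type argument needed in the $B=4$ case.
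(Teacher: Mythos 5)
Your proof is correct and captures the key mechanism: the hypothesis $\tilde\rho\ge M\de_0^3$ forces $\de_0'=\de_0/\tilde\rho^{1/3}\le M^{-1/3}\ll 1$, which removes exactly the degeneration that would produce a $D_4$- (or worse) singularity and spoil the $\La^{-5/6}$ bound. The paper's own proof of Lemma~\ref{duistuniest3} is, however, noticeably shorter: instead of transplanting the full three-case structure by base point $(u_1^0,(\de_0')^0,(B_1')^0)$ from Lemma~\ref{duistuniest}, it observes that once $\de_0'\ll 1$, the normalization $(\de_0')^3+(\de'_{3,0})^{3/2}+|B_1'|^{3/2}=1$ collapses to $(\de'_{3,0})^{3/2}+|B_1'|^{3/2}\sim 1$, and the coupling term $\de_0'u_1u_2\al_{1,1}$ becomes a small perturbation. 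The phase then essentially \emph{separates} into $u_1^3 B_3 - u_1 B_1'$ and $u_2^3 b + \de'_{3,0}u_2\tilde\al_1$, and since $\max\{|B_1'|,\de'_{3,0}\}\sim 1$ (and in the regime $\de'_{3,0}\sim 1$ one necessarily has $|\tilde\al_1|\sim 1$, as you correctly argue), one applies stationary phase in whichever variable is non-degenerate and van der Corput of order~$3$ in the other --- with no reference to $u_1^0$ at all. So your argument buys the same conclusion via the machinery of the $B=4$ proof, at the cost of re-verifying the $\Gamma^{(j)}$-corrections to the reduced phase after stationary phase in $u_1$ (in your Case~A), where incidentally the correction to $\pa_{u_2}^3\Psi$ is of size $O\big((\de_0')^3/|u_1^0|^{3/2}\big)$, controlled by $\de_0'\ll 1$, rather than the $O(\tilde\rho^{1/3})$ you write; this is only a labeling slip, since both are small, but the mechanism is different. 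The paper's shorter route exploits the decoupling directly and avoids the base-point case analysis entirely.
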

\begin{remark}\label{rem11.2II}
Without the assumption $\tilde \rho\gg \de_0^3,$ estimate \eqref{duistypeg3} may fail. Indeed, in the  worst case, it may happen that, after re-scaling, all of the quantities $B_1'(s,\de_1),\de_0'$ and $ \de_{3,0}'$ are of size $1,$ and  a degenerate critical point of order $4$  arises for the integration in $u_2,$ after we have applied the method of stationary phase in $u_1.$ In this case, we will only obtain an estimate $|I(\La,\de,s)|\le C \La^{-1/2-1/4},$ and  this estimate will be sharp. 
\end{remark}
\begin{proof} 
The proof is analogous to the proof of Lemma \ref{duistuniest}. In the more difficult case where $\La:=\tilde\rho\la\gtrsim 1,$ after applying the change of coordinates $x=\tilde\rho^{1/3}u,$ we see that it suffices to prove an estimate of the form $|I(\La,\de,s)|\le C \La^{-5/6},$ where the oscillatory integral $I(\La,\de,s)$ is as in \eqref{9.5II}, only with $\phi$ in the  phase $\Phi_1$ replaced by   
$$
\phi(u,\tilde\rho,\de,\,s_2):= u_2^3 \,b({\tilde\rho}^{\frac 13}u,\de_0^r,s_2)
+\de'_{3,0} u_2 \,\tilde\al_1(\tilde\rho^{\frac 13}u_1,\de_0^{\r},s_2)
+ \de'_{0} u_1u_2 \, \al_{1,1}(\tilde\rho^{\frac 13}u_1,\de_0^{\r},s_2)
$$
(compare \eqref{phiII})
Here, 
$$
B_1'(s,\de_1):=\frac{B_1(s,\de_1)}{\tilde\rho^{\frac23}},\
\de_0':=\frac{\de_0}{\tilde\rho^\frac 13},\ \de_{3,0}':=\frac{\de_{3,0}}{\tilde\rho^{\frac 23}}.
$$
Notice that our  assumption implies that  $\de_0'\ll 1,$  and then
$$
(\de'_{3,0})^{\frac 32}+|B'_1(s,\de_1)|^{\frac 32}\sim 1.
$$
This shows that the phase $\Phi_1$ will have at worst an Airy type singularity in one of the variables $u_1$ or $u_2.$ Applying thus first the method of stationary phase to the integration in one of these variables, and subsequently van der Corput's  estimates of order $3$ to the integration in the second variable, we arrive at the desired estimate for $I(\La,\de,s).$
\end{proof}

Next, in order to estimate $\nu_{\de,0}^\la(x),$ recall that 
\begin{equation}\label{nude01}
\nu_{\de,0}^\la(x)=\rho^{\frac 23+\frac 23}\la^3\int e^{-i\la s_3 \Phi_2(u,z,s_2,\de)}
\chi_0(z) \chi_0(u) a(\si_\rho u, \rho^{\frac 23} z,s,\de) \tilde\chi_1(s_2, s_3)\, du dz ds_2 ds_3,
\end{equation}
with $\Phi_2$ given by \eqref{10.9}, and in place of \eqref{10.12}  we now  get
\begin{equation}\label{11.5}
\|\nu_{\de,0}^{\la}\|_\infty \lesssim  \rho^{\frac 23} \la^2(\la\rho)^{-\frac 13}=\rho^{-\frac 43} (\la\rho)^{\frac 53}.
\end{equation}
By means of interpolation, we arrive from \eqref{11.4}  and \eqref{11.4}  at   a uniform estimate
\begin{equation}\label{}
\|T_{\de,0}^\la\|_{p_c\to p_c'}\lesssim 1.
\end{equation}
The corresponding estimate for $p<p_c$ allows for summation over all dyadic $\la\gg1,$ but in order to reach also the endpoint $p=p_c,$ similar to our discussion in \cite{IM-rest1} for the case $B=2,$ we shall have to apply a complex interpolation  argument.

\medskip 
For the estimation of the operators $T_{\de,l}^\la,$ very similar statements hold true (compare the analogous discussion in Subsection \ref{Tdel}). 

Scaling here $x=\si_{2^l\rho} u:=((2^l\rho)^{1/3} u_1,\, (2^l\rho)^{1/3} u_2)$ leads to 
\begin{equation} \label{11.7}
\widehat{\nu_{\de}}(\xi)=(2^l\rho)^{\frac 23}
\,e^{-i\la s_3 B_0(s,\de_1)} \int_{|\si_{2^l\rho} u|<\ve} e^{-i\la2^l\rho\, s_3
\Phi_1(u,s,\de)}a(\si_{2^l\rho} u,\de,s)\, du, 
\end{equation}
where now  the phase $\Phi_1=\Phi_{1,l}$  has the form
\begin{eqnarray}  \label{11.8}
&&\hskip0.3cm\Phi_1(u,s,\de)= u_1^3\, B_3(s_2,\de_1,(2^l\rho)^{\frac 13}u_1) 
-u_1  ((2^l\rho)^{-\frac 23}B_1(s,\de_1))  + u_2^3 \,b(\si_{2^l\rho} u,\de_0^\r,s_2)\\
&&\hskip2cm+\de'_{3,0} u_2 \,\tilde\al_1(\de_0^{\r},s_2)
+\de'_{0} u_1u_2 \, \al_{1,1}((2^l\rho)^{\frac 13}u_1,\de_0^{\r},s_2), \nonumber
\end{eqnarray}
and where 
$$(\de'_{3,0})^{\frac 32}+(\de'_0)^3= 2^{-l}\le \frac 1{M_0}\ll 1.$$
Moreover, in analogy with \eqref{11.3}, one easily verifies that also 
\begin{equation}\label{11.9}
\sum_{\{l:M_0\le 2^l\le \frac{\rho^{-1}}{M_1}\}}\sum_{\la\rho\gg 1} \|T_{l,\infty}^\la\|_{p_c\to p'_c}\lesssim 1.
\end{equation}

As for the main terms $\nu_{l,0}^\la$  of 
$$\nu_{\de,l}^\la=\nu_{l,0}^\la+\nu_{l,\infty}^\la,$$
which   is here given by 
$$
\widehat{\nu_{l,0}}(\xi):=(2^l\rho)^{\frac 23}e^{-i\la s_3 B_0(s,\de_1)} \int_{|\si_{2^l\rho} u|<\ve} e^{-i\la2^l\rho\, s_3
\Phi_1(u,s,\de)}a(\si_{2^l\rho} u,\de,s)\chi_0(u) \, du,
$$
we  find that in place of \eqref{10.24} we now have 
\begin{equation}\label{11.10}
\|\widehat{\nu_{l,0}^\la}\|_\infty \lesssim (2^l\rho)^{\frac 23}(\la2^l\rho)^{-\frac 56}.
\end{equation}
Moreover, after changing coordinates , we may write 
\begin{eqnarray}\label{nudel1}
&&\hskip2cm\nu_{l,0}^\la(x)=(2^l\rho)^{\frac 23+\frac 23}\la^3\\
&&\times\int e^{-i\la s_3 \Phi_2(u,z,s_2,\de)}
\chi_1(z)\chi_0(u) a(\si_{2^l\rho} u, (2^l\rho)^{\frac 23} z,s,\de) \tilde\chi_1(s_2, s_3)\, du dz ds_2 ds_3\nonumber
\end{eqnarray}
with $\Phi_2$ given by 
\begin{eqnarray}\nonumber
&&\Phi_2(u,z,s_2,\de):= s_2^{\frac {n}{n-2}}G_5(s_2,\de)- x_1 s_2^{\frac {n-1}{n-2}}G_3(s_2,\de)\\
&&\hskip2cm - s_2x_2-x_3+ (2^l\rho)^{2/3}z(x_1-s_2^{\frac {1}{n-2}}G_1(s_2,\de)+2^l\rho\,\Phi_1(u,z,s_2,\de).\label{11.11}
\end{eqnarray}
and
\begin{eqnarray}  \label{11.12}
&&\hskip2cm\Phi_1(u,z,s_2,\de)= u_1^3\, B_3(s_2,\de_1,(2^l\rho)^{\frac 13}u_1) 
-u_1 z+ u_2^3 \,b(\si_{2^l\rho} u,\de_0^\r,s_2)\\
&&\hskip 2cm +\de'_{3,0} u_2 \,\tilde\al_1(\de_0^{\r},s_2)
+\de'_{0} u_1u_2 \, \al_{1,1}((2^l\rho)^{\frac 13}u_1,\de_0^{\r},s_2). \nonumber
\end{eqnarray}
This leads to the estimate 
\begin{equation}\label{11.13}
\|\nu_{l,0}^{\la}\|_\infty \lesssim  (2^l\rho)^{\frac 23} \la^2(\la2^l\rho)^{-\frac 13}.
\end{equation}
Interpolating between this estimate and \eqref{11.10}, we obtain again only a uniform estimate
\begin{equation}\label{}
\|T_{l,0}^\la\|_{p_c\to p_c'}\lesssim 1
\end{equation}
(whereas the corresponding estimate for $p<p_c$ allows for summation over all dyadic $\la\gg1$ and all $l.$) So again, we shall have to apply a complex interpolation  argument in order to include the endpoint $p=p_c.$ 
\medskip

For the operators $T^\la_{l,\infty},$ we get a better estimate of the form $\|T_{l,\infty}^\la\|_{p_c\to p_c'}\le C_N(\la 2^l\rho)^{-N},$ which allows to sum absolutely in $l$ and $\la.$ 
\medskip

 Recall also that we have seen that we may restrict  ourselves to those $\la$ for which  $\la\ll\de_0^{-3}.$  This assumption has the additional  advantage  that we shall have to deal only with finite sums in Proposition \ref{s11.1}. Notice also that $\rho^{-1}\le \de_0^{-3},$ since we have seen that $\rho\ge\de_0^3.$ 

\medskip
Taking into account these observations, the following Proposition puts together those estimates which still need to be established in order to complete the proof of  Proposition \ref{rdomain},  and hence also that of our main result in \cite{IM-rest1}, Theorem 1.7:
\begin{proposition}\label{s11.1}
Assume that $m=2,B=3$ and $A=0$ in \eqref{5.17II}, so that $\th_c=1/3, p_c=6/5$ and $n\ge 7.$ Then the following hold true, provided $M\in\NN$ is sufficiently large and $\de$ sufficiently  small:

 \begin{itemize}
\item[(a)] If $\la\rho\lesssim 1,$ and if $\nu_{\de,Ai}^\la$ and $\nu_{\de,l}^\la$ are given by \eqref{aismall}, respectively \eqref{lsmall}, then let 
$$
\nu^{I}_{\de,Ai}:=\sum_{2^M\le\la\le 2^M\rho^{-1}}\nu_{\de,Ai}^\la \quad \mbox{and}\quad
\nu^{II}_{\de}:=\sum_{2^M\le \la\le 2^M\rho^{-1}}\,\sum_{\{l:M_0\le 2^l\le \frac{\la}{M_1}\}}\nu_{\de,l}^\la,
$$
and denote by $T^{I}_{\de,Ai}$ and $T^{II}_{\de}$ the convolution operators $\vp\mapsto \widehat{\vp*\nu^I_{\de,Ai}}$ and $\vp\mapsto \widehat{\vp*\nu^{II}_{\de}},$ respectively. Then
\begin{equation}\label{11.17}
\|T^{I}_{\de,Ai}\|_{p_c\to p'_c}\le C\quad \mbox{and}\quad \|T^{II}_{\de}\|_{p_c\to p'_c}\le C.
\end{equation}

\item[(b)] If $\la\rho\gg 1,$ and if $\nu_{\de,0}^\la$  and $\nu_{l,0}^\la$ denote the main terms of  of $\nu_{\de,Ai}^\la,$ respectively $\nu_{\de,l}^\la$   (cf. \eqref{nude01},  \eqref{nudel1}), then let
$$
\nu^{III}_{\de,Ai}:=\sum_{ 2^M\rho^{-1}< \la\le 2^{-M}\de_0^{-3}}\nu_{\de,0}^\la \quad \mbox{and}\quad 
\nu^{IV}_{\de}:=\sum_{\{l:M_0\le 2^l\le \frac{\rho^{-1}}{M_1}\}}\sum_{2^M\rho^{-1}< \la\le 2^{-M}\de_0^{-3}}\nu_{l,0}^\la,
$$and denote by $T^{III}_{\de,Ai}$ and $T^{IV}_{\de}$ the convolution operators $\vp\mapsto \widehat{\vp*\nu^{III}_{\de,Ai}}$ and $\vp\mapsto \widehat{\vp*\nu^{IV}_{\de}},$ respectively. Then
\begin{equation}\label{11.18}
\|T^{III}_{\de,Ai}\|_{p_c\to p'_c}\le C\quad \mbox{and}\quad \|T^{IV}_{\de}\|_{p_c\to p'_c}\le C.
\end{equation}
\end{itemize}
Here, the constant $C$ is independent of $\de.$
\end{proposition}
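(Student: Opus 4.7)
The plan is to establish each of the four uniform endpoint bounds by Stein's interpolation theorem for analytic families, in direct analogy with the complex-interpolation arguments of Subsections~\ref{complexint1}, \ref{complexint0} and \ref{complexint2}. For each of the four operators I would embed the relevant summed measure into an analytic family $\mu_\zeta$, $\zeta\in\Sigma=\{0\le\Re\zeta\le 1\}$, normalized by an entire, boundary-bounded factor $\gamma(\zeta)$ with $\gamma(1/3)=1$ so that $\mu_{1/3}$ is the measure at hand. Concretely, I would take
$$
\mu^I_\zeta:=\gamma(\zeta)\sum_\lambda \lambda^{\frac23(1-3\zeta)}\nu_{\delta,Ai}^\lambda,\qquad \mu^{III}_\zeta:=\gamma(\zeta)\sum_\lambda (\lambda\rho)^{\frac56(1-3\zeta)}\nu^\lambda_{\delta,0},
$$
with analogous families for $T^{II}_\delta$ and $T^{IV}_\delta$ carrying an additional factor $2^{\sigma l(1-3\zeta)}$ (for a suitable rational $\sigma$) to absorb the $l$-sum. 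The Fourier-side bounds \eqref{analog11}, \eqref{analog21}, \eqref{11.4} and \eqref{11.10} (with $B=3$) then give uniform $L^\infty$-estimates for $\widehat{\mu_{it}}$ on the left edge, directly from the essentially disjoint supports of the $\widehat{\nu^\lambda}$ in frequency space. Since $\theta_c=1/3=2/p'_c$, interpolation at $\zeta=1/3$ yields the desired $L^{6/5}\to L^6$ bound.

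The heart of the proof is the right-edge estimate $\|\mu_{1+it}\|_\infty\le C$, a genuine pointwise control of the oscillatory sum. Here a typical summand is already of the naive size $\rho^{-4/3}$ (for $\mu^{III}$) or similar, so one must genuinely exploit the oscillation in $\lambda$. I would substitute the oscillatory-integral representations \eqref{nude01} and \eqref{nudel1} for $\nu_{\delta,0}^\lambda$ and $\nu_{l,0}^\lambda$, using the phase $\Phi_2$ of \eqref{10.9} (resp.\ \eqref{11.11}), so that the weighted sum becomes an exponential sum indexed by $\log_2\lambda$ (and, for $T^{II}_\delta$, $T^{IV}_\delta$, also by $l$). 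Following the four-region split of Subsection~\ref{complexint1}, I would decompose $x$-space according to whether $|D|\ge 4c$ or not, and whether $\max\{|A|,|B_q|\}\ge 1$ or not, where $A,B_q,D$ are the natural analogues of the quantities in \eqref{fd2} built from $\lambda$ and $Q_A(x)$, $Q_B(x)$, $Q_D(x)$. In each of the three regions with at least one ``large'' quantity, Lemma~\ref{simplesum} together with a uniform $C^1$-estimate on the amplitude (viewed as a function of the dyadic parameter after freezing the inner integration variables) supplies the summability, the factor $|2^{-i\alpha t}-1|^{-1}$ being absorbed into a factor of $\gamma(\zeta)$ of the form $c\cdot 2^{\alpha(1-\zeta)}-c'$. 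The remaining ``all-small'' region is handled by the oscillatory double-sum lemma, Lemma~8.1 of \cite{IM-rest1}. The assumption $\lambda\ll\delta_0^{-3}$ keeps every sum finite, with only $O(\log(1/\delta_0))$ terms in each dyadic variable, so no further gain in $\delta_0$ is needed.

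The main obstacle is expected to be the right-edge analysis for $T^{IV}_\delta$, which combines the Airy-parameter $l$, the degenerate Case~D of Lemma~\ref{lastterm} (where the extra term $\delta_0 u_1u_2\alpha_{1,1}$ in \eqref{phir} must be carried through all estimates), and a genuine double oscillatory sum in $(\lambda,l)$. I expect this to follow the scheme of the proof of Proposition~\ref{analyint4} almost verbatim: after the rescaling $x=\sigma_{2^l\rho}u$ and the further reduction to the Airy coordinate $z$ via \eqref{ctoz} (with $2^l\rho$ in place of $\rho$), one applies van der Corput of order $B=3$ to the $u_2$-integration inside each summand, and then runs the same sub-case analysis on the triple $(A,B_q,D)$ driving the outer sum in $\lambda$, with an additional outer sum in $l$ controlled by a second application of Lemma~\ref{simplesum}. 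The bookkeeping is noticeably longer than in the earlier cases of the paper, but it introduces no estimate beyond those already appearing in the excerpt and in~\cite{IM-rest1}.
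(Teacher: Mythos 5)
Your overall scheme is the same as the paper's -- analytic families with boundary estimates via Lemma~\ref{simplesum} and the double-sum lemma -- but the explicit normalizations you propose for $T^{III}_{\delta,Ai}$ and $T^{IV}_\delta$ are wrong, and this is not a cosmetic slip. The paper takes $\nu^{III}_{\delta,\zeta}:=\gamma(\zeta)\sum_\lambda \big(\rho^{-4/5}(\lambda\rho)\big)^{\frac56(1-3\zeta)}\nu^\lambda_{\delta,0}$, i.e.\ the factor is $\rho^{\frac16(1-3\zeta)}\lambda^{\frac56(1-3\zeta)}$; you drop the $\rho^{-4/5}$ and take $(\lambda\rho)^{\frac56(1-3\zeta)}=\rho^{\frac56(1-3\zeta)}\lambda^{\frac56(1-3\zeta)}$. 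This matters precisely at the right edge: with \eqref{11.5}, $\|\nu^\lambda_{\delta,0}\|_\infty\lesssim\rho^{-4/3}(\lambda\rho)^{5/3}$, so your right-edge summand has size $(\lambda\rho)^{-5/3}\|\nu^\lambda_{\delta,0}\|_\infty\sim\rho^{-4/3}$, while the paper's summand $\mu_\lambda=\rho^{4/3}(\lambda\rho)^{-5/3}\nu^\lambda_{\delta,0}$ is $O(1)$. You actually notice the $\rho^{-4/3}$ but assert that ``one must genuinely exploit the oscillation in $\lambda$'' to repair it; that cannot work. Lemma~\ref{simplesum} bounds the oscillatory sum by $\|H\|_{C^1(Q)}/|2^{i\alpha t}-1|$, which controls the \emph{number} of terms, not the magnitude of each. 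If $\|H\|_{C^1(Q)}\sim\rho^{-4/3}$ then the right-edge estimate $\|\mu_{1+it}\|_\infty\le C$ (uniform in $\delta$) is simply false, and Stein interpolation cannot be invoked. The same defect appears in your $T^{IV}_\delta$ family: the extra factor $2^{\sigma l(1-3\zeta)}$ only fixes the $l$-dependence, while the paper's $\big((2^l\rho)^{-4/5}(\lambda 2^l\rho)\big)^{\frac56(1-3\zeta)}$ also introduces the necessary $\rho$-dependence (and the correct $l$-power $\tfrac16$ comes out automatically), so again the right-edge summands under your normalization are off by $\rho^{-4/3}$.

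Two secondary remarks. First, $\gamma(\zeta)$ in Stein interpolation must be uniformly bounded on $\Sigma$ independently of $\delta$; the missing correction $\rho^{-\frac23(1-3\zeta)}$ is an entire function of $\zeta$ equal to $1$ at $\zeta=1/3$ but unbounded in $\rho$ on the boundary lines, so it cannot be hidden inside $\gamma$ -- it has to be combined with the $\lambda$-dependent factor so that each boundary product is $O(1)$. Second, your expectation that $T^{IV}_\delta$ will follow the scheme of Proposition~\ref{analyint4} ``almost verbatim'' is too optimistic: the paper itself remarks that the $T^{III}$-type argument does not carry over and instead performs a more refined two-parameter analysis modelled on the $T^{II}_\delta$ case (passing through the phase $\Psi_4$, the quantities $A,B,D,E$ in \eqref{13.19}, Lemmas~\ref{as1}--\ref{as2} and \ref{simpleintest}, plus a separate treatment of the $\nu^\lambda_{l,I}$ versus $\nu^\lambda_{l,II}$ split). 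Your plan for $T^I_{\delta,Ai}$ and $T^{II}_\delta$, by contrast, matches the paper's families and boundary estimates \eqref{analog11}--\eqref{analog22} correctly.
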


 \bigskip

\setcounter{equation}{0}
\section{Proof of Proposition \ref{s11.1} (a) : Complex interpolation}\label{propfinal}
In this section, we  assume that $B=3$ and $A=0$  in \eqref{5.17II},  and that  $\la\rho\lesssim1.$
 
 \subsection{Estimation of $T^I_{\de,Ai}$}\label{deai}
 Recall formula \eqref{hatnuai} for $\widehat{\nu_{\de,Ai}^\la}.$  Applying the Fourier inversion formula to this expression and performing the change of variables $z:=\la^\frac23B_1(s,\,\de),$ we find that we may write
 
\begin{equation}\label{12.1}
\nu_{\de,Ai}^\la(x)=\la^\frac53 \int e^{-is_3\la\Phi(z,\,s_2,\,x,\,\de)}\, 
a(z, s_2,\,\tilde\de^\la,\,\de_0^{\r},\la^{-\frac 1{9}})\chi_0(z)\tilde\chi_1(s_2)\chi_1(s_3)\, dzds_2ds_3,
\end{equation}
where $a$ is again  a smooth function of all its (bounded) variables, and where 
$$
\Phi(z,s_2,x,\de):=\phi(s_2,\,x,\de)+\la^{-\frac23}z(x_1-s_2^\frac1{n-2}G_1(s_2,\,\de)),
$$
with
$$
\phi(s_2,\,x,\de):=s_2^\frac{n}{n-2}G_5(s_2,\,\de)-s_2^\frac{n-1}{n-2}G_3(s_2,\,\de)x_1
-s_2x_2-x_3
$$
\medskip \noi
 (compare  with a similar discussion in Section 6 of \cite{IM-rest1}, in particular with  (6.22)). Recall also that $G_3(0)\ne 0$ and $G_5(0)=(G_1G_3-G_2)(0)\ne 0.$ 

In fact, a priori we have to assume that the density $a$ depends also on the variable $s_3.$ However, arguing  as in Subsection \ref{Aig}, we may develop this function into a convergent  series of smooth functions, each of which is a tensor product of  a smooth function of the variable $s_3$  with  a smooth function depending on the remaining variables only.  Thus, by considering each of the corresponding terms separately, we can reduce to the situation \eqref{12.1}, provided we choose the functions $\tilde\chi_1$ and $\chi_1,$  which localize to the regions where $|s_2|\sim 1$ and $|s_3|\sim 1,$ properly.

\medskip

We next embed  $\nu_{\de, Ai}^I$ into an analytic  family of measures 
$$
\nu_{\de,\,\zeta}^I:=\ga(\ze) \sum_{2^M\le \la\le 2^M\rho^{-1}}
\la^{\frac 23 (1-3\ze)}\nu_{\de,Ai}^\la
$$
where $\ze$ lies again in the complex strip $\Sigma$ given by   $0\le \Re \zeta\le1,$  and where $\ga(\ze):=
(1-2^{2(1-\zeta)})/(1-2^{4/3}).$

Here, summation is again  over dyadic $\la=2^j, j\in \NN.$ Observe that indeed $\nu_{\de, Ai}^I=\nu_{\de,\,\th_c}^I,$ since $\th_c=1/3.$ 

Since the supports of the $\widehat{\nu_{\de,\,Ai}^{\la}}$ are almost disjoint, \eqref{analog11} implies that 
$$
\|\widehat{\nu_{\de,\,it}^I}\|_\infty\lesssim 1\qquad \forall t\in\RR.
$$

We shall also prove that 
\begin{equation}\label{12.2}
|\nu_{\de,\,1+it}^I(x)|\le C \qquad \forall t\in\RR, x\in \RR^3.
\end{equation}
Again, Stein's interpolation  theorem will then imply that the operator $T^I_{\de,Ai}$ is bounded from $L^{p_c}$ to 
$L^{p'_c},$ which will complete the first part of Proposition \ref{s11.1} (a).
 
\medskip

In order to prove \eqref{12.2}, we first consider the case where $|x|\gg 1.$ In this case, we can use a formula similar to (6.18) in  \cite{IM-rest1}  in order to  argue  as in Subsection 6.1 of \cite{IM-rest1} and find that 
$$
|\nu_{\de,Ai}^\la(x)|\le C_N \la^{-N}, \quad N\in\NN, \mbox { if } |x|\gg 1.
$$
This estimate allows in a trivial way to sum in $\la$ and thus to  obtain \eqref{12.2}. 
\medskip 

We may therefore assume from now on  that $|x|\lesssim 1.$ We then write
\begin{equation}\label{12.3}
\nu_{\de, 1+it}^I(x)=\ga(1+it) \sum_{2^M\le \la\le 2^M\rho^{-1}}
\la^{-2it}\mu_\la(x),
\end{equation}
where 
$$
\mu_\la(x):=\la^\frac13 \int e^{-is_3\la\Phi(z,\,s_2,\,x,\,\de)}\, 
a(z, s_2,\,\tilde\de^\la,\,\de_0^{\r},\la^{-\frac 1{9}})\chi_0(z)\tilde\chi_1(s_2)\chi_1(s_3)\, dzds_2ds_3.
$$

Let us look at the contribution to this integral given by a small neighborhood of a given point $s_2^0 \sim 1.$ If 
$|\pa_{s_2}^2\phi(s_2^0,x,\de)|\sim 1,$ then van der Corput's estimate applied to the integration in $s_2$ shows that 
$|\mu_\la(x)|\lesssim \la^{1/3}\la^{-1/2}=\la^{-1/6},$ which clearly implies \eqref{12.2}. This situation arises in particular when $|x_1|\ll1,$ or when $|x_1|\sim 1$ and $G_5$ and $G_3 x_1$ have opposite signs.

\medskip
So, let us assume from now on that $|x_1|\sim 1,$ and that $G_5$ and $x_1G_3 $ have the same sign.
Notice that if $\de=0,$ then
$$
\phi(s_2,\,x,\,0)=s_2^\frac{n}{n-2}G_5(0)-s_2^\frac{n-1}{n-2}G_3(0)x_1
-s_2x_2-x_3.
$$ 
Since the exponents $n/(n-1)$ and $(n-1)/(n-2)$ are different, this   shows that there is a unique $s_2^c(0)\sim 1$ so that $\pa_{s_2}^2\phi(s_2^c(0),x,\,0)=0,$  whereas $|\pa_{s_2}^3\phi(s_2^c(0),x,\,0)|\sim 1.$ By the implicit function, we then find a smooth function  $s_2^c(x_1,\de)$ such that 
$$
\pa_{s_2}^2\phi(s_2^c(x_1,\de),x,\de)\equiv 0.
$$

Let us put here 
$$
\phi^\sharp(v,x,\de):= \phi(s_2^c(x_1,\de)+v,x,\de), \qquad \Phi^\sharp(z,v,x,\de):=\phi(z,s_2^c(x_1,\de)+v,x,\de).
$$
By means of Taylor expansion around $v=0,$ we may write
$$
\phi^\sharp(v,x,\de)=v^3 Q_3(v,x,\de)- v Q_1(x,\de)+ Q_0(x,\de),
$$
where  the $Q_j$ are smooth functions of all their variables, and where we may assume that $|Q_3(v,x,\de)|\sim 1,$ 
since we had $|\pa_{s_2}^3\phi(s_2^c(0),x,\,0)|\sim 1$. Moreover, developing 
$$
x_1-(s_2^c(x_1,\de)+v)^\frac1{n_2-2}G_1(s_2^c(x_1,\de)+v,\,\de_1)=Q_5(x,\de) +v Q_6(v,x,\de),
$$
after scaling $v\mapsto \la^{-1/3}v,$ we find that 
\begin{eqnarray*}
 \la\Phi^\sharp(z,\la^{-\frac 13}v,x,\de)&=&v^3 Q_3(\la^{-\frac 13}v,x,\de)- v \la^{\frac 23}Q_1(x,\de)+ \la Q_0(x,\de)\\
 &&\hskip1cm+ zv\,Q_6(\la^{-\frac 13}v,x,\de) +\la^{\frac 13} z\,Q_5(x,\de).
\end{eqnarray*}
This allows to re-write 
\begin{equation}\label{12.4}
\mu_\la(x)=\int\int_{ -\la^{1/3}}^{\la^{1/3}}\chi_0(z) F_\de(\la,x,z,v) \, dv dz,
\end{equation}
where $F_\de$ is of the form
\begin{eqnarray*}
F_\de(\la,x,z,v)&=&\chi_0(\la^{-\frac 13}v) \,  \widehat {\chi_1}\Big(A+ Dz -B v- v^3 Q_3(\la^{-\frac 13}v,x,\de) +zv\,Q_6(\la^{-\frac 13}v,x,\de)\Big)\\
&&\hskip1cm \tilde a(z,\la^{-\frac 13} v,\,\tilde\de^\la,\, x_1,\,\de_0^{\r},\la^{-\frac 1{9}}).
\end{eqnarray*}
Here,  $\tilde a$ is again a smooth function of all its  variables with compact support, and 
\begin{eqnarray*}
&& A=A(x,\la,\de):=\la Q_0(x,\de) , \qquad B:=B(x,\la,\de):=\la^{\frac 23} Q_1(x,\de),\\
&& D=D(x,\la,\de):=\la^{\frac 13} Q_5(x,\de).
\end{eqnarray*}
Now we can argue in a very similar way is in previous proofs, and will therefore an briefly sketch the proof.

 We first consider the contribution to the sum in \eqref{12.3} given by the $\la$'s satisfying $|A|=\la |Q_0(x,\de)|\gg 1.$  Observe that for $z$ fixed, we may estimate 
$\int_{ -\la^{1/3}}^{\la^{1/3}}| F_\de(\la,x,z,v)| \, dv$ by means of Lemma \ref{as1}, where we choose $y_2=v,$ $T:=\la^{1/3},$  $\eps=0, r_i\equiv 0$ (so that the integral in $y_1$ just yields a positive constant) and $Q(y_2)=zQ_6(y_2,x,\de).$ The condition \eqref{a1} is here also satisfied, since $(\phi^\sharp)''(0,x,\de)=0$ and $|(\phi^\sharp)'''(0,x,\de)|\sim 1.$ Thus, Lemma \ref{as1} implies that for $L$ sufficiently large, we may estimate
$$
|\mu_\la(x)|\le C\Big( \int_{I}|A+Dz|^{-\frac 16}\, |\chi_0(z)|\, dz+|J|\Big),
$$
where $I$ and $J$ denote the sets of all $z\in \supp \chi_0 $ for which  $|A+Dz|\ge L,$  respectively $|A+Dz|< L.$
The integral can easily be estimated by 
$$
|D|^{-\frac 16}\int_{I}|z+\frac A D|^{-\frac 16}\, |\chi_0(z)|\, dz\lesssim |D|^{-\frac 16}(1+|\frac A D|)^{-\frac 16}
\lesssim |A|^{-\frac 16}.
$$
Moreover, if $|D|\ll L,$ then the set $J$ is empty, if we assume that $|A|\ge 2L.$ So, let us assume that 
$|D|\gtrsim L.$  Then, on the set $J$ we have $|z+A/D|<L/|D|,$ and since $|z|\lesssim 1,$ this implies that 
$|A/D|\lesssim 1,$  and we see that $|J|\le L/|D|\lesssim  L/|A|.$  Putting all this together, we find that 
$$
|\mu_\la(x)|\le C|A|^{-\frac 16},
$$
which allows to sum over  those  $\la$ for which $\la |Q_0(x,\de)|\gg 1$ so that the estimate is independent of $x.$ 

Next, we consider the $\la$'s  for which $|A|\lesssim 1 .$ If in addition $|D|\gg 1,$ then we can argue as before and obtain an estimate of the form $|\mu_\la(x)|\le C|D|^{-1/6},$ which again allows to sum. Similarly, if $|A|\lesssim 1$  and $|D|\lesssim 1,$  but $|B|\gg 1,$ we may  apply Lemma \ref{as1} once more and obtain that $|\mu_\la(x)|\le C|B|^{-1/4}.$ This allows again to sum.

\medskip We are thus left with the  oscillatory sum \eqref{12.3} over  only those $\la$'s  for which, say, 
$\max\{|A|,|B|,|D|\}\le L.$  However, this can again easily be handled by means of Lemma \ref{simplesum}, and we arrive at \eqref{12.2}. Recall here  that by \eqref{deldil}  the components of $\tilde\de^\la$ are of the form 
$\la^{\be_i}\de_i,$  where we may assume that $|\la^{\be_i}\de_i|\le C,$ since we are assuming that $\rho(\tilde\de^\la)=\la\rho(\tilde\de)\lesssim 1.$

\subsection{Estimation of $T_\de^{II}$}\label{tdII}
  We next come to the proof   the  second estimate  in Proposition \ref{s11.1} (a), where  we still assume  that $\la\rho\lesssim1.$ 
Recall from Subsection \ref{larhos} that we have decomposed 
\begin{equation}\label{12.5}
\nu^\la_{\de,l}=\nu^\la_{l,\infty }+\nu^\la_{l,0,0 }+\nu_{l,I}^\la+\nu_{l,II}^\la+\nu_{l,III}^\la.
\end{equation}
Here, we denote by $\nu^\la_{l,0,0 }$ the contribution to $\nu_{l,0}^\la$ by the domain where $|u_1|\ll 1.$   

Again, we  embed  the measure $\nu_{\de}^{II}$ into an analytic  family of measures 
$$
\nu_{\de,\,\zeta}^{II}:=\ga(\ze) \sum_{2^M\le \la\le 2^M\rho^{-1}}\,\sum_{\{l:M_0\le 2^l\le \frac{\la}{M_1}\}}
2^{\frac l6 (1-3\ze)}\la^{\frac 23 (1-3\ze)}\, \nu_{\de,l}^\la
$$
where $\ze$ lies again in the complex strip $\Sigma.$ The analytic function $\ga(\ze)$ will be a finite product of factors $\ga_i(\ze)$ which will be specified in the course of the proof.

In view of  \eqref{analog21}, by following our standard approach it will suffice to prove the following estimate in order to  establish the second inequality in \eqref{11.17}:
\begin{equation}\label{12.6}
|\nu_{\de,\,1+it}^{II}(x)|\le C \qquad \forall t\in\RR, x\in \RR^3.
\end{equation}
By putting
$
\mu_{l,\la}:= 2^{-\frac 13 l}\la^{-\frac 43} \nu_{\de,l}^\la,
$
we may re-write 
\begin{equation}\label{12.7}
\nu_{\de,\,1+it}^{II}(x)=\ga(1+it) \sum_{2^M\le \la\le 2^M\rho^{-1}}\,\sum_{\{l:M_0\le 2^l\le \frac{\la}{M_1}\}}
2^{-\frac 12  it l}\la^{-2it}\, \mu_{l,\la}(x).
\end{equation}
\medskip

\subsubsection{Contribution by the $\nu_{l,II}^\la$ }

Let us begin with  the contribution of the main terms $\nu_{l,II}^\la$ in \eqref{12.5}, i.e.,  let us look at 
\begin{equation}\label{12.8}
\nu_{II,1+it}:=\ga(1+it) \sum_{2^M\le \la\le 2^M\rho^{-1}}\,\sum_{\{l:M_0\le 2^l\le \frac{\la}{M_1}\}}
2^{-\frac 12  it l}\la^{-2it}\, \mu_{l,\la,II}(x),
\end{equation}
where we have set $ \mu_{l,\la,II}:=2^{-\frac 13 l}\la^{-\frac 43}\, \nu_{l,II}^\la.$ 
We want to prove that

\begin{equation}\label{12.9}
|\nu_{II,\,1+it}(x)|\le C \qquad \forall t\in\RR, x\in \RR^3.
\end{equation}

Recall  from Subsection \ref{larhos} that we may restrict ourselves to those $x$ for which $|x|\lesssim1$ and $|x_1|\sim 1.$ Making use of  \eqref{nuII2} and  \eqref{8.52n}, after scaling the variable $u_2$ by the factor $2^{-l/3},$ we find that
\begin{eqnarray}\nonumber
&&\mu_{l,\la,II}(x)=  \int e^{-i s_3 \tilde\Psi_3(y_2,v,x, \de,\la, l)}
 a_3(\la^{-\frac 13}y_2,v, x,2^{-\frac l3},(2^{l}\la^{-1})^{\frac13},\de)    \label{12.10}\\
&&\hskip6cm\times   \chi_1(s_3) \,\chi_1(v)\,\chi_0(2^{-\frac l3} y_2)  \, dy_2 dv ds_3 \\
&& =\int \widehat{\chi_1}\big( \tilde\Psi_3(y_2,v,x, \de,\la, l)\big)  a_3(\la^{-\frac 13}y_2,v, x,2^{-\frac l3},(2^{l}\la^{-1})^{\frac13},\de)  \,\chi_1(v)\,\chi_0(2^{-\frac l3} y_2)  \, dy_2 dv,\nonumber
\end{eqnarray}
with
% \begin{eqnarray}\nonumber
%&&\Psi_3(u_2,v,x, \de,\la, l)=\la \Big(v\, (2^{-l}\la)^{-\frac 13}(x_1^2\omega(\de_1x_1)-x_2)+ 
% (2^{-l}\la)^{-\frac 23}\de_0 \, v u_2+  Q_A(x,\de) \Big)\\
%&&\hskip2cm+ 2^l\Big(u_2^3  \,b(x_1,(2^{-l}\la)^{-\frac13}u_2,\de)+
% u_2\, (2^{-l}\la)^{\frac 23} [ \de_0 \tilde G_1(x_1,\de) +\de_3x_1^{n_1}\al_1(\de_1x_1)]\Big).\label{12.9}
%\end{eqnarray}
\begin{eqnarray*}\nonumber
&&\tilde\Psi_3(y_2,v,x, \de,\la, l)=v\, \la (2^{-l}\la)^{-\frac 13}(x_1^2\omega(\de_1x_1)-x_2)+   \la Q_A(x,\de) \\
&&\hskip2cm+ y_2^3  \,b(x_1,\la^{-\frac13}y_2,\de)+
 y_2\,\Big( \la^{\frac 23} [ \de_0 \tilde G_1(x_1,\de) +\de_3x_1^{n_1}\al_1(\de_1x_1)]+(2^{l}\la)^{\frac 13}\de_0 \, v \Big).
\end{eqnarray*}
We shall write this as 
\begin{equation}\label{12.11}
\tilde\Psi_3(y_2,v,x, \de,\la, l)=A+Bv + y_2^3  \,b(x_1,\la^{-\frac13}y_2,\de)+   y_2 (D+ Ev), 
\end{equation}
with
\begin{eqnarray}\nonumber
&& A=A(x,\la,\de):=\la Q_A(x,\de) , \qquad B=B(x,\la,l,\de):=2^{\frac l3} \la ^{\frac {2}3} Q_B(x,\de),\\
&& D=D(x,\la,\de):=\la^{\frac 23}Q_D(x,\de),  \qquad E=E(\la,l,\de)=2^{\frac l3} \la^{\frac 13}\de_0,\label{12.12}
\end{eqnarray}
and 
\begin{eqnarray*}
Q_A(x,\de)&:=&\tilde G_1(x_1,\de)(x_1^2\omega(\de_1x_1)-x_2)+x_1^n\alpha(\de_1 x_1)-x_3,\\ \quad Q_B(x,\de)&:=&x_1^2\omega(\de_1x_1)-x_2, \quad
Q_D(x,\de):=\de_0 \tilde G_1(x_1,\de) +\de_3x_1^{n_1}\al_1(\de_1x_1).
\end{eqnarray*}

Now, applying Lemma \ref{as2} from the appendix to the integration in $y_2,$ with $T:=2^{l/3}$ and $\de:=\la^{-1/3}$ (so that $\de T=(2^l\la^{-1})^{1/3}\ll 1$), we see that we may estimate 
\begin{eqnarray}\nonumber
&&\Big|\int \widehat{\chi_1}\big( \tilde\Psi_3(y_2,v,x, \de,\la, l)\big)  a_3(\la^{-\frac 13}y_2,v, x,2^{-\frac l3},(2^{l}\la^{-1})^{\frac13},\de) \,\chi_0(2^{-\frac l3} y_2)  \, dy_2 \Big|\\
&&\hskip 3cm\lesssim \big(1+\max\{|A+Bv|,|D+Ev|\}\big)^{-\frac 16}, \label{12.13}
\end{eqnarray}
with a constant which does not depend on $A,B,D,E$ and $v.$ 
The following simple lemma will thus  be useful:
\begin{lemma}\label{simpleintest}
Let $\ve>0,$ and consider for $A,B,D,E\in\RR$  the integral
$$
J(A,B,D,E):= \int \big(1+\max\{|A+Bv|,|D+Ev|\}\big)^{-\ve} \chi_0(v) \,dv,
$$
where as usually $\chi_0$ is a smooth, non-negative bump function with compact support. Then
$$
|J(A,B,D,E)|\le C\big(\max\{|A|, |B|,|D|,|E|\}\big)^{-\ve},
$$
where the constant $C$ is independent of $A,B,D$ and $E.$ 
\end{lemma}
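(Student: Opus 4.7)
If $M:=\max\{|A|,|B|,|D|,|E|\}\le 1$ the estimate is trivial, since $M^{-\ve}\ge 1$ while $J\le \|\chi_0\|_{L^1}$; I therefore assume $M\ge 1$. Because $\max\{|A+Bv|,|D+Ev|\}\ge |A+Bv|$ and the claim is symmetric under the interchange $(A,B)\leftrightarrow(D,E),$ I may assume that the maximum $M$ is attained among $\{|A|,|B|\},$ after which it suffices to prove
$$J_1 := \int(1+|A+Bv|)^{-\ve}\chi_0(v)\,dv \le C M^{-\ve}.$$

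Fix $R>0$ with $\supp\chi_0\subset[-R,R]$. I plan to split into two subcases according to the size of $|B|.$ In the \emph{small-slope} regime $|B|\le M/(2R)$ one necessarily has $M=|A|,$ and the triangle inequality gives $|A+Bv|\ge |A|-R|B|\ge M/2$ uniformly on $\supp\chi_0,$ yielding $J_1\le (1+M/2)^{-\ve}\|\chi_0\|_{L^1}\le CM^{-\ve}$ at once. In the \emph{large-slope} regime $|B|>M/(2R),$ where in particular $|B|\sim M,$ the bound $|A|\le M\le 2R|B|$ together with $|v|\le R$ shows $|A+Bv|\le 3R|B|,$ so the change of variable $u=A+Bv$ yields
$$J_1\le \frac{1}{|B|}\int_{|u|\le 3R|B|}(1+|u|)^{-\ve}\,du \le C\,\frac{(R|B|)^{1-\ve}}{|B|} = CR^{1-\ve}|B|^{-\ve}\le C' M^{-\ve},$$
the middle inequality being the elementary estimate $\int_{|u|\le L}(1+|u|)^{-\ve}du\le CL^{1-\ve}$ valid for $\ve\in(0,1]$ and $L\ge 1$.

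There are no serious obstacles; the argument is a routine change of variable.  The only implicit restriction is $\ve\in(0,1]$, which is harmless for the applications in this paper, since the lemma is invoked with $\ve=1/6$ (as furnished by Lemma~\ref{as2}); for $\ve>1$ the same computation would only yield $J\le CM^{-1},$ which is in fact the sharp bound in that range and reflects a genuine obstruction rather than a defect of the proof.
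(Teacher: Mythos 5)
Your proof is correct and follows essentially the same route as the paper's: drop the term $|D+Ev|$, split according to whether $|B|$ is small or comparable to the maximum (the paper phrases this as $|A|\gg|B|$ versus $|A|\lesssim|B|$), and in the second regime change variables $u=A+Bv$; the $(A,B)\leftrightarrow(D,E)$ symmetry is invoked at the start rather than at the end, but that is cosmetic. Your explicit remark that the argument (and indeed the statement) only gives the claimed bound for $\ve\le 1$ is a fair observation that the paper leaves implicit, though in fact at $\ve=1$ a logarithmic loss appears, so strictly one should say $\ve\in(0,1)$; this is harmless since the lemma is applied with $\ve=1/6$.
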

\begin{proof} 
If $|A|\gg |B|,$ then $|A+Bv|\gtrsim |A|,$ and we see that $|J(A,B,D,E)|\le C|A|^{-\ve}.$ 
Next,  if $|A|\lesssim |B|$ then we apply the change of variables $v\mapsto w:=Bv$ and find that we can estimate 
\begin{eqnarray*}
|J(A,B,D,E)|\lesssim \frac 1{|B|}\int_{|w|\lesssim |B|} (1+|A+w|)^{-\ve}  \,dw
\lesssim \frac 1{|B|}\int_{|y|\lesssim |B|} (1+|y|)^{-\ve}  \,dw\lesssim |B|^{-\ve},
\end{eqnarray*}
provided $|B|\ge 1.$ Of course, if $|B|< 1,$ then we can always use the trivial estimate $|J(A,B,D,E)|\lesssim 1.$

We may now conclude the proof by interchanging the roles of $A,B$ and $D,E.$

 \end{proof}

In combination with \eqref{12.13} and  \eqref{12.10} this lemma  implies that 
\begin{equation}\label{mullaII}
|\mu_{l,\la,II}(x)|\lesssim \big(\max\{|A|, |B|,|D|,|E|\}\big)^{-\frac 16},
\end{equation}
with $A,B,D$ and $E$ given by \eqref{12.12}.

In order to estimate $\nu_{II,1+it}(x),$ we shall again distinguish various cases, in a similar way as in preceding arguments of this type, and  shall thus only briefly sketch the ideas.
\medskip

Let us first consider the contribution by those terms in \eqref{12.8} for which  $|D|\gtrsim1$ and $|E|\gtrsim1.$  Since by \eqref{mullaII} we may estimate $|\mu_{l,\la,II}(x)|\lesssim |D|^{-1/12} |E|^{-1/12},$  we can first sum  these contributions  absolutely over  all  $l$  for which  $|E|=2^{l/3} \la^{1/3}\de_0\gtrsim1,$  and subsequently over all dyadic $\la=2^j$  for which   $|D|=\la^{\frac 23}  |Q_D(x,\de)|\gtrsim1,$ and arrive at a bound which is uniform in $x$ and $\de.$ 

\medskip
In essentially the same way we can sum (absolutely) the contributions by those terms in \eqref{12.8} for which  $|A|\gtrsim 1$ and $|B|\gtrsim 1.$

\medskip
Consider next the terms for which $|E|\ll 1$ and $|D|\ll 1.$ For these terms, we re-write
\begin{equation}\label{mulaIIint}
\mu_{l,\la,II}(x)= \int e^{-i s_3 (A+Bv)} J(v,s_3)\,  \chi_1(s_3) \,\chi_1(v)\, dvds_3\,\\
\end{equation}
with
\begin{eqnarray}\nonumber
&&J(v,s_3):=  \int e^{-i s_3 \Big(y_2^3b(x_1,\la^{-\frac13}y_2,\de)+   y_2 (D+ Ev)\Big)}
 a_3(\la^{-\frac 13}y_2,v, x,2^{-\frac l3},(2^{l}\la^{-1})^{\frac13},\de)    \label{12.15}\\
&&\hskip6cm\times  \,\chi_0(2^{-\frac l3} y_2)  \, dy_2,
\end{eqnarray}
But, since $|D+Ev|\lesssim 1,$  the proof of Lemma 6.3 (a) in \cite {IM-rest1}  shows that $J(v,s_3)=g(D+Ev, s_3,x,2^{-\frac l3},(2^{l}\la^{-1})^{\frac13},\de), $ with a smooth function $g,$  and thus
$$
\mu_{l,\la,II}(x)= \int e^{-i s_3 (A+Bv)} g(D+Ev,s_3, x,2^{-\frac l3},(2^{l}\la^{-1})^{\frac13},\de)\,  \chi_1(s_3) 
\,\chi_1(v)\, dvds_3.
$$
Arguing in a similar way as in Subsection \ref{deai}, without  loss of generality we may and shall assume that 
$g(D+Ev,s_3, x,2^{-\frac l3},(2^{l}\la^{-1})^{\frac13},\de)=g(D+Ev, x,2^{-\frac l3},(2^{l}\la^{-1})^{\frac13},\de) $ is independent of $s_3.$ Then we may write 
\begin{equation}\label{12.14}
\mu_{l,\la,II}(x)= \int  g(D+Ev, x,2^{-\frac l3},(2^{l}\la^{-1})^{\frac13},\de)\, \widehat { \chi_1}(A+Bv) \,\chi_1(v)\,\, dv
\end{equation}
(alternatively, one could  also use integrations by parts in $s_3$  in the previous formula, but the other approach appears a bit clearer).

\medskip 
Recall that we  assume that either $|A|\gtrsim1$ and $|B|\ll1,$ or $|A|\ll 1$ and $|B|\gtrsim1,$ or $|A|\ll 1$ and $|B|\ll1.$

\medskip
If $|A|\gtrsim1$ and $|B|\ll1,$ then we can treat the summation in $l$ by means of Lemma \ref{simplesum}, where we choose, for $\la$ fixed,
$$
H_{\la,x}(u_1,u_2,u_3,u_4):=\int  g(D+u_1v, x,u_3,u_4,\de)\, \widehat { \chi_1}(A+u_2v) \, \chi_1(v)\,\, dv.
$$
Then clearly $\|H_{\la,x}\|_{C^1(Q)}\lesssim |A|^{-1},$ and so after summation in those $l$ for which  $|E|\ll 1$ and $|B|\ll 1 ,$ we can also sum (absolutely) in  the $\la$'s for which $|A|\gtrsim 1.$ Observe that this requires that $\ga(\ze)$  contains  a factor 
$$
\ga_1(\ze):= \frac{2^{\frac{1-\ze}2}-1}{2^{\frac 13}-1}.
$$

\medskip
Consider next the case where $|B|\gtrsim 1$ and $|A|\ll 1.$ If we write $\la=2^j,$ then $2^{l/3} \la^{2/3}=2^{k/3},$ where we put $k:=l+2j.$ We therefore pass from the summation variables $j$ and $l$ to the variables $j$ and $k,$  which allows to write 
$B=2^{ k/3}  Q_B(x).$ For $k$ fixed, we then sum first in $j$ by means of Lemma \ref{simplesum}, which  gives an estimate of order $O(|B|^{-1}),$ which then in return allows to sum (absolutely) in those $k$ for which $|B|=2^{ k/3}  |Q_B(x)|\gtrsim 1.$  Since $-l/2-2j=-k/2-j,$ the application of Lemma \ref{simplesum} requires  in this case that $\ga(\ze)$  contains  a factor 
$$
\ga_2(\ze):= \frac{2^{1-\ze}-1}{2^{\frac 23}-1}.
$$

\medskip
There remains the case where $|A|+|B|\ll 1$ and $|D|+|E|\lesssim 1.$ The summation over all $\l$'s and $\la$'s  for which these conditions are satisfied can easily be treated by means of the double summation  Lemma 8.1 in \cite{IM-rest1}, in a very similar way as this was done in the  last part of the proof of Proposition 5.2 (a)  of that article. The corresponding vector $(\al_1,\al_2)$  to be used in Lemma 8.1 will here be given by $(\al_1,\al_2)=(2,1/2),$ and  the vectors $(\be_1^k,\be_2^k)$  by $(1,0), (2/3, 1/3), (2/3, 0), (1/3, 1/3), (-1,1)$ and $(0,-1/3);$ they obviously satisfy the assumptions of  Lemma 8.1. For the application of this lemma, we need to assume that $\ga(\ze)$  contains  also a factor $\ga_3(\ze)$ given by Remark 8.2 in \cite{IM-rest1}.

\medskip 
What  remains are the contributions by those $l$ and $\la$ for which either $|D|\gtrsim1$ and $|E|\ll1,$ or $|D|\ll 1$ and $|E|\gtrsim1.$  
\medskip

We begin with the case where   $|E|\gtrsim1$ and  $|D|\ll1.$  Then we may assume in addition that $|B|\ll 1,$ for otherwise by \eqref{mullaII} we have $|\mu_{l,\la,II}(x)|\lesssim |E|^{-1/12}|B|^{-1/12},$  which allows to sum absolutely in $j$ and $l,$  as can easily seen by means of a change of the summation variables from $j$ and $l$ to $ k:=2j+l $ and  $m:=j+l$ (compare \eqref{12.12}). In a very similar way, we may also assume that $|A|\ll 1.$ 

Recall from \eqref{12.10} and \eqref{12.11} that 
\begin{eqnarray*}
&&\mu_{l,\la,II}(x)=  \int e^{-i s_3 \big(A+ y_2^3 \,b\big(x_1,\la^{-\frac 13}y_2,\de\big)+D y_2+  v(B+ Ey_2)\big)}
 a_3(\la^{-\frac 13}y_2,v, x,2^{-\frac l3},(2^{l}\la^{-1})^{\frac13},\de)   \\
&&\hskip6cm\times   \chi_1(s_3) \,\chi_1(v)\,\chi_0(2^{-\frac l3} y_2)  \, dy_2 dv ds_3\, .
\end{eqnarray*}
Again, by our usual argument, we may assume without loss of generality that $a_3$ is independent of $v.$ Then   find that 
\begin{eqnarray}\label{12.15b}
&&\mu_{l,\la,II}(x)=  \int e^{-i s_3 \big(A+ y_2^3 \,b\big(x_1,\la^{-\frac 13}y_2,\de\big))\big)}
\widehat{\chi_1}\big(s_3(B+Ey_2)\big) \nonumber  \\
&&\hskip2cm\times  a_3(\la^{-\frac 13}y_2, x,2^{-\frac l3},(2^{l}\la^{-1})^{\frac13},\de)  \chi_1(s_3) \,\chi_1(v)\,\chi_0(2^{-\frac l3} y_2)  \, dy_2  ds_3\, .
\end{eqnarray}
We then change the summation variables from $j,l$ to $j, k,$ where $k:=j+l,$ so that $E=2^{k/3} \de_0.$ Then, for $k$ fixed,  we can treat the summation in $j$ by means of Lemma \ref{simplesum}, where we choose
\begin{eqnarray*}
&&H_{k,x}(u_1,u_2,u_3,u_4,u_5):=  \int e^{-i s_3 \big(u_1+ y_2^3 \,b\big(x_1,u_2y_2,\de\big))\big)}
\widehat{\chi_1}\big(s_3(u_3+Ey_2)\big)   \\
&&\hskip4cm\times  a_3(u_2y_2, x,u_4,u_5,\de)  \chi_1(s_3) \,\chi_1(v)\,\chi_0(u_3 y_2)  \, dy_2  ds_3\, .
\end{eqnarray*}
By means of the change of variables $y_2\mapsto y_2/E$ we thus see that $|H_{k,x}(u_1,\dots,u_5)|\lesssim  |E|^{-1}$ on the natural cuboid $Q$ arising in this context, since $\widehat{\chi_1}$ is a Schwartz function.
Next, observe that  by the product rule, $\pa_{u_i}H_{k,x}(u_1,\dots, u_5)$ can be written as  finite sum  of integrals of a similar form, where the amplitude may carry  additional factors of the form $y_2^n,$ with $n=0,\dots,4.$ Again,  the change of variables $y_2\mapsto y_2/E$ shows that these can be estimated  by $C|E|^{-1}$ (or even  higher powers of $|E|^{-1}$). We thus find that $\|H_{k,x}\|_{C^1(Q)}\lesssim |E|^{-1},$ and thus after the summation over the  $\la=2^j$  this allows to subsequently also sum over  the $k$  for which $|E|=2^{k/3}\de_0\gtrsim 1.$

\medskip

There remains  the contribution by those $l$ and $\la$ for which $|D|\gtrsim1$ and $|E|\ll1.$ Observe that here we have $|D+Ev|\gtrsim 1$  in \eqref{12.15}.

Applying  the change of variables $y_2=\la^{1/3} t$  in the integral defining $J(v,s_3),$  we obtain
\begin{eqnarray}\nonumber
&&J(v,s_3)=  \la^{\frac 13}  \int e^{-i s_3 \la\big(t^3b(x_1,t,\de)+   
t(\la^{-\frac 23}(D+ Ev))\big)}\\
&&\hskip3cm\times a_3\big(t,v, x,2^{-\frac l3},(2^{l}\la^{-1})^{\frac13},\de\big)  
 \,\chi_0((\la 2^{-l})^{\frac 13}t)  \, dt.\label{Jint}
\end{eqnarray}
It is important to observe that here the phase function is independent of $l.$ Notice  also that by \eqref{12.12}
\begin{equation}\label{12.16}
\la^{-\frac 23}D=Q_D(x,\de), \quad   \la^{-\frac 23}E= (2^l\la^{-1})^{\frac 13} \de_0\ll 1,
\end{equation}
so that in particular $\la^{-\frac 23}|D+vE|\lesssim 1.$ 

\medskip
We may then argue in a similar way as in the proof of Lemma 6.3 (b) in \cite{IM-rest1} to see that for every $N\in\NN,$ 
\begin{eqnarray}%\nonumber
J(v,s_3)&=&|D+Ev|^{-\frac 14} a_+\big(\la^{-\frac 13} |D+Ev|^{\frac 12}, v, x,2^{-\frac l3},(2^{l}\la^{-1})^{\frac13},\de\big)  \nonumber\\
&&\hskip2cm \times \chi_0\Big({2^{-\frac l3}|D+Ev|^{\frac 12}}\Big)\, e^{-is_3|D+Ev|^{\frac 32}q_+\big(\la^{-\frac 13} |D+Ev|^{\frac 12}, x,\de\big)}  \label{12.17}\\
&+&|D+Ev|^{-\frac 14} a_-\big(\la^{-\frac 13} |D+Ev|^{\frac 12}, v, x,2^{-\frac l3},(2^{l}\la^{-1})^{\frac13},\de\big)  \nonumber \\
&&\hskip2cm \times \chi_0\Big(2^{-\frac l3}|D+Ev|^{\frac 12}\Big)\, e^{-is_3|D+Ev|^{\frac 32}q_-\big(\la^{-\frac 13} |D+Ev|^{\frac 12},x,\de\big)}   \nonumber\\
&+& (D+Ev)^{-N}\, F_N\Big( |D+vE|^{\frac 32}, \la^{-\frac 13} |D+Ev|^{\frac 12}, v, x,2^{-\frac l3},(2^{l}\la^{-1})^{\frac13},\de\Big), \nonumber
\end{eqnarray}
where  $a_\pm, q_\pm$ and $F_N$ are smooth functions of their (bounded) variables. 
Moreover, $|q_\pm(\big(0, x,(2^{l}\la^{-1})^{\frac13},\de\big)|\sim 1.$ 

Indeed, notice that the phase in \eqref{Jint} has a critical point in the support of the amplitude  only if 
$2^{-\frac l3}|D+vE|^{1/2}\lesssim 1,$ and so we obtain the first two terms in \eqref{12.17} by applying the method of stationary phase, whereas the last term arises from integrations by parts on intervals in $t$ on which there is no stationary point.

\medskip
We shall concentrate on the first term only.  The second term can be treated in the same way as the first one, and  the last term can be handled in an even easier way by a similar method, since it is of order $=O(|D|^{-N})$ and, unlike the first term, carries no oscillatory factor.

\medskip
We denote by 
\begin{eqnarray*}
\mu^1_{l,\la}(x)&:=&  \int e^{-i s_3 \big[(A+Bv) +|D+Ev|^{\frac 32} q_+\big(\la^{-\frac 13} |D+Ev|^{\frac 12}, x,\de\big)\big]}   |D+Ev|^{-\frac 14}\chi_1(s_3) \,\chi_1(v)\,\nonumber\\
&&\hskip0cm \times a_+\big(\la^{-\frac 13} |D+Ev|^{\frac 12}, v, x,2^{-\frac l3},(2^{l}\la^{-1})^{\frac13},\de\big) \,\chi_0\Big({2^{-\frac l3}|D+Ev|^{\frac 12}}\Big) \,   dvds_3 
\end{eqnarray*}
 the contribution by the first term in \eqref{12.17} to $\mu_{l,\la,II}(x),$ and by $ \nu^1_{II,1+it}(x)$ the contribution of the $\mu^1_{l,\la}(x)$ to the sum defining $ \nu_{II,1+it}(x).$

\medskip
Assuming for instance that $D>0,$  and keeping  in  mind  that, according to \eqref{12.12}, 
$\la^{-1/3} D^{1/2}=Q_D(x,\de)^{1/2}$ depends only on $x$ and $\de,$ a Taylor expansion then shows that 
\begin{eqnarray*}
|D+Ev|^{\frac 32}&=&D^{\frac 32}+\frac 32 D^{\frac 12} E v+D^{-\frac 12} E^2 r_1(D^{-1} E, v),\\
q_+\big(\la^{-\frac 13} |D+Ev|^{\frac 12}, x,\de\big)&=&q_+\big(\la^{-\frac 13} D^{\frac 12}, x,\de\big)
+\frac 12 q'_+\big(\la^{-\frac 13} D^{\frac 12}, x,\de\big)\la^{-\frac 13}D^{-\frac 12}Ev\\
&+&\la^{-\frac 13}D^{\frac 12} (D^{-1} E)^2  r_2(\la^{-\frac 13}D^{\frac 12}, D^{-1} E,v,x,\de),
\end{eqnarray*}
where $q'_+$ denotes the partial derivative of $q_+$ with respect to the first variable, and where $r_1$ and $r_2$ are smooth, real-valued functions. This implies that 
\begin{eqnarray}\label{12.22b}
&|D+Ev|^{\frac 32} q_+\big(\la^{-\frac 13} |D+Ev|^{\frac 12}, x,\de\big)=D^{\frac 32} q_+\big(\la^{-\frac 13} 
D^{\frac 12}, x,\de\big)\\
&+v\Big[\frac 12 q'_+\big(\la^{-\frac 13} D^{\frac 12}, x,\de\big)\la^{-\frac 13}DE +\frac 32 q'_+\big(\la^{-\frac 13} D^{\frac 12}, x,\de\big) D^{\frac 12} E\Big]+  r(\la^{-\frac 13}D^{\frac 12}, D^{-1} E, v,x,\de),\nonumber
\end{eqnarray}
again with a smooth, real-valued function $r.$ 
In  combination with \eqref{12.12} we then find that the complete phase in the oscillatory integral defining 
$\mu^1_{l,\la}(x)$ is  of the form
$$
s_3[A'+B'v+ r],
$$
with $A'=\la Q_{A'}(x,\de), \ B'=2^{\frac l3}\la^{\frac 23} Q_{B'}(x,\de),$ where 
\begin{eqnarray*}
Q_{A'}(x,\de)&:=&Q_A(x,\de)+Q_D(x,\de)^{\frac 32}  q_+\big(Q_D(x,\de)^{\frac 12}, x,\de\big),\\
Q_{B'}(x,\de)&:=&Q_B(x,\de)+\frac 12 q'_+\big(Q_D(x,\de)^{\frac 12}, x,\de\big)Q_D(x,\de)\de_0 \\
&&+32 q'_+\big(Q_D(x,\de)^{\frac 12}, x,\de\big)Q_D(x,\de)^{\frac 12}\de_0.
\end{eqnarray*}

Thus, if $|B'|\gg 1,$ then an integration by parts in $v$ shows that 
$$
|\mu^1_{l,\la}(x)|\lesssim |D|^{-\frac 14} |B'|^{-1}.
$$
This estimate allows to control the sum over all $l$  such that $|B'|\gg 1,$ and  subsequently the sum over all dyadic $\la$ such that $|D|\gtrsim 1,$ and we arrive at the desired uniform estimate in $x$ and $\de.$ 

\medskip
Next, if $|B'|\lesssim 1,$  then  we can argue in a similar way as  before  and apply Lemma \ref{simplesum} to the summation in $l$  by putting here 
\begin{eqnarray*}
H_{\la,x}(u_1,\dots, u_7)&:=& \int e^{-i s_3 [A'+ u_1v+r(Q_D(x)^{\frac 12}, D^{-1} u_3, v,x,\de)]}   |D+u_2v|^{-\frac 14}\chi_1(s_3) \,\chi_1(v)\,\\
&&\hskip-2.5cm \times a_+\big(|Q_D(x,\de)+u_3v|^{\frac 12}, v, u_4,u_5,\de\big) 
\,\chi_0\Big({|u_6+u_7v|^{\frac 12}}\Big) \,  dvds_3
\end{eqnarray*}
and choosing the cuboid $Q$ in the obvious way.
Then we easily see that   $\|H_{\la,x}\|_{C^1(Q)}\lesssim |D|^{-1/4},$ and so after summation in those $l$ for which $|B'|\ll 1,$ we can also sum (absolutely) in  the $\la$'s for which $|D|\gtrsim 1.$ Observe that this requires again that $\ga(\ze)$  contains  the  factor $\ga_1(\ze).$

This concludes the proof of the uniform estimate of $\nu^1_{II, 1+it}(x)$ in $x$ and $\de,$  and thus also of  estimate \eqref{12.9}.

\subsubsection{Contribution by the $\nu_{l,I}^\la$ }

Let us next consider  the contribution of the  terms $\nu_{l,I}^\la$ in \eqref{12.5}, i.e.,  let us look at 
\begin{equation}\label{12.19}
\nu_{I,1+it}:=\ga(1+it) \sum_{2^M\le \la\le 2^M\rho^{-1}}\,\sum_{\{l:M_0\le 2^l\le \frac{\la}{M_1}\}}
2^{-\frac 12  it l}\la^{-2it}\, \mu_{l,\la,I}(x),
\end{equation}
where we have set $ \mu_{l,\la,I}:=2^{-\frac 13 l}\la^{-\frac 43}\, \nu_{l,I}^\la.$ 
We want to prove that

\begin{equation}\label{12.20}
|\nu_{I,\,1+it}(x)|\le C \qquad \forall t\in\RR, x\in \RR^3.
\end{equation}

Our discussion in Section \ref{m2b34}  shows that 
\begin{eqnarray*}
&&\hskip1cm\mu_{l,\la,I}(x)=\la^{\frac 13}\, 2^l   \int e^{-i s_3 \tilde\Psi(u,z,s_2,x, \de,\la, l)}\tilde a\big(\si_{2^l\la^{-1}}u, (2^{l}\la^{-1})^{\frac23}z,s_2,\de\big) \,  \chi_1(s_2,s_3)\\
&&\hskip0cm\times   \,(1-\chi_0)\Big(\ve(2^l\la^{-1})^{-\frac13}(x_1-s_2^\frac1{n-2}G_1(s_2,\de))\Big)\,\chi_0(u) \chi_1(u_1) \,\chi_1(z)\,du_1 du_2\, dz  \,ds_2 ds_3   ,
\end{eqnarray*}
where $\ve>0$ is small.
Moreover,
\begin{eqnarray*}
\tilde\Psi(u,z,s_2,x, \de,\la, l)&=&\la^{\frac 13} 2^{\frac{2l}3}\big(x_1-s_2^\frac1{n-2}G_1(s_2,\de)
 -  (2^l\la^{-1})^{\frac 13} u_1\big) \, z \\
&+&\la \Big(  s_2^{\frac {n}{n-2}}G_5(s_2,\de)- x_1 s_2^{\frac {n-1}{n-2}}G_3(s_2,\de)- s_2x_2-x_3
\Big)\\
&+& 2^l\big(u_1^3\, B_3(s_2,\de_1, (2^{-l}\la)^{-\frac 13}u_1)   +\phi^\sharp_{2^{-l}\la}(u_1,u_2,\, \tilde\de^{2^{-l}\la}, s_2)\big),
\end{eqnarray*}
with $\phi^\sharp$ given by \eqref{7.13II}. Observe that the first term is here much bigger than $2^l,$ so that it dominates the third term.  

We change variables from $s_2$ to $v:=x_1-s_2^\frac1{n-2}G_1(s_2,\de).$  Then $s_2$ is a smooth function $s_2\big(v,x_1, \de\big),$ and we may re-write 
\begin{eqnarray*}
&&\hskip1cm\mu_{l,\la,I}(x)=\la^{\frac 13}\, 2^l   \int e^{-i s_3 \tilde\Psi_1(u,z,v,x, \de,\la, l)}\, a_1\big(\si_{2^l\la^{-1}}u, (2^{l}\la^{-1})^{\frac13},z,v,x,\de\big) \,  \chi_0(v)\chi_1(s_3)\\
&&\hskip4cm\times   \,(1-\chi_0)\big(\ve(2^l\la^{-1})^{-\frac13}v\big)\,\chi_0(u) \chi_1(u_1) \,\chi_1(z)\,du_1 du_2\, dz  \,v\, ds_3,
\end{eqnarray*}
where $\tilde\Psi_1$ if of the form
\begin{eqnarray*}
\tilde\Psi_1(u,z,v,x, \de,\la, l)&=&\big(\la^{\frac 13} 2^{\frac{2l}3}\, v-2^lu_1\big)z +2^l g_1\big(u,v, x, (2^l\la^{-1})^{1/3}, \tilde\de^{2^{-l}\la},\de\big)\\
&+&\la  g_2\big(u,v, x, \de\big),
\end{eqnarray*}
with smooth, real-valued  functions $g_1$ and $ g_2.$ Integrating $N$ times by parts in $z,$ and subsequently  changing coordinates from $v$ to $w:=(2^l\la^{-1})^{-\frac13}v,$ we arrive at the following expression for $\mu_{l,\la,I}(x):$

\begin{eqnarray}\nonumber
\mu_{l,\la,I}(x)&=& 2^{(\frac {4}3-N)l}   \int e^{-i s_3 \Psi(y,z,w,x, \de,\la, l)}\, a\big(\si_{2^l\la^{-1}}y, (2^{l}\la^{-1})^{\frac13},z,(2^l\la^{-1})^{\frac13}w,\de\big)\, \chi_1(s_3)\\
&&\hskip-0.5cm\times  \chi_0((2^l\la^{-1})^{\frac13}w) \,(1-\chi_0)(\ve w)\,\chi_0(y) \chi_1(y_1) \,\chi_1(z)\frac {1}{(w-y_1)^N}\,dy_1 dy_2\, dz\, ds_3 \, dw \label{nuI},
\end{eqnarray}
with phase $\Psi$ of the form
\begin{eqnarray}\nonumber
\Psi(y,z,w,x, \de,\la, l)&=&2^l (w-y_1)z +2^l g_1\big(y,(2^l\la^{-1})^{\frac13}w, x, (2^l\la^{-1})^{1/3}, \tilde\de^{2^{-l}\la},\de\big)\\
&+&\la g_2\big(y,(2^l\la^{-1})^{\frac13}w,x, \de\big)\label{12.22}.
\end{eqnarray}
Notice that we have also changed the names of variables $u$ to $y,$ in order to avoid possible confusion in the later application of Lemma \ref{simplesum}. Recall also  that in this integral, $|w|\gg 1\sim u_1.$ 

A Taylor expansion of $g_2$ in the ``variable'' $ (2^l\la^{-1})^{\frac13}w$ then shows that we may re-write the phase in the form
\begin{eqnarray}\nonumber
\Psi(y,z,w,x, \de,\la, l)&=&2^l \Big[(w-y_1)z +g_1\big(y,(2^l\la^{-1})^{\frac13}w, x, (2^l\la^{-1})^{1/3}, \tilde\de^{2^{-l}\la},\de\big)  \label{12.23}.\\
&&\hskip2cmh_0\big(y,(2^l\la^{-1})^{\frac13}w,x, \de\big) w^3\Big]\\
&+&\la h_3\big(y,x, \de\big)+\la^{\frac 23} 2^{\frac l3}\, h_2\big(y,x, \de\big)w+\la^{\frac 13} \,2^{\frac {2l}3} h_1\big(y,x, \de\big)w^2, \nonumber
\end{eqnarray}
where $h_0,\dots, h_3$ are again smooth, real-valued functions of their (bounded) variables.
\medskip

The following lemma will be useful.

\begin{lemma}\label{s12.1} Let $\be_1,\dots,\be_n\in ]0,\infty[$ be given, pairwise  distinct  positive numbers. For any complex numbers 
$\al_1,\dots,\al_n\in\bC,$  denote by $\La$ the set of all dyadic numbers $\la=2^j$ such that 
$\max_{k=1,\dots,n}\la^{\be_k}|\al_k|\ge 1.$ Then there exists an exceptional set $\La_e\subset \La$ depending on the $\al_k$ and $\be_k$ whose cardinality is bounded by a constant $C_1(\be_1,\dots,\be_n)$  depending only on  the $\be_1,\dots,\be_n$ such that for all $\la\in\La\setminus \La_e$  we have that $|\sum_{k=1}^n \la^{\be_k}\al_k|\ge 2/3,$ and moreover
\begin{equation}\label{12.24}
\sum_{\la\in\La\setminus \La_e}\big|\sum_{k=1}^n \la^{\be_k}\al_k\big|^{-1}\le C_2(\be_1,\dots,\be_n),
\end{equation}
where the constant $C_2(\be_1,\dots,\be_n)$ depends  only on the numbers $\be_k.$
\end{lemma}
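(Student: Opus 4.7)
My plan is to exploit the ordering structure of the monomial terms $f_k(\la):=\la^{\be_k}|\al_k|$. Without loss of generality (by dropping zero terms) I assume $\al_k\neq 0$ for all $k$, and I arrange $\be_1<\cdots<\be_n$. For each pair $k\neq k'$ the equation $f_k(\la)=f_{k'}(\la)$ has the unique solution $\la_{k,k'}:=(|\al_k|/|\al_{k'}|)^{1/(\be_{k'}-\be_k)}$, so the $\le\binom{n}{2}$ crossovers partition $]0,\infty[$ into $\le\binom{n}{2}+1$ open intervals $I_1,\dots,I_N$ on each of which the strict ordering of $f_1(\la),\dots,f_n(\la)$ is constant; on each such $I$ let $k_*=k_*(I)$ denote the dominant index.

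I then declare the \emph{good} part $\La_g\subset\La$ to consist of all dyadic $\la\in\La$ lying in some ordering interval on which
$$
f_{k_*}(\la)\ge 3(n-1)\,f_k(\la)\qquad\text{for every }k\neq k_*,
$$
and set $\La_e:=\La\setminus\La_g$. For $\la\in\La_g$ the triangle inequality directly gives
$$
\Big|\sum_{k=1}^n\la^{\be_k}\al_k\Big|\ge f_{k_*}(\la)-\sum_{k\neq k_*}f_k(\la)\ge \tfrac{2}{3}f_{k_*}(\la)\ge\tfrac{2}{3},
$$
using that $\la\in\La$ forces $\max_k f_k(\la)=f_{k_*}(\la)\ge 1$.

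The main step is bounding $|\La_e|$. If $\la\in\La\cap I\setminus\La_g$ there is some $k'\neq k_*$ with $f_{k'}(\la)\le f_{k_*}(\la)<3(n-1)f_{k'}(\la)$, and the strict monotonicity of $f_{k_*}(\la)/f_{k'}(\la)=(|\al_{k_*}|/|\al_{k'}|)\la^{\be_{k_*}-\be_{k'}}$ confines $\la$ to an interval of logarithmic length at most $\log(3(n-1))/|\be_{k_*}-\be_{k'}|$ around the crossover $\la_{k_*,k'}$, a bound depending only on the exponent gaps. Summing over the $\le n-1$ choices of $k'$ and the $\le\binom{n}{2}+1$ ordering intervals yields the desired uniform bound $|\La_e|\le C_1(\be_1,\dots,\be_n)$.

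Finally, the summability estimate \eqref{12.24} reduces to a geometric series computation. On $\La_g$ we have $|\sum_k\la^{\be_k}\al_k|^{-1}\le\tfrac{3}{2}f_{k_*(\la)}(\la)^{-1}$, and splitting the sum by $k_*=k_*(\la)$ reduces matters to estimates of the form
$$
\sum_{\{j:\,2^{j\be_k}|\al_k|\ge 1\}} 2^{-j\be_k}|\al_k|^{-1}\le \frac{1}{1-2^{-\be_k}},
$$
since the smallest admissible $j$ already satisfies $2^{-j\be_k}|\al_k|^{-1}\le 1$. Summing the $n$ contributions gives $C_2(\be_1,\dots,\be_n)=\tfrac{3}{2}\sum_{k=1}^n(1-2^{-\be_k})^{-1}$. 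The only subtle point throughout is to verify that every length and summation bound depends only on the exponent gaps $|\be_k-\be_{k'}|$, ensuring that $C_1,C_2$ are independent of the $\al_k$'s.
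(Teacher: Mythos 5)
Your proof is correct and follows essentially the same strategy as the paper's. Both arguments isolate a dominant term $f_{k_*}(\la)=\la^{\be_{k_*}}|\al_{k_*}|$ and discard a bounded exceptional set where no single term sufficiently dominates; both then control the remaining sum by comparison with a geometric series in the dominant index. The only cosmetic difference is in how the exceptional set is defined: the paper removes those $\la$ for which \emph{some pair} of terms has ratio within $[1/4,4]$, which makes the cardinality bound immediate without reference to ordering intervals, and then deduces the lower bound $2/3$ via a geometric chain argument with ratio $4$; you instead work interval-by-interval, demand that the leading term dominate each of the others by the factor $3(n-1)$, and invoke the plain triangle inequality. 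Both thresholds are tuned to produce the same constant $2/3$, and both exceptional-set bounds depend only on $n$ and the gaps $|\be_k-\be_{k'}|$, so the content is identical. (Incidentally your $C_2=\tfrac32\sum_k(1-2^{-\be_k})^{-1}$ is slightly sharper than the paper's $\tfrac32\,n!\max_k(1-2^{-\be_k})^{-1}$, whose $n!$ factor is generous.)
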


\begin{proof} We may assume without loss of generality that $\al_k\ne 0$ for every $k.$ 
Observe that if $k\ne l$ and, say, $\be_k>\be_l,$ then we have that $1/4\le (2^{\be_kj}|\al_k|)/(2^{\be_lj}|\al_l|)\le 4$ if and only if 
$$
\Big| j+\frac{\log_2|\frac{\al_k}{\al_l}|}{\be_k-\be_l}\Big|\le \frac 2{\be_k-\be_l}.
$$
We therefore define the set $\La_e$ to be the set of  all dyadic numbers  $\la=2^j\in \La$  satisfying this conditions for at least on pair $k\ne l.$ The cardinality of $\La_e$ is then clearly bounded by 
$\binom n2\, 4\max_{k\ne l}|\be_k-\be_l|^{-1}.$ Moreover, if $\la\in\La\setminus \La_e,$ and if we choose a permutation $(k(1),\dots,k(n))$ of $(1,\dots,n)$ so that
$$
\la^{\be_{k(1)}}|\al_{k(1)}|>\la^{\be_{k(2)}}|\al_{k(2)}|>\cdots >\la^{\be_{k(n)}}|\al_{k(n)}|,
$$
then we have indeed even 
$$
\la^{\be_{k(1)}}|\al_{k(1)}|> 4\la^{\be_{k(2)}}|\al_{k(2)}|>\cdots >4^{n-1}\la^{\be_{k(n)}}|\al_{k(n)}|.
$$
This implies that 
$$
\big|\sum_{k=1}^n \la^{\be_k}\al_k\big|\ge \la^{\be_{k(1)}}|\al_{k(1)}|\big( 1-\sum_{l=1}^{n-1} 4^{-l}\big)\ge
\frac 23  \la^{\be_{k(1)}}|\al_{k(1)}|\ge \frac 23.
$$
And, since $\sum_{j\in\bZ:2^{\be_lj}|\al_l|\ge 1}(2^{\be_lj}|\al_l|)^{-1}\le (1-2^{-\be_l})^{-1},$ we obtain \eqref{12.14}, with 
$$
C_2(\be_1,\dots,\be_n):=\frac 32 n!\, \max_k\frac 1{1-2^{-\be_k}}.
$$
\end{proof}

In order to prove \eqref{12.20}, let us consider
\begin{eqnarray*}
F(t,y,z,w,x, \de, l):= \sum_{2^M\le \la\le 2^M\rho^{-1}}\
\la^{-2it}\,\int e^{-i s_3 \Psi(y,z,w,x, \de,\la, l)} \,\chi_1(s_3)\, ds_3
\end{eqnarray*}
We shall prove that 
\begin{equation}\label{12.25}
|F(t,y,z,w,x, \de,\la, l)|\le C \frac{2^l (1+|w|^3)}{|2^{-i2t}-1|},
\end{equation}
with a constant $C$ not depending on $t,y,z,,x, \de$ and  $l.$ By choosing $N$ in \eqref{nuI} sufficiently big, we see that this estimate will imply \eqref{12.20}, provided $\ga(\ze)$ contains a factor 
$$
\ga_4(\ze):= \frac{2^{2(1-\ze)}-1}{3}.
$$

Let us  put $\be_3:=1,\be_2:=2/3,\be_1:=1/3$ and, given $y,w,x,\de$  and $l,$  also
 $\al_3:= h_3\big(y,x, \de\big), \al_2:= 2^{\frac l3}\, h_2\big(y,x, \de\big)w, \al_1:=2^{\frac {2l}3} h_1\big(y,x, \de\big)w^2.$ Accordingly, we set
\begin{eqnarray*}
\La_1=\La_1(y,w,x,\de,l)&:=&\big\{\la=2^j: 2^M\le \la\le 2^M\rho^{-1} \ \mbox{and} \\
 &&\hskip-2cm\max\{\la |h_3\big(y,x, \de\big)|,\, \la^{\frac 23} |2^{\frac l3}\, h_2\big(y,x, \de\big)w|,\, \la^{\frac 13} \,2^{\frac {2l}3} |h_1\big(y,x, \de\big)w^2|\}\ge 1.\big\}\\
 &=&\big\{\la=2^j: 2^M\le \la\le 2^M\rho^{-1} \ \mbox{and} \ \max_{k=1,\dots,3}\la^{\be_k}|\al_k|\ge 1\big\}\, ,
\end{eqnarray*}
and 
\begin{eqnarray*}
\La_2=\La_2(y,w,x,\de,l)&:=&\big\{\la=2^j: 2^M\le \la\le 2^M\rho^{-1} \ \mbox{and} \\
 &&\hskip-2cm\max\{\la |h_3\big(y,x, \de\big)|,\, \la^{\frac 23} |2^{\frac l3}\, h_2\big(y,x, \de\big)w|,\, \la^{\frac 13} \,2^{\frac {2l}3} |h_1\big(y,x, \de\big)w^2|\}< 1\big\}\, .
\end{eqnarray*}
We also denote by $\La_e\subset \La$ the set of exceptional $\la$'s given by Lemma \ref{s12.1} for this choice of $\be_k$ and $\al_k.$ 
Correspondingly, we decompose $F=F_e+F_1+F_2,$ where $F_e,F_1$ and $F_2$ are defined as $F,$ only with summation over the dyadic $\la$'s restricted to the subsets $\La_e,$ $\La_1\setminus\La_e$ and $\La_2,$ respectively. 

\medskip
For $F_e,$ we then trivially get the estimate $|F(t,y,z,w,x, \de,\la, l)|\le C,$ since the cardinality of $\La_e$ is bounded by a constant not depending on the arguments of $F_e.$ 
\medskip

Next, in order to estimate $F_1,$ let us re-write 
$$
\int e^{-i s_3 \Psi(y,z,w,x, \de,\la, l)} \,\chi_1(s_3)\, ds_3=
\int e^{-i s_3(\la^{\be_1}\al_1+\la^{\be_2}\al_2+\la^{\be_3}\al_3)}\, \Big( e^{-i s_3 \Psi_0(y,z,w,x, \de,\la, l)} \,\chi_1(s_3)\Big)\, ds_3\,, 
$$ 
where $\Psi_0$ denotes the phase 
\begin{eqnarray*}
\Psi_0(y,z,w,x, \de,\la, l)&:=&2^l \Big[(w-y_1)z +g_1\big(y,(2^l\la^{-1})^{\frac13}w, x, (2^l\la^{-1})^{1/3}, \tilde\de^{2^{-l}\la},\de\big)  \\
&&\hskip2cmh_0\big(y,(2^l\la^{-1})^{\frac13}w,x, \de\big) w^3\Big]\, .
\end{eqnarray*}
Observe that $|\Psi_0(y,z,w,x, \de,\la, l)|\le C2^l(1+|w|).$ An integrating by parts in $s_3$  therefore shows that 
$$
\big| \int e^{-i s_3 \Psi(y,z,w,x, \de,\la, l)} \,\chi_1(s_3)\, ds_3\big|\le 
 C\,  \frac {2^l(1+|w|)}{|\la^{\be_1}\al_1+\la^{\be_2}\al_2+\la^{\be_3}\al_3|}.
$$
We may thus control the sum over all $\la\in \La_1$ by means of Lemma \ref{s12.1} and obtain the estimate
$$
|F_1(t,y,z,w,x, \de,\la, l)|\le C 2^l (1+|w|).
$$
\smallskip

Finally, $F_2$ can again be estimated by means of Lemma \ref{simplesum}. Indeed,  observe that in the sum defining $F_2(t,y,z,w,x, \de,\la, l),$  the expressions 
\begin{eqnarray*}
&& (2^l\la^{-1})^{\frac13}w,\ (2^l\la^{-1})^{1/3}, \tilde\de^{2^{-l}\la},
 \la h_3\big(y,x, \de\big), \ \la^{\frac 23} 2^{\frac l3}\, h_2\big(y,x, \de\big)w, \ \la^{\frac 13} \,2^{\frac {2l}3} h_1\big(y,x, \de\big)w^2
\end{eqnarray*}
are all uniformly bounded. Therefore, we may here put
\begin{eqnarray*}
&&H(u_1,\dots, u_6):=\int e^{-is_3\big(2^l [(w-y_1)z +g_1(y,u_1, x, u_2,u_3,\de)+h_0 (y,u_1,x, \de) w^3]+u_4+u_5+u_6\big)}
\, \chi_1(s_3)\, ds_3,
\end{eqnarray*}
with the $a_k$ in Lemma \ref{simplesum} given by 
$$
a_1:=2^{l/3}2^w,\, a_2:=2^{l/3}, \dots, a_4:=h_3(y,x, \de), \, a_5:=2^{\frac l3}\, h_2(y,x, \de)w, \, a_6:=2^{\frac {2l}3} h_1(y,x, \de)w^2,
$$
and the obvious corresponding cuboid $Q.$ Then clearly $\|H\|_{C^1(Q)}\le C 2^l(1+|w|^3),$ and thus Lemma \ref{simplesum} yields the estimate
$$
|F_2(t,y,z,w,x, \de,\la, l)|\le C \frac{2^l (1+|w|^3)}{|2^{-i2t}-1|}.
$$
This concludes the proof of estimate \eqref{12.25}, and thus also of \eqref{12.20}.

\subsubsection{Contribution by the $\nu_{l,III}^\la$ }

The contribution of the  terms $\nu_{l,III}^\la$ in \eqref{12.5} can be treated in a very similar way as the one by the terms $\nu_{l,I}^\la.$ Indeed, arguing  as  before, we here arrive at the following expression for  $ \mu_{l,\la,III}:=2^{-\frac 13 l}\la^{-\frac 43}\, \nu_{l,III}^\la:$ 
\begin{eqnarray}\nonumber
\mu_{l,\la,III}(x)&=& 2^{(\frac {4}3-N)l}   \int e^{-i s_3 \Psi(y,z,w,x, \de,\la, l)}\, a\big(\si_{2^l\la^{-1}}y, (2^{l}\la^{-1})^{\frac13},z,(2^l\la^{-1})^{\frac13}w,\de\big)\, \chi_1(s_3)\\
&&\hskip-0.5cm\times  \chi_0\big((2^l\la^{-1})^{\frac13}w\big) \,\chi_0\big(\frac w \ve\big)\,\chi_0(y) \chi_1(y_1) \,\chi_1(z)\frac {1}{(w-y_1)^N}\,dy_1 dy_2\, dz\, ds_3 \, dw \label{nuIII}.
\end{eqnarray}
The phase $\Psi$ is still given by \eqref{12.22}. Notice that now $|w|\ll 1\sim u_1.$ The arguments used in the preceding subsection therefore carry over to this case, with minor modifications (even simplifications). 

\subsubsection{Contribution by the $\nu_{l,\infty}^\la$ }
Let us next  look at 
\begin{equation}\label{12.27}
\nu_{\infty,1+it}:=\ga(1+it) \sum_{2^M\le \la\le 2^M\rho^{-1}}\,\sum_{\{l:M_0\le 2^l\le \frac{\la}{M_1}\}}
2^{-\frac 12  it l}\la^{-2it}\, \mu_{l,\la,\infty}(x),
\end{equation}
where we have set $ \mu_{l,\la,\infty}:=2^{-\frac 13 l}\la^{-\frac 43}\, \nu_{l,\infty}^\la.$ 
We want to prove that

\begin{equation}\label{12.28}
|\nu_{\infty,\,1+it}(x)|\le C \qquad \forall t\in\RR, x\in \RR^3.
\end{equation}

To this end, recall  formulas \eqref{8.38n} and \eqref{8.39n} for $\nu_{l,\infty}^\la(x).$  From these  formulas, it is easy to see that $|\nu_{l,\infty}^\la(x)|\lesssim 2^{-lN}\la^{-N}$ if $|x|\gg 1,$ and thus summation in $l$ and $\la$ is no problem in this case. So, assume that $|x|\lesssim 1.$ Then the second term in the phase $\Psi(z,s_2,\de)$ in \eqref{8.39n} can be absorbed into the amplitude $a_{N,l},$ and we arrive at an expression of the following form for $ \mu_{l,\la,\infty}(x):$
\begin{eqnarray*}  
 \mu_{l,\la,\infty}(x)&=& 2^{-lN}\, \la^{\frac 13}\int e^{-is_3\la\big(s_2^{\frac {n}{n-2}}G_5(s_2,\de)- x_1 s_2^{\frac {n-1}{n-2}}G_3(s_2,\de)- s_2x_2-x_3\big)} \\
&&\hskip0.5cm \times a_{N,l} \Big(z,  s_2,s_3,\,\de_0^{\r},\tilde\de^{2^{-l}\la},(2^{-l}\la)^{-\frac13}, \la^{-\frac 1{9}}\Big) \chi_1(z)  \,\chi_1(s_2)\chi_1(s_3)\, dz ds_2 ds_3,
\end{eqnarray*}
where $a_{N,l}$ is  a smooth function of all its (bounded) variables such that $\|a_{N,l}\|_{C^k}$ is uniformly bounded in $l.$ Denote by $\Psi(s_2)=\Psi(s_2,x,\de)=s_2^{\frac {n}{n-2}}G_5(s_2,\de)- x_1 s_2^{\frac {n-1}{n-2}}G_3(s_2,\de)- s_2x_2-x_3$ the phase appearing in this integral.  We can now argue in a similar way as in Subsection \ref{deai}:

Since $s_2\sim 1$  in the integral, we see that if $|x_1|\ll1,$ then $|\pa^2_{s_2}\Psi(s_2)|\sim 1,$  and van der Corputs' estimate implies that $|\mu_{l,\la,\infty}(x)|\lesssim 2^{-lN}
\la^{1/3-1/2}.$ We can then sum the series \eqref{12.27} absolutely to arrive at \eqref{12.28}. Let us therefore assume from now on  that $|x_1|\sim 1,$ and that the sign of $x_1$ is such that there is a point 
$s_2^c(x,\de)\sim 1$ such that $\pa^2_{s_2}\Psi(s_2^c(x,\de),x,\de)=0.$ This point is  then unique, by the implicit function theorem, since  $|\pa^3_{s_2}\Psi(s_2^c(x,\de),x,\de)|\sim 1.$ Changing coordinates from $s_2$ to $v:=s_2-s_2^c(x,\de),$  and applying a Taylor expansion in $v,$ we see that the phase can be written in the form
$$
Q_3(v,x,\de)\,v^3 - Q_1(x,\de)\, v+ Q_0(x,\de),
$$
with smooth functions $Q_j.$ Scaling in $v$ by a factor $\la^{-1/3}$ then leads to an expression of the following form  for $ \mu_{l,\la,\infty}(x):$
\begin{eqnarray}  \nonumber
 \mu_{l,\la,\infty}(x)&=& 2^{-lN}\,\int e^{-is_3\big( Q_3(\la^{-\frac 13}w,x,\de)\,w^3 + \la^{\frac 23} Q_1(x,\de)\, w+ \la Q_0(x,\de)\big)} \\
&&\hskip-0.5cm \times a_{N,l} \Big(z,  \la^{-\frac 13}w,s_3,\,\de_0^{\r},\tilde\de^{2^{-l}\la},(2^{-l}\la)^{-\frac13}, \la^{-\frac 1{9}}\Big) \chi_1(z)  \,\chi_0(\la^{\frac 13} w)\chi_1(s_3)\, ds_3 dw dz.\label{12.29}
\end{eqnarray}
Performing first the integration in $s_3,$ this easily implies the following estimate:
\begin{eqnarray*} 
|\mu_{l,\la,\infty}(x)|&&\lesssim  2^{-lN}\,\int \int_{-\la^{\frac 13}}^{\la^{\frac 13}}\Big(1+\big|Q_3(\la^{-\frac 13}w,x,\de)\,w^3 + \la^{\frac 23} Q_1(x,\de)\, w+ \la Q_0(x,\de)\big|\Big)^{-N} \\
&&\hskip5cm \times  \chi_1(z)  \,\chi_0(\la^{\frac 13} w) dw dz.
\end{eqnarray*}
Putting $A:=\la Q_0(x,\de), \, B:=\la^{ 2/3} Q_1(x,\de)$ and $T:=\la^{1/3}$ in Lemma \ref{as1}, we then find that 
$$
|\mu_{l,\la,\infty}(x)|\lesssim 2^{-lN} \max\{|A|^{\frac 13}, |B|^{\frac 12}\}^{\epsilon-\frac 12}.
$$
This estimate allows to sum over all $\la$ such that $\max\{\la |Q_0(x,\de)|,\la^{ 2/3} |Q_1(x,\de)|\}> 1,$ 
even absolutely. 

There remains the summation over all $\la$ such that $\la |Q_0(x,\de)|\le 1$ and 
$\la^{ 2/3} |Q_1(x,\de)|\le 1.$ However, in view of \eqref{12.29}, this sum can easily be controlled by means of Lemma \ref{simplesum}, as we have done in many similar cases  before, and and we shall therefore skip the details. Altogether, we arrive at \eqref{12.28}.

\subsubsection{Contribution by the $\nu_{l,00}^\la$ }

We finally come to the  contribution of the  terms $\nu_{l,00}^\la$ in \eqref{12.5}.  Recall from Subsection \ref{nudell} that 
\begin{eqnarray*}
&&\widehat{\nu_{l,00}^\la}(\xi):=(2^{-l}\la)^{-\frac 23} \chi_1(s,s_3) \,\chi_1((2^{-l}\la)^\frac23 B_1(s,\,\de_1))  \,e^{-is_3 \la B_0(s,\de_1)}\\
&&\hskip1cm\times\iint e^{-i s_3 2^l\Phi(u_1,u_2,s, \de,\la, l)} 
a(\si_{2^l\la^{-1}}u, \de,s) \, \chi_0(u)\chi_0\big( \frac {u_1}\ve\big)\,du_1du_2,
\end{eqnarray*}
where we assume $\ve>0$ to be sufficiently small. Moreover, the phase $\Phi$ is given by \eqref{Philla} and \eqref{philla}, with $B=3.$ Since 
$$
|\tilde\de^{2^{-l}\la}|\ll 1\quad \mbox{and}\quad (2^{-l}\la)^\frac23|B_1(s,\,\de_1)| \sim 1,
$$
we see that we can again integrate by parts in $u_1,$ in order to gain factors $2^{-lN},$ and then the same type of argument that led to the expression \eqref{hatnulinf} for $\widehat{\nu_{l,\infty}^\la}(\xi)$ can be applied in order to see that an analogous expression
 \begin{eqnarray*}
  &&\widehat{\nu_{l,00}^\la}(\xi)=2^{-lN}\, \la^{-\frac 23}  \chi_1(s,s_3) 
\,\chi_1((2^{-l}\la)^\frac23 B_1(s,\,\de_1))\,e^{-is_3 \la B_0(s,\de_1)} \\
 &&\hskip2cm \times\,  \tilde a_{N,l}\Big((2^{-l}\la)^\frac23 B_1(s,\,\de_1), s,s_3,\,\tilde\de^{2^{-l}\la}, \de_0^{\r},(2^{-l}\la)^{-\frac13},\la^{-\frac 1{3B}}\big), 
 \end{eqnarray*}
 can be obtained for $\widehat{\nu_{l,00}^\la}(\xi)$ too, 
 where $\tilde a_{N,l}$ is  again a  smooth function of all its (bounded) variables such that $\|a_{N,l}\|_{C^k}$ is uniformly bounded in $l.$ From here on, we can argue exactly as  for the $\nu_{l,\infty}^\la.$

\medskip

This concludes the proof of the second estimate in \eqref{11.17}, and thus also the proof of part (a) of Proposition \ref{s11.1}.

\setcounter{equation}{0}
\section{Proof of Proposition \ref{s11.1} (b) : Complex interpolation}\label{propfinalb}
In this section, we  assume that $B=3$ and $A=0$ in \eqref{5.17II},   and that  $\la\rho\gg 1.$
 
 \subsection{Estimation of $T^{III}_{\de,Ai}$}\label{deaib}

As usually, we embed   $\nu_{\de, Ai}^{III}$ into an analytic  family of measures 
$$
\nu_{\de,\,\zeta}^{III}:=\ga(\ze) \sum_{2^M\rho^{-1}< \la\le 2^{-M}\de_0^{-3}}
\big(\rho^{-\frac 45}(\la\rho)\big)^{\frac 56 (1-3\ze)}\nu_{\de,0}^\la\, ,
$$
where $\ze$ lies  in the complex strip $\Sigma$ given by   $0\le \Re \zeta\le1.$  
 Since the supports of the $\widehat{\nu_{\de,0}^{\la}}$ are almost disjoint, and since, according to \eqref{11.4}, $\|\nu_{\de,0}^{\la}\|_\infty \lesssim \rho^{-4/3} (\la\rho)^{ 5/3},$ we see that 
$$
\|\widehat{\nu_{\de,\,it}^{III}}\|_\infty\lesssim 1\qquad \forall t\in\RR.
$$
Again, by  Stein's interpolation  theorem, it will therefore suffice to prove the following estimate:
\begin{equation}\label{13.1}
|\nu_{\de,\,1+it}^{III}(x)|\le C \qquad \forall t\in\RR, x\in \RR^3.
\end{equation}

Now, if $|x|\gg1,$ arguing in a similar way as for the case $B=4$ in Subsection \ref{Aig}, we see that 
$|\nu_{\de,0}^{\la}(x)|\lesssim \rho^{\frac 23+\frac 23}\la^3(\la\rho^{2/3})^{-N}$ for every $N\in\NN,$ which allows to sum absolutely in $\la$ and obtain  \eqref{13.1}.

\medskip
From now on, we shall therefore assume that $|x|\lesssim 1.$ We then re-write
\begin{equation}\label{13.2}
\nu_{\de,\,1+it}^{III}(x)=\ga(1+it)\sum_{2^M\rho^{-1}< \la\le 2^{-M}\de_0^{-3}}
\big(\rho^{-\frac 45}(\la\rho)\big)^{-\frac 52 it}\mu_\la(x)\, ,
\end{equation}
where $\mu_\la:=\rho^{4/3}(\la\rho)^{-5/3}\nu_{\de,0}^\la.$  Recall also from \eqref{11.5} that $\|\nu_{\de,0}^{\la}\|_\infty \lesssim  \rho^{-\frac 43} (\la\rho)^{\frac 53},$  which barely fails to be sufficient to obtain  \eqref{13.1}.

We therefore need again a more refined reasoning.  Following our discussion for the case $B=4$ in Subsection \ref{Aig}, we decompose 
$
\nu^\la_{\de,0}=\nu_{0,I}^\la+\nu_{0,II}^\la
$
as in \eqref{nudedec}. In the same way in which we had derived  \eqref{10.21}, we find here  that
\begin{equation}\label{nuIIIest}
\|\nu_{0,I}^\la(x)\|_\infty \le C_N \rho^{\frac 23}\la^2\,(\la\rho)^{-N}.
\end{equation}
If we denote by $\mu_{\la,I}:=\rho^{4/3}(\la\rho)^{-5/3}\nu_{0,I}^\la,$ then this estimate shows that we can sum the corresponding series in \eqref{13.2}, with $\mu_\la$ replaced by $\mu_{\la,I},$ absolutely, and obtain the desired uniform estimate in $x$ and $\de.$

\medskip
What remains are the contributions by the $\nu_{0,II}^\la.$ In order to keep the notation simple, we therefore shall assume from now on that $\mu_\la=\rho^{4/3}(\la\rho)^{-5/3}\nu_{0,II}^\la.$  

In analogy with our formulas \eqref{nudon4} and \eqref{Phi4n}, we  then find  the following expressions for 
$\mu_\la:$
\begin{eqnarray*}\nonumber
\mu_\la(x)&=&(\la\rho)^{\frac 13} \int e^{-i\la s_3 \Phi_4(y_1,u_2,w,x,\de)}
 \widehat{\chi_0}(s_3y_1)\, \chi_0(w)\, \chi_0(w -(\la\rho)^{-1} y_1) \nonumber \\
&&\hskip1cm\times \chi_0(u_2)\, a_4\big((\la\rho^{\frac 23})^{-1} y_1,\rho^{\frac 13} u_2,w,s_1,\rho^{\frac 13},x, \de\big) \tilde\chi_1(s_3)\, dy_1du_2dw ds_3\, 
\end{eqnarray*}
with phase $\Phi_4$ of the form
\begin{eqnarray*}\nonumber
&&\Phi_4(y_1,u_2,w,x,\de)= \Psi_3\big(\rho^{\frac 13}w, x,\de\big)+\rho \big(w -(\la\rho)^{-1} y_1\big)^3\, \tilde B_3\big(\rho^{\frac 13}w,(\la\rho^{\frac 23})^{-1} y_1,x,\de\big)  \nonumber \\
&&\hskip2cm+ \rho\Big(u_2^3 \,b\big(\rho^{\frac 13}w,(\la\rho^{\frac 23})^{-1} y_1,\rho^{\frac 13}u_2,x,\de_0^{\r}\big) +\de'_{3,0} u_2 \,\tilde\al_1\big(\rho^{\frac 13}w,x,\de_0^{\r}\big)\\
&&\hskip2cm
+\de'_{0} u_2 \,\big( w -(\la\rho)^{-1} y_1\big)\, \al_{1,1}\big(\rho^{\frac 13}w,(\la\rho^{\frac 23})^{-1} y_1,x,\de_0^{\r}\big)\Big)\, .\nonumber
\end{eqnarray*}
Recall  that in this integral, $|u_2|+|w|\lesssim 1$ and $|y_1|\lesssim \la\rho.$ 
Moreover, the factor $\widehat{\chi_0}(s_3y_1)$ guarantees the absolute convergence of this integral with respect to the variable $y_1.$ We  also recall that
$
\de'_0+\de'_{3,0}\sim 1,
$
and that the coefficient $\de'_{3,0}$ does not appear in Case ND, where $\al_{1,1}=0$.  Finally, we perform the change of variables $u_2=(\la\rho)^{-1/3} y_2$ and obtain 
\begin{eqnarray}\nonumber
\mu_\la(x)&=& \int e^{-is_3 \Phi_5(y,w,x,\de;\la)}
 \widehat{\chi_0}(s_3y_1)\, \chi_0(w)\, \chi_0(w -(\la\rho)^{-1} y_1) \nonumber \\
&&\hskip1cm\times \chi_0((\la\rho)^{-1/3} y_2)\, a_5\big((\la\rho^{\frac 23})^{-1} y_1,\la^{-\frac 13} y_2,w,s_1,\rho^{\frac 13},x, \de\big) \tilde\chi_1(s_3)\, ds_3dy_1dy_2 dw \, ,\label{nudon43}
\end{eqnarray}
with phase $\Phi_5$ of the form
\begin{eqnarray}\nonumber
&&\Phi_5(y,w,x,\de;\la)=\la \Psi_3\big(\rho^{\frac 13}w, x,\de\big)+\la\rho \big(w -(\la\rho)^{-1} y_1\big)^3\, \tilde B_3\big(\rho^{\frac 13}w,(\la\rho^{\frac 23})^{-1} y_1,x,\de\big)  \nonumber \\
&&\hskip3cm+ y_2^3 \,\tilde b\big(\rho^{\frac 13}w,(\la\rho^{\frac 23})^{-1} y_1,\la^{-\frac 13}y_2,x,\de_0^{\r}\big) 
\label{Phi4n3}\\
&&\hskip0cm
+y_2\,  (\la\rho)^{\frac 23} \Big( \de'_{3,0}  \,\tilde\al_1\big(\rho^{\frac 13}w,x,\de_0^{\r}\big)
+\de'_{0} \,\big( w -(\la\rho)^{-1} y_1\big)\, \tilde\al_{1,1}\big(\rho^{\frac 13}w,(\la\rho^{\frac 23})^{-1} y_1,x,\de_0^{\r}\big)\Big)\, .\nonumber
\end{eqnarray}
Performing a Taylor expansion with respect to the  bounded quantities  $(\la\rho^{\frac 23})^{-1} y_1$ and 
$(\la\rho)^{-1} y_1,$ we see that we may re-write the phase as
\begin{eqnarray}\nonumber
&&\Phi_5(y,w,x,\de;\la)=A+ By_2+ \tilde b\big(\rho^{\frac 13}w,(\la\rho^{\frac 23})^{-1} y_1,\la^{-\frac 13}y_2,x,\de_0^{\r}\big)\, y_2^3\\
&& \hskip2cm+r_1(y_1) +(\la\rho)^{-\frac 13}y_2r_2(y_1)\, , \label{Phi5n}
\end{eqnarray}
where 
\begin{eqnarray*}\nonumber
&&A:= \la\Big[\Psi_3\big(\rho^{\frac 13}w, x,\de\big)+\rho w^3 \tilde B_3\big(\rho^{\frac 13}w,0,x,\de\big) \Big]
=:\la\, Q_A(\rho^{\frac 13}w,x,\de);\nonumber\\
&&B:=(\la\rho)^{\frac 23} \Big( \de'_{3,0}  \,\tilde\al_1\big(\rho^{\frac 13}w,x,\de_0^{\r}\big)+
\de'_{0}  w  \tilde\al_{1,1}\big(\rho^{\frac 13}w,0,x,\de_0^{\r}\big)\Big)=:\la^{\frac 23}\, Q_B(\rho^{\frac 13}w,x,\de)\, ,
\end{eqnarray*}
and where $r_1$ and $r_2$ are of the form
\begin{eqnarray*}
&&r_1(y_1)=R_1\big(w,(\la\rho)^{-1} y_1, {\rho^\frac 13}, x,\de\big)\, y_1\, ,\\
&&r_2(y_1)=R_2\big(w,(\la\rho)^{-1} y_1, {\rho^\frac 13}, x,\de\big)\, y_1\, ,
\end{eqnarray*}
with smooth functions $R_1,R_2$ of their bounded entries $w,(\la\rho)^{-1} y_1, {\rho^\frac 13}, x$ and $\de.$

Let us put, for $w$ fixed such that $|w|\lesssim 1,$ 
\begin{eqnarray*}\nonumber
\mu_\la(w,x)&:=& \int e^{-is_3 \Phi_5(y,w,x,\de;\la)}
 \widehat{\chi_0}(s_3y_1)\, \chi_0(w -(\la\rho)^{-1} y_1) \nonumber \\
&&\hskip0cm\times \chi_0((\la\rho)^{-1/3} y_2)\, a_5\big((\la\rho^{\frac 23})^{-1} y_1,\la^{-\frac 13} y_2,w,s_1,\rho^{\frac 13},x, \de\big) \tilde\chi_1(s_3)\, ds_3dy_1dy_2 \, .
\end{eqnarray*}
By means of integrations by parts in $s_3$ and exploiting the rapid decay of $\widehat{\chi_0}(s_3y_1)$ in $y_1,$  we may estimate
\begin{eqnarray*}
&&|\mu_\la(w,x)|\le C_N \int\int\limits_{|y_2|\lesssim(\la\rho)^{\frac 13}}\int \Big( 1+\big|A+ By_2+ \tilde b\big(\rho^{\frac 13}w,(\la\rho^{\frac 23})^{-1} y_1,\la^{-\frac 13}y_2,x,\de_0^{\r}\big)\, y_2^3 \\
 &&\hskip3cm +r_1(y_1)+(\la\rho)^{-\frac 13}y_2r_2(y_1)\big|\Big)^{-N}(1+|y_1|)^{-N} \, dy_1dy_2\, .
\end{eqnarray*}

Observe first that  $|B|\lesssim (\la\rho)^{\frac 23}.$ Thus, if $|A|\gg \la\rho,$ then the term $A$ becomes dominant, and  we can clearly estimate $|\mu_\la(x)|\le C |A|^{-N}$ for every $N\in\NN.$  Otherwise, if  $|A|\lesssim \la\rho,$
then by choosing $T:=c(\la\rho)^{1/3}$ in Lemma \ref{as1}, with  a suitable constant $c>0,$  we see that all assumptions of this lemma are satisfied, and we obtain the estimate
\begin{equation}\label{13.6}
|\mu_\la(w,x)|\le C \max\{|A|^{\frac 13}, |B|^{\frac 12}\}^{-\frac 12}.
\end{equation}

This estimate thus holds no matter how large $|A|$ is. 

Consider the function
$$
F(t,w,x,\de):=\sum_{2^M\rho^{-1}< \la\le 2^{-M}\de_0^{-3}}
\big(\rho^{-\frac 45}(\la\rho)\big)^{-\frac 52 it}\mu_\la(w,x)\, ,
$$
for $|w|\lesssim 1.$ We shall prove that 
\begin{equation}\label{13.7}
|F(t,w,x,\de)|\le C \frac 1{|2^{-i\frac 52 t}-1|},
\end{equation}
with a constant $C$ not depending on $t,w, x$ and $\de.$ This estimate will immediately yield the desired estimate for the contributions of the $\nu_{0,II}^\la$ and thus complete the proof of \eqref{13.1}, provided we choose 
$$
\ga(\ze):= \frac{2^{\frac 52 (1-\ze)}-1}{2^{\frac 53}-1}.
$$
Given $w,x,\de,$ denote by $\La(w,x,\de)$ the set of all dyadic $\la$ from our summation range 
$\La:=\{\la=2^j:2^M\rho^{-1}< \la\le 2^{-M}\de_0^{-3}\},$
 for which either  $|A|=\la\, |Q_A(\rho^{\frac 13}w,x,\de)|>1$ or $|B|=\la^{\frac 23}\, |Q_B(\rho^{\frac 13}w,x,\de)|>1.$ 
We the decompose $F(t,w,x,\de)=F_1(t,w,x,\de)+F_2(t,w,x,\de),$ where $F_1(t,w,x,\de)$ and  $F_2(t,w,x,\de)$ are defined as $F,$ only with summation restricted to the subsets $\La(w,x,\de)$ and $\La\setminus \La(w,x,\de),$ respectively.  Then, by \eqref{13.6}, we clearly have that 
$$
|F_1(t,w,x,\de)|\le \sum_{\la\in \La(w,x,\de)}
|\mu_\la(w,x)|\le C,
$$
and we are thus left with $F_2(t,w,x,\de).$ 

In the corresponding sum, we have $\la\, |Q_A(\rho^{\frac 13}w,x,\de)|\le1$ and  $\la^{\frac 23}\, |Q_B(\rho^{\frac 13}w,x,\de)|\le 1,$  and therefore  $F_2$ can again be estimated by means of Lemma \ref{simplesum}. Indeed,  we may here put
\begin{eqnarray*}
&&H(u_1,\dots, u_6)\\
&&\hskip0.5cm:=\int e^{-is_3 \big(u_1+u_2y_2+\tilde b (\rho^{\frac 13}w,u_3 y_1,u_4y_2,x,\de_0^{\r})\, y_2^3 +
R_1(w,u_5 y_1, {\rho^\frac 13}, x,\de)\, y_1 +u_6 y_2 R_2(w,u_5 y_1, {\rho^\frac 13}, x,\de)\big)}\\
 &&\hskip0.5cm\widehat{\chi_0}(s_3y_1)\, \chi_0(w -u_5 y_1) \chi_0(u_6 y_2)\, a_5\big(u_3 y_1,u_4 y_2,w,s_1,\rho^{\frac 13},x, \de\big) \tilde\chi_1(s_3)\, ds_3dy_1dy_2 \, ,
\end{eqnarray*}
where the variables $u_1,\dots, u_6$ correspond to the bounded expressions $\la Q_A(\rho^{\frac 13}w,x,\de),$  
$\la^{\frac 23}Q_B(\rho^{\frac 13}w,x,\de), (\la\rho^{2/3})^{-1}, \la^{-1/3}, (\la\rho)^{-1}$ and $(\la\rho)^{-1/3},$ respectively. By means of integrations by parts in the variable  $y_2$ for $|y_2|\gg 1$ (or, alternatively, in $s_3$), it is then easily verified that 
$\|H\|_{C^1(Q)}\le C,$  where $Q$ denotes the obvious  cuboid $Q$  appearing in this situation.  Thus, estimate \eqref{13.7} follows from Lemma \ref{simplesum}.

\subsection{Estimation of $T^{IV}_{\de}$}\label{deIV}
The estimation of the operator $T^{IV}_{\de}$ will follow similar ideas as the one for $T^{II}_{\de}.$ Nevertheless, for the convenience of the reader, we will give  some details.

As usually, we embed   $\nu_{\de}^{IV}$ into an analytic  family of measures 
$$
\nu^{IV}_{\de,\zeta}:=\ga(\zeta)\sum_{\{l:M_0\le 2^l\le \frac{\rho^{-1}}{M_1}\}}\sum_{2^M\rho^{-1}< \la\le 2^{-M}\de_0^{-3}}  \big((2^l\rho)^{-\frac 45}(\la2^l\rho)\big)^{\frac 56 (1-3\ze)}\nu_{l,0}^\la,
$$
where $\ze$ lies  in the complex strip $\Sigma$ given by   $0\le \Re \zeta\le1.$  
 Since the supports of the $\widehat{\nu_{\de,0}^{\la}}$ are almost disjoint,  estimate  \eqref{11.10} shows that
$$
\|\widehat{\nu_{\de,\,it}^{IV}}\|_\infty\lesssim 1\qquad \forall t\in\RR.
$$
Again, by  Stein's interpolation  theorem, it will therefore suffice to prove the following estimate:
\begin{equation}\label{13.9}
|\nu_{\de,\,1+it}^{IV}(x)|\le C \qquad \forall t\in\RR, x\in \RR^3,
\end{equation}
where we write 
\begin{equation}\label{13.10}
\nu_{\de,\,1+it}^{IV}(x)=\ga(1+it) \sum_{\{l:M_0\le 2^l\le \frac{\rho^{-1}}{M_1}\}}\sum_{2^M\rho^{-1}< \la\le 2^{-M}\de_0^{-3}}  (\la (2^l\rho)^{\frac 15})^{-\frac 52 it}\mu_{l,\la}(x),
\end{equation}
with $\mu_{l,\la}:=\la^{-5/3} (2^l\rho)^{-1/3}\nu_{l,0}^\la.$

Regretfully, it seems that the approach in the previous subsection cannot be applied in the present situation, and a  more refined analysis is needed, similar to our discussion in Subsection \ref{tdII}.  From \eqref{nudel1} to 
\eqref{11.12} we get  that
\begin{eqnarray*}
\mu_{l,\la}=(2^l\rho) \la^{\frac 43}\int e^{-i\la s_3 \Phi_2(u,z,s_2,x,\de)}
\chi_1(z)\chi_0(u) a(\si_{2^l\rho} u, (2^l\rho)^{\frac 23} z,s,\de) \tilde\chi_1(s_2, s_3)\, du dz ds_2 ds_3,
\end{eqnarray*}
where
\begin{eqnarray}\nonumber
&&\Phi_2(u,z,s_2,x,\de)= s_2^{\frac {n}{n-2}}G_5(s_2,\de)- x_1 s_2^{\frac {n-1}{n-2}}G_3(s_2,\de)- s_2x_2-x_3\\
&&\hskip1.5cm +z(2^l\rho)\Big((2^l\rho)^{-\frac 13} \big(x_1-s_2^{\frac {1}{n-2}}G_1(s_2,\de)\big)-  u_1\Big)+(2^l\rho) u_1^3\, B_3\big(s_2,\de_1,(2^l\rho)^{\frac 13}u_1\big) \label{Phi2nl}\\
&&\hskip1.5cm+ (2^l\rho)\Big(u_2^3 \,b(\si_{2^l\rho} u,\de_0^\r,s_2)
+\de'_{3,0} u_2 \,\tilde\al_1(\de_0^{\r},s_2)\nonumber
+\de'_{0} u_1u_2 \, \al_{1,1}\big((2^l\rho)^{\frac 13}u_1,\de_0^{\r},s_2\big)\Big).\nonumber
\end{eqnarray}
Now,  if $|x|\gg1,$ we see that 
$|\mu_{l,\la}(x)|\lesssim \,2^l\rho\la^{4/3}(\la (2^l\rho)^{2/3})^{-N}$ for every $N\in\NN,$ which is stronger than what is needed for \eqref{13.9}.

\medskip
From now  on we shall therefore  assume that $|x|\lesssim 1.$  For such $x$ fixed,  we again decompose 
\begin{equation}\label{nudedecl}
\nu^\la_{l,0}=\nu_{l,I}^\la+\nu_{l,II}^\la,
\end{equation}
where $\nu_{l,I}^\la$  and $\nu_{l,II}^\la$ denote the contributions to the integral above  by the  region $L_{I}$  where
$
|x_1-s_2^{\frac {1}{n-2}}G_1(s_2,\de)|\gg (2^l\rho)^\frac13, 
$
and the region $L_{II}$  where 
$
|x_1-s_2^{\frac {1}{n-2}}G_1(s_2,\de)|\lesssim (2^l\rho)^\frac13, 
$
respectively.  Then, in analogy with \eqref{nuIIIest}, by means of integrations by parts in $z$ we obtain 
\begin{equation}\label{nuIIIestl}
\|\nu_{l,I}^\la(x)\|_\infty \le C_N =(2^l\rho)^{\frac 23}\la^2\,(\la2^l\rho)^{-N}.
\end{equation}
If we denote by $\mu_{l,\la,I}:=\la^{-5/3} (2^l\rho)^{-1/3}\nu_{l,I}^\la,$ then this estimate shows that we can sum the corresponding series in \eqref{13.10}, with $\mu_{l,\la}$ replaced by $\mu_{l,\la,I},$ absolutely, and obtain the desired uniform estimate in $x$ and $\de.$

\medskip
What remains are the contributions by the $\nu_{l,II}^\la.$ In order to keep the notation simple, we therefore shall assume from now on that $\mu_{l,\la}=\la^{-5/3} (2^l\rho)^{-1/3}\nu_{l,I}^\la,$ i.e., that
\begin{eqnarray}\label{13.14}
&&\mu_{l,\la}(x)=(2^l\rho) \la^{\frac 43}\int e^{-i\la s_3 \Phi_2(u,z,s_2,x,\de)}
 a\big((2^l\rho)^{\frac 13} u, (2^l\rho)^{\frac 23} z,s,\de\big)\, \chi_1(z)\chi_0(u)\nonumber\\
&&\hskip3cm \chi_0\big((2^l\rho)^{-\frac13}(x_1-s_2^{\frac {1}{n-2}}G_1(s_2,\de)\big) \, \tilde\chi_1(s_2, s_3)\, du dz ds_2 ds_3,
\end{eqnarray}
Given a point $u^0,s_2^0,z^0$ such that the amplitude in this integral does not vanish, we want to understand the contribution of a small neighborhood of this point to the integral.  Assume first that $\pa_{u_1}\Phi_2(u^0,z^0,s^0_2,x,\de)\ne 0.$ Then, integrations by parts in $u_1$ allow to gain factors $(\la2^l\rho)^{-N},$  and so we can again sum the corresponding contributions to  $\nu_{\de,\,1+it}^{IV}(x)$ absolutely. 

Let us next assume that $\pa_{u_1}\Phi_2(u^0,z^0,s^0_2,x,\de)= 0.$ For a short while, it  will then be helpful to change coordinates from  $s_2$ first to $v:=x_1-s_2^{\frac {1}{n-2}}G_1(s_2,\de),$ and then to $w:=
(2^l\rho)^{-\frac 13}v= (2^l\rho)^{-\frac 13}\big(x_1-s_2^{\frac {1}{n-2}}G_1(s_2,\de)\big)$ in a similar way is in Subsection \ref{Aig}, and re-write
\begin{eqnarray*}
&&\mu_{l,\la}(x)=(\la 2^l\rho)^{\frac 43}\int e^{-i\la s_3 \tilde\Phi_2(u,z,w,s_2,x,\de)}
 a\big((2^l\rho)^{\frac 13} u, (2^l\rho)^{\frac 23} z,(2^l\rho)^{\frac 13}w,x,\de\big)\, \chi_1(z)\chi_0(u)\nonumber\\
&&\hskip7cm \chi_0(w)) \, \chi_1(s_3)\, du dz ds_2 ds_3,
\end{eqnarray*}
where
\begin{eqnarray*}\nonumber
&&\tilde\Phi_2= z(2^l\rho)(w-u_1)+\Psi_3(\rho^{\frac 13} w,x,\de)+(2^l\rho) u_1^3\,  B_3\big((2^l\rho)^{\frac 13} w,(2^l\rho)^{\frac 13}u_1,x,\de\big) \\
&&\hskip2cm+ (2^l\rho)\Big(u_2^3 \,b\big((2^l\rho)^{\frac 13} u,(2^l\rho)^{\frac 13} w,x,\de_0^\r\big)
+\de'_{3,0} u_2 \,\tilde\al_1\big(2^l\rho)^{\frac 13} w,x,\de_0^{\r}\big)\\
&&\hskip2cm+\de'_{0} u_1u_2 \, \al_{1,1}\big(2^l\rho)^{\frac 13}u_1,2^l\rho)^{\frac 13} w,x,\de_0^{\r}\big)\Big)\, ,\nonumber
\end{eqnarray*}
By $w^0$ we denote the value of $w$ corresponding to $s_2^0.$ We now can see that there is also a unique critical point of the phase with respect to the variable $z,$ at $z^0,$  provided $w=u_1^0.$ Thus, if $w^0=u_1^0,$ then the phase has a critical point with respect to the variable $(u_1,s_2)$ respectively $(u_1,w);$  otherwise, we can again integrate by parts  in $z,$ which allows to gain  factors $(\la2^l\rho)^{-N}$ as before, and we are done. 
So, assume that $w^0=u_1^0.$ Since the phase is linear in $z,$ and since $|\pa_z\pa_{u_1}\tilde \Phi_2|\sim 2^l\rho$ at the critical point, we see that we may apply the method of stationary phase to  the double integration with respect to the variables $(u_1,z)$  and gain in  particular  a factor $(\la2^l\rho)^{-1}.$ Having realized this, we may come back to our previous formula \eqref{13.14}, and knowing  that we may apply the method of stationary phase to the integration with  respect to the variables 
$(u_1,z)$ as well, we see that we may essentially write 
\begin{eqnarray}\label{13.15}
&&\mu_{l,\la}(x)=\la^{\frac 13}\int e^{-i\la s_3 \Psi_2(u_2,s_2,x,\de,l)}
 a_2\big((2^l\rho)^{\frac 13} u_2, (2^l\rho)^{\frac 13},s,\de\big)\,\chi_0(u_2)\nonumber\\
&&\hskip3cm \chi_0\big((2^l\rho)^{-\frac13}(x_1-s_2^{\frac {1}{n-2}}G_1(s_2,\de)\big) \, \tilde\chi_1(s_2, s_3)\, du_2 ds_2 ds_3,
\end{eqnarray}
where the phase $\Psi_2$ arises from  $\Phi_2$ by replacing $(u_1,z)$ by the critical point $(u^0_1,z^0).$

Now, arguing exactly as in Subsection \ref{Aig}, by means of Lemma  7.1 in \cite{IM-rest1}  we find that the phase $\Psi_2$ is given by  the expression in \eqref{phasenew}, with $B=3,$ i.e., 
$$
\Psi_2=s_2x_1^2\omega(\de_1x_1)+x_1^n\alpha(\de_1 x_1)+s_2\de_0 y_2+y_2^3  \,b(x_1,y_2,\de)+ r(x_1,y_2,\de)  -s_2x_2 -x_3.
$$
Moreover, since we here have changed coordinates from $y_2$ to $u_2$ so that $y_2=(2^l\rho)^{1/3}u_2,$ 
this means that
\begin{eqnarray}\nonumber
&&\Psi_2(u_2,s_2,x, \de, l)=s_2x_1^2\omega(\de_1x_1)+x_1^n\alpha(\de_1 x_1)-s_2x_2 -x_3\\
&&\hskip3cm+ (2^l\rho) \,u_2^3 \,b\big(x_1,(2^l\rho)^{1/3}u_2,\de\big)+
(2^l\rho)^{1/3} u_2 [\de_0 s_2+\de_3 x_1^{n_1}\al_1(\de_1x_1)]  \label{13.16}
\end{eqnarray}
(compare \eqref{7.3II}).
Note  that $\pa_{s_2} (s_2^\frac1{n-2}G_1(s_2,\de))\sim 1$ because $s_2\sim1$ and $G_1(s_2,0)=1.$ Therefore, the relation $|x_1-s_2^\frac1{n-2}G_1(s_2,\de)|\lesssim (2^l\rho)^{1/3}$ can be re-written as 
$|s_2-\tilde G_1(x_1,\de)|\lesssim (2^l\rho)^{1/3},$ where $\tilde G_1$ is again a smooth function such that $|\tilde G_1|\sim 1.$
If we write  
$$
s_2=(2^l\rho)^{\frac 13}v+\tilde G_1(x_1,\de),
$$
 then this means that $|v|\lesssim 1.$ We  shall therefore change variables  from $s_2$ to $v,$ which leads to the following expression for $\mu_{l,\la}(x):$
 \begin{eqnarray*}%\label{13.17}
&&\mu_{l,\la}(x)=(\la2^l\rho)^{\frac 13}\int e^{-i\la s_3 \Psi_3(u_2,v,x,\de,l)}
 a_3\big((2^l\rho)^{\frac 13} u_2, (2^l\rho)^{\frac 13}v,x,\de\big)\nonumber\\
&&\hskip5cm \times \chi_0(u_2)\chi_0(v) \, \chi_1(s_3)\, du_2 dv ds_3,
\end{eqnarray*}
with a smooth amplitude $a_3$ and the new phase function
\begin{eqnarray*}\nonumber
&&\Psi_3(u_2,v,x, \de, l)=v\, (2^l\rho)^{\frac 13}\big( x_1^2\omega(\de_1x_1)-x_2\big)+  (2^l\rho)^{\frac 23}  \de_0\, vu_2
+Q_A(x,\de) \\
&&\hskip3cm+ (2^l\rho) \,u_2^3 \,b\big(x_1,(2^l\rho)^{\frac 13}u_2,\de\big)+
(2^l\rho)^{\frac 13} u_2 Q_D(x,\de) %\label{13.18}
\end{eqnarray*}
(compare with  the corresponding expressions in \eqref{12.10} to \eqref{12.12}). Finally, putting $y_2:=(\la2^l\rho)^{1/3} u_2,$ we find that 
\begin{eqnarray}\label{13.17}
&&\mu_{l,\la}(x)=\int e^{-i s_3 \Psi_4(y_2,v,x,\de,\la,l)}
 a_4\big(\la^{-\frac 13} y_2, (2^l\rho)^{\frac 13}v,x,\de\big)\nonumber\\
&&\hskip5cm \times \chi_0\big((\la 2^l\rho)^{-\frac 13}y_2\big)\, \chi_1(s_3)\,\chi_0(v)\,  dv ds_3 dy_2\\
&&=\int\widehat{\chi_1}\big(\Psi_4(y_2,v,x,\de,\la,l)\big)\, a_4\big(\la^{-\frac 13} y_2, (2^l\rho)^{\frac 13}v,x,\de\big)\,\chi_0(v)\,\chi_0\big((\la 2^l\rho)^{-\frac 13}y_2\big)\, dy_2 dv \nonumber
\end{eqnarray}
with a smooth amplitude $a_4$ and  phase function
\begin{eqnarray*}\nonumber
&&\Psi_4(y_2,v,x, \de,\la,l)=v\, \la(2^l\rho)^{\frac 13}\big( x_1^2\omega(\de_1x_1)-x_2\big)+  \la^{\frac 23}(2^l\rho)^{\frac 13}  \de_0\, vy_2+\la Q_A(x,\de) \\
&&\hskip3cm+ \,y_2^3 \,b\big(x_1,\la^{-\frac 13}y_2,\de\big)+
\la^{\frac 23} y_2 Q_D(x,\de) %\label{13.18}
\end{eqnarray*}
 We shall write this as 
\begin{equation}\label{13.18}
\Psi_4(y_2,v,x, \de,\la,l)=A+Bv + y_2^3 \,b\big(x_1,\la^{-\frac 13}y_2,\de\big)+   y_2 (D+ Ev), 
\end{equation}
with
\begin{eqnarray}\nonumber
&& A:=\la Q_A(x,\de) , \qquad B:=\la2^{\frac l3}\, \rho^{\frac 13} Q_B(x,\de),\\
&& D:=\la^{\frac 23}Q_D(x,\de),  \qquad E:= \la^{\frac 23}2^{\frac l3} \, (\rho^{\frac 13}\de_0)\, . \label{13.19}
\end{eqnarray}
Here, $Q_A(x,\de),Q_B(x,\de)$ and $Q_D(x,\de)$ are as in \eqref{12.12}. 

\medskip

Applying Lemma \ref{as2} in the appendix with  $T:=(\la2^l\rho)^{1/3} =(\la\rho)^{1/3} 2^{l/3}\gg 1$  and $\de:=\la^{-1/3}$ so that $\de T=(2^l\rho)^{1/3}\ll1,$ and subsequently Lemma \ref{simpleintest} in a similar way as in Subsection \ref{tdII}, we find that in analogy with \eqref{mullaII} we have that
\begin{equation}\label{mullaIV}
|\mu_{l,\la}(x)|\lesssim \big(\max\{|A|, |B|,|D|,|E|\}\big)^{-\frac 16},
\end{equation}
now with $A,B,D$ and $E$ given by \eqref{13.19}.
\medskip

In order to estimate $\nu_{\de,1+it}^{IV}(x),$ we shall again distinguish various cases.
\medskip

As in the discussion of $\nu_{\de,1+it}^{II}(x)$ in Subsection \ref{tdII}, the contributions by those terms in \eqref{13.10} for which either   $|D|\gtrsim1$ and $|E|\gtrsim1,$   or   $|A|\gtrsim 1$ and $|B|\gtrsim 1,$ can easily handled by means of estimate \eqref{mullaIV} (compare  with \eqref{13.19}). 

\medskip
Consider next the terms for which  $|D|\ll 1$ and $|E|\ll 1.$ For these terms, it will   be useful to re-write $\mu_{l,\la}(x)$ in analogy with \eqref{mulaIIint} as  
 \begin{equation}\label{13.21}
\mu_{l,\la}(x)=  \int e^{-i s_3 (A+Bv)} J(v,s_3)\,\chi_1(s_3)\,\chi_0(v)\,\chi_0(D+Ev)\, dvds_3,
\end{equation}
with
\begin{eqnarray}\nonumber
&&J(v,s_3):=  \int e^{-i s_3 \Big(y_2^3b\big(x_1,\la^{-\frac 13}y_2,\de\big)+   y_2 (D+ Ev)\Big)}
 a_4\big(\la^{-\frac 13} y_2, (2^l\rho)^{\frac 13}v,x,\de\big)\nonumber\\
&&\hskip5cm \times \chi_0\big((\la 2^l\rho)^{-\frac 13}y_2\big) \, dy_2\,.\label{13.22}
\end{eqnarray}
\medskip
From here we arrive without loss of generality  at the following analogue of \eqref{12.14}:
\begin{equation}\label{13.21b}
\mu_{l,\la}(x)= \int  g(D+Ev, x,\la^{-\frac13},(2^l\rho)^{\frac 13},\de)\, \widehat { \chi_1}(A+Bv) \,\chi_0(v)\,\, dv,
\end{equation}
where $g$ is a smooth function of its bounded arguments. 

 \medskip 
Recall that we  assume that either $|A|\gtrsim1$ and $|B|\ll1,$ or $|A|\ll 1$ and $|B|\gtrsim1,$ or $|A|\ll 1$ and $|B|\ll1.$

\medskip
If $|A|\gtrsim1$ and $|B|\ll1,$ then we can treat the summation in $l$ by means of Lemma \ref{simplesum}, where we choose, for $\la$ fixed,
$$
H_{\la,x}(u_1,u_2,u_3,u_4):=\int  g(D+u_1v, x,u_3,u_4,\de)\, \widehat { \chi_1}(A+u_2v) \, \chi_0(v)\,\, dv.
$$
Then clearly $\|H_{\la,x}\|_{C^1(Q)}\lesssim |A|^{-1},$ and so after summation in those $l$ for which  $|E|\ll 1$ and $|B|\ll 1 ,$ we can also sum (absolutely) in  the $\la$'s for which $|A|\gtrsim 1.$ Observe that this requires that $\ga(\ze)$  contains  a factor 
$$
\ga_1(\ze):= \frac{2^{\frac{1-\ze}2}-1}{2^{\frac 13}-1}.
$$

\medskip
Consider next the case where $|B|\gtrsim 1$ and $|A|\ll 1.$ If we write $\la=2^j,$ then $\la 2^{l/3}=2^{k/3},$  if we put $k:=l+3j.$ We therefore pass from the summation variables $j$ and $l$ to the variables $j$ and $k,$  which allows to write 
$B=2^{ k/3}  \rho^{1/3}Q_B(x).$ For $k$ fixed, we then sum first in $j$ by means of Lemma \ref{simplesum}, which  gives an estimate of order $O(|B|^{-1}),$ which then in return allows to sum (absolutely) in those $k$ for which $|B|\gtrsim 1.$ The application of Lemma \ref{simplesum} requires  in this case that $\ga(\ze)$  contains  a factor 
$$
\ga_2(\ze):= \frac{2^{1-\ze}-1}{2^{\frac 23}-1}.
$$

\medskip
There remains the sub-case where $|A|+|B|\ll 1$ and $|D|+|E|\lesssim 1.$ The summation over all $\l$'s and $\la$'s  for which these conditions are satisfied can easily be treated by means of the double summation  Lemma 8.1 in \cite{IM-rest1}, in a very similar way as in Subsection \ref{tdII}.

\medskip 
What  remains are the contributions by those $l$ and $\la$ for which either $|D|\gtrsim1$ and $|E|\ll1,$ or $|D|\ll 1$ and $|E|\gtrsim1.$  
\medskip

We begin with the case where   $|E|\gtrsim1$ and  $|D|\ll1.$  Then we may assume in addition that $|B|\ll 1,$ for otherwise by \eqref{mullaIV} we have $|\mu_{l,\la}(x)|\lesssim |E|^{-1/12}|B|^{-1/12},$  which allows to sum absolutely in $j$ and $l.$   In a very similar way, we may also assume that $|A|\ll 1.$ 

Recall next from \eqref{13.17} and \eqref{13.18} that 
\begin{eqnarray*}
&&\mu_{l,\la}(x)=  \int e^{-i s_3 \big(A+ y_2^3 \,b\big(x_1,\la^{-\frac 13}y_2,\de\big)+D y_2+  v(B+ Ey_2)\big)}
 a_4\big(\la^{-\frac 13} y_2, (2^l\rho)^{\frac 13}v,x,\de\big)   \\
&&\hskip6cm\times \chi_1(s_3) \chi_0(v)\,\chi_0\big((\la 2^l\rho)^{-\frac 13}y_2\big)\, dy_2 dv ds_3\, .
\end{eqnarray*}
Again, by our usual argument, we may assume without loss of generality that $a_4$ is independent of $v.$ Then we  find that, in analogy with \eqref{12.15b}, 
\begin{eqnarray*}
&&\mu_{l,\la}(x)=  \int e^{-i s_3 \big(A+ y_2^3 \,b\big(x_1,\la^{-\frac 13}y_2,\de\big))\big)}
\widehat{\chi_0}\big(s_3(B+Ey_2)\big) \nonumber  \\
&&\hskip2cm\times  a_4\big(\la^{-\frac 13} y_2, (2^l\rho)^{\frac 13},x,\de\big)  \chi_1(s_3) \,\chi_0\big((\la 2^l\rho)^{-\frac 13}y_2\big)  \, dy_2  ds_3\, .
\end{eqnarray*}
We then change the summation variables from $j,l$ to $j, k,$ where $k:=2j+l,$ so that $E=2^{k/3} \rho^{1/3}\de_0.$ Then, for $k$ fixed,  we can treat the summation in $j$ by means of Lemma \ref{simplesum}, where we choose
\begin{eqnarray*}
&&H_{k,x}(u_1,u_2,u_3,u_4,u_5):=  \int e^{-i s_3 \big(u_1+ y_2^3 \,b\big(x_1,u_2y_2,\de\big))\big)}
\widehat{\chi_0}\big(s_3(u_3+Ey_2)\big)   \\
&&\hskip4cm\times  a_4(u_2y_2,u_4, x,\de)  \chi_1(s_3) \,\chi_1(v)\,\chi_0(u_5 y_2)  \, dy_2  ds_3\, .
\end{eqnarray*}
Arguing in the same way as in the corresponding case of Subsection \ref{tdII}, we find by means of the change of variables $y_2\mapsto y_2/E$  that  $\|H_{\la,x}\|_{C^1(Q)}\lesssim |E|^{-1},$ and thus after the summation over the  $\la=2^j$  this allows to subsequently also sum over  the $k$  for which $|E|\gtrsim 1.$

\medskip

There remains  the contribution by those $l$ and $\la$ for which $|D|\gtrsim1$ and $|E|\ll1.$ Observe that here we have $|D+Ev|\gtrsim 1$  in \eqref{13.22}.

Applying  the change of variables $y_2=\la^{1/3} t$  in the integral defining $J(v,s_3),$  we obtain
\begin{eqnarray*}
J(v,s_3)=  \la^{\frac 13}  \int e^{-i s_3 \la\big(t^3b(x_1,t,\de)+   
t(\la^{-\frac 23}(D+ Ev))\big)}
a_4\big(t, (2^l\rho)^{\frac 13}v,x,\de\big) 
 \chi_0\big((2^l\rho)^{-\frac 13}t\big)\, \chi_0(v) \, \chi_1(s_3)\, dt.
\end{eqnarray*}
Observe also that by \eqref{13.19}
\begin{equation}\label{13.23}
\la^{-\frac 23}D=Q_D(x,\de), \quad   \la^{-\frac 23}E= \de_0\ll 1,
\end{equation}
so that in particular $\la^{-\frac 23}|D+vE|\lesssim 1.$

Arguing in a similar way as in Subsection \ref{tdII}, we find that for every $N\in \NN,$ 
\begin{eqnarray}%\nonumber
J(v,s_3)&=&|D+Ev|^{-\frac 14} a_+\big(\la^{-\frac 13} |D+Ev|^{\frac 12}, v, x,(2^l\rho)^{\frac 13},\de\big)   \nonumber\\
&&\hskip1.5cm \times \chi_0\Big({(\la 2^l\rho)^{-\frac 13}|D+Ev|^{\frac 12}}\Big)\, e^{-is_3|D+Ev|^{\frac 32}q_+\big(\la^{-\frac 13} |D+Ev|^{\frac 12}, x,\de\big)}  \label{13.24}\\
&+&|D+Ev|^{-\frac 14} a_-\big(\la^{-\frac 13} |D+Ev|^{\frac 12}, v, x,(2^l\rho)^{\frac 13},\de\big)   \nonumber \\
&&\hskip1.5cm \times \chi_0\Big({(\la 2^l\rho)^{-\frac 13}|D+Ev|^{\frac 12}}\Big)\, e^{-is_3|D+Ev|^{\frac 32}q_-\big(\la^{-\frac 13} |D+Ev|^{\frac 12},x,\de\big)}   \nonumber\\
&+&  (D+Ev)^{-N}\, F_N\Big( |D+vE|^{\frac 32}, \la^{-\frac 13} |D+Ev|^{\frac 12}, v, x,2^{-\frac l3},(2^{l}\la^{-1})^{\frac13},\de\Big), \nonumber
\end{eqnarray}
where  $a_\pm, q_\pm$ and $F_N$ are smooth functions of their (bounded) variables. 
Moreover, $|q_\pm(\big(0, x,(2^{l}\la^{-1})^{\frac13},\de\big)|\sim 1.$

\medskip
We shall concentrate on the first term only.  The second term can be treated in the same way as the first one, and  the last term can be handled in an even easier way by a similar method, since it is of order $=O(|D|^{-N})$ and, unlike the first term, carries no oscillatory factor.

\medskip
We denote by 
\begin{eqnarray*}
\mu^1_{l,\la}(x)&:=&  \int e^{-i s_3 \big[(A+Bv) +|D+Ev|^{\frac 32} q_+\big(\la^{-\frac 13} |D+Ev|^{\frac 12}, x,\de\big)\big]}   |D+Ev|^{-\frac 14}\chi_1(s_3) \,\chi_0(v)\,\nonumber\\
&&\hskip0cm \times a_+\big(\la^{-\frac 13} |D+Ev|^{\frac 12}, v, x,(2^l\rho)^{\frac 13},\de\big)  \, \chi_0\Big({(\la 2^l\rho)^{-\frac 13}|D+Ev|^{\frac 12}}\Big) \,   dvds_3 
\end{eqnarray*}
 the contribution by the first term in \eqref{13.24} to $\mu_{l,\la}(x),$ and by $ \nu^1_{\de,1+it}(x)$ the contribution of the $\mu^1_{l,\la}(x)$ to the sum defining $ \nu_{\de,1+it}^{IV}(x).$

\medskip
Assuming for instance that $D>0,$  and making use of  \eqref{12.22b}, we here find that the complete phase in the oscillatory integral defining 
$\mu^1_{l,\la}(x)$ is  of the form
$$
s_3[A'+B'v+ r],
$$
with $A'=\la Q_{A'}(x,\de), \ B'=\la 2^{\frac l3} Q_{B'}(x,\de),$ where 
\begin{eqnarray*}
Q_{A'}(x,\de)&:=&Q_A(x,\de)+Q_D(x,\de)^{\frac 32}  q_+\big(Q_D(x,\de)^{\frac 12}, x,\de\big),\\
Q_{B'}(x,\de)&:=&\rho^{\frac13}Q_B(x,\de)+\frac 12 q'_+\big(Q_D(x,\de)^{\frac 12}, x,\de\big) \rho^{\frac 13}Q_D(x,\de)\de_0 \\
&&+\frac 32 q'_+\big(Q_D(x,\de)^{\frac 12}, x,\de\big)\rho^{\frac 13}Q_D(x,\de)^{\frac 12}\de_0,
\end{eqnarray*}
and where $r$ is again a bounded error  term.

Thus, if $|B'|\gg 1,$ then an integration by parts in $v$ shows that 
$$
|\mu^1_{l,\la}(x)|\lesssim |D|^{-\frac 14} |B'|^{-1}.
$$
This estimate allows to control the sum over all $l$  such that $|B'|\gg 1,$ and  subsequently the sum over all dyadic $\la$ such that $|D|\gtrsim 1,$ and we arrive at the desired uniform estimate in $x$ and $\de.$ 

\medskip
Next, if $|B'|\lesssim 1,$  then  we can argue in a similar way as  before  and apply Lemma \ref{simplesum} to the summation in $l$  by putting here 
\begin{eqnarray*}
H_{\la,x}(u_1,\dots, u_7)&:=& \int e^{-i s_3 [A'+ u_1v+r(Q_D(x)^{\frac 12}, D^{-1} u_3, v,x,\de)]}   |D+u_2v|^{-\frac 14}\chi_1(s_3) \,\chi_0(v)\,\\
&&\hskip-2.5cm \times a_+\big(|Q_D(x,\de)+u_3v|^{\frac 12}, v, x,u_4,\de\big) 
\,\chi_0\Big({|u_6+u_7v|^{\frac 12}}\Big) \,  dvds_3
\end{eqnarray*}
and choosing the cuboid $Q$ in the obvious way.
Then we easily see that   $\|H_{\la,x}\|_{C^1(Q)}\lesssim |D|^{-1/4},$ and so after summation in those $l$ for which $|B'|\ll 1,$ we can also sum (absolutely) in  the $\la$'s for which $|D|\gtrsim 1.$ Observe that this requires again that $\ga(\ze)$  contains  the  factor $\ga_1(\ze).$

This concludes the proof of the uniform estimate of $\nu^1_{\de, 1+it}(x)$ in $x$ and $\de,$  and thus also of  estimate \eqref{13.9}.

\medskip 

The proof of Proposition \ref{s11.1} is now complete.

 \bigskip

\setcounter{equation}{0}
\section{Appendix: Integral estimates of van der Corput type}\label{appendix}

\begin{lemma}\label{as1}
Let $b=b(y_1,y_2)$ be a $C^2$- function on $\RR\times [-1,1]$ such that $b(0,0)\ne 0,$ $\|b(y_1,\cdot)\|_{C^2([-1,1])}\le c_1$ for every $y_1\in\RR$  and 
\begin{equation}\label{a1}
|b(y_1,y_2)-b(0,0)|\le \ve,\quad\mbox{and}\quad  c_2|y_2|^{3-j}\le \Big|\pa_{y_2}^j\Big(b\y y_2^3\Big)\Big|,\ j=1,2
\end{equation}
for every $\y\in\RR\times [-1,1],$  where $0<c_1\le c_2.$ Furthermore, let $Q=Q(y_2)$ be a smooth function on 
$[-1,1]$ such that $\|Q\|_{C^2([-1,1])}\le c_1,$and let $A,B,T$ be real numbers so that  $\max\{|A|,|B|\}\ge L,$ 
$T\ge L$ and 
\begin{equation}\label{a2}
|A|\le T^3,\quad |B|\le T^2,
\end{equation}
and let $r_i=r_i(y_1),\  i=1,2,$ be a measurable functions on $\RR$  such that $|r_i(y_1)|\le c(1+|y_1|).$ For $\epsilon\ge 0$ and  $N\ge 2,$ we put 
\begin{eqnarray*}
I_\epsilon(A,B,T)&:=&
 \hskip-0.5cm\int\limits_{-T}^T \int\limits_\infty^\infty \Big(1+\Big|A-\Big(B+Q(\frac {y_2}T)\Big)y_2- b(y_1,\frac {y_2}T)\, y_2^3 +r_1(y_1)+\frac {y_2}T r_2(y_1)\Big|\Big)^{-N}\\
&& \hskip1cm \times \Big(1+|y_1|\Big)^{-N}  |y_2|^\epsilon \, dy_1 dy_2.
\end{eqnarray*}
Then, for $N$ sufficiently large,  there are constants $C>0$ and $\ve_0>0,$ which only depend on  the constants $c, c_1$ and $c_2,$ such that for all  functions $b$ and $q$ and all $A,B,T$  with the assumed properties, and all  $\ve\le \ve_0$  and $L\ge C,$ we have 
$$
|I_\epsilon(A,B,T)|\le C \max\{|A|^{\frac 13}, |B|^{\frac 12}\}^{\epsilon-\frac 12}.
$$
\end{lemma}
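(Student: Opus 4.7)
The plan is to read the integrand as a weighted sublevel-set estimator for $\Phi(y_1,y_2)$ in the variable $y_2,$ while the outer factor $(1+|y_1|)^{-N}$ collapses the $y_1$-integration to a harmless bounded factor. Writing $\psi(y_2):=b(y_1,y_2/T)y_2^3,$ a direct rescaling of \eqref{a1} yields the lower bounds $|\psi'(y_2)|\ge c_2 y_2^2$ and $|\psi''(y_2)|\ge c_2|y_2|$ on $|y_2|\le T,$ with $\psi'', \psi'$ of definite sign matching that of $b(0,0)$ once $\ve_0$ is small. Set $M:=\max\{|A|^{1/3},|B|^{1/2}\}\ge L^{1/3};$ by \eqref{a2} we have $M\le T.$ I would first restrict to $|y_1|\le\kappa M$ for a small fixed $\kappa,$ since for larger $|y_1|$ the outer weight already beats any crude polynomial bound on the $y_2$-integral. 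On the localized region, the shifted parameters $A':=A+r_1(y_1)$ and $B':=B-r_2(y_1)/T$ satisfy $|A'|\sim|A|$ and $|B'|\sim|B|$ provided $L$ is taken large enough, so it suffices to prove, uniformly in such $y_1,$
\begin{equation}\label{corea}
\int_{|y_2|\le T}(1+|\Phi(y_2)|)^{-N}|y_2|^{\epsilon}\,dy_2\ \le\ C\,M^{\epsilon-1/2}.
\end{equation}

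Estimate \eqref{corea} will be proved by decomposing dyadically in $|y_2|\sim R$ and, on each shell, invoking one of the two standard sublevel bounds
$$|\{|y_2|\sim R:|\Phi(y_2)|\le\tau\}|\ \lesssim\ \min\bigl(\tau/\mu_R,\,(\tau/\lambda_R)^{1/2}\bigr),$$
where $\mu_R:=\inf_{|y_2|\sim R}|\Phi'|$ and $\lambda_R:=\inf_{|y_2|\sim R}|\Phi''|$ (the second one applied on sub-intervals where $\Phi'$ is monotone). A layer-cake integration in $\tau$ then gives
$$\int_{|y_2|\sim R}(1+|\Phi|)^{-N}|y_2|^\epsilon\,dy_2\ \lesssim\ R^\epsilon\,\min(\mu_R^{-1},\,\lambda_R^{-1/2}),$$
for $N$ sufficiently large.

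I would then analyse three regimes. For $R\gg M,$ the cubic $\psi$ dominates: $|A|\le M^3,$ $|By_2|\le M^2 R,$ and the $Q,r_i$-perturbations are all $O(R),$ so $|\Phi|\gtrsim R^3$ throughout the shell and the contribution is $\lesssim R^{1+\epsilon-3N},$ which sums trivially. For $R\ll M$ with $M=|A|^{1/3},$ the term $A$ dominates the others once $L$ is large, giving $|\Phi|\gtrsim M^3$ and hence a $M^{-N}$-type negligible contribution. For $R\ll M$ with $M=|B|^{1/2},$ either one of $A$ or $By_2$ dominates the other pointwise (yielding strong decay), except on the single dyadic shell $R\sim|A|/|B|\le M,$ where $\mu_R\sim|B|=M^2$ gives a contribution $\lesssim R^\epsilon/M^2\le M^{\epsilon-2}.$ The principal contribution comes from $R\sim M$: here $|\psi''(y_2)|\gtrsim|y_2|\sim M$ dominates the perturbation of size $O(1/T)$ in $\Phi''$ since $M\gg 1/T,$ so $\lambda_R\sim M$ with $\Phi'$ monotone on each half-shell, and the second-derivative sublevel bound produces exactly $R^\epsilon\lambda_R^{-1/2}\sim M^{\epsilon-1/2}.$

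The main technical obstacle is the shell $R\sim M$ in the worst case where $\Phi$ actually has a critical point on it; this occurs precisely when $B$ and $b(0,0)$ have opposite signs and $|B|\sim M^2,$ in which case the critical point sits at $|y_2|\sim M.$ I would then further split the half-shell into the $cM$-neighbourhood of the critical point---where only the second-derivative estimate, with $\lambda_R\sim M,$ is available and produces the sharp exponent $M^{\epsilon-1/2}$---and its complement, where $|\Phi'|\gtrsim c\,M^2$ allows the stronger first-derivative bound. Choosing $c$ small but independent of the parameters, and verifying that the variation of $b$ (controlled by $\ve_0$) together with the perturbations $Q(y_2/T),$ $r_1(y_1),$ $r_2(y_1)/T$ (bounded on the localized $y_1$-region by quantities $\ll|A|,|B|$ thanks to $L$ large and $T\ge L$) neither shift the location of this critical point nor flip the sign of $\psi''$ by more than a constant factor, is where the hypotheses $L\ge C$ and $\ve\le\ve_0$ are used essentially.
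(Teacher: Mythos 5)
Your plan is correct in substance, but it takes a genuinely different route from the paper. The paper first dominates $(1+|\cdot|)^{-N}$ by a smooth Schwartz majorant $\rho,$ rescales $y_2\mapsto y_2/\max\{|A|^{1/3},|B|^{1/2}\}$ so that the relevant critical points land at bounded $|t|,$ and then converts the sublevel problem into an oscillatory integral by Fourier inversion, $\rho(x)=\int e^{ix\xi}\hat\rho(\xi)\,d\xi,$ applying a second-order van der Corput lemma to the rescaled phase $\phi_{y_1}(t)$; the tails $|t|\gg1$ and $|t|\ll1$ are disposed of by direct size estimates. You instead decompose dyadically in $|y_2|\sim R,$ apply first- and second-derivative sublevel bounds shell by shell (which are the "measure-theoretic" twins of van der Corput), and identify $R\sim M$ as the critical shell that produces the exponent $M^{\epsilon-1/2}.$ Both arguments are valid and of comparable difficulty; the paper's rescaling has the advantage that one deals with $O(1)$ scales once and for all, whereas your dyadic sum has to be organized case-by-case (and for $\epsilon<1/2,$ as you seem aware, one must use the first-derivative bound on the sub-principal shells $R\ll M,$ since summing the crude second-derivative bound $R^{\epsilon-1/2}$ over all shells would only give $O(1)$ rather than $M^{\epsilon-1/2}$). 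Two small inaccuracies to watch when you flesh this out: the claim "$|A'|\sim|A|$ and $|B'|\sim|B|$" need not hold when $|A|$ is tiny but $|r_1(y_1)|$ is of size $\sim\kappa M$; what you actually need, and what does hold for $\kappa$ small and $L$ large, is $\max\{|A'|^{1/3},|B'|^{1/2}\}\sim M.$ Also the pre-truncation to $|y_1|\le\kappa M$ is not quite automatic from the decay of $(1+|y_1|)^{-N}$ alone, since $T$ is not bounded in terms of $M$; you additionally need the observation that the cubic term forces rapid decay for $|y_2|\gg\max(M,|y_1|^{1/3}),$ so that the crude $y_2$-bound is $\lesssim\max(M,|y_1|^{1/3})^{1+\epsilon}$ rather than $T^{1+\epsilon},$ after which the tail in $y_1$ is indeed negligible for $N$ large.
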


\begin{proof} It will be convenient for the proof to call a constant $C$ admissible, if it depends only on the constants $\ve, \epsilon, b(0,0)$ and $c,c_1,c_2$ from the statement of the lemma. All constants $C$ appearing within the proof will be admissible, but may change from line to line.

\medskip We begin by the observation that the second assumption in \eqref{a1}  implies that also 
\begin{equation}\label{a1n}
 c_2|y_2|^{3-j}\le \Big|\pa_{y_2}^j\Big(b(y_1,\tau y_2) y_2^3\Big)\Big|,\ j=1,2, \, |y_2|<\tau^{-1},
\end{equation}
for every $\tau>0.$ Indeed, if we fix $y_1$ and put $\psi_\tau(y_2):= b(y_1,\tau y_2) y_2^3,$ then 
$\psi_\tau(y_2)=\psi_1(\tau y_2)/\tau^3,$  so that $\psi''_\tau(y_2)=\psi''_1(\tau y_2)/\tau.$ This immediately leads to \eqref{a1n}.

\medskip
Let us first  assume that $|A|\ge L\ge 1.$ Without loss of generality, we may assume  that  $A,B\ge 0$ (if necessary, we may change the signs of $r$ or $b,q$ as well as  of $y_2$). We may then choose $\al,\be\ge 0$ so that $A=\al^3, B=\be ^2.$ 
\medskip

Next, by  convolving $(1+|\cdot|)^{-N}$ with a suitable smooth bump function, we may choose a smooth, non-negative function $\rho$ on $\RR$ which is integrable and such that also its Fourier transform is integrable and so that $(1+|x|)^{-N}\le \rho(x)\le 2(1+|x|)^{-N},$  and put 
\begin{eqnarray*}
J_\epsilon(\al,\be,T)&:=&\int\limits_{-T}^T \int\limits_\infty^\infty\rho\Big(\al^3-(\be^2+Q(\frac {y_2} T))\,y_2- b(y_1,\frac {y_2}T)\, y_2^3 +r_1(y_1)+\frac {y_2}T r_2(y_1)\Big)\\
&& \hskip 1cm \times \Big(1+|y_1|\Big)^{-N} |y_2|^\epsilon \, dy_1 dy_2.
\end{eqnarray*}
It then suffices to prove that
\begin{equation}\label{a3}
|J_\epsilon(\al,\be,T)|\le C \max\{\al,\be\}^{\epsilon-\frac 12}
\end{equation}
whenever  $L^{1/3}\le \al \le T, \ 0\le \be\le T.$

To this end, performing  the change of variables $y_2=\al s,$ we  re-write 
\begin{eqnarray}\nonumber
J_\epsilon(\al,\be,T)&=& \al^{1+\epsilon}  \int\limits_\infty^\infty \int\limits_{- \frac{T}{\al}}^{\frac {T}{\al}}\rho\Big(\al^3 [1-(\ga +\frac 1{\al^2} Q(\frac \al T s))\, s - b(y_1,\frac {\al}T s)\, s^3]\\
&& \hskip 2cm+r_1(y_1)+s\frac \al T  r_2(y_1)\Big)\Big(1+|y_1|\Big)^{-N} |s|^\epsilon\, ds dy_1, 
\label{a4}
\end{eqnarray}
where 
$$
\ga:=\Big(\frac \be \al\Big)^2.
$$

{\bf 1. Case: $ \ga\ge 1,$ i.e., $\be\ge \al.$} Changing variables $s=\ga^{1/2} t,$ we re-write
\begin{eqnarray*}
J_\epsilon(\al,\be,T)&=& \beta^{1+\epsilon}\int\limits_\infty^\infty \int\limits_{-\frac {T}{\be}}^{\frac { T}{\be}}
\rho\Big(\al^3 -\beta^3 \Big((1+\frac 1{\be^2}Q(\frac \be T t))\, t - b(y_1,\frac {\be}T t)\, t^3\Big)\\
&&\hskip2cm+r_1(y_1)+t\frac \beta T  r_2(y_1)\Big) \Big(1+|y_1|\Big)^{-N}|t|^\epsilon\, dt dy_1.
\end{eqnarray*}

Now, there are admissible constants $C_1,C_2\ge 1 ,$  so that 
if $|t|\ge C_1,$ then 
$$
|\al^3 -\beta^3 (\Big((1+\frac 1{\be^2}Q(\frac \be T t))\, t  - b(y_1,\frac {\be}T t)\, t^3\Big)|\ge  C_2\beta^3 |t|^3.
$$ 
 Notice also that $|r_1(y_1)+t\frac \beta T  r_2(y_1)|\le 2c (1+|y_1|).$ 
Integrating separately in $y_1$ over the sets where $1+|y_1|\le C_2\beta^3 |t|^3/(4c)$ and where 
$1+|y_1|> C_2\beta^3 |t|^3/(4c),$ one then easily  finds that the contribution $J_I(\al,\be,T)$ by the region where $|t|\ge C_1$ to the integral $J_\epsilon(\al,\be,T)$ can be estimated by
$$
|J_I(\al,\be,T)|\le C \beta^{1+\epsilon} \Big( ( \beta^3)^{-N}+ (\beta^3)^{1-N}\Big)\le 2C \be^{4+\epsilon-3N}.
$$
Assume next that  $|t|< C_1.$ We then choose $\chi\in C_0^\infty(\RR)$ such that $\chi(t)=1$ when $|t|\le C_1$ and $\chi(t)=0$ when $|t|\ge 2 C_1,$ and with corresponding control of the derivatives of $\chi.$ The contribution  by the region where $|t|< C_1$ to the integral $J_\epsilon(\al,\be,T)$ can then be estimated by
$$
J_{II}(\al,\be,T):= \beta^{1+\epsilon} \iint
\rho\Big(\al^3 -\beta^3  \phi_{y_1}(t) +r_1(y_1)\Big) \Big(1+|y_1|\Big)^{-N} \chi(t)|t|^{\epsilon}\, dt dy_1,
$$
where we have set 
$$
\phi_{y_1}(t):= \Big(1+\frac 1{\be^2}Q(\frac \be T t)-\frac{r_2(y_1)}{\beta^2 T}\Big)t - b(y_1,\frac {\be}T t)\, t^3, \quad |t|\le  C_1.
$$
Recall here that $T/\be\ge 1$ (for $1\le |t|\le 2C_1,$ we may extend the function $b$ in a suitable way, if necessary).

By Fourier inversion, this can be estimated by 
\begin{eqnarray*}
|J_{II}(\al,\be,T)|&\le& C \beta^{1+\epsilon}
 \Big|\iint
 \int e^{-i \xi\beta^3 \phi_{y_1}(t)} \chi(t)|t|^\epsilon\, dt\, e^{i\xi(\al^3 +ir(y_1))} \Big(1+|y_1|\Big)^{-N}
  \hat \rho(\xi)  d\xi dy_1\Big|\\
 &\le& C \beta^{1+\epsilon}
 \iint \Big| \int e^{-i \xi\beta^3 \phi_{y_1}(t)} \chi(t)|t|^\epsilon\, dt\Big| \Big(1+|y_1|\Big)^{-N}
  |\hat \rho(\xi)|  d\xi dy_1.
 \end{eqnarray*}

Now, if  $ |r_2(y_1)/(\beta^2 T )|\ge \frac 12,$ then $c(1+|y_1|)\ge \beta^2T/2\ge1,$ so trivially the integration in $y_1$ yields that 
 $$
 |J_{II}(\al,\be,T)\le C \beta^{1+\epsilon}(\beta^2 T)^{1-N}\le C \beta^{3+\epsilon-2N}
 $$
 for every $N\in \NN.$ 
 
 So, assume that $|r_2(y_1)/(\beta^2 T)|< \frac 12.$ Then the phase $\phi_{y_1}(t)$ has no degenerate critical point on the support of $\chi(t),$  if we assume $\ve$ to be sufficiently small, then $b$ is a small perturbation of the constant function $b(0,0),$  in the sense of \eqref{a1}. It is then easily verified that our assumptions (and in particular \eqref{a1n}) imply that $\phi_{y_1}(t)$ satisfies the hypotheses 
 of the van der Corput type Lemma 2.2 in \cite{IM-rest1}, with $M=2$ and constants $c_1,c_2>0$   which are admissible, provided we choose $L$ sufficiently large. Therefore, the lemma shows that the inner integral with respect to $t$ is bounded by $C(1+|\xi|\beta^3)^{-1/2},$ which implies that 
 $$
 |J_{II}(\al,\be,T)\le C \beta^{1+\epsilon} (\beta^3)^{-\frac 12}=C \be^{\epsilon-\frac 12}.
 $$

{\bf 2. Case: $ \ga< 1,$ i.e., $\beta<\al.$} Then there are admissible  constants $C_3,C_4\ge 1$ so that  if $|s|\ge C_3,$ then 
$\al^3|1-(\ga +\frac 1{\al^2} Q(\frac \al T s))\, s - b(y_1,\frac {\al}T s)\, s^3|\ge C_4\al^3|s|^3$ in \eqref{a4}. Arguing in a similar way as in the first case, this implies that  the contribution $J_{III}(\al,\be,T)$ by the region where $|s|\ge C_3$ to the integral $J_\epsilon(\al,\be,T)$ in \eqref{a4}  can be estimated by
$$
|J_{III}(\al,\be,T)|\le C \al^{4+\epsilon-3N}.
$$
Similarly, there are admissible  constants $C_5,C_6>0$ so that  if $|s|\le  C_5,$ then 
$|1-(\ga +\frac 1{\al^2} Q(\frac \al T s))\, s - b(y_1,\frac {\al}T s)\, s^3|\ge C_6$ in \eqref{a4}, and this implies that  the contribution $J_{IV}(\al,\be,T)$ by the region where $|s|\le C_5$ to the integral $J_\epsilon(\al,\be,T)$ can be estimated by
$$
|J_{IV}(\al,\be,T)|\le C \al^{4+\epsilon-3N}.
$$
Finally, on the set where   $C_5<|s|< C_3,$ the phase $\phi_{y_1}(s):= (\frac{r_2(y_1)}{\al ^2 T}-(\ga +\frac 1{\al^2} Q(\frac \al T s)))s - b(y_1,\frac {\al}T s)\, s^3$ has again no degenerate critical point, and way conclude in a similar way as in the first case that the contribution $J_V(\al,\be,T)$ by this region can be estimated by
$$
 |J_V(\al,\be,T)\le C \al^{1+\epsilon}  (\al^3)^{-\frac 12}=C \al^{\epsilon-\frac 12}.
 $$
Combining all these estimates, we arrive at the conclusion of the lemma when $|A|\ge 1.$ 
\smallskip

Finally, when $|A|\le L$ and $|B|\ge L,$ then we may  indeed assume without loss of generality that $\al=0.$ Arguments very similar to those that we have applied before then show that $|J_\epsilon(\al,\be,T)|\le C \be^{\epsilon-\frac 12},$ which concludes the proof of the lemma.
\end{proof}

 \begin{lemma}\label{as2}
Let $b=b(y)$ be a $C^2$- function on $[-1,1]$ such that $b(0)\ne 0$ and $\|b\|_{C^2([-1,1])}\le c_1.$  Furthermore, let  $A$ and $B$ be real numbers,  and let  $\de_0\in ]0,1[.$ For $T\ge L$   and $\de>0$   such that $\de<1$ and $\de T\le \de_0,$  we put 
\begin{eqnarray*}
I(A,B)&:=&
 \hskip-0.5cm\int_\RR \big|\rho\big(A+By+ b(\de y) y^3 \big)\big| \,\chi_0\big(\frac y T\big)\, dy\, ,
 \end{eqnarray*}
 where $\rho\in \S(\RR)$ denotes a fixed Schwartz function and  $\chi_0\in C^\infty_0(\RR)$  a  non-negative bump function supported in the interval $[-1,1].$  Then,  for $\de_0$ sufficiently small and $L$ sufficiently large, we have 
\begin{equation}\label{a6}
|I(A,B)|\le C \big(1+\max\{|A|^{\frac 13}, |B|^{\frac 12}\}\big)^{-\frac 12},
\end{equation}
where the constant $C$ depends only on $c_1, \de_0, \rho$ and $\chi_0,$ but not on  $A,B,T$ and $\de.$ 
\end{lemma}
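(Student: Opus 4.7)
The statement is essentially a one-variable specialization of Lemma \ref{as1}, so the cleanest route is a direct reduction. In the notation of Lemma \ref{as1}, I would take $\epsilon := 0$, $r_1\equiv r_2\equiv 0$, $Q\equiv 0$, and as the two-variable coefficient the $y_1$-independent choice $b_1(y_1,y_2):= b(\delta T y_2)$. Since the hypothesis $\delta T\le \delta_0$ forces $\delta T y_2\in[-\delta_0,\delta_0]$ for $y_2\in[-1,1]$, I obtain $|b_1(y_1,y_2)-b(0)|\le \|b'\|_\infty\delta_0$, which is as small as desired for $\delta_0$ small; the $C^2$-bound $\|b_1(y_1,\cdot)\|_{C^2([-1,1])}\le c_1(1+\delta_0+\delta_0^2)$ follows from the chain rule together with $\delta T\le 1$. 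Crucially, the quantitative non-degeneracy condition \eqref{a1} of Lemma \ref{as1}, namely $c_2|y_2|^{3-j}\le|\partial_{y_2}^j(b_1\, y_2^3)|$ for $j=1,2$, holds with $c_2\sim|b(0)|$: for $j=1,2$ the dominant contribution to $\partial_{y_2}^j(b(\delta T y_2)y_2^3)$ is the term coming from differentiating only the $y_2^3$ factor, which equals $(\text{const})\cdot b(0)y_2^{3-j}+O(\delta_0|y_2|^{3-j})$.

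With these choices, the integrand of $I_0(A,B,T)$ in Lemma \ref{as1} (after the harmless sign changes $B\mapsto -B$, $b\mapsto -b$) becomes $(1+|A+By_2+b(\delta y_2)y_2^3|)^{-N}(1+|y_1|)^{-N}$, so integrating out the trivial $y_1$-factor produces only an admissible constant $C_N$. Lemma \ref{as1} therefore yields
\begin{equation*}
\int_{-T}^T\big(1+|A+By_2+b(\delta y_2)y_2^3|\big)^{-N}\,dy_2\le C\,\max\{|A|^{1/3},|B|^{1/2}\}^{-1/2}
\end{equation*}
whenever $\max\{|A|,|B|\}\ge L$ and $L$ is sufficiently large. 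Since $\rho\in\mathcal{S}(\mathbb{R})$ satisfies $|\rho(u)|\le C_N(1+|u|)^{-N}$ for every $N$, and $\chi_0(y/T)$ restricts $y$ to $[-T,T]$, this gives the claimed bound in the regime $\max\{|A|,|B|\}\ge L$.

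In the complementary regime $\max\{|A|,|B|\}\le L$, both $|A|^{1/3}$ and $|B|^{1/2}$ are $O(1)$, so the right-hand side of \eqref{a6} is bounded below by a positive constant and it suffices to prove the uniform bound $|I(A,B)|\le C$. This is immediate: for $\delta_0$ small, the function $f(y):=A+By+b(\delta y)y^3$ satisfies $|f(y)|\ge \tfrac12|b(0)||y|^3$ for $|y|\ge C_L$ (some constant depending only on $L$ and $c_1$), while $|\rho(f(y))|$ decays faster than any inverse power of $|y|^3$; the contribution from $|y|\le C_L$ is trivially bounded by $C_L\|\rho\|_\infty$.

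\emph{Main obstacle.} The only genuine technical point is the verification that the quantitative non-degeneracy hypothesis \eqref{a1} of Lemma \ref{as1} holds with admissible constants under the specialization $b_1(y_1,y_2)=b(\delta T y_2)$; this is precisely where the smallness assumption $\delta T\le\delta_0$ is used, and no new oscillatory-integral analysis beyond what is already contained in Lemma \ref{as1} is required.
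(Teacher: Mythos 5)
Your reduction to Lemma~\ref{as1} has a genuine gap: you overlook that Lemma~\ref{as1} also carries the hypotheses \eqref{a2}, namely $|A|\le T^3$ and $|B|\le T^2$, and these are used essentially in its proof (the scaling $A=\alpha^3$, $B=\beta^2$ there requires $\alpha,\beta\le T$). In Lemma~\ref{as2}, $A$ and $B$ are arbitrary real numbers, so the regimes $|A|>T^3$ or $|B|>T^2$ (more precisely, after choosing the optimal $T'=1/\delta$ in the application, the regimes $|A|>\delta^{-3}$ or $|B|>\delta^{-2}$) are genuinely possible and are not addressed by your argument. You do treat the complementary regime $\max\{|A|,|B|\}\le L$, but that does not cover these large-parameter cases, which are the ones where the conclusion $|I(A,B)|\lesssim\max\{|A|^{1/3},|B|^{1/2}\}^{-1/2}$ cannot follow from an application of Lemma~\ref{as1} over a window of half-length $T$.

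These leftover cases require a separate argument — and one that is not merely bookkeeping. The paper handles them as follows: if $|B|>T^2/\delta_0^2$, Fourier inversion reduces $I(A,B)$ to an oscillatory integral with phase $\phi(y)=By+b(\delta y)y^3$, and for $\delta_0$ small and $|y|\le T$ one has $|\phi'(y)|\ge|B|/2$ (because the cubic contribution is $O(T^2)\ll|B|$), so a single integration by parts combined with the trivial bound $O(T)$ and $T<\delta_0|B|^{1/2}$ yields the decay $|B|^{-1/4}$; if instead $|A|>T^3/\delta_0^3$ while $|B|\le T^2/\delta_0^2$, then $|A+By+b(\delta y)y^3|\ge|A|-|B|T-\|b\|_\infty T^3\ge|A|/2$ uniformly on the support, so the Schwartz decay of $\rho$ gives an even stronger bound. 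The remaining regime, where $|A|\le T^3/\delta_0^3$ and $|B|\le T^2/\delta_0^2$, is exactly where the reduction to Lemma~\ref{as1} (with the window $[-1/\delta,1/\delta]$) applies, and there your argument is correct — including the verification of the non-degeneracy hypothesis \eqref{a1}, which is fine. So the structure of a full proof is: your reduction covers the ``bounded-relative-to-$T$'' range of $(A,B)$, but you must supplement it with the integration-by-parts estimate for $|B|$ large and the size estimate for $|A|$ large.
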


\begin{proof} 
In a similar way is in the preceding proof, we may dominate the function $|\rho|$ by a non-negative Schwartz function. Let us therefore assume without loss of generality that $\rho\ge 0.$  

Let us first assume that  $\max\{|A|,|B|\}\ge L.$ 
If $|A|\le T^3/\de_0^3$ and $|B|\le T^2/\de_0^2,$  estimate \eqref{a6} follows easily from the previous Lemma \ref{as1} (just replace $\chi_0(\cdot/T)$ by the characteristic function of the interval $[-1/\de,1/\de]$ ). 

Next, if $|B|> T^2/\de_0^2,$ using Fourier inversion, we may estimate
\begin{equation}\label{a7}
|I(A,B)|\le C\int \big|\int e^{is \phi(y)}\chi_0\big (\frac yT\big)\, dy \big| \,| \hat\rho(s)| \, ds\,,
\end{equation}
where $\phi(y):=By+ b(\de y) y^3.$ And, be means of integration by parts, we  obtain that 
$$
\big|\int e^{is \phi(y)}\chi_0\big (\frac yT\big)\, dy \big|\le  C|s|^{-1}
\int_{-T}^T\Big( \Big|\frac{\phi''(y)}{\phi'(y)^2}\Big|+\Big|\frac {1}{T\phi'(y)}\Big|\Big) \, dy.
$$
Since  for $|y|\le T$ we have 
$$
|\phi'(y)|=|B-y^2\big(3b(\de y) +\frac {\de y}3  b'(\de y) \big)|\ge \frac {|B|}2\quad \mbox{and}\quad 
|\phi''(y)|\le C T,
$$
if we choose $\de_0$ sufficiently small, we see that $\big|\int e^{is \phi(y)}\chi_0\big (\frac yT\big)\, dy \big|\le C /|sB|$ for $\de_0$ sufficiently small. On the other hand, trivially we have  $\big|\int e^{is \phi(y)}\chi_0\big (\frac yT\big)\, dy \big|\le C T,$  and taking the geometric mean of these estimates and using  that $T<\de_0|B|^{1/2}$  we find that
 $$
 \big|\int e^{is \phi(y)}\chi_0\big (\frac yT\big)\, dy \big|\le C |s|^{-\frac 12} |B|^{-\frac 14}.
 $$ 
 In combination with \eqref{a7} this  confirms estimate \eqref{a6} also in this case.
 
 There remains the case where $|A|>T^3/\de_0^3$ and $|B|\le T^2/\de_0^2.$ Here we may  estimate
 $$
 |A+By+ b(\de y) y^3|\ge |A|-|B|T-\|b\|_\infty T^3\ge \frac{|A|}2,
 $$
 provided $\de_0$ is sufficiently small. This implies that $ |I(A,B)|\le C|A|^{-N}T\le C |A|^{-N+1/3}$ for every $N\in \NN,$ which is stronger than what we need for \eqref{a6}.
 \medskip
 
Finally, if  $\max\{|A|,|B|\}< L,$ then  the the rapid decay of $\rho$ easily implies that $ |I(A,B)|\le C.$
 \end{proof}


\begin{thebibliography}{9}

\bibitem{arhipov}
 Arhipov, G.\,I.,  Karacuba, A.\,A.,   {\v{C}}ubarikov, V.\,N.,
\newblock Trigonometric integrals.
\newblock {\em Izv. Akad. Nauk SSSR Ser. Mat.}, 43 (1979), 971--1003, 1197 (Russian);
English translation in {\em Math. USSR-Izv.}, 15 (1980), 211--239.

\bibitem{arnold}
 Arnol'd, V.\,I.,
\newblock Remarks on the method of stationary phase and on the {C}oxeter
  numbers.
\newblock {\em Uspekhi Mat. Nauk}, 28 (1973),17--44 (Russsian); English translation 
in {\em Russian Math. Surveys}, 28 (1973), 19--48.


\bibitem{agv}{
Arnol'd, V.I.,  Gusein-Zade, S.M. and  Varchenko, A.N., 
\newblock { Singularities of differentiable maps.  {Vol. II}},  Monodromy and asymptotics of integrals,
 {\em Monographs in Mathematics, 83}.
\newblock Birkh\"auser, Boston Inc., Boston, MA, 1988.}

\bibitem{bs}
Bak, J.-G.,  Seeger, A., Extensions of the Stein-Tomas theorem.
 \newblock {\em Math. Res. Lett..}, 18 (2011), no. 4, 767--781.

\bibitem{bourgain}
Bourgain, J.,  Estimations de certaines fonctions maximales.
 \newblock {\em C. R. Acad. Sci. Paris S\'er. I Math.}, 301 (10) (1985),  499--502.

\bibitem{ccw}
Carbery, C., Christ, M. Wright, J., Multidimensional van der Corput and sublevel set estimates.
 \newblock {\em J. Amer. Math. Soc.}, 12 (1999), no. 4, 981--1015.

\bibitem{CDV}
Colin de Verdi\`ere, I.,  Nombre de points entiers dans une famille
homothetique de domains de $\bR^n$.
 \newblock {\em Ann. scient.Ec. Norm. sup.}, 10 (1974), 559--575.
 
 
\bibitem{denef-etal}
Denef, J., Nicaise, J.,  Sargos, P.,
 Oscillatory integrals and Newton polyhedra.
 


\bibitem{domar}
Domar,Y., 
\newblock On the {B}anach algebra {$A(G)$} for smooth sets {$\Gamma \subset
  \RR^n$}.
\newblock {\em Comment. Math. Helv.}, 52, no. 3 (1977), 357--371.

\bibitem{drury}
S. W. Drury,
\emph{Restrictions of Fourier transforms to curves},
Ann. Inst. Fourier. \textbf{35} (1985), 117--123.

 \bibitem{duistermaat}
 Duistermaat, J.\,J.,
\newblock Oscillatory integrals, {L}agrange immersions and unfolding of
  singularities.
\newblock {\em Comm. Pure Appl. Math.}, 27 (1974), 207--281.



\newblock {\em J. Anal. Math.}, 95 (2005), 147--172.
\bibitem{Erde'lyi}
Erde'lyi, A., {\em Asymptotc Expansions}. Dover Publications, Inc., New York, 1956.



\bibitem{erdšs}
Erd\"os, L.,   Salmhofer, M.,  
Decay of the Foureir transform of surfaces with vanishing curvature.
\newblock {\em Math. Z.}, 257 (2007), 261--294.


\bibitem{greenblatt}
Greenblatt, M.,  The asymptotic behavior of degenerate oscillatory integrals in two dimensions.  \newblock {\em J. Funct. Anal.},  257, no. 6 (2009), 1759--1798.

\bibitem{grafakos}
Grafakos, L.,
\newblock {Modern Fourier analysis}.
 {\em Graduate Texts in Mathematics} 250.
\newblock Springer, New York, 2009.


\bibitem{greenleaf}
Greenleaf, A.,
\newblock Principal curvature and harmonic analysis.
\newblock {\em Indiana Univ. Math. J.}, 30(4) (1981), 519--537.

\bibitem{hormander-book} L. H\"ormander
The analysis of linear partial differential operators. I. Distribution theory and Fourier analysis. Reprint of the second (1990) edition [Springer, Berlin; MR1065993 (91m:35001a)]. Classics in Mathematics. Springer-Verlag, Berlin, 2003. x+440 pp. ISBN: 3-540-00662-1 35-02

\bibitem{IM-ada}
Ikromov, I.\,A.,  M\"uller, D., 
\newblock On adapted coordinate systems.
\newblock {\em   Trans. Amer. Math. Soc.,} 363 (2011), no. 6, 2821--2848. 


\bibitem{IKM-max} Ikromov, I.\,A., Kempe, M.,  M\"uller, D.,
 \emph{Estimates for maximal functions  associated to hypersurfaces in $\bR^3$  and related problems of harmonic analysis}.  Acta Math. 204 (2010), 151--271.

\bibitem{IM-uniform}
Ikromov, I.\,A.,  M\"uller, D., 
\newblock Uniform estimates for the Fourier transform of surface carried measures in  $\bR^3$ and an application to Fourier restriction.
\newblock {\em    J. Fourier Anal. Appl.,} 17 (2011), no. 6, 1292--1332.


\bibitem{IM-rest1}
Ikromov, I.\,A.,  M\"uller, D., 
\newblock $L^p$-$L^2$  Fourier restriction  for  hypersurfaces in  $\bR^3:$ Part I.
\emph{Preprint; ArXiv:  http://arxiv.org/abs/1208.6090}

\bibitem{iosevich}
Iosevich, A., 
Fourier transform, $L^2$ restriction theorem, and scaling.
\emph{Boll. Unione Mat. Ital. Sez. B Artic. Ric. Mat.} (8) 2 (1999), 383--387.

\bibitem{iosevich-sawyer}
Iosevich, A., Sawyer, E.,
\newblock Maximal averages over surfaces.
\newblock {\em Adv. Math.}, 132 (1997), 46--119.

\bibitem{karpushkin}
 Karpushkin, V.\,N.,
\newblock A theorem on uniform estimates for oscillatory integrals with a phase
  depending on two variables. 
  \newblock {\em Trudy Sem. Petrovsk.} 10 (1984), 150--169, 238 (Russian);
  English translation in 
{\em  J. Soviet Math.}, 35 (1986), 2809--2826.

\bibitem{magyar}
Magyar, A., 
\newblock On Fourier restriction and the Newton polygon.
\newblock {\em  Proceedings Amer. Math. Soc.} 137 (2009), 615--625.

\bibitem{phong-stein}
Phong, D.H. and  Stein, E.M., 
\newblock The {N}ewton polyhedron and oscillatory integral operators.
\newblock {\em Acta Math.}, 179 (1997), no. 1, 105--152.


\bibitem{PSS} Phong, D.\,H., Stein, E.\,M.,  Sturm, J.\,A.,
\newblock On the growth and stability of real-analytic functions.
\newblock {\em Amer. J. Math.}, 121 (1999), no. 3, 519-554.


\bibitem{stein-weiss}
Stein, E.\,M., Weiss, G,,
\newblock {Introduction to Fourier analysis on Euclidean spaces. }
{\em Princeton Mathematical Series} No. 32. 
\newblock Princeton University Press, Princeton, N.J., 1971.

\bibitem{schulz}
Schulz, H., 
\newblock Convex hypersurfaces of finite type and the asymptotics of their
  {F}ourier transforms.
\newblock {\em Indiana Univ. Math. J.}, 40 (1991), 1267--1275.

\bibitem{stein-book}
Stein, E.\,M.,
\newblock {Harmonic analysis: Real-variable methods, orthogonality, and
  oscillatory integrals}. {\em Princeton Mathematical Series} 43.
\newblock Princeton University Press, Princeton, NJ, 1993.


\bibitem{strichartz} Strichartz, R.\,S.,  Restrictions of Fourier transforms to quadratic surfaces and decay of solutions of wave equations. 
\newblock {\em Duke Math. J.}, 44  (1977), 705--714.

\bibitem{vdC}
van der Corput, J.\,G., 
\newblock Zahlentheoretische Absch\"atzungen.
\newblock {\em Math. Ann.}, 84 (1921),  53--79.


\bibitem{Va}
Varchenko, A.\,N.,
\newblock Newton polyhedra and estimates of oscillating integrals.
\newblock {\em Funkcional. Anal. i Prilo\v zen}, 10 (1976), 13--38 (Russian); English translation in 
\newblock {\em Funktional Anal. Appl.}, 18 (1976), 175--196.




\end{thebibliography}
\end{document}